\newcommand{\lap}{\Delta}
\newcommand{\vphi}{\varphi}
\newcommand{\R}{\mathbb{R}} 
\newcommand{\N}{\mathbb{N}}
\newcommand{\inb}{\partial_{\text{in}}}
\newcommand{\outb}{\partial_{\text{out}}}
\newcommand{\supp}{\text{supp}}
\newcommand{\interior}{\text{int}}
\numberwithin{equation}{section}
\newtheorem{thm}{Theorem}[section]
\newtheorem{lem}[thm]{Lemma}
\newtheorem{cor}[thm]{Corollary}
\newtheorem{prop}[thm]{Proposition}
\newtheorem{definition}[thm]{Definition}
\newtheorem{rmk}[thm]{Remark}
\def\XXint#1#2#3{{\setbox0=\hbox{$#1{#2#3}{\int}$ }
\vcenter{\hbox{$#2#3$ }}\kern-.6\wd0}}
\newtheorem{theorem}{Theorem}
\newtheorem{question}{Question}
\newcommand{\al}{\alpha}
\renewcommand{\bar}{\overline} 
\definecolor{purple}{rgb}{0.65, 0, 1}
\definecolor{orange}{rgb}{1,.6,0}
\definecolor{purple}{rgb}{0.5, 0, 0.9}
\definecolor{green}{rgb}{0, 1, 0}
\definecolor{orange}{rgb}{1,.5,0}
\definecolor{gray}{rgb}{.6,.6,.6}
\begin{document}

\title{Symmetry in stationary and uniformly-rotating solutions of active scalar equations}

\author{Javier G\'omez-Serrano, Jaemin Park, Jia Shi and Yao Yao}

\begin{abstract}

In this paper, we study the radial symmetry properties of stationary and uniformly-rotating solutions of the 2D Euler and gSQG equations, both in the smooth setting and the patch setting. For the 2D Euler equation, we show that any smooth stationary solution with compactly supported and nonnegative vorticity must be radial, without any assumptions on the connectedness of the support or the level sets. In the patch setting, for the 2D Euler equation we show that every uniformly-rotating patch $D$ with angular velocity $\Omega \leq 0$ or $\Omega\geq \frac{1}{2}$ must be radial, where both bounds are sharp. For the gSQG equation we obtain a similar symmetry result for $\Omega\leq 0$ or $\Omega\geq \Omega_\alpha$ (with the bounds being sharp), under the additional assumption that the patch is simply-connected. These results settle several open questions in Hmidi \cite{Hmidi:trivial-solutions-rotating-patches} and de la Hoz--Hassainia--Hmidi--Mateu \cite{delaHoz-Hassainia-Hmidi-Mateu:vstates-disk-euler} on uniformly-rotating patches. Along the way, we close a question on overdetermined problems for the fractional Laplacian \cite[Remark 1.4]{Choksi-Neumayer-Topaloglu:anisotropic-liquid-drop-models}, which may be of independent interest. The main new ideas come from a calculus of variations point of view.

\color{black}
\vskip 0.3cm


\end{abstract}

\maketitle

\section{Introduction}

Let us start by considering the initial value problem for the two-dimensional incompressible Euler equation in vorticity form. Here the evolution of the vorticity $\omega$ is given by
\begin{equation}\label{euler}
\begin{cases}
\partial_t \omega+ u \cdot \nabla \omega = 0 \ &\text{ in } \mathbb{R}^2 \times \mathbb{R}_+,  \\
 u(\cdot,t) = -\nabla^{\perp}(-\Delta)^{-1}\omega(\cdot,t) &\text{ in }\mathbb{R}^2,\\
 \omega(\cdot,0) = \omega_0&\text{ in }\mathbb{R}^2,
\end{cases}
\end{equation}
where $\nabla^\perp := (-\partial_{x_2}, \partial_{x_1})$. Note that we can express $u$ as 
$
u(\cdot,t) = \nabla^\perp(\omega(\cdot,t) * \mathcal{N}),
$
where $\mathcal{N}(x) := \frac{1}{2\pi}\ln |x|$ is the Newtonian potential in two dimensions.  More generally, the 2D Euler equation belongs to the following family of active scalar equations indexed by a parameter $\alpha$, ($0 \leq \alpha < 2$), known as the generalized Surface Quasi-Geostrophic (gSQG) equations:
\begin{equation}\label{gSQG}
\begin{cases}
\partial_t \omega+ u \cdot \nabla \omega = 0 \ &\text{ in } \mathbb{R}^2 \times \mathbb{R}_+,  \\
 u(\cdot,t) = -\nabla^{\perp}(-\Delta)^{-1+\frac{\alpha}{2}}\omega(\cdot,t) &\text{ in }\mathbb{R}^2,\\
 \omega(\cdot,0) = \omega_0&\text{ in }\mathbb{R}^2.
\end{cases}
\end{equation}
Here we can also express the Biot--Savart law as 
\begin{equation}\label{u_sqg}
u(\cdot,t) = \nabla^\perp(\omega(\cdot,t) * K_\alpha),
\end{equation}
where $K_\alpha$ is the fundamental solution for $-(-\Delta)^{-1+\frac{\alpha}{2}}$, that is, 
\begin{equation}\label{def_k_alpha}
K_\alpha(x) = \begin{cases}\frac{1}{2\pi} \ln |x|& \text{ for }\alpha = 0,\\ 
-C_\alpha |x|^{-\alpha} & \text{ for } \alpha \in (0,2),\end{cases}
\end{equation} where $C_\alpha = \frac{1}{2\pi} \frac{\Gamma(\frac{\alpha}{2})}{2^{1-\alpha}\Gamma(1-\frac{\alpha}{2})} $ is a positive constant only depending on $\alpha$. 

In this paper we will be focusing on establishing radial symmetry properties for stationary and uniformly-rotating solutions to equations \eqref{euler} and \eqref{gSQG}. 
We either work with the \emph{patch} setting, where $\omega(\cdot,t)=1_{D(t)}$ is an indicator function of a bounded set that moves with the fluid, or the smooth setting, where $\omega(\cdot,t)$ is smooth and compactly-supported in $x$. For well-posedness results for patch solutions, see the global well-posedness results  \cite{Bertozzi-Constantin:global-regularity-vortex-patches,Chemin:persistance-structures-fluides-incompressibles} for \eqref{euler}, and  local well-posedness results \cite{Rodrigo:evolution-sharp-fronts-qg,Gancedo:existence-alpha-patch-sobolev,Kiselev-Yao-Zlatos:local-regularity-sqg-patch-boundary,Cordoba-Cordoba-Gancedo:uniqueness-sqg-patch}  for \eqref{gSQG} with $\alpha\in (0,2)$.

 Let us begin with the definition of a stationary/uniformly-rotating solution in the patch setting. For a bounded domain $D\subset \mathbb{R}^2$ with $C^1$ boundary, we say $\omega = 1_D$ is a \emph{stationary patch} solution to \eqref{gSQG} for some $\alpha\in[0,2)$ if $u(x)\cdot\vec{n}(x)=0$ on $\partial D$, with $u$ given by \eqref{u_sqg}. This leads to the integral equation
 \begin{equation}\label{eq_stat}
 1_D * \mathcal{K_\alpha} \equiv C_i \quad\text{ on } \partial D, 
 \end{equation}
 where the constant $C_i$ can differ on different connected components of $\partial D$. And if $\omega(x,t) = 1_D(R_{\Omega t} x)$ is a \emph{uniformly-rotating patch} solution with angular velocity $\Omega$ (where $R_{\Omega t}x$  rotates a vector $x\in\mathbb{R}^2$ counter-clockwise by angle $\Omega t$ about the origin), then $1_D$ becomes stationary in the rotating frame with angular velocity $\Omega$, that is,
$\left(\nabla^\perp(\omega(\cdot,t) * K_\alpha) - \Omega x^\perp\right) \cdot \vec{n}(x) = 0
$ on $\partial D$. As a result we have 
\begin{equation}\label{eq_rotating20}
 1_D * \mathcal{K_\alpha} - \frac{\Omega}{2}|x|^2 \equiv C_i \quad\text{ on }\partial D, 
\end{equation}
where $C_i$ again can take different values along different connected components of $\partial D$.  Note that a stationary patch $D$ also satisfies \eqref{eq_rotating20} with $\Omega=0$, and it can be considered as a special case of uniformly-rotating patch with zero angular velocity.

Likewise, in the smooth setting, if $\omega(x,t)=\omega_0(R_{\Omega t}x)$ is a uniformly-rotating solution of \eqref{gSQG} with angular velocity $\Omega$ (which becomes a stationary solution in the $\Omega=0$ case), then we have $(\nabla^\perp(\omega_0*K_\alpha)-\Omega x^{\perp})\cdot \nabla \omega_0=0$. As a result, $\omega_0$ satisfies
\begin{equation}\label{eq_smooth}
\omega_0 * \mathcal{K_\alpha} - \frac{\Omega}{2}|x|^2 \equiv C_i \quad\text{ on each connected component of a regular level set of }\omega_0,
\end{equation}
where $C_i$ can be different if a regular level set $\{\omega_0=c\}$ has multiple connected components.

Clearly, every radially symmetric patch/smooth function automatically satisfies \eqref{eq_rotating20}/\eqref{eq_smooth} for all $\Omega \in \mathbb{R}$. The goal of this paper is to address the complementary question, which can be roughly stated as following:

\begin{question}In the patch or smooth setting, under what condition must a stationary/uniformly-rotating solution be radially symmetric?
\label{question1}
\end{question}

Below we summarize the previous literature related to this question, and state our main results. We will first discuss the 2D Euler equation in the patch and smooth setting respectively, then discuss the gSQG equation with $\alpha\in(0,2)$.

\subsection{2D Euler in the patch setting} Let us deal with the patch setting first. So far affirmative answers to Question~\ref{question1} have only been only obtained for simply-connected patches, for angular velocities $\Omega=0$, $\Omega<0$ (under some additional convexity assumptions), and $\Omega=\frac{1}{2}$. For stationary patches ($\Omega=0$), Fraenkel \cite[Chapter 4]{Fraenkel:book-maximum-principles-symmetry-elliptic} proved that if $D$ satisfies \eqref{eq_rotating20} (where $K_\alpha=\mathcal{N}$) with the \emph{same} constant $C$ on the whole $\partial D$, then $D$ must be a disk. The idea is that in this case the stream function $\psi=1_D*\mathcal{N}$ solves a semilinear elliptic equation $\Delta \psi = g(\psi)$ in $\mathbb{R}^2$ with $g(\psi)=1_{\{\psi<C\}}$, where the monotonicity of the discontinuous function $g$  allows one to apply the moving plane method developed in \cite{Serrin:symmetry-moving-plane,Gidas-Ni-Nirenberg:symmetry-maximum-principle} 
to obtain the symmetry of $\psi$. As a direct consequence, every simply-connected stationary patch must be a disk. But if $D$ is not simply-connected, \eqref{eq_rotating20} gives that $\psi=C_i$  on different connected components of $\partial D$, thus $\psi$ might not solve a single semilinear elliptic equation in $\mathbb{R}^2$. Even if $\psi$ satisfies $\Delta \psi = g(\psi)$, $g$ might not have the right monotonicity. For these reasons, whether a non-simply-connected stationary patch must be radial still remained an open question.

For $\Omega< 0$, Hmidi \cite{Hmidi:trivial-solutions-rotating-patches} used the moving plane method to show that if a simply-connected uniformly-rotating patch $D$ satisfies some additional convexity assumption (which is stronger than star-shapedness but weaker than convexity), then $D$ must be a disk. In the special case $\Omega=\frac{1}{2}$, Hmidi \cite{Hmidi:trivial-solutions-rotating-patches} also showed that a simply-connected uniformly-rotating patch $D$ must be a disk, using the fact that $1_D * \mathcal{N} - \frac{\Omega}{2}|x|^2$ becomes a harmonic function in $D$ when $\Omega=\frac{1}{2}$.

On the other hand, it is known that there can be non-radial uniformly-rotating patches for $\Omega \in (0,\frac{1}{2})$. The first example dates back to the Kirchhoff ellipse \cite{Kirchhoff:vorlesungen-math-physik}, where it was shown that any ellipse $D$ with semiaxes $a,b$ is a uniformly-rotating patch with angular velocity $\frac{ab}{(a+b)^2}$. Deem--Zabusky \cite{Deem-Zabusky:vortex-waves-stationary} numerically found families of patch solutions of \eqref{euler} with $m$-fold symmetry by bifurcating from a disk at explicit angular velocities $\Omega^{0}_{m} = \frac{m-1}{2m}$  and coined the term V-states. Further numerics were done in \cite{Wu-Overman-Zabusky:steady-state-Euler-2d,Elcrat-Fornberg-Miller:stability-vortices-cylinder,LuzzattoFegiz-Williamson:efficient-numerical-method-steady-uniform-vortices,Saffman-Szeto:equilibrium-shapes-equal-uniform-vortices}. Burbea gave the first rigorous proof of their existence by using (local) bifurcation theory arguments close to the disk \cite{Burbea:motions-vortex-patches}. There have been many recent developments in a series of works by Hmidi--Mateu--Verdera and de la Hoz--Hmidi--Mateu--Verdera \cite{Hmidi-Mateu-Verdera:rotating-vortex-patch,Hmidi-delaHoz-Mateu-Verdera:doubly-connected-vstates-euler,Hmidi-Mateu-Verdera:rotating-doubly-connected-vortices} in different settings and directions (regularity of the boundary, different topologies, etc.). In particular, \cite{Hmidi-delaHoz-Mateu-Verdera:doubly-connected-vstates-euler} showed the existence of $m$-fold doubly-connected non-radial patches bifurcating at any angular velocity $\Omega \in \left(0,\frac12\right)$ from some annulus of radii $b\in(0,1)$ and $1$.

There are many other interesting perspectives of the V-states, which we briefly review below, although they are not directly related to Question~\ref{question1}.
Hassainia--Masmoudi--Wheeler \cite{Hassainia-Masmoudi-Wheeler:global-bifurcation-vortex-patches} were able to perform global bifurcation arguments and study the whole branch of V-states. Other scenarios such as the bifurcation from ellipses instead of disks have also been studied: first numerically by Kamm \cite{Kamm:thesis-shape-stability-patches} and later theoretically by Castro--C\'ordoba--G\'omez-Serrano \cite{Castro-Cordoba-GomezSerrano:analytic-vstates-ellipses} and Hmidi--Mateu \cite{Hmidi-Mateu:bifurcation-kirchhoff-ellipses}. See also the work of Carrillo--Mateu--Mora--Rondi--Scardia--Verdera \cite{Carrillo-Mateu-Mora-Rondi-Scardia-Verdera:dislocation-ellipses} for variational techniques applied to other anisotropic problems related to vortex patches. Love \cite{Love:stability-ellipses} established linear stability for ellipses of aspect ratio bigger than $\frac{1}{3}$ and linear instability for ellipses of aspect ratio smaller than $\frac{1}{3}$. Most of the efforts have been devoted to establish nonlinear stability and instability in the range predicted by the linear part. Wan \cite{Wan:stability-rotating-vortex-patches}, and Tang \cite{Tang:nonlinear-stability-vortex-patches} proved the nonlinear stable case, whereas Guo--Hallstrom--Spirn \cite{Guo-Hallstrom-Spirn:dynamics-unstable-kirchhoff-ellipse} settled the nonlinear unstable one. See also \cite{Constantin-Titi:evolution-nearly-circular-vortex-patches}. In \cite{Turkington:corotating-vortices}, Turkington considered $N$ vortex patches rotating around the origin in the variational setting, yielding solutions of the problem which are close to point vortices.

Our first main result is summarized in the following Theorem~\ref{thmA}, which gives a complete answer to Question~\ref{question1} for 2D Euler in the patch setting. Note that $D$ is allowed to be disconnected, and each connected component can be non-simply-connected. Figure~\ref{fig_bifurcation1} illustrates a comparison of our result (in red color) with the previous results (in black color).

\begin{theorem}[\textbf{= Corollary~\ref{cor_multiple_steady}, Theorems~\ref{thm_omega<0} and \ref{clockwise rotating_patch}}]
\label{thmA}
 Let $D\subset \mathbb{R}^2$ be a bounded domain with $C^1$ boundary. Assume $D$ is a stationary/uniformly-rotating patch of \eqref{euler}, in the sense that $D$ satisfies \eqref{eq_rotating20} (with $K_\alpha =\mathcal{N}$) for some $\Omega\in \mathbb{R}$. Then $D$ must be radially symmetric if $\Omega \in (-\infty,0)\cup[\frac{1}{2},\infty)$, and radially symmetric up to a translation if $\Omega=0$.
\end{theorem}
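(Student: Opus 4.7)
The plan is to identify uniformly-rotating patches with constrained critical points of an explicit energy functional and then force radial symmetry via rearrangement inequalities combined with an equality-case analysis. Introduce
\[
  I(D) := \int_D\int_D \mathcal{N}(x-y)\,dx\,dy, \quad M(D) := \int_D |x|^2\,dx, \quad J_\Omega(D) := \tfrac12 I(D) - \tfrac{\Omega}{2}M(D).
\]
A routine first-variation computation under a normal boundary perturbation with speed $\eta$ gives $\frac{d}{dt}J_\Omega(D_t)|_{t=0} = \int_{\partial D} \eta \bigl[\,1_D*\mathcal{N} - \tfrac{\Omega}{2}|x|^2\,\bigr]\,d\sigma$. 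Hence, among bounded sets with connected components $D_1,\dots,D_k$ of prescribed areas $m_i$, the Euler--Lagrange condition is exactly \eqref{eq_rotating20}, with the $C_i$ appearing as Lagrange multipliers for the $k$ separate area constraints.

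For $\Omega\le 0$ I would use a continuous Steiner symmetrization. Riesz's rearrangement inequality yields $I(D) \ge I(D^*)$ with equality iff $D$ is a translate of the centered disk $D^*$ of the same measure; the Hardy--Littlewood inequality yields $M(D) \ge M(D^*)$ with equality iff $D = D^*$. Thus if $(D_t)_{t\ge0}$ denotes the continuous Steiner symmetrization of $D$ about any line $\ell$ through the origin, both $I(D_t)$ and $M(D_t)$ are non-increasing in $t$, and for $\Omega \le 0$ we obtain $\frac{d}{dt}J_\Omega(D_t)|_{t=0} \le 0$. On the other hand, the critical-point property of $D$ makes this same derivative vanish along any variation whose normal speed preserves each $|D_i|$, so the remaining task is to decompose the Steiner symmetrization into a component-area-preserving part plus a controllable remainder, and then invoke the strict versions of Riesz and Hardy--Littlewood. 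This would force $D$ to coincide with its Steiner symmetrization about every line through the origin when $\Omega<0$, and about every line through some fixed point when $\Omega = 0$ (since the $M$-term drops out and the interaction $I$ is translation-invariant), giving the desired radial symmetry (respectively up to a translation).

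For $\Omega \ge \tfrac12$ the sign in $\Delta\psi_\Omega = 1_D - 2\Omega \le 0$ is reversed and the direct rearrangement monotonicity fails. My plan is to dualize: the function $-\psi_\Omega = \tfrac{\Omega}{2}|x|^2 - 1_D * \mathcal{N}$ is subharmonic throughout $\mathbb{R}^2$, blows up to $+\infty$ at infinity, and is still constant on each $\partial D_i$, so it is structurally parallel to $\psi_\Omega$ in the $\Omega < 0$ regime. Concretely I would transfer the problem onto the complementary set $E := B_R \setminus \overline{D}$ inside a large enclosing disk $B_R$; the condition \eqref{eq_rotating20} on $\partial D$ translates into an overdetermined condition for $E$ with effective angular velocity $\Omega' = 2\Omega - 1 \ge 0$ -- a regime essentially symmetric to the previous one -- up to boundary terms on $\partial B_R$ that can be controlled by letting $R \to \infty$ after subtracting an explicit radial quadratic.

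The principal obstacle I anticipate is the equality analysis of the Steiner symmetrization step for multiply-connected $D$. One must verify that the first variation of $J_\Omega$ along the symmetrization path correctly balances the distinct Lagrange multipliers $C_i$ on different components of $\partial D$, and one must strengthen the rearrangement equality from ``equality of integrals'' to ``pointwise symmetry of the patch'' without any a~priori assumption on the topology of $D$. The sharpness of the thresholds (Kirchhoff ellipses at $\Omega \in (0,\tfrac12)$ and Burbea's $V$-states bifurcating at $\Omega_m^0 = (m-1)/(2m)$) indicates that the sign of the rearrangement monotonicity must turn over exactly at $\Omega = 0$ and $\Omega = \tfrac12$, so this matching is what is expected to do the real work.
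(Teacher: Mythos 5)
Your high-level framing — rotating patches as constrained critical points of $J_\Omega$, and the observation that the $\Omega\ge\tfrac12$ regime should be dual to the $\Omega\le 0$ regime via the complement inside a large disk — is on the right track, and the latter is indeed the structure used in the paper for Theorem~\ref{clockwise rotating_patch}. But there is a genuine gap exactly where you flag one, and it is not a technicality.

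The obstruction is that continuous Steiner symmetrization does not preserve the constraint structure under which $D$ is a critical point. For a non-simply-connected patch, condition \eqref{eq_rotating20} has a distinct constant $C_i$ on \emph{each connected component of $\partial D$}, including the boundaries of the holes $h_i$. For the first variation of $J_\Omega$ to vanish at $D$, the admissible perturbation must therefore preserve not only $|D_j|$ for each component $D_j$, but also $|h_i|$ for each hole — those are the additional constraints that conjugate to the extra multipliers $C_i$ on the inner boundaries. Steiner symmetrization preserves $|D|$ and $|D_j|$, but it does not preserve $|h_i|$: holes migrate, shrink, merge, and can vanish under the flow. So you cannot conclude $\frac{d}{dt}J_\Omega(D_t)\vert_{t=0}=0$; the first-variation identity simply does not hold for this variation, and the rearrangement monotonicity argument collapses. (This is precisely why, in the gSQG part of the paper, the Steiner-symmetrization argument is only used to classify \emph{simply-connected} patches, where all $C_i$ coincide.) The paper's fix is to abandon Steiner symmetrization altogether for the 2D Euler non-simply-connected case and instead perturb along $\vec{v}=-\nabla(\tfrac{|x|^2}{2}+p)$, with $p$ a harmonic-conjugate-type function whose constant Dirichlet values $c_i$ on $\partial h_i$ are \emph{chosen} (Lemma~\ref{p_def}) to enforce $\int_{\partial h_i}\vec v\cdot\vec n\,d\sigma=0$, i.e.\ hole-area preservation; the role of Riesz/Hardy--Littlewood is then replaced by a Talenti-type rearrangement inequality (Proposition~\ref{talenti2}) for this modified $p$. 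That is a different key lemma, and it is what makes the proof close.

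Two smaller points. First, there is a sign slip in the $\Omega\ge\tfrac12$ paragraph: using $1_{B_R}*\mathcal{N}=\tfrac14|x|^2+C_R$ gives $f_\Omega=-1_{B_R\setminus D}*\mathcal{N}-\tfrac{2\Omega-1}{4}|x|^2+C_R$, so after normalizing to unit vorticity the complementary patch has effective angular velocity $\tilde\Omega=-(2\Omega-1)/2\le 0$, not $2\Omega-1\ge 0$; you want to land back in the $\Omega\le 0$ regime, and the sign works out, but as written it points the wrong way. Second, even granting the $R\to\infty$ reduction, the boundary error on $\partial B_R$ and the quantitative non-degeneracy of the Talenti-type estimate (to prevent the contribution from $D_{0,R}$ from vanishing in the limit) are the real technical content of Theorem~\ref{clockwise rotating_patch} (Lemmas~\ref{phi_convergence} and \ref{lem2}); your proposal does not indicate how these would be controlled.
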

\begin{figure}[h!]
\begin{center}
\includegraphics[scale=1]{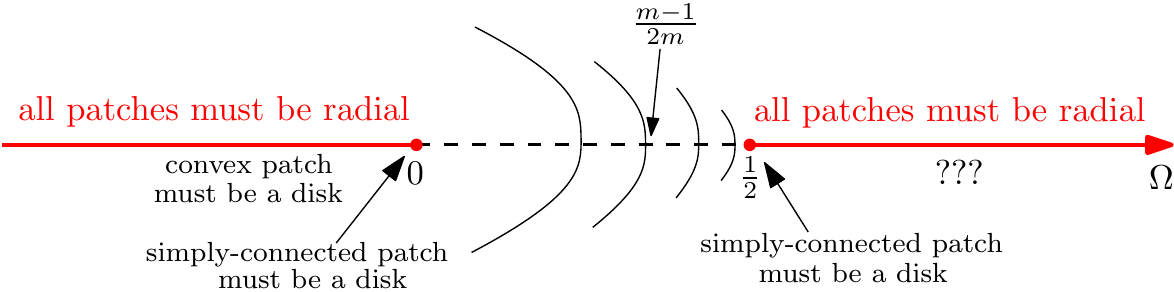}
\caption{\label{fig_bifurcation1}For 2D Euler in the patch setting, previous results on Question~\ref{question1} are summarized in black color. Our results in Theorem A are colored in red.}
\end{center}
\end{figure}

\subsection{2D Euler in the smooth setting} One of the main motivations of this paper is to find sufficient rigidity conditions in terms of the vorticity, such that the only stationary/uniformly-rotating solutions are radial ones. Heuristically speaking, this belongs to the broader class of ``Liouville Theorem'' type of results, which show that solutions satisfying certain conditions must have a simpler geometric structure, such as being constant (in one direction, or all directions) or being radial. In the literature we could not find any conditions on 2D Euler that leads to radial symmetry, although several other Liouville-type results have been established for 2D fluid equations: For 2D Euler, Hamel--Nadirashvili \cite{Hamel-Nadirashvili:liouville-euler,Hamel-Nadirashvili:shear-flow-euler-strip-halfspace} proved that any stationary solution without a stagnation point must be a shear flow. (But note that this result does not apply to our setting \eqref{eq_smooth}, since the velocity field $u$ associated with any compactly-supported $\omega_0$ must have a stagnation point). See also the Liouville theorem by Koch--Nadirashvili--Seregin--\v{S}ver\'ak for the 2D Navier--Stokes equations \cite{Koch-Nadirashvili-Seregin-Sverak:liouville-navier-stokes}. 

Let us briefly review some results on the characterization of stationary solutions to 2D Euler, although they are not directly related to Question~\ref{question1}.  Nadirashvili \cite{Nadirashvili:stationary-2d-euler} studied the geometry and the stability of stationary solutions, following the works of Arnold \cite{Arnold:geometrie-differentielle-dimension-infinie,Arnold:apriori-estimate-hydrodynamic-stability,Arnold-Khesin:topological-methods-hydrodynamics}. Izosimov--Khesin \cite{Izosimov-Khesin:characterization-steady-solutions-2d-euler} characterized stationary solutions of 2D Euler on surfaces.  Choffrut--\v{S}ver\'ak \cite{Choffrut-Sverak:local-structure-steady-euler} showed that locally near each stationary smooth solution there exists a manifold of stationary smooth solutions transversal to the foliation, and Choffrut--Sz\'ekelyhidi \cite{Choffrut-Szekelyhihi:weak-solutions-stationary-euler} showed that there is an abundant set of stationary weak ($L^{\infty}$) solutions near a smooth stationary one. Shvydkoy--Luo \cite{Luo-Shvydkoy:2d-homogeneous-euler,Luo-Shvydkoy:addendum-homogeneous-euler} classified the set of stationary smooth solutions of the form $v = \nabla^{\perp}(r^{\gamma}f(\omega))$, where $(r,\omega)$ are polar coordinates. In a different direction, Turkington \cite{Turkington:stationary-vortices} used variational methods to construct stationary vortex patches of a prescribed area in a bounded domain, imposing that the patch is a characteristic function of the set $\{\Psi > 0\}$, and also studied the asymptotic limit of the patches tending to point vortices. Long--Wang--Zeng \cite{Long-Wang-Zeng:concentrated-steady-vortex-patches} studied their stability, as well as the regularity in the smooth setting (see also \cite{Cao-Wang:nonlinear-stability-patches-bounded-domains}). For other variational constructions close to point vortices, we refer to the work of Cao--Liu--Wei \cite{Cao-Liu-Wei:regularization-point-vortices}, Cao--Peng--Yan \cite{Cao-Peng:planar-vortex-patch-steady} and Smets--van Schaftingen \cite{Smets-VanSchaftingen:desingularization-vortices-euler}. We remark that these results do not rule out that those solutions may be radial. Musso--Pacard--Wei \cite{Musso-Pacard-Wei:stationary-solutions-euler} constructed nonradial smooth stationary solutions without compact support in $\omega$.  The (nonlinear $L^1$) stability of circular patches was proved by Wan--Pulvirenti \cite{Wan-Pulvirenti:stability-circular-patches} and later Sideris--Vega gave a shorter proof \cite{Sideris-Vega:stability-L1-patches}. See also Beichman--Denisov \cite{Beichman-Denisov:stability-rectangular-strip} for similar results on the strip.

Lately, Gavrilov \cite{Gavrilov-stationary-euler-3d,Gavrilov:stationary-euler-helix} provided a remarkable construction of nontrivial stationary solutions of 3D Euler with compactly supported velocity. See also Constantin--La--Vicol for a simplified proof with extensions to other fluid equations \cite{Constantin-La-Vicol:remarks-gavrilov-stationary}.

Regarding uniformly-rotating smooth solutions $(\Omega\neq 0)$ for 2D Euler, Castro--C\'ordoba--G\'omez-Serrano \cite{Castro-Cordoba-GomezSerrano:uniformly-rotating-smooth-euler} were able to desingularize a vortex patch to produce a smooth $m$-fold V-state with $\Omega \sim \frac{m-1}{2m} > 0$ for $m \geq 2$. Recently Garc\'ia--Hmidi--Soler \cite{Garcia-Hmidi-Soler:non-uniform-vstates-euler} studied the construction of V-states bifurcating from other radial profiles (Gaussians and piecewise quadratic functions).

Our second main result is the following theorem, which gives radial symmetry of compactly supported stationary/uniformly-rotating solutions in the smooth setting for $\Omega\leq 0$, under the additional assumption $\omega_0\geq 0$:

\begin{theorem}[\textbf{= Theorem \ref{theorem3} and Corollary~\ref{cor_conti_rotating}}]
\label{thmB}
 Let $\omega_0 \geq 0$ be smooth and compactly-supported. Assume $\omega(x,t)=\omega_0(R_{\Omega t}x)$ is a stationary/uniformly-rotating solution of \eqref{euler} with $\Omega \leq 0$, in the sense that it satisfies \eqref{eq_smooth} with $K_\alpha =\mathcal{N}$. Then $\omega_0$ must be radially symmetric if $\Omega<0$, and radially symmetric up to a translation if $\Omega=0$.
 \end{theorem}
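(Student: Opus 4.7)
The plan is to rephrase the problem variationally: exhibit $\omega_0$ as a critical point of an appropriate functional on its rearrangement orbit, then use rearrangement inequalities to force $\omega_0$ to coincide with its symmetric decreasing rearrangement. Introduce
\[
J_\Omega[\omega]\;:=\;-\frac{1}{2}\int_{\mathbb{R}^2}\omega\,(\omega * \mathcal{N})\,dx\;+\;\frac{\Omega}{2}\int_{\mathbb{R}^2}|x|^2\,\omega\,dx,
\]
which is the kinetic energy of the fluid plus $\Omega/2$ times the moment of inertia --- the natural Hamiltonian in the frame rotating at angular velocity $\Omega$. Let $\mathcal{R}(\omega_0)=\{\omega_0\circ T^{-1}: T\text{ is area-preserving}\}$ be the rearrangement orbit. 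Along a path $\omega_s=\omega_0\circ\Phi_s^{-1}$ generated by a smooth divergence-free field $v$, the first variation of $J_\Omega$ equals $\int (\omega_0 * \mathcal{N} - \frac{\Omega}{2}|x|^2)\,(v\cdot\nabla\omega_0)\,dx$, which vanishes for every such $v$ iff $\omega_0 * \mathcal{N} - \frac{\Omega}{2}|x|^2$ is constant on each connected component of each level set of $\omega_0$, i.e.\ precisely condition \eqref{eq_smooth}. Thus every uniformly-rotating solution is automatically a critical point of $J_\Omega$ on $\mathcal{R}(\omega_0)$.

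Next, I would show that for $\Omega\leq 0$ the symmetric decreasing rearrangement $\omega^*$ centered at the origin is a maximizer of $J_\Omega$ on $\mathcal{R}(\omega_0)$, unique up to translation when $\Omega=0$. By Riesz's rearrangement inequality applied to $|x-y|^{-\alpha}$ and differentiated at $\alpha=0^+$ (using $|x-y|^{-\alpha}=1-\alpha\log|x-y|+O(\alpha^2)$), the kinetic energy $-\frac{1}{2}\int\omega(\omega*\mathcal{N})dx$ is maximized by $\omega^*$, with Lieb's characterization of equality forcing every maximizer to be a translate of $\omega^*$. By the layer-cake formula, $\int|x|^2\omega\,dx$ is minimized over $\mathcal{R}(\omega_0)$ uniquely by $\omega^*$ centered at the origin; multiplying by the nonpositive factor $\Omega/2$ reverses the inequality, so $\frac{\Omega}{2}\int|x|^2\omega\,dx$ is also maximized at $\omega^*$. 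Adding these two yields $J_\Omega[\omega]\leq J_\Omega[\omega^*]$ with the stated equality analysis.

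The main obstacle is bridging ``critical point'' to ``maximizer'': a priori, a critical point of a functional on a nonconvex orbit can fail to be a maximum. The plan to rule this out is to produce, for any non-radial candidate $\omega_0$, an explicit one-parameter area-preserving deformation $\omega_s\subset\mathcal{R}(\omega_0)$ --- for instance via continuous Steiner symmetrization in a cleverly chosen direction, polarization across a hyperplane, or an angular swap exchanging mass between two arcs where rotational symmetry is violated --- along which $J_\Omega$ increases strictly to first order in $s$. Since criticality of $\omega_0$ forces the first-order variation along every smooth admissible path to vanish, the existence of such an increasing deformation forces both rearrangement inequalities of the previous paragraph to be simultaneously saturated at $\omega_0$; by Lieb's equality characterization in Riesz and the uniqueness of the moment minimizer, this gives $\omega_0=\omega^*$ when $\Omega<0$ and $\omega_0$ is a translate of $\omega^*$ when $\Omega=0$. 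Designing the deformation so that it is genuinely tangent to $\mathcal{R}(\omega_0)$ at $\omega_0$ and that the linear change of $J_\Omega$ is strictly positive --- which will require fine use of the nonlocal kernel $\mathcal{N}$ together with the local geometry of level sets of $\omega_0$ --- is the crux.
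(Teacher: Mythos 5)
Your proposal correctly identifies the variational structure and even names the crux, but the bridge from ``critical point against smooth divergence-free flows'' to ``agreement with the symmetric decreasing rearrangement'' is left genuinely open, and the tools you offer to close it do not directly apply. The difficulty is twofold.

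First, the first-variation formula $\frac{d}{ds}J_\Omega[\omega_s]\big|_{s=0}=\int f_\Omega\,(v\cdot\nabla\omega_0)\,dx$ controls $J_\Omega$ only along paths generated by \emph{smooth} divergence-free $v$. Continuous Steiner symmetrization, polarization, and the ``angular swap'' deformations you propose are all non-smooth: their formal initial velocity is a discontinuous (bang-bang) vector field, so criticality against $C^1$ divergence-free fields does \emph{not} imply the one-sided derivative of $J_\Omega$ along Steiner symmetrization vanishes. You would have to prove that vanishing separately, by a finite-difference estimate using H\"older/Lipschitz continuity of $f_\Omega$ and constancy of $f_\Omega$ on each $\partial U^h$; this is exactly what the paper does for the gSQG case under a simply-connectedness hypothesis (Theorem~\ref{thm-smooth}), not for Theorem~\ref{thmB}.

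Second, and more seriously for Theorem~\ref{thmB}, the equation \eqref{eq_smooth} allows the constant $C_i$ to differ across distinct connected components of the \emph{same} regular level set (e.g., $\omega_0$ with annular level sets, or with disconnected support). A Steiner-symmetrization or polarization argument treats $\omega_0$ globally and implicitly uses a \emph{single} function $h\mapsto C_0(h)$; when different components carry different constants, the finite-difference estimate for the linear term fails. This is precisely why the paper's proof of Theorem~\ref{theorem3} does not use global rearrangement at all. Instead it approximates $\omega_0$ by a step function $\sum\alpha_i 1_{D_i}$ and perturbs each level region $D_i$ separately by a tailor-made divergence-free field $\vec v=-\nabla(\tfrac{|x|^2}{2}+p_i)$, where $p_i$ solves a Poisson problem adapted to the hole structure of $D_i$ (Lemma~\ref{p_def}), guaranteeing the first variation equals zero exactly even with per-component constants. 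Positivity of the first variation for non-radial $\omega_0$ is then extracted from quantitative Talenti-type inequalities (Lemmas~\ref{talenti_2}--\ref{talenti_4}) and the Fusco--Maggi--Pratelli stability of the isoperimetric inequality, rather than from Riesz/Lieb. Your Riesz-plus-second-moment reasoning correctly identifies $\omega^*$ as the maximizer of $J_\Omega$ on the orbit, but without a mechanism to convert criticality into maximality in the presence of per-component constants, the argument does not reach the conclusion.
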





Although the extra assumption $\omega_0\geq 0$ might seem unnatural at the first glance, in a forthcoming work \cite{GomezSerrano-Park-Shi-Yao:nonradial-stationary-solutions} we will show that it is indeed necessary:  if we allow $\omega_0$ to change sign, then by applying bifurcation arguments to sign-changing radial patches, we are able to show that there exists a compactly-supported, sign-changing smooth stationary vorticity $\omega_0$ that is non-radial.

\subsection{The gSQG case ($0<\alpha<2$)}  Recall that in the patch setting, a stationary/uniformly-rotating patch satisfies \eqref{eq_rotating20} with $K_\alpha$ given in \eqref{def_k_alpha}.
Even though the kernels $K_\alpha$ are qualitatively similar for all $\alpha\in[0,2)$, 
 there is a key difference on the symmetry v.s. non-symmetry results  between the cases $\alpha=0$ and $\alpha>0$: For the 2D Euler equation ($\alpha=0$), we proved in Theorem~A that any rotating patch $D$ with $\Omega \leq 0$ must be radial, even if $D$ is not simply-connected. However, this result is not true for any $\alpha \in (0,2)$: de la Hoz--Hassainia--Hmidi--Mateu \cite{delaHoz-Hassainia-Hmidi:doubly-connected-vstates-gsqg} showed that there exist non-radial patches bifurcating from annuli at $\Omega < 0$ and G\'omez-Serrano \cite{GomezSerrano:stationary-patches} constructed non-radial, doubly connected stationary patches ($\Omega = 0$). Therefore we cannot expect a non-simply-connected rotating patch $D$ with $\Omega\leq 0$ to be radial for $\alpha\in(0,2)$.

However, if $D$ is a simply-connected stationary patch, then radial symmetry results were obtained in a series of works for $\alpha \in[0,\frac{5}{3})$, which we review below. These works consider \eqref{eq_rotating20} in a more general context not limited to dimension 2: Let $K_{\alpha, d}$ be the fundamental solution of $-(-\Delta)^{-1+\frac{\alpha}{2}}$ in $\mathbb{R}^d$ for $d\geq 2$, given by
\begin{equation}
K_{\alpha,d} := -C_{\alpha, d} |x|^{-d+2-\alpha} 
\end{equation}  for some $C_{\alpha, d}>0$; except that in the special case $-d+2-\alpha=0$ it becomes $K_{\alpha,d}=C_d \ln|x|$ for some $C_d>0$. Note that $K_{\alpha,d} \in L_{loc}^1(\mathbb{R}^d)$ for all $\alpha<2$. 
Consider the following question:

\begin{question} Let $\alpha\in[0,2)$. Assume $D\subset \mathbb{R}^d$ is a bounded domain such that 
\begin{equation}\label{multi_d}
K_{\alpha,d} * 1_D - \frac{\Omega}{2} |x|^2 = \text{const} \quad\text{ on }\partial D
\end{equation}
 for some $\Omega \leq 0$, where the constant is the same along all connected components of $\partial D$. Must $D$ be a ball in $\mathbb{R}^d$? 
 \label{question2}
 \end{question}

Positive answers to Question~\ref{question2} were obtained in the $\Omega=0$ case for $\alpha < \frac{5}{3}$ in the following works. As we discussed before, Fraenkel \cite{Fraenkel:book-maximum-principles-symmetry-elliptic} proved that $D$ must be a ball for $\alpha=0$. Also using the moving plane method,  
 Reichel \cite[Theorem 2]{Reichel:balls-riesz-potentials}, Lu--Zhu \cite{Lu-Zhu:overdetermined-riesz-potential} and Han--Lu--Zhu \cite{Han-Lu-Zhu:characterization-balls-bessel-potentials} generalized this result to $\alpha \in[0,1)$.  Here \cite{Lu-Zhu:overdetermined-riesz-potential} also covered generic radially increasing potentials not too singular at the origin (which include all Riesz potentials $K_{\alpha,d}$ with $\alpha \in[0,1)$). Recently, Choksi--Neumayer--Topaloglu \cite[Theorem 1.3]{Choksi-Neumayer-Topaloglu:anisotropic-liquid-drop-models} further pushed the range to $\alpha \in[0,\frac53)$, leaving the range $\alpha \in [\frac53,2)$ an open problem.   We point out that in all these results for $\alpha>0$, $\partial D$ was assumed to be at least $C^1$. All above results were obtained using the moving plane method.
 
In our third main result, we use a completely different approach to give an affirmative answer to Question~\ref{question2} for all $\Omega\leq 0$ and $\alpha\in [0,2)$, under a weaker assumption on the regularity of $\partial D$.

\begin{theorem}[\textbf{= Theorem \ref{thm-generic}}]
\label{thmC}
Let $D$ be a bounded domain in $\mathbb{R}^d$ with Lipschitz boundary (and if $d=2$ we only require $\partial D$ to be rectifiable). If $D$ satisfies \eqref{multi_d} for some $\Omega\leq 0$ and $\alpha\in[0,2)$, then it must be a ball in $\mathbb{R}^d$.
 \end{theorem}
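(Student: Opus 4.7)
My plan is to interpret the overdetermined condition \eqref{multi_d} as the Euler--Lagrange equation for a Riesz-type energy under a volume constraint, identify the ball as the unique minimizer via rearrangement inequalities, and then upgrade the critical-point condition to coincidence with this minimizer.

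Consider the energy
\[
J(E) := \iint_{\mathbb{R}^d \times \mathbb{R}^d} K_{\alpha,d}(x-y)\,1_E(x)\,1_E(y)\,dx\,dy \;-\; \Omega \int_{\mathbb{R}^d} |x|^2\,1_E(x)\,dx,
\]
on measurable sets of volume $|D|$. A shape-derivative computation (legitimate under Lipschitz or, when $d=2$, rectifiable regularity of $\partial D$) shows that the first variation of $J$ along a volume-preserving normal perturbation $V$ on $\partial D$ equals $2\int_{\partial D}\bigl(K_{\alpha,d}\ast 1_D - \tfrac{\Omega}{2}|x|^2\bigr)V\,d\sigma$, so \eqref{multi_d} with a single constant $c$ across all components of $\partial D$ is exactly the statement that $D$ is a volume-constrained critical point of $J$, with Lagrange multiplier $2c$. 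The ball $B^\ast := B_R(0)$ of volume $|D|$ is the unique minimizer of $J$ in this class (unique up to translation if $\Omega = 0$): the Riesz rearrangement inequality applied to $-K_{\alpha,d}$ (positive and strictly radially decreasing for all $\alpha\in[0,2)$ and $d\geq 2$) gives $\iint K_{\alpha,d}\,1_E\,1_E \geq \iint K_{\alpha,d}\,1_{B^\ast}\,1_{B^\ast}$ with the strict-equality case (Lieb, Burchard) forcing $E$ to be a translate of $B^\ast$, and since $-\Omega\geq 0$ and $B^\ast$ uniquely minimizes $\int_E |x|^2$ among sets of volume $|D|$, strict inequality in this second moment (when $\Omega<0$) pins the center to the origin.

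For any competitor $E$ with $|E|=|D|$, writing $\phi := 1_E - 1_D$ and $u := K_{\alpha,d}\ast 1_D - \tfrac{\Omega}{2}|x|^2$, and using $u=c$ on $\partial D$ together with $\int\phi=0$, one obtains the decomposition
\[
J(E) - J(D) \;=\; 2\int (u-c)\,\phi\,dx \;+\; \iint K_{\alpha,d}(x-y)\,\phi(x)\phi(y)\,dx\,dy,
\]
in which the quadratic form is non-positive because the Fourier transform of $-K_{\alpha,d}$ is a positive multiple of $|\xi|^{-(2-\alpha)}$, and strictly positive unless $\phi=0$. To force $D=B^\ast$, I would combine this with the analogous decomposition of $J(D)-J(B^\ast)$ obtained from the Euler--Lagrange condition $u^\ast = c^\ast$ satisfied by the ball, together with a \emph{bang-bang} sign structure that I expect to follow from the Euler--Lagrange condition: $u \leq c$ throughout $D$ and $u \geq c$ throughout $\mathbb{R}^d\setminus D$, and likewise for $u^\ast$. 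These give $(u-c)\phi^\ast \geq 0$ pointwise for $\phi^\ast = 1_{B^\ast}-1_D$, and an analogous sign for the $u^\ast$-term, which combined with $J(B^\ast)\leq J(D)$ from rearrangement should collapse both sides of the decomposition to zero. The strict positivity of the quadratic form for $\phi^\ast\neq 0$ then forces $\phi^\ast=0$, i.e.\ $D=B^\ast$.

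The main obstacle, and where I expect the real work of the proof to lie, is rigorously establishing the bang-bang inequalities $u\leq c$ in $D$ and $u\geq c$ in $\mathbb{R}^d\setminus D$. In the local case $\alpha=0$, the interior inequality is immediate from subharmonicity, since $\Delta u = 1_D + |\Omega|d > 0$ inside $D$ and $u=c$ on $\partial D$; the exterior inequality follows from the asymptotic behaviour $u(x)\to+\infty$ (for $\Omega<0$, or for $\Omega=0$ and $d=2$) together with harmonicity or strict subharmonicity outside $D$ and the appropriate one-sided maximum principle. For $\alpha\in(0,2)$ the operator $(-\Delta)^{1-\alpha/2}$ is genuinely nonlocal, so pointwise maximum principles must be replaced by integral or energy-based comparison arguments, or by working directly with the Riesz representation of $u$; additional care is needed to handle the boundary trace under only Lipschitz (or rectifiable, in $d=2$) regularity of $\partial D$. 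An appealing conceptual alternative that sidesteps this pointwise analysis is a continuous Steiner symmetrization through hyperplanes containing the origin: along this flow, $J$ is non-increasing (Riesz term by Steiner symmetrization, second moment because the hyperplane contains the origin), and the critical-point condition combined with the strict-equality case in Steiner symmetrization for strictly radially decreasing kernels should force $D$ to be invariant under reflection across every such hyperplane, hence a ball centered at the origin.
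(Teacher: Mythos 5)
Your primary approach has a sign problem that the bang-bang structure cannot fix. You correctly observe that the quadratic form $\iint K_{\alpha,d}(x-y)\phi(x)\phi(y)\,dx\,dy$ is \emph{non-positive} (strictly negative for $\phi\neq 0$ with $\int\phi=0$). But that is exactly the wrong sign for your argument to close. Taking $\phi^*=1_{B^*}-1_D$ in your decomposition, rearrangement gives $J(B^*)-J(D)\le 0$, the bang-bang inequalities (granting them) give $\int(u-c)\phi^*\ge 0$, and the quadratic term satisfies $\iint K\phi^*\phi^*\le 0$; so you have written $0\ge(\text{nonnegative})+(\text{nonpositive})$, which is self-consistent without forcing either summand to vanish. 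Running the mirror decomposition from $B^*$ with $u^*$, or adding the two, only reproduces $\iint K\phi^*\phi^*\le 0$, not $\phi^*=0$. The structural reason is that the energy has a \emph{concave} interaction part, so the bang-bang condition $u\le c$ on $D$, $u\ge c$ on $D^c$ is only the first-order condition for a local minimizer on the simplex of densities; with a negative-definite Hessian there can be many such stationary extreme points, and nothing in the decomposition singles out the ball. (Your worry about establishing the bang-bang inequalities for $\alpha>0$ is a real additional difficulty, but it is not the decisive gap — even if you had them, the sign mismatch above remains.)

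Your ``appealing conceptual alternative,'' the continuous Steiner symmetrization, is precisely the route the paper takes in Theorem~\ref{thm-generic}, but you stop short of the step that actually carries the proof. The symmetrization handles the monotonicity side cleanly: $\mathcal{I}[D^\tau]$ is non-increasing by Riesz/Brock, and either the confinement term (when $\Omega<0$) or the interaction term at the center of mass (when $\Omega=0$) is strictly decreasing to first order whenever $D$ is not Steiner symmetric. What remains, and what the paper spends most of its effort on, is showing that the \emph{overdetermined condition forces $\frac{d^+}{d\tau}\mathcal{E}[D^\tau]\big|_{\tau=0}=0$} in the first place, under only Lipschitz/rectifiable regularity of $\partial D$ and a kernel $K$ that may be quite singular. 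This is done by a finite-difference estimate, not a shape derivative: split $\mathcal{E}[D^\tau]-\mathcal{E}[D]$ into the linear term $I_1$ and the quadratic term $I_2$, use that $D^\tau\triangle D$ lives in the $\tau$-tube $B^\tau[D]$ with $|B^\tau[D]|\lesssim\tau$ \textbf{(HD)}, that $1_D*K-\frac{\Omega}{2}g$ is $C^{0,\delta'}$ and equals the constant $C_0$ on $\partial D$ so $|u-C_0|\lesssim\tau^{\delta'}$ on that tube, and that the self-interaction of $1_{B^\tau[D]}$ is $o(\tau)$ by Hardy--Littlewood. Only then do the two computations of the right derivative collide. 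Without this quantitative first-variation estimate, ``critical point of $J$'' has no rigorous meaning along the Steiner flow at the stated regularity, so the symmetrization argument you sketch does not yet close.
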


As a directly consequence, Theorem C implies that for the gSQG equation with $\alpha\in[0,2)$, any simply-connected rotating patch with $\Omega\leq 0$ must be a disk (see Theorem~\ref{thm-euler}). In addition, in the smooth setting  \eqref{eq_smooth}, we prove a similar result in Corollary~\ref{thm-euler-smooth} for uniformly-rotating solutions with $\Omega\leq 0$ for all $\alpha\in[0,2)$: if the super level-sets $\{\omega_0>h\}$ are all simply-connected for all $h>0$, then $\omega_0$ must be radially decreasing.

Next we review the previous literature on uniformly-rotating solutions for the gSQG equation. Note that the case of $\alpha\in(0,2)$ is more challenging than the 2D Euler case, since the velocity is more singular and this produces obstructions to the bifurcation theory when it comes to the choice of spaces and the regularity of the functionals involved in the construction. Hassainia--Hmidi \cite{Hassainia-Hmidi:v-states-generalized-sqg} showed the existence of V-states with $C^k$ boundary regularity in the case $0 < \alpha < 1$, and in \cite{Castro-Cordoba-GomezSerrano:existence-regularity-vstates-gsqg}, Castro--C\'ordoba--G\'omez-Serrano upgraded the result to show existence and $C^{\infty}$ boundary regularity in the remaining open cases: $\al \in [1,2)$ for the existence, $\al \in (0,2)$ for the regularity. In that case, the solutions bifurcate at angular velocities given by $\Omega_{m}^{\al} := 2^{\al-1}\frac{\Gamma(1-\al)}{\Gamma\left(1-\frac{\al}{2}\right)^2}\left(\frac{\Gamma\left(1+\frac{\al}{2}\right)}{\Gamma\left(2-\frac{\al}{2}\right)} - \frac{\Gamma\left(m+\frac{\al}{2}\right)}{\Gamma\left(m+1-\frac{\al}{2}\right)}\right)$. This boundary regularity was subsequently improved to analytic in \cite{Castro-Cordoba-GomezSerrano:analytic-vstates-ellipses}. See also \cite{Hmidi-Mateu:existence-corotating-counter-rotating} for another family of rotating solutions, \cite{delaHoz-Hassainia-Hmidi:doubly-connected-vstates-gsqg,Renault:relative-equlibria-holes-sqg} for the doubly connected case and \cite{Castro-Cordoba-GomezSerrano:global-smooth-solutions-sqg} for a construction in the smooth setting. 

One can check that $\Omega^{\alpha}_{m}$ are increasing functions of $m$ for any $\alpha$, whose limit is a finite number $\Omega^{\alpha}:= 2^{\al-1}\frac{\Gamma(1-\al)}{\Gamma\left(1-\frac{\al}{2}\right)^2}\frac{\Gamma\left(1+\frac{\al}{2}\right)}{\Gamma\left(2-\frac{\al}{2}\right)}$ for $\alpha\in[0,1)$, and $+\infty$ if $\alpha\geq 1$. It is then a natural question to ask whether  there exist V-states (with area $\pi$) that rotate with angular velocity faster than $\Omega_\alpha$ for $\alpha\in(0,1)$.  Our fourth main theorem answers this question among all simply-connected patches:

\begin{theorem}[\textbf{= Theorem \ref{thm_large_omega}}]
\label{thmD}
For $\alpha\in (0,1)$, let $1_{D}$ be a simply connected V-state of area $\pi$ and let its angular velocity be $\Omega \geq \Omega^{\al}$. Then $D$ must be the unit disk.
\end{theorem}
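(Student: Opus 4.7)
The plan is variational. I would introduce the Riesz energy $I(D):=\tfrac{C_\alpha}{2}\iint_{D\times D}|x-y|^{-\alpha}\,dx\,dy$, the polar moment $M(D):=\int_D|x|^2\,dx$, and the functional
\[
G_\Omega(D) := I(D) + \tfrac{\Omega}{2}\,M(D)
\]
on the constraint set $\{|D|=\pi\}$. A direct first-variation computation shows that V-states of area $\pi$ rotating at angular velocity $\Omega$ are exactly the constrained critical points of $G_\Omega$, with the boundary constant of \eqref{eq_rotating20} equal (up to sign) to the Lagrange multiplier. Testing the first variation against infinitesimal translations, which preserve area, forces $\int_D x\,dx=0$ whenever $\Omega\neq 0$, a normalization I would impose throughout. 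Under these constraints, $I(D)\leq I(B_1)$ by Riesz rearrangement and $M(D)\geq M(B_1)$ trivially, with equality in either case characterizing $D=B_1$.

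The heart of the proof is to establish the sharp functional inequality
\[
I(B_1)-I(D) \;\leq\; \tfrac{\Omega^\alpha}{2}\bigl[M(D)-M(B_1)\bigr]
\]
for every simply-connected $D\subset\R^2$ with $|D|=\pi$ and $\int_D x\,dx=0$, with equality only at $D=B_1$. Granted this, for every $\Omega\geq\Omega^\alpha$ an immediate algebraic rearrangement yields
\[
G_\Omega(D)-G_\Omega(B_1) \;=\; [I(D)-I(B_1)]+\tfrac{\Omega}{2}[M(D)-M(B_1)] \;\geq\; \tfrac{\Omega-\Omega^\alpha}{2}[M(D)-M(B_1)] \;\geq\; 0,
\]
strict unless $D=B_1$, so $B_1$ is the unique global minimizer of $G_\Omega$ on the constraint set. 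The second variation of $G_\Omega$ at $B_1$ is positive definite with spectral gap $\inf_m(\Omega-\Omega^\alpha_m)>0$ for $\Omega>\Omega^\alpha$, so $B_1$ is isolated among constrained critical points; a global bifurcation/continuation argument, using that all $m$-fold V-state branches emanate from $B_1$ at $\Omega=\Omega^\alpha_m<\Omega^\alpha$, then rules out alternative critical branches at $\Omega\geq\Omega^\alpha$, forcing any simply-connected V-state $D$ to coincide with $B_1$. The boundary case $\Omega=\Omega^\alpha$ follows by continuity together with the strict monotonicity $\Omega^\alpha_m\nearrow\Omega^\alpha$.

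The hardest step is promoting the sharp inequality from its infinitesimal version at $B_1$ to a genuinely global statement. Near $B_1$ the inequality reduces to a Fourier-mode expansion on $\partial B_1$: evaluating on $\partial D_\varepsilon=\{(1+\varepsilon\cos(m\theta))e^{i\theta}\}$ with a second-order area correction, both sides scale like $\varepsilon^2$ and the ratio is $\Omega^\alpha_m/\Omega^\alpha\leq 1$, attained in the limit $m\to\infty$ so the constant $\Omega^\alpha$ is sharp. To extend globally I would use a continuous Steiner symmetrization about lines through the origin, a flow that preserves area, centroid at $0$, and simple connectedness, and along which $I$ monotonically increases while $M$ monotonically decreases. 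The crucial technical step is the instantaneous rate bound $\dot I(D_t)\leq\tfrac{\Omega^\alpha}{2}(-\dot M(D_t))$, which I would prove by decomposing the infinitesimal rearrangement into Fourier modes against the current boundary and invoking the uniform bound $\Omega^\alpha_m\leq\Omega^\alpha$; integrating from $D$ to $B_1$ then yields the sharp functional inequality. Simple connectedness is used essentially here: doubly-connected (annular) competitors violate the inequality, in accord with the non-radial doubly-connected rotating patches at $\Omega\leq 0$ from \cite{delaHoz-Hassainia-Hmidi:doubly-connected-vstates-gsqg}.
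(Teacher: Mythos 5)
Your approach is genuinely different from the paper's and, unfortunately, has two gaps that I do not believe can be closed along the lines you sketch.

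The paper's proof does not use the energy functional at all. Instead, setting $R=\max_{x\in D}|x|$, $x_0\in\partial D$ a farthest point, and $U:=B(0,R)\setminus D$, it computes $\nabla(1_U*K_\alpha)(x_0)\cdot x_0$ in two ways: once directly (yielding a strictly positive number because $(x_0-y)\cdot x_0>0$ for all $y\in U\subset B(0,R)$), and once via a maximum principle for $1_U*K_\alpha$ on $D$. The latter uses that $1_U*K_\alpha$ is fractionally superharmonic in $D$ (since $1_U\equiv 0$ there, $(-\Delta)^{\alpha/2}1_U<0$ in $D$), together with the observation that, by the rotating-patch identity, the boundary values of $1_U*K_\alpha$ on $\partial D$ equal the radial function $1_{B(0,R)}*K_\alpha-\tfrac{\Omega}{2}|x|^2$ up to a constant. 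Proposition~\ref{prop_radial} then shows (via Bessel/hypergeometric identities) that this radial function is non\emph{increasing} in $|x|$ exactly when $\Omega\ge R^{-\alpha}\Omega_\alpha$, which makes $x_0$ a minimum along $\partial D$ and hence, by the maximum principle, a minimum on all of $\overline D$, so that $\nabla(1_U*K_\alpha)(x_0)\cdot\vec n(x_0)\le 0$, contradicting the first computation. This also delivers a slightly stronger threshold $\Omega_c(R)=R^{-\alpha}\Omega_\alpha$ than the area-normalized constant $\Omega_\alpha$.

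The first gap in your argument is the sharp functional inequality $I(B_1)-I(D)\le\tfrac{\Omega_\alpha}{2}[M(D)-M(B_1)]$. Proving it via a pointwise-in-time rate bound $\dot I(D_t)\le\tfrac{\Omega_\alpha}{2}(-\dot M(D_t))$ along continuous Steiner symmetrization is not something the proposed Fourier decomposition can deliver: the eigenmode interpretation of $\Omega_m^\alpha$ is intrinsically a linearization \emph{at the disk}; there is no analogous spectral structure at an arbitrary intermediate domain $D_t$, and continuous Steiner symmetrization is not a gradient flow of $G_\Omega$ (it is not even a flow that preserves simple connectedness in general — sections with several components move rigidly before merging, so $S^\tau[D]$ can be multiply connected for intermediate $\tau$, which also undercuts your invocation of the simply-connected hypothesis). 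This is precisely why the paper's Steiner-symmetrization argument (Section~4) is confined to $\Omega\le 0$, where \emph{both} $\mathcal I$ and $\mathcal V$ move monotonically in the favorable direction and no rate comparison is needed; your case $\Omega>0$ puts the two terms in competition, and controlling the ratio sharply is genuinely hard.

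The second and independently fatal gap is logical: even granting the inequality, it shows only that $B_1$ is the unique constrained \emph{global minimizer} of $G_\Omega$. A V-state is merely a constrained \emph{critical point}, and nothing prevents the existence of saddle-type critical points at the same $\Omega$. Your proposed remedy — a positive-definite second variation at $B_1$ plus a global bifurcation/continuation argument along the $m$-fold branches — does not close this gap: local positivity of the Hessian gives isolation of $B_1$ only in a neighborhood, and global bifurcation theory describes the connected branches emanating from $B_1$ but does not rule out connected components of the solution set that never meet the trivial branch. You would need a genuine Liouville-type argument valid for arbitrary critical points, which is exactly what the paper's maximum-principle computation provides and your minimization framework does not.
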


Finally, we illustrate a comparison of our results in Theorem C and D (in red color) with the previous results (in black color) in Figure~\ref{fig_bifurcation2}.
\begin{figure}[h!]
\begin{center}
\includegraphics[scale=1]{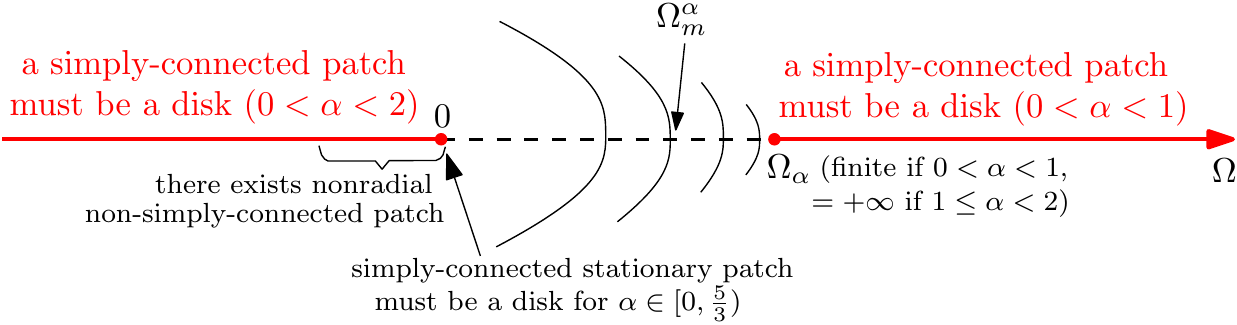}
\caption{\label{fig_bifurcation2}For gSQG in the patch setting, previous results on Question~\ref{question1} are summarized in black color, with our results in Theorem C and D colored in red.}
\end{center}
\end{figure}

\subsection{Structure of the proofs}

While all the previous symmetry results on Question~\ref{question1} and \ref{question2} \cite{Fraenkel:book-maximum-principles-symmetry-elliptic, Lu-Zhu:overdetermined-riesz-potential, Han-Lu-Zhu:characterization-balls-bessel-potentials, Hmidi:trivial-solutions-rotating-patches,  Reichel:balls-riesz-potentials, Choksi-Neumayer-Topaloglu:anisotropic-liquid-drop-models} are done by moving plane methods, our approaches are completely different, which have a variational flavor.

Theorem~\ref{thmA} is based on computing the first variation of the energy functional
$$ \mathcal{E}[1_D] = - \frac12 \int_{\mathbb{R}^2}1_D(x) (1_D * \mathcal{N})(x)  - \frac{\Omega}{2}|x|^2 1_D(x) dx $$ in two different ways,
as we deform $D$ along a carefully chosen vector field that is divergence-free in $D$. On the one hand, we show the first variation should be 0 if $D$ is a stationary/rotating patch with angular velocity $\Omega$; on the other hand, we show that the first variation must be non-zero if $\Omega \leq 0$ or $\Omega\geq \frac{1}{2}$, leading to  a contradiction.  If $D$ is simply-connected, we give a very short proof in Section~\ref{sec_21}, where a rearrangement inequality due to Talenti \cite{Talenti:rearrangements} is crucial to get a sign condition. For a non-simply-connected patch $D$, the choice of the right vector field is more involved. Since $1_D * \mathcal{N} - \frac{\Omega}{2}|x|^2$ now takes different constant values on different connected components of $\partial D$, in order to keep the first variation to be 0, we have to modify our perturbation vector field such that it also preserves the area of each hole. We then prove a new version of a rearrangement inequality for this modified vector field in a similar spirit as Talenti's result, leading to a non-zero first variation if $D$ is non-radial and $\Omega \leq 0$ or $\Omega\geq \frac{1}{2}$.

The smooth setting in Theorem~\ref{thmB} is based on a similar idea, but technically more difficult. The point of view is to approximate a smooth function by step functions and consider the above perturbation in each set where the step function is constant. To do this we need to obtain some quantitative (stability) estimates on our version of Talenti's rearrangement inequality, in particular in terms of the Fraenkel asymmetry of the domain in the spirit of Fusco--Maggi--Pratelli  \cite{Fusco-Maggi-Pratelli:stability-faber-krahn}. 

Theorem~\ref{thmC} is also based on a variational approach, but we need a different perturbation from the vector field in Theorem~\ref{thmA}, which heavily relies on the Newtonian potential, and fails for general Riesz potential $K_\alpha$. The key ingredient to prove Theorem~\ref{thmC} is to perturb $D$ using the continuous Steiner symmetrization \cite{Brock:continuous-steiner-symmetrization}, which has been successfully applied in other contexts by Carrillo--Hittmeir--Volzone--Yao \cite{Carrillo-Hittmeir-Volzone-Yao:nonlinear-aggregation-symmetry-asymptotics} (nonlinear aggregation models) or Morgan \cite{Morgan:ball-minimizes-grativational-energy} (minimizers of the gravitational energy). This method is much more flexible and allows to treat more singular kernels than in the existing papers using moving plane methods. 
Due to the low regularity of the kernels, instead of computing the derivative of the energy under the perturbation, we work with finite differences instead. 

Theorem~\ref{thmD} uses maximum principles and monotonicity formulas for nonlocal equations. The idea is to find the smallest disk $B(0,R)$ containing $D$ (which intersects $\partial D$ at some $x_0$), then use two different ways to compute $\nabla (1_{B(0,R)\setminus D}*K_\alpha)$ at $x_0$, and obtain a contradiction if $\Omega \geq \Omega_\alpha$ and $D$ is not a disk.  The proof works for the full range of $\alpha \in [0,2)$, thus closing the problem raised by Hmidi \cite{Hmidi:trivial-solutions-rotating-patches} and de la Hoz--Hassainia--Hmidi--Mateu \cite{delaHoz-Hassainia-Hmidi-Mateu:vstates-disk-euler} among all simply-connected patches.

\subsection{Organization of the paper}
The paper is split into sections according to the cases $\alpha = 0$ (Euler) and $\alpha \neq 0$ (gSQG). Sections 2 and 3 are devoted to prove the symmetry results for $\alpha = 0$, in the patch setting (Section 2) and in the smooth setting (Section 3). Sections 4 and 5 deal with the gSQG equations with $0<\alpha<2$. Section 4 is concerned with the case $\Omega \leq 0$, whereas Section 5 handles the case $\Omega \geq \Omega_c$.

\vspace*{-0.2cm}

\subsection{Notations}
Through Section \ref{sec2}--\ref{sec3} of this paper, we use the following notations.

For a simple closed curve $\Gamma$, denote $\interior(\Gamma)$ by its interior, which is the bounded connected component of $\R^{2}$ separated by the curve $\Gamma$. Note that the Jordan--Schoenflies theorem guarantees that $\interior(\Gamma)$ is open and simply connected. 

We say that two disjoint simple closed curves $\Gamma_1$ and $\Gamma_2$ are nested if $\Gamma_1\subset\interior(\Gamma_2)$ or vice versa. We say that two connected domains $D_1, D_2$ are nested if one is contained in a hole of the other one.

For a bounded connected domain $D\subset \mathbb{R}^2$, we denote by $\outb D$ its outer boundary. And if $D$ is doubly-connected, we denote by $\inb D$ its inner boundary, 

For a set $D$, we use $1_D(x)$ to denote its indicator function. And for a statement $S$, we let $\mathbbm{1}_S = \begin{cases} 1 & \text{ if $S$ is true}\\0 & \text{ if $S$ is false}\end{cases}$. (e.g. $\mathbbm{1}_{\pi<3}=0$).

For a domain $U\subset \mathbb{R}^2$, in the boundary integral $\int_{\partial U} \vec{n} \cdot \vec{f} d\sigma$, the vector $\vec{n}$ is taken as the outer normal of the domain $U$ in that integral. 

\section{Radial symmetry of steady/rotating patches for 2D Euler equation}\label{sec2}

Throughout this section we work with the 2D Euler equation \eqref{euler} in the patch setting. For a stationary or uniformly-rotating patch $D$ with angular velocity $\Omega\in \mathbb{R}$, let 
\[
f_{\Omega}(x) := (1_D * \mathcal{N})(x) - \frac{\Omega}{2}|x|^2.
\]
Recall that in \eqref{eq_rotating20} we have shown that $f_\Omega\equiv C_i$ on each connected component of $\partial D$, where the constants can be different on different connected components.

Our goal in this section is to prove Theorem~\ref{thmA}, which completely answers Question~\ref{question1} for 2D Euler patches. As we described in the introduction, our proof has a variational flavor, which is done by perturbing $D$ by a carefully chosen vector field, and compute the first variation of an associated energy functional in two different ways. In Section~\ref{sec_21}, we will define the energy functional and the perturbation vector field, and give a one-page proof in Theorem~\ref{thm_simply_connected} that answers Question~\ref{question1} among simply-connected patches. (Note that even among simply-connected patches, it is an open question whether every rotating patch with $\Omega>\frac{1}{2}$ or $\Omega<0$ must be a disk.) In the following subsections, we further develop this method, and modify our perturbation vector field to cover non-simply-connected patches.

\subsection{Warm-up: radial symmetry of simply-connected rotating patches}\label{sec_21}
We begin by providing a sketch and some motivations of our approach, and then give a rigorous proof afterwards in Theorem~\ref{thm_simply_connected}.  
Suppose that $D$ is a $C^1$ simply-connected rotating patch with angular velocity $\Omega$ that is \emph{not} a disk. We perturb $D$ in 
``time'' (here the ``time'' $t$ is just a name for our perturbation parameter, and is irrelevant with the actual time in the Euler equation) 
with a velocity field $\vec{v}(x) \in C^1(D) \cap C(\bar D)$ that is divergence-free in $D$, which we will fix later. That is, consider the transport equation
\[
\rho_t + \nabla\cdot(\rho \vec{v}) = 0
\]
with $\rho(\cdot,0) = 1_D$. We then investigate how the ``energy functional''
\[
\mathcal{E}[\rho] := - \int_{\mathbb{R}^2} \frac{1}{2} \rho(x) (\rho * \mathcal{N})(x) -  \frac{\Omega}{2}|x|^2 \rho(x) dx
\]
 changes in time under the perturbation. Formally, we have
 \begin{equation}\label{time_derivative}
 \begin{split}
 \frac{d}{dt}\mathcal{E}[\rho]\Big|_{t=0} &= -\int_{\mathbb{R}^2} \rho_t(x,0) \Big((\rho(\cdot,0)*\mathcal{N})(x) - \frac{\Omega}{2}|x|^2 \Big) dx \\
 &=  -\int_D\vec{v}(x) \cdot\nabla\Big((1_D*\mathcal{N})(x) - \frac{\Omega}{2}|x|^2\Big) dx.
 \end{split}
 \end{equation}
The above transport equation and the energy functional only serve as our motivation, and will not appear in the proof. In the actual proof we only focus on the right hand side of \eqref{time_derivative}, which is an integral that is well-defined by itself:
\begin{equation}\label{def_I}
\mathcal{I} := - \int_D\vec{v}(x) \cdot\nabla\Big((1_D*\mathcal{N})(x) - \frac{\Omega}{2}|x|^2 \Big) dx = -\int_D\vec{v}\cdot\nabla f_\Omega \,dx.
\end{equation}
We will use two different ways to compute $\mathcal{I}$, and show that if $D$ is not a disk, the two ways lead to a contradiction for $\Omega \leq 0$ or $\Omega \geq \frac12$.

 On the one hand, since $f_\Omega$ is a constant on $\partial D$ (denote it by $c$), the divergence theorem yields the following for \emph{every} $\vec v\in C^1(D) \cap C(\bar D)$ that is divergence-free in $D$:
 \begin{equation}\label{i=0}
 \mathcal{I}=-c\int_{\partial D} \vec{n}\cdot\vec{v} d\sigma + \int_D (\nabla\cdot\vec{v}) f_\Omega \,dx =-c\int_D \nabla\cdot\vec{v} dx + \int_D (\nabla\cdot\vec{v}) f_\Omega \,dx = 0.
 \end{equation} On the other hand, we fix $\vec v$ as follows, which is at the heart of our proof. Let $\vec v(x) := -\nabla \varphi(x)$ in $D$, where 
 \begin{equation}
 \label{def_phi1}
 \varphi(x) := \frac{|x|^2}{2} + p(x) \quad \text{ in }D,
 \end{equation}
 with $p(x)$ being the solution to Poisson's equation
 \begin{equation}
 \label{def_p1}
 \begin{cases}
 \Delta p(x) = -2 & \text{ in } D\\
 p(x) = 0 & \text{ on }\partial D.
 \end{cases}
 \end{equation}
 Note that $\varphi$ is harmonic in $D$, thus $\vec{v}$ is indeed divergence-free in $D$. This definition of $\vec v$ is motivated by the fact that among all divergence-free vector fields in $D$, such $\vec v$ is the closest one to $-\vec x$ in the $L^2(D)$ distance. (In fact, such $\vec v$ is connected to the gradient flow of $\int_D \frac{|x|^2}{2} dx$ in the metric space endowed by 2-Wasserstein distance, under the constraint that $|D(t)|$ must remain constant \cite{Maury-RoudneffChupin-Santambrogio:macroscopic-crowd-gradient-flow, Maury-RoudneffChupin-Santambrogio-Venel:handling-congestion-crowd, Alexander-Kim-Yao:quasistatic-evolution-crowd}.) Formally, one expects that $D$ becomes ``more symmetric'' as we perturb it by $\vec v$, which inspires us to consider the first variation of $\mathcal{E}$ under such perturbation.

 In the proof we will show that with such choice of $\vec{v}$, we can compute $\mathcal{I}$ in another way and obtain that $\mathcal{I} > 0$ for $\Omega\leq 0$ and $\mathcal{I} < 0$ for $\Omega \geq \frac{1}{2}$. Therefore in both cases, we obtain a contradiction with $\mathcal{I}=0$ in \eqref{i=0}.
 
Our proof makes use of a rearrangement inequality for solutions to elliptic equations, which is due to Talenti \cite{Talenti:rearrangements}. Below is the form that we will use; the original theorem works for a more general class of elliptic equations.



\begin{prop}[\cite{Talenti:rearrangements}, Theorem 1]\label{talenti} Let $D\subset \mathbb{R}^2$ be a bounded domain with $C^1$ boundary, and let $p$ be defined as in \eqref{def_p1}. Let $B$ be a disk centered at the origin with $|B|=|D|$, and let $p_B$ solve \eqref{def_p1} in $B$. Then we have $p^* \leq p_B$ pointwise in $B$, where $p^*$ is the radial decreasing rearrangement of $p^*$. As a consequence, we have
\[
\int_D p(x) dx \leq \frac{1}{4\pi} |D|^2,
\]
and the equality is achieved if and only if $D$ is a disk.
\end{prop}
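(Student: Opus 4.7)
The plan is to prove this via Schwarz-type symmetrization, tracking the distribution function $\mu(t) := |\{p>t\}|$ and showing it is pointwise dominated by its counterpart on the comparison disk $B$. The key players are the co-area formula, the divergence theorem applied to $-\Delta p = 2$, and the planar isoperimetric inequality. By Sard's theorem together with the regularity of elliptic solutions, for a.e.\ $t \in (0,\max_D p)$ the super-level set $\{p>t\}$ is a domain with $C^1$ boundary $\{p=t\}$, on which $|\nabla p|$ is strictly positive.

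First I would derive two identities at a regular level $t$. The co-area formula gives
$$-\mu'(t) = \int_{\{p=t\}} \frac{1}{|\nabla p|}\, d\mathcal{H}^1,$$
while integrating $-\Delta p = 2$ over $\{p>t\}$ and using the divergence theorem yields
$$\int_{\{p=t\}} |\nabla p|\, d\mathcal{H}^1 = 2\mu(t).$$
Cauchy--Schwarz on the level set then produces
$$\mathcal{H}^1(\{p=t\})^2 \leq \left(\int_{\{p=t\}} |\nabla p|\, d\mathcal{H}^1\right)\left(\int_{\{p=t\}} \frac{1}{|\nabla p|}\, d\mathcal{H}^1\right) = 2\mu(t)\,(-\mu'(t)).$$
Combining this with the isoperimetric inequality $\mathcal{H}^1(\{p=t\}) \geq 2\sqrt{\pi \mu(t)}$ in $\R^2$ gives the fundamental differential inequality
$$-\mu'(t) \geq 2\pi \quad \text{for a.e.\ } t \in (0,\max_D p).$$

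Next I would compare with the ball. For $B$ of radius $R=\sqrt{|D|/\pi}$ one has $p_B(r) = (R^2-r^2)/2$, whose distribution function $\mu_B(t) = |D| - 2\pi t$ on $[0,R^2/2]$ saturates the inequality $-\mu_B'(t) = 2\pi$. Starting from $\mu(\max_D p) = 0$, the inequality $-\mu'\geq 2\pi$ first forces $\max_D p \leq R^2/2 = \max_B p_B$, and then integrating from any $t$ up to $\max_D p$ yields $\mu(t) \leq \mu_B(t)$ for all $t \geq 0$. This is precisely the pointwise comparison $p^* \leq p_B$ on $B$. The integral bound is immediate from the layer-cake representation:
$$\int_D p\, dx = \int_0^\infty \mu(t)\, dt \leq \int_0^\infty \mu_B(t)\, dt = \int_B p_B\, dx = \frac{|D|^2}{4\pi},$$
the last equality being a direct radial computation.

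Finally, for the equality case, if $\int_D p\, dx = |D|^2/(4\pi)$ then $\mu(t) = \mu_B(t)$ for a.e.\ $t$, which forces equality in both the isoperimetric inequality and in the Cauchy--Schwarz step for a.e.\ level. Equality in the isoperimetric inequality forces each super-level set $\{p>t\}$ to be a disk, while equality in Cauchy--Schwarz forces $|\nabla p|$ to be constant along $\{p=t\}$; propagating these to $t=0$ identifies $D = \{p>0\}$ with a disk. The main obstacle I anticipate is the regularity bookkeeping at non-regular levels to justify the a.e.\ integration arguments and the equality analysis, but this is handled by Sard's theorem for smooth elliptic solutions together with the fact that $p \in C^{1,\alpha}(\bar D)$ up to the $C^1$ boundary.
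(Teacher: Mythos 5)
Your proof is correct and uses the same symmetrization machinery (co-area formula, divergence theorem on $-\Delta p = 2$, Cauchy--Schwarz on level sets, planar isoperimetric inequality, and the resulting differential inequality $-\mu'(t)\geq 2\pi$) that the paper itself deploys when it generalizes this statement to non-simply-connected domains in Proposition~\ref{talenti2}; the paper cites Proposition~\ref{talenti} to Talenti without reproving it, but its proof of the extension is precisely your argument adjusted for the presence of holes. One tiny slip in wording: to obtain $\mu(t)\leq\mu_B(t)$ you should integrate $\mu'\leq -2\pi$ from $0$ to $t$ (using $\mu(0)=|D|$), rather than ``from $t$ up to $\max_D p$'', which would give the reverse bound; the conclusion you state is nonetheless correct and the rest of the argument, including the equality analysis via the isoperimetric and Cauchy--Schwarz equality cases, is sound.
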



Now we are ready to prove the following theorem, saying that any simply-connected stationary/rotating patch with $\Omega \leq 0$ or $\Omega \geq \frac{1}{2}$ must be a disk. Interestingly, the same proof can treat the two disjoint intervals $\Omega \leq 0$ and $\Omega \geq \frac{1}{2}$ all at once.

\begin{thm}\label{thm_simply_connected}
Let $D$ be a simply-connected domain with $C^1$ boundary. If $D$ is a rotating patch solution with angular velocity $\Omega$, where $\Omega \leq 0$ or $\Omega\geq \frac{1}{2}$, then $D$ must be a disk, and it must be centered at the origin unless $\Omega=0$.
\end{thm}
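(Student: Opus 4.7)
The plan is to implement the variational strategy sketched just before the theorem: compute
\[
\mathcal{I} = -\int_D \vec v \cdot \nabla f_\Omega \, dx
\]
in two genuinely different ways and equate them. The first way is already recorded in \eqref{i=0}: since $\vec v$ is divergence-free in $D$ and $f_\Omega \equiv c$ on $\partial D$, integration by parts gives $\mathcal{I} = 0$. The second way expands $\vec v = -x - \nabla p$ and $f_\Omega = u - \frac{\Omega}{2}|x|^2$ with $u := 1_D * \mathcal{N}$, and evaluates the four resulting pieces of $\int_D \nabla \varphi \cdot \nabla f_\Omega$ explicitly, using none of the rotating-patch information encoded in $f_\Omega|_{\partial D}$.

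The crucial non-trivial step in the second computation is the identity
\[
\int_D x \cdot \nabla u \, dx = \frac{|D|^2}{4\pi},
\]
which I will establish by writing $\nabla u(x) = \frac{1}{2\pi}\int_D \frac{x-y}{|x-y|^2}\,dy$ and symmetrizing the resulting double integral in $x \leftrightarrow y$: the integrand $x \cdot (x-y)/|x-y|^2$ plus its swap simplifies pointwise to $1$, so the double integral collapses to $|D|^2/2$. This is the ``free'' identity that requires no boundary data for $u$. The remaining three pieces are elementary integrations by parts using $p = 0$ on $\partial D$, $\Delta u = 1$ in $D$, and $\nabla \cdot x = 2$: they yield $\int_D \nabla p \cdot \nabla u = -\int_D p\, dx$ and $\int_D x \cdot \nabla p \, dx = -2 \int_D p\, dx$. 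Collecting terms produces
\[
\mathcal{I} = \frac{|D|^2}{4\pi} + (2\Omega - 1)\int_D p\, dx - \Omega \int_D |x|^2 \, dx.
\]

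Equating the two computations produces the clean identity
\[
(2\Omega - 1)\int_D p\,dx = \Omega \int_D |x|^2\,dx - \frac{|D|^2}{4\pi}.
\]
Into this I feed Talenti's inequality (Proposition~\ref{talenti}), $\int_D p \leq \frac{|D|^2}{4\pi}$ with equality iff $D$ is a disk, together with the elementary bathtub principle $\int_D |x|^2 \geq \frac{|D|^2}{2\pi}$, with equality iff $D$ is a disk centered at the origin. For $\Omega \geq \tfrac{1}{2}$, bounding the left-hand side above by Talenti forces $\Omega \int_D |x|^2 \leq \Omega \frac{|D|^2}{2\pi}$; since $\Omega > 0$, the bathtub bound then forces equality throughout, so $D$ is a disk centered at the origin. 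For $\Omega < 0$ the negative factor $2\Omega - 1$ reverses the Talenti bound, but the arithmetic once more delivers $\int_D |x|^2 = \frac{|D|^2}{2\pi}$ and the same conclusion. The $\Omega = 0$ case is the most direct: the identity becomes $\int_D p = \frac{|D|^2}{4\pi}$, and the equality case of Talenti alone gives that $D$ is a disk (centered anywhere, consistent with translation invariance for stationary patches).

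The main obstacle, and essentially the only conceptual one, is recognizing the symmetrization identity $\int_D x \cdot \nabla u = \frac{|D|^2}{4\pi}$: it is precisely this identity that turns the tautology $\mathcal{I}=0$ into a quantitative geometric relation capable of pinning down $D$. Once it is in hand, the proof reduces to standard integration by parts together with the two rearrangement-type inequalities above.
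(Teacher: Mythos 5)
Your proof is correct and follows essentially the same route as the paper: same functional, same vector field $\vec v = -\nabla(\tfrac{|x|^2}{2}+p)$, the same symmetrization identity $\int_D x\cdot\nabla(1_D*\mathcal{N})\,dx=\tfrac{|D|^2}{4\pi}$, and the same combination of Talenti's inequality with the second-moment (bathtub) bound. The only difference is presentational — you force simultaneous equality in both inequalities rather than deriving a sign contradiction from assuming $D\ne B$ — and the resulting algebraic identity $(2\Omega-1)\int_D p = \Omega\int_D|x|^2 - \tfrac{|D|^2}{4\pi}$ is exactly the paper's \eqref{I_eq2} set to zero.
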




\begin{proof}
Let $D$ be a rotating patch with $\Omega \in (-\infty,0]\cup[\frac12,\infty)$. As we described above, in this theorem we will use two different ways to compute the integral $\mathcal{I}$ defined in \eqref{def_I}, where we fix $\vec v(x) := -\nabla \varphi(x)$, with $\varphi$ and $p$ defined as in \eqref{def_phi1} and \eqref{def_p1}. 

On the one hand, we have that $\vec v$ is divergence free in $D$, and elliptic regularity theory immediately yields that $\vec v\in C^1(D) \cap C(\bar D)$. Using the assumption that $D$ is a rotating patch, we know $f_\Omega$ is a constant on $\partial D$. (Note that $\partial D$ is a connected closed curve since we assume $D$ is simply-connected). Thus the computation in \eqref{i=0} directly gives that $\mathcal{I}=0$.

On the other hand, we compute $\mathcal{I}$ as follows:
\begin{equation}\label{comp_i1}
\mathcal{I} =  -\int_D\vec{v}\cdot\nabla f_\Omega dx =  \int_D \nabla\varphi \cdot\nabla f_\Omega dx = \underbrace{\int_D x \cdot \nabla f_\Omega dx}_{=: \mathcal{I}_1} + \underbrace{ \int_D \nabla p \cdot \nabla f_\Omega dx}_{=: \mathcal{I}_2}.
\end{equation}
For $\mathcal{I}_1$, we have
\begin{equation}\label{i1_identity}
\begin{split}
\mathcal{I}_1 &=  \int_D x \cdot \nabla(1_D * \mathcal{N})dx - \int_D x \cdot \Omega x dx\\
&= \frac{1}{2\pi} \int_{D}\int_{D} \frac{x\cdot (x-y)}{|x-y|^2} dydx - \Omega \int_D |x|^2 dx\\
&=  \frac{1}{4\pi} \int_{D}\int_{D}\frac{x\cdot (x-y) - y\cdot(x-y)}{|x-y|^2}dydx - \Omega \int_D |x|^2 dx\\
&= \frac{1}{4\pi} |D|^2 -\Omega \int_D |x|^2 dx,
\end{split}
\end{equation}
where the third equality is obtained by exchanging $x$ with $y$ in the first integral, then taking average with the original integral. To compute $\mathcal{I}_2$, using the divergence theorem (and the fact that $p = 0$ on $\partial D$), we have
\begin{equation}\label{comp_i2}
\begin{split}
\mathcal{I}_2 = -\int_D p \Delta f_\Omega dx = (2\Omega-1) \int_D p dx.
\end{split}
\end{equation}
Plugging \eqref{i1_identity} and \eqref{comp_i2} into \eqref{comp_i1} gives
\begin{equation}\label{I_eq2}
\mathcal{I} =  \frac{1}{4\pi} |D|^2 -\Omega \int_D |x|^2 dx +  (2\Omega-1) \int_D p dx.
\end{equation}
When $\Omega=0$, Proposition~\ref{talenti} directly gives that $\mathcal{I}> 0$ if $D$ is not a disk, contradicting $\mathcal{I}=0$. 

When $\Omega \in (-\infty, 0) \cup[\frac12, \infty)$, let $B$ be a disk centered at the origin with the same area as $D$. Towards a contradiction, assume $D \neq B$. Among all sets with the same area as $D$, the disk $B$ is the unique one that minimizes the second moment, thus we have \[
\int_D |x|^2 dx > \int_B |x|^2 dx = \frac{1}{2\pi} |D|^2, 
\] where the last step follows from an elementary computation. Plugging this into \eqref{I_eq2} gives the following inequality for $\Omega \in [\frac12, \infty)$:
\[
\mathcal{I} < \frac{1}{4\pi}|D|^2 - \frac{\Omega}{2\pi} |D|^2  +  (2\Omega-1) \int_D p dx = (1-2\Omega) \Big(\frac{1}{4\pi}|D|^2 -  \int_D p dx \Big) \leq 0.
\]
 On the other hand, for $\Omega \in (-\infty, 0)$, we have 
\[
\mathcal{I} > \frac{1}{4\pi}|D|^2 - \frac{\Omega}{2\pi} |D|^2  +  (2\Omega-1) \int_D p dx = (1-2\Omega) \Big(\frac{1}{4\pi}|D|^2 -  \int_D p dx \Big) > 0,
\]
and we get a contradiction to $\mathcal{I}=0$ in all the cases, thus the proof is finished.
\end{proof}

\begin{rmk}\label{rmk_disconnected}
In fact, one can easily check that Theorem~\ref{thm_simply_connected} holds for a bounded disconnected patch $D = \dot\cup_{i=1}^N D_i$ with $C^1$ boundary, as long as each connected component $D_i$ is simply-connected. Here the proof remains the same, except a small change in the $\mathcal{I}=0$ proof: since now we have $f_\Omega=c_i$ on $\partial D_i$, \eqref{i=0} should be replaced by
\[
 \mathcal{I}=-\sum_{i=1}^N\left( c_i\int_{\partial D_i} \vec{n}\cdot\vec{v} d\sigma + \int_{D_i} (\nabla\cdot\vec{v}) f_\Omega dx \right)= 0.
\]
\end{rmk}

Even in the regime $\Omega \in (0,\frac{1}{2})$, where  non-radial rotating patches are known to exist (recall that there exist patches bifurcating from a disk at $\Omega_{m} = \frac{m-1}{2m}$ for all $m\geq 2$),  our approach still allows us to obtain the following quantitative estimate, saying that if a simply-connected patch $D$ rotates with angular velocity $\Omega \in (0,\frac{1}{2})$ that is very close to $\frac{1}{2}$, then $D$ must be very close to a disk, in the sense that their symmetric difference must be small.

\begin{cor}\label{cor_stability1}
Let $D$ be a simply-connected domain with $C^1$ boundary. Assume $D$ is a rotating patch solution with angular velocity $\Omega$, where $\Omega \in (\frac{1}{4}, \frac{1}{2})$. Let $\delta := \frac{1}{2}-\Omega$. Then we have 
\[
|D\triangle B| \leq 2\sqrt{2\delta}|D|,
\]
where $B$ is the disk centered at the origin with the same area as $D$. 
\end{cor}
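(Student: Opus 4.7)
The plan is to extract quantitative information from the key identity in the proof of Theorem~\ref{thm_simply_connected}. For a simply-connected rotating patch $D$ with angular velocity $\Omega$, the computation $\mathcal{I}=0$ gives
\[
0 \;=\; \frac{1}{4\pi}|D|^2 - \Omega\int_D|x|^2\,dx + (2\Omega-1)\int_D p\,dx.
\]
Since $B$ is the disk centered at the origin with $|B|=|D|$, we have $\int_B|x|^2\,dx = \frac{1}{2\pi}|D|^2$. Substituting and using $1-2\Omega = 2\delta$, this rearranges to the quantitative identity
\[
2\delta\left(\frac{1}{4\pi}|D|^2 - \int_D p\,dx\right) \;=\; \Omega\left(\int_D|x|^2\,dx - \int_B|x|^2\,dx\right).
\]

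Next I would bound the left-hand side from above and the right-hand side from below. By the maximum principle applied to \eqref{def_p1}, $p\geq 0$ in $D$, so the Talenti deficit satisfies $\frac{1}{4\pi}|D|^2 - \int_D p\,dx \leq \frac{1}{4\pi}|D|^2$. This gives an upper bound of $\frac{\delta}{2\pi}|D|^2$ on the left-hand side.

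The main step is the geometric lower bound
\[
\int_D|x|^2\,dx - \int_B|x|^2\,dx \;\geq\; \frac{|D\triangle B|^2}{4\pi}.
\]
Letting $R$ denote the radius of $B$ and $V := |D\setminus B| = |B\setminus D| = \tfrac12 |D\triangle B|$ (the two set differences have equal measure since $|D|=|B|$), the difference equals
\[
\int_{D\setminus B}(|x|^2 - R^2)\,dx \;+\; \int_{B\setminus D}(R^2 - |x|^2)\,dx,
\]
and each integrand is nonnegative on its domain of integration. Subject to the volume constraint, each integral is minimized when the corresponding set is the annulus adjacent to $\partial B$ (i.e.\ $R\leq |x|\leq \sqrt{R^2+V/\pi}$ for the first, $\sqrt{R^2-V/\pi}\leq |x|\leq R$ for the second), and a direct computation in each case yields exactly $V^2/(2\pi)$. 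Adding these gives $V^2/\pi = |D\triangle B|^2/(4\pi)$.

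Combining the two bounds with the quantitative identity yields
\[
\Omega\cdot\frac{|D\triangle B|^2}{4\pi} \;\leq\; \frac{\delta}{2\pi}|D|^2, \qquad\text{i.e.}\qquad |D\triangle B| \;\leq\; \sqrt{\tfrac{2\delta}{\Omega}}\,|D|.
\]
The hypothesis $\Omega > \tfrac14$ then gives $\sqrt{2\delta/\Omega} < 2\sqrt{2\delta}$, producing the stated estimate. The only nontrivial ingredient is the geometric lemma on the second moment deficit, which is the step I would expect to write out in most detail; the rest is an algebraic rearrangement of the identity already established in Theorem~\ref{thm_simply_connected}.
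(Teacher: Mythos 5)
Your proof is correct and follows essentially the same route as the paper: the same identity from $\mathcal{I}=0$, the same bound $\int_D p\,dx\geq 0$, the same annular lower bound on the second-moment deficit, and the same final use of $\Omega>\tfrac14$. The only differences are cosmetic bookkeeping (you keep $\Omega$ on one side rather than bounding $\tfrac{1-2\Omega}{\Omega}$ at the start, and you phrase the second-moment estimate as a sum of two nonnegative integrals rather than a difference of two one-sided bounds), which lead to the identical inequality $\beta^2\leq 2\delta|D|^2$.
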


\begin{proof}
In the proof of Theorem~\ref{thm_simply_connected}, combining the equation $\mathcal{I}=0$ and \eqref{I_eq2} together, we have that 
\[
\frac{1}{4\pi} |D|^2 -\Omega \int_D |x|^2 dx - (1-2\Omega) \int_D p dx=0.
\]
Dividing  both sides by $\Omega$ and rearranging the terms, we obtain
\[
\int_D |x|^2 dx - \frac{1}{2\pi} |D|^2 = \frac{1-2\Omega}{\Omega} \left( \frac{1}{4\pi}|D|^2 - \int_D p dx \right) \leq \frac{2\delta
|D|^2}{\pi},
\]
where in the inequality we used that $2\delta := 1-2\Omega$, $\Omega > \frac{1}{4}$, and $\int_D p dx \geq 0$.

Since $\int_B |x|^2 dx = \frac{1}{2\pi}|D|^2$, the above inequality implies that 
\begin{equation}\label{temp321}
\int_{D \setminus B} |x|^2 dx - \int_{B\setminus D} |x|^2 dx \leq \frac{2\delta |D|^2}{\pi}.
\end{equation}
Since $D$ and $B$ has the same area, let us denote $\beta := |D \setminus B| = |B \setminus D|$. Among all sets $U\subset B^c$ with area $\beta$, $\int_U |x|^2 dx$ is minimized when $U$ is an annulus with area $\beta$ and inner circle coinciding with $\partial B$, thus an elementary computation gives
\[
\int_{D \setminus B} |x|^2 dx \geq \inf_{U\subset B^c, |U|=\beta}  \int_U |x|^2 dx = \frac{\beta(2|B|+\beta)}{2\pi}.
\]
Likewise, among all sets $V\subset B$ with area $\beta$, $\int_V |x|^2 dx$ is maximized when $V$ is an annulus with area $\beta$ and outer circle coinciding with $\partial B$, thus
\[
\int_{B \setminus D} |x|^2 dx \leq \sup_{V\subset B, |V|=\beta}  \int_V |x|^2 dx = \frac{\beta(2|B|-\beta)}{2\pi}.
\]
Subtracting these two inequalities yields
\[
\int_{D \setminus B} |x|^2 dx - \int_{B\setminus D} |x|^2 dx\geq \frac{\beta^2}{\pi},
\]
and combining this with \eqref{temp321} immediately gives
\[
\beta^2 \leq 2\delta |D|^2,
\]
thus $|D\triangle B| = 2\beta \leq 2\sqrt{2\delta} |D|$.
\end{proof}


\subsection{Radial symmetry of non-simply-connected stationary patches}
\label{subsec_non_simply_connected}

In this subsection, we aim to prove radial symmetry of a connected rotating patch $D$ with $\Omega\leq 0$, where $D$ is allowed to be non-simply-connected. Let $D\subset \R^{2}$ be a bounded connected domain with $C^1$ boundary. Assume $D$ has $n$ holes with $n\geq 0$, and then let $h_1, \cdots, h_n \subset \mathbb{R}^2$ denote the $n$ holes of $D$ (each $h_i$ is a bounded open set). Note that $\partial D$ has $n+1$ connected components: they include the outer boundary of $D$, which we denote by $\partial D_{0}$, and the inner boundaries $\partial h_{i}$ for $i=1,...,n$.

To begin with, we point out that even for the steady patch case $\Omega = 0$, the proof of Theorem~\ref{thm_simply_connected} cannot be directly adapted to the non-simply-connected patch. If we define $\vec{v}$ in the same way, then the second way to compute $\mathcal{I}$ still goes through (since Proposition~\ref{talenti} still holds for non-simply-connected $D$), and leads to $\mathcal{I}>0$ if $D$ is not a disk. But the first way to compute $\mathcal{I}$ no longer gives $\mathcal{I}=0$: if $D$ is stationary and not simply-connected, $f(x):= (1_D*\mathcal{N})(x)$ may take different constant values on different connected components of $\partial D$, thus the identity \eqref{i=0} no longer holds.


In order to fix this issue, we still define $\vec v = -\nabla \varphi = -\nabla (\frac{|x|^2}{2} + p)$, but modify the definition of $p$ in the following lemma. Compared to the previous definition \eqref{def_p1}, the difference is that $p$ now takes different values $0,c_1,\dots, c_n$ on each connected component of $\partial D$. The lemma shows that there exist values of $\{c_i\}_{i=1}^n$, such that $\int_{\partial h_{i}} \nabla p\cdot \vec{n} d \sigma=-2|h_{i}|$ along the boundary of each hole. As we will see later, this leads to $\int_{\partial h_{i}} \vec{v}\cdot \vec{n} d \sigma=0 \text{ for } i=1,\dots,n,$ which ensures $\mathcal{I}=0$. (Of course, with $p$ defined in the new way, the second way of computing $\mathcal{I}$ no longer follows from Proposition~\ref{talenti}, and we will take care of this later in Proposition~\ref{talenti2}.)

\begin{lem}\label{p_def}

Let $D$, $h_i$ and $\partial D_0$ be given as in the first paragraph of Section~\ref{subsec_non_simply_connected}. 
Then there exist positive constants $\{c_{i}\}_{i=1}^n$, such that the solution $p:\overline{D}\to\mathbb{R}$ to the Poisson equation
\begin{align}\label{eq1}
\begin{cases}
\lap p=-2 & \text{ in $D$},\\
p= c_{i} &\text{ on $\partial h_{i}$~ for $i=1,\dots,n$},\\
p= 0 &\text{ on $\partial D_{0}$}.
\end{cases}
\end{align}
 satisfies 
\begin{equation}\label{eq_p_int_bdry}
\int_{\partial h_{i}} \nabla p\cdot \vec{n} ~d \sigma=-2|h_{i}| \quad\text{ for } i=1,\dots,n.
\end{equation}
 Here  $|h_i|$ is the area of the domain $h_i\subset \mathbb{R}^2$. 
\end{lem}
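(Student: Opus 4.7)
The plan is to reduce the existence of the constants $\{c_i\}$ to a finite-dimensional linear-algebra problem via superposition, and then to extract positivity from the maximum principle together with a bit of M-matrix theory.

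By linearity, decompose $p = p_0 + \sum_{j=1}^{n} c_j w_j$, where $p_0 \in C(\bar D) \cap C^2(D)$ solves $\Delta p_0 = -2$ in $D$ with $p_0 \equiv 0$ on all of $\partial D$, and each $w_j \in C(\bar D) \cap C^2(D)$ is the harmonic function in $D$ with boundary values $w_j = \delta_{ij}$ on $\partial h_i$ and $w_j = 0$ on $\partial D_0$. Any choice of $c_1,\dots,c_n$ yields a solution of \eqref{eq1}, while the flux condition \eqref{eq_p_int_bdry} becomes the linear system $Mc = b$, where
\[
M_{ij} := \int_{\partial h_i} \nabla w_j \cdot \vec n\, d\sigma, \qquad b_i := -2|h_i| - \int_{\partial h_i} \nabla p_0 \cdot \vec n\, d\sigma.
\]
Integrating by parts in $D$ and using that the outer normal $\vec n_D$ of $D$ equals $-\vec n$ on each $\partial h_i$, one sees $M_{ij} = -\int_D \nabla w_i \cdot \nabla w_j\, dx$, so that $-M$ is the Gram matrix of $\{\nabla w_i\}_{i=1}^{n}$. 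Since the $w_i$'s have linearly independent boundary data and all vanish on $\partial D_0$, their gradients are linearly independent in $L^2(D)$. Hence $-M$ is positive definite, $M$ is invertible, and $c$ exists uniquely.

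For positivity of the $c_i$'s, I would combine the strong maximum principle with Hopf's lemma. Since $\Delta p_0 = -2 < 0$ and $p_0 = 0$ on $\partial D$, one has $p_0 > 0$ in $D$, and Hopf at each inner boundary gives $\nabla p_0 \cdot \vec n > 0$ on $\partial h_i$, so $b_i < 0$. Each $w_j$ satisfies $0 < w_j < 1$ in $D$, attaining its maximum on $\partial h_j$ and its minimum on every other boundary component, so Hopf yields $M_{jj} < 0$ and $M_{ij} > 0$ for $i \neq j$. Finally, $w := \sum_{k} w_k$ is harmonic in $D$ with $w = 1$ on each inner boundary and $w = 0$ on $\partial D_0$, hence $w < 1$ strictly in $D$, and Hopf forces $\sum_k M_{ik} < 0$ for every $i$. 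These three sign facts together say that $-M$ has strictly positive diagonal, nonpositive off-diagonal, and strict diagonal dominance; equivalently, $-M$ is a (strictly diagonally dominant) M-matrix, and hence $(-M)^{-1}$ has nonnegative entries. Combined with $-b > 0$ and the fact that no row of $(-M)^{-1}$ is zero, we obtain $c = (-M)^{-1}(-b)$ with $c_i > 0$ for every $i$.

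The main obstacle, in my view, is the positivity step: solvability is a clean consequence of the Gram-matrix identity, but extracting $c_i > 0$ requires the full sign pattern of $M$ — including strict diagonal dominance of $-M$ — which in turn needs Hopf's lemma applied not only to each $w_j$ individually but also to the total harmonic function $\sum_k w_k$.
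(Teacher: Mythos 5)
Your proposal is correct and uses the same decomposition as the paper ($p = u + \sum c_j v_j$, leading to a linear system $Mc=b$ with the same entries), but the two steps are handled differently. For invertibility, you observe that $M_{ij} = -\int_D \nabla w_i \cdot \nabla w_j\,dx$, so $-M$ is a Gram matrix of linearly independent gradients and hence positive definite; the paper instead determines the sign pattern of $M$ (negative diagonal, positive off-diagonal, negative column sums via Hopf) and then invokes the Gershgorin circle theorem. Your Gram-matrix route is arguably cleaner and more structural, bypassing Gershgorin entirely. For positivity of the $c_i$, you stay at the linear-algebra level: you establish the full sign pattern and strict diagonal dominance of $-M$, invoke the M-matrix fact that $(-M)^{-1}\ge 0$ with no zero rows, and conclude from $-b>0$. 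The paper instead argues directly on the solution $p$ itself: if $c_{i^*}:=\min_i c_i\le 0$, then by the minimum principle (using $\Delta p = -2 < 0$ and $p=0$ on the outer boundary) $p$ attains its global minimum on $\partial h_{i^*}$, forcing $\int_{\partial h_{i^*}}\nabla p\cdot \vec n\,d\sigma\ge 0$, which contradicts the prescribed flux $-2|h_{i^*}|<0$. This is a shorter argument and avoids the sign analysis of $M$ entirely, since it exploits the PDE structure of the assembled solution $p$ rather than properties of the system matrix. Both routes are valid; the paper's positivity argument is leaner, while your invertibility argument is leaner.
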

\begin{proof} 
Let $u$ satisfy that
\begin{align*}
\begin{cases}
\lap u=-2 &\text{ in $D$}\\
 u=0 &\text{ on $\partial D$}.
\end{cases}
\end{align*}
Furthermore let the function $v_{j}$ for $j=1,...,n$ be the solution to
\begin{align*}
\begin{cases}
\lap v_{j}=0 &\text{ in $D$}\\
v_{j}=0 &\text{ on $\partial D\setminus \partial h_{j}$}\\
v_{j}=1 &\text{ on $\partial h_{j}$}.
\end{cases}
\end{align*}
Now we consider the following linear equation,
\begin{align}
Ax=b, \label{eq2}
\end{align}
where $A_{i,j}=\int_{\partial h_{i}}\nabla v_{j}\cdot \vec{n}\,d\sigma$ and $b_{i}=-2|h_{i}|-\int_{\partial h_{i}}\nabla u\cdot \vec{n}\,d\sigma$. We argue that \eqref{eq2} has a unique solution. Thanks to the divergence theorem, we have  
\[
0=\int_{D}\lap v_{j}dx=\int_{\partial D_{0}}\nabla v_{j}\cdot\vec{n}d\sigma-\sum_{i=1}^{n}\int_{\partial h_{i}}\nabla v_{j}\cdot \vec{n}d\sigma.\]
 Therefore,
\[\sum_{i=1}^{n}A_{i,j}=\int_{\partial D_{0}}\nabla v_{j}\cdot \vec{n}d\sigma<0,
\]
where the last inequality follows from the Hopf Lemma since $v_{j}$ attains its minimum value 0 on $\partial D_{0}$, and $v_j\not\equiv 0$ on $\partial D$. A similar argument gives that $A_{i,j}>0$ for $i \neq j$ and $A_{j,j}<0$. Thus $A$ is invertible by Gershgorin circle theorem \cite{Gerschgorin:eigenvalues-theorem}, leading to a unique solution of \eqref{eq2}. Let us denote the solution by $x=(c_{1},...,c_{n})^{t}$. Then the function $p$ defined by 
\[
p:=u+\sum_{i=1}^{n}c_i v_{i}
\] 
satisfies the desired properties \eqref{eq_p_int_bdry}.

Now we prove that $c_{i}>0$ for $i\ge 1$. Suppose that $c_{i*}:=\min_{i}c_{i}\le 0$. Then by the minimum principle, $p$ attains its minimum on $\partial h_{i^{*}}$. Therefore, 
\[0\le \int_{\partial h_{i^{*}}}\nabla p\cdot \vec{n}d\sigma=-2|h_{i^{*}}|<0,\]
which is a contradiction.
\end{proof}

Next we prove a parallel version of Talenti's theorem for the function $p$ constructed in Lemma~\ref{p_def}. We will use this result throughout Section \ref{sec2}--\ref{sec3}.

\begin{prop}\label{talenti2} Let $D\subset \R^{2}$ be a bounded connected domain with $C^1$ boundary. Assume $D$ has $n$ holes with $n\geq 0$, and denote by $h_1, \cdots, h_n \subset \mathbb{R}^2$ the holes of $D$ (each $h_i$ is a bounded open set). Let $p:\overline{D}\to\mathbb{R}$ be the function constructed in Lemma~\ref{p_def}. Then the following two estimates hold:
\begin{equation}\label{estimate_p_general1}
\sup_{\overline{D}}{p}\le \frac{|D|}{2\pi}
\end{equation}
and
\begin{equation}\label{estimate_p_general}
\quad \int_{D}p(x)dx\le \frac{|D|^{2}}{4\pi}
\end{equation} Furthermore, for each of the two inequalities above, the equality is achieved if and only if $D$ is either a disk or an annulus. 
\end{prop}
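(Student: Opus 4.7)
The plan is to generalize Talenti's classical rearrangement argument via coarea $+$ isoperimetric $+$ Cauchy--Schwarz to this multi-boundary setting, with the flux identity $\int_{\partial h_i}\nabla p\cdot \vec{n}\,d\sigma=-2|h_i|$ from Lemma~\ref{p_def} being the crucial new input that absorbs the ``missing'' hole areas into the isoperimetric bound and makes the argument insensitive to the presence of holes.

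By the strong minimum principle and $c_i>0$, one has $p>0$ in $D$. Set $T:=\sup_{\overline D}p$ and $\mu(t):=|\{x\in D:p(x)>t\}|$, so $\mu(0)=|D|$ and $\mu(T)=0$. The key move is to work not with $\{p>t\}$ but with the ``filled'' superlevel set
\[
U(t):=\{x\in D:p(x)>t\}\ \cup\ \bigcup_{i\in S_t}h_i,\qquad S_t:=\{i:c_i>t\},
\]
which absorbs exactly those holes whose boundary value exceeds $t$. Continuity of $p$ up to $\partial D$ gives $\partial h_i\subset \mathrm{int}(U(t))$ for every $i\in S_t$, so in $\mathbb{R}^2$ one has $\partial U(t)=\{p=t\}\cap D$. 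For a.e.\ $t\in(0,T)$ (regular level sets by Sard), applying the divergence theorem to $\{p>t\}$, using $\Delta p=-2$ together with the flux identity from Lemma~\ref{p_def} and the sign flip $\vec{n}^{\mathrm{out}}_{\{p>t\}}=-\vec{n}^{\mathrm{out}}_{h_i}$ at $\partial h_i$, produces the key identity
\[
\int_{\{p=t\}\cap D}|\nabla p|\,d\sigma = 2\mu(t)+2\sum_{i\in S_t}|h_i| = 2|U(t)|.
\]

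Combining this identity with the planar isoperimetric inequality $H^1(\partial U(t))^2\geq 4\pi|U(t)|$, the Cauchy--Schwarz inequality, and the coarea formula $-\mu'(t)=\int_{\{p=t\}\cap D}|\nabla p|^{-1}\,d\sigma$ yields
\[
4\pi|U(t)|\leq H^1(\{p=t\}\cap D)^2 \leq 2|U(t)|\cdot(-\mu'(t)),
\]
so $-\mu'(t)\geq 2\pi$ for a.e.\ $t\in(0,T)$. Integrating from $0$ to $T$ gives $T\leq |D|/(2\pi)$, which is \eqref{estimate_p_general1}, and the layer-cake representation $\int_D p=\int_0^T\mu(t)\,dt$ together with the pointwise bound $\mu(t)\leq (|D|-2\pi t)^+$ gives $\int_D p\leq |D|^2/(4\pi)$, which is \eqref{estimate_p_general}.

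For the equality case, equality in the isoperimetric step forces $U(t)$ to be a round disk for a.e.\ $t\in(0,T)$, and equality in Cauchy--Schwarz forces $|\nabla p|$ to be constant on each level circle. Sending $t\to 0^+$ shows that $D\cup \bigcup_i h_i$ is a disk. If $n\geq 2$, comparing $U(c_{\min}^-)$ and $U(c_{\min}^+)$ with $c_{\min}:=\min_i c_i$ would give two round disks differing by one (or more) of the $h_i$, which is impossible since removing an interior disk from a disk is never a disk. Hence $n\leq 1$. In the $n=1$ case, the same comparison at $t=c_1$ forces $\sup p=c_1$ (so no interior level circles above $c_1$ exist); parametrizing the foliation of $D$ by level circles via $\Phi(t,\theta)=x_t+r_t(\cos\theta,\sin\theta)$ and imposing $\nabla p\cdot \partial_t\Phi\equiv 1$ forces $\dot x_t\cdot(\cos\theta,\sin\theta)$ to be independent of $\theta$, hence $\dot x_t\equiv 0$, so $D$ is a concentric annulus. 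The main technical obstacle I expect is the careful bookkeeping of the boundary contributions of the divergence theorem when level sets become irregular (handled via Sard) and when $t$ crosses the values $c_i$ at which the topology of $U(t)$ changes, together with the endpoint analysis in the equality case at $t\to 0^+$, $t\to T^-$, and each $t=c_i$.
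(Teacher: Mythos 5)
Your proposal is correct and follows essentially the same route as the paper: you introduce the filled superlevel sets $U(t)$ (the paper's $\tilde D_k$), apply the divergence theorem together with the flux normalization from Lemma~\ref{p_def} to get $\int_{\{p=t\}\cap D}|\nabla p|\,d\sigma=2|U(t)|$, and then combine coarea, Cauchy--Schwarz and the isoperimetric inequality to obtain $-\mu'(t)\ge 2\pi$, from which both estimates follow; the equality analysis via limits of $U(t)$ across the values $c_i$ matches the paper's Step~2. The one place where you go slightly beyond the paper is the concentricity step: the paper simply asserts that constancy of $|\nabla p|$ on each level circle forces the family of disks to be concentric, whereas you spell out the foliation/parametrization argument ($\nabla p\cdot\partial_t\Phi\equiv 1$ with $|\nabla p|$ constant on circles forces $\dot x_t\equiv 0$), which is a useful explicit justification of that claim.
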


\begin{proof} The proof is divided into two parts: In step 1 we prove the two inequalities \eqref{estimate_p_general1} and \eqref{estimate_p_general}, and in step 2 we show that equality can be achieved if and only if $D$ is a disk or an annulus. 
 
\textbf{Step 1.} When $D$ is simply-connected, \eqref{estimate_p_general1} and \eqref{estimate_p_general} directly follow from Talenti's theorem Proposition~\ref{talenti}. Next we consider a non-simply-connected domain $D$, and prove that these inequalities also hold when $p:\overline{D}\to\mathbb{R}$ is defined as in Lemma~\ref{p_def}.

For $k\in\mathbb{R}^+$, let us denote $D_{k}:=\left\{ x\in D: \ p(x)>k\right\}$,  $g(k):=|D_{k}|$ and $\tilde{D}_{k}:=D_{k}\dot{\cup}(\dot{\cup}_{\{i: c_{i}>k\}}\overline{h_{i}})$.  Elliptic regularity theory gives that $p\in C^\infty(D)$, thus by Sard's theorem, $k$ is a regular value for almost every $k\in (0,\sup_{D}p)$, that is, $|\nabla p(x)|>0$ on $\left\{ x\in D: \ p(x)=k\right\}$. Thus $\left\{ x\in D: \ p(x)=k\right\}$ is a union of smooth simple closed curves and equal to $\partial \tilde{D}_{k}$ for almost every $k\in (0,\sup_{D}p)$.

\quad Since $\partial D_{k}=\partial \tilde{D}_{k}  \dot{\cup} (\dot{\cup}_{\{i: c_{i}>k\}}\partial h_{i})$ for $k\notin\left\{c_{1},...,c_{n}\right\}$, we compute
\begin{align*}
g(k)&=-\frac{1}{2}\int_{D_{k}}\lap p(x) dx~=-\frac{1}{2}\int_{\partial D_{k}}\nabla p \cdot \vec{n}d\sigma\\
&=-\frac{1}{2}\int_{\partial \tilde{D}_{k}}\nabla p\cdot \vec{n}d\sigma+\frac{1}{2}\sum_{\{i: c_{i}>k\}}\int_{\partial h_{i}}\nabla p\cdot \vec{n}d\sigma\\
&=-\frac{1}{2}\int_{{\partial \tilde{D}}_{k}} \nabla p\cdot \vec{n}d\sigma-\sum_{\{i: c_{i}>k\}}|h_{i}|,
\end{align*}
where the last identity is due to \eqref{eq_p_int_bdry}. Therefore, it follows that 
\begin{align}
g(k)+\sum_{\{i: c_{i}>k\}}|h_{i}|=-\frac{1}{2}\int_{{\partial \tilde{D}}_{k}}\nabla p\cdot \vec{n}d\sigma=\frac{1}{2}\int_{{\partial \tilde{D}}_{k}}|\nabla p|d\sigma,\label{eq3}
\end{align}
where the last equality follows from the fact that $\nabla p$ is perpendicular to the tangent vector on the level set.\\
 On the other hand, the coarea formula yields that
\begin{align*}
g(k)=\int_{\R}\int_{{\partial \tilde{D}}_{s}}1_{D_{k}}\frac{1}{|\nabla p|}d\sigma ds=\int_{k}^{\infty}\int_{{\partial \tilde{D}}_{s}}\frac{1}{|\nabla p|}d\mathcal{\sigma} ds.
\end{align*}
Therefore, it follows that for almost every $k\in (0,\sup_{D}p)$, 
\begin{align}
g'(k)=-\int_{{\partial \tilde{D}}_{k}}\frac{1}{|\nabla p|}d\sigma.\label{eq4}
\end{align}
Thus it follows from \eqref{eq3} and \eqref{eq4} that
\begin{align}\label{eq28}
g'(k)\left(g(k)+\sum_{\{i: c_{i}>k\}}|h_{i}|\right)=-\frac{1}{2}\left(\int_{{\partial \tilde{D}}_{k}}|\nabla p| d\sigma \right)\left(\int_{{\partial \tilde{D}}_{k}}\frac{1}{|\nabla p|}d\sigma \right)\le -\frac{1}{2}P({\tilde{D}}_{k})^2,
\end{align}
where $P(E)$ denotes the perimeter of a rectifiable curve $\partial E$. Note that the last inequality becomes equality if and only if $|\nabla p|$ is a constant on $\partial \tilde{D}_{k}$. Also, the isoperimetric inequality gives that
\begin{equation}\label{isoperimetric}
P(\tilde{D}_{k})^{2}\ge 4\pi |\tilde{D}_{k}|,
\end{equation}
where equality holds if and only if $\tilde{D}_{k}$ is a disk.  This yields that
\begin{align}
g'(k)\left(g(k)+\sum_{\{i: c_{i}>k\}}|h_{i}|\right)\le -2\pi |\tilde{D}_{k}|=-2\pi \left(g(k)+\sum_{\{i: c_{i}>k\}}|h_{i}|\right). \label{eq5}
\end{align}
Therefore, $g'(k)\le -2\pi$ for almost every $k\in (0,\sup_{D}p)$. Combining it with the fact that $g(0)=|D|$, we have
\[
g(k)\le (g(0)-2\pi k)_{+}=(|D|-2\pi k)_{+} \text{ for almost every }k\geq 0.
\] This proves that $\sup_{\bar{D}}p\le \frac{|D|}{2\pi}$. It follows that
\begin{align*}
\int_{D}p(x)dx=\int_{D}\int_{0}^{\frac{|D|}{2\pi}}1_{\left\{k< p(x)\right\}}dkdx= \int_{0}^{\frac{|D|}{2\pi}}g(k)dk\le\int_{0}^{\frac{|D|}{2\pi}}(|D|-2\pi k)_{+}dx= \frac{|D|^2}{4\pi}.
\end{align*}

\textbf{Step 2.}  Now we show that for the two inequalities \eqref{estimate_p_general1} and \eqref{estimate_p_general}, the equality is achieved if and only if $D$ is either a disk or an annulus. First, if $D$ is either a disk or annulus centered at some $x_0\in\mathbb{R}^2$, then uniqueness of solution to Poisson's equation gives that $p$ is radially symmetric about $x_0$. Since we have $\Delta p = -2$ in $D$ and $p=0$ on the outer boundary of $D$, this gives an explicit formula $p(x) = -\frac{|x-x_0|^2}{2}+\frac{R^2}{2}$ for $x\in D$, where $R$ is the outer radius of $D$. For either a disk or an annulus, one can explicitly compute $\sup_D p$ and $\int_D p dx$ to check that equalities in \eqref{estimate_p_general1} and \eqref{estimate_p_general} are achieved.  

To prove the converse, assume that either \eqref{estimate_p_general1} or \eqref{estimate_p_general} achieves equality, and we aim to show that $D$ is either a disk or an annulus. In order for either equality to be achieved, \eqref{eq5} needs to achieve equality at almost every $k\in (0,\sup_D p)$. In addition, $g(k)$ needs to be continuous in $k$  since $g(k)$ is decreasing. Since \eqref{eq5} follows from a combination of the Cauchy-Schwarz inequality in \eqref{eq28} and the isoperimetric inequality in \eqref{isoperimetric}, we need to have all the three conditions below in order for either \eqref{estimate_p_general1} or \eqref{estimate_p_general} to achieve equality:

(1) $|\nabla p|$ is a constant on each level set $\partial \tilde{D}_{k}$ for almost every $k\in (0,\sup_D p)$; 

(2) $\tilde{D}_{k}$ is a disk for almost every $k\in (0,\sup_D p)$.

(3) $g(k) = |D_k|$ is continuous in $k$. As a result, $|\tilde D_k|$ is continuous in $k$ at all $k\neq c_i$, with $c_i>0$ defined in \eqref{eq1}.

Next we will show that if all these three conditions are satisfied, then $D$ must be an annulus or disk. First, note that by sending $k\searrow 0$ in condition (2), and combining it with the continuity of $|\tilde D_k|$ as $k\searrow 0$, it already gives that the outer boundary of $D$ must be a circle. Therefore if $D$ is simply-connected, it must be a disk.

If $D$ is non-simply-connected, using condition (2) and (3), we claim that $D$ can have only one hole, which must be a disk, and $p$ must achieve its maximum value in $\bar D$ on the boundary of the hole. To see this, let $h_i$ be any hole of $D$, and recall that $p|_{\partial h_i} = c_i$. As we consider the set limit of $\tilde D_k$ as $k$ approaches $c_i$ from below and above, by definition of $\tilde D_k$ we have 
\[
\lim_{k\nearrow c_i} \tilde D_k = \lim_{k\searrow c_i} \tilde D_k \dot{\cup}(\dot{\cup}_{\{j: c_j = c_i\}}\overline{h_{j}}).
\]
By (2) and (3), the left hand side $\lim_{k\nearrow c_i} \tilde D_k $ is a disk, and the set $\lim_{k\searrow c_i} \tilde D_k$ on the right hand side is also a disk (if the limit is non-empty). But after taking union with the holes $\{h_j: c_j = c_i\}$ (each is a simply-connected set), the right hand side will be a disk if and only if $\lim_{k\searrow c_i} \tilde D_k$ is empty, $\dot{\cup}_{\{j: c_j = c_i\}}\overline{h_{j}} = \overline{h_i}$, and $h_i$ is a disk. This implies $c_i= \sup_D p$ and $c_j < c_i$ for all $j\neq i$. But since $h_i$ is chosen to be any hole of $D$, we know $D$ can have only one hole (call it $h$), which is a disk, and $\sup_D p = p|_{\partial h}$. Finally, note that condition (1) gives that all the disks $\{\tilde D_k\}$ are concentric, and as a result we have $D$ is an annulus, finishing the proof. 
\end{proof}

Finally, we are ready to show that every connected stationary patch $D$ with $C^1$ boundary must be either a disk or an annulus. 

\begin{thm}\label{theorem1} Let $D\subset \mathbb{R}^2$ be a bounded domain with $C^1$ boundary. Suppose that $\omega(x):=1_{D}(x)$ is a stationary patch solution to the 2D Euler equation in the sense of \eqref{eq_stat}. Then $D$ is either a disk or an annulus.
\end{thm}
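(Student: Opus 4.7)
The plan is to rerun the variational scheme from Theorem~\ref{thm_simply_connected}, but with the perturbation field built from the adapted potential $p$ of Lemma~\ref{p_def}, which is precisely what accommodates the presence of holes. Let $p$ solve \eqref{eq1} with the constants $c_i$ chosen so that \eqref{eq_p_int_bdry} holds, set $\varphi(x):=\tfrac12|x|^2+p(x)$, and define $\vec v:=-\nabla\varphi$ in $D$; then $\Delta\varphi=2-2=0$, so $\vec v$ is divergence-free in $D$. I would then compute
\[
\mathcal I\;:=\;-\int_D \vec v\cdot\nabla f_0\,dx,\qquad f_0\;:=\;1_D\ast\mathcal N,
\]
in two different ways to extract a rigid identity that forces $D$ to be a disk or an annulus.

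The first evaluation uses stationarity. Since $\nabla\cdot\vec v=0$, the divergence theorem gives $\mathcal I=-\int_{\partial D}f_0\,\vec v\cdot\vec n\,d\sigma$, with $\vec n$ the outer normal of $D$. By \eqref{eq_stat}, $f_0$ equals some constant $C_0$ on $\outb D$ and some (possibly different) constants $C_i$ on each $\partial h_i$. A short calculation combining \eqref{eq_p_int_bdry}, the orientation reversal $\vec n|_{\partial h_i}=-\vec n_{h_i}$, and the divergence theorem applied to the position field $\vec x$ on $h_i$ (which yields flux $2|h_i|$) shows that $\int_{\partial h_i}\vec v\cdot\vec n\,d\sigma=0$ for each $i$; the divergence theorem on all of $D$ then forces the flux of $\vec v$ through $\outb D$ to vanish as well. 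Hence $\mathcal I=0$.

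For the second evaluation I would expand $\mathcal I=\int_D \vec x\cdot\nabla f_0\,dx+\int_D \nabla p\cdot\nabla f_0\,dx$. The first integral equals $|D|^2/(4\pi)$ by the symmetrization identity already recorded in \eqref{i1_identity} with $\Omega=0$. For the second, an integration by parts that moves $\Delta$ onto $f_0$ kills every boundary term: $p=0$ on $\outb D$, while on each hole boundary $\int_{\partial h_i}\nabla f_0\cdot\vec n_{h_i}\,d\sigma=\int_{h_i}\Delta f_0\,dx=0$ since $f_0$ is harmonic inside $h_i$; what remains is $-\int_D p\,\Delta f_0\,dx=-\int_D p\,dx$, where I have used $\Delta f_0=1_D=1$ in $D$. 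Combining, $\mathcal I=|D|^2/(4\pi)-\int_D p\,dx$, and matching with $\mathcal I=0$ forces $\int_D p\,dx=|D|^2/(4\pi)$. This saturates the rearrangement inequality of Proposition~\ref{talenti2}, so by its equality case $D$ must be a disk or an annulus. The only delicate point I expect is the vanishing of $\int_{\partial h_i}\vec v\cdot\vec n\,d\sigma$ on each \emph{individual} hole, which is precisely what makes the more elaborate boundary conditions of Lemma~\ref{p_def} indispensable in place of the simpler \eqref{def_p1}.
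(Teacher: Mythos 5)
Your proposal is correct and follows essentially the same path as the paper's own proof: both define $p$ via Lemma~\ref{p_def}, perturb by $\vec v=-\nabla\varphi$ with $\varphi=\tfrac12|x|^2+p$, establish $\mathcal I=0$ from stationarity together with the vanishing flux $\int_{\partial h_i}\nabla\varphi\cdot\vec n\,d\sigma=0$ across each hole boundary, and then match this against the second evaluation $\mathcal I=\tfrac{|D|^2}{4\pi}-\int_Dp\,dx$ to saturate the equality case of Proposition~\ref{talenti2}. The only cosmetic difference is in how $\mathcal I=0$ is derived: the paper subtracts the outer-boundary constant $a_0$ from $f$ before integrating by parts, whereas you argue that all hole fluxes vanish and then invoke the total divergence theorem to kill the outer-boundary flux as well; both routes are correct and of identical length.
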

\begin{proof}
 If $D$ has $n$ holes (where $n\geq 0$), denote them by $h_1,\dots,h_n$.  By \eqref{eq_stat},  the function $f:= 1_D * \mathcal{N}$ is constant on each of connected component of $\partial D$, and let us denote 
\begin{equation}\label{eq_f_const}
f(x)=
\begin{cases}
a_{i} & \text{ on $\partial h_{i}$}\\
a_{0} & \text{ on $\partial D_{0}$}.
\end{cases}
\end{equation}
Let $p:\overline{D}\to\mathbb{R}$ be defined as in Lemma~\ref{p_def}, and let $\vphi := \frac{|x|^2}{2} + p$. 
Similar to the proof of Theorem~\ref{thm_simply_connected}, we calculate $\mathcal{I}:=\int_{D}\nabla \vphi \cdot \nabla fdx$ in two different ways. Note that $\nabla f = \nabla (f-a_0)$ in $D$. Applying the divergence theorem to $\mathcal{I}$ and using \eqref{eq_f_const} and $\Delta\varphi=0$ in $D$, it follows that
\begin{equation}\label{temp111}
\begin{split}
\mathcal{I}&=\int_{\partial D}(\nabla \vphi\cdot \vec{n})(f-a_0) d\sigma-\int_{D}\lap \vphi (f-a_0) dx=-\sum_{i=1}^{n}(a_{i} - a_0) \int_{\partial h_{i}}\nabla\vphi \cdot \vec{n}d\sigma.
\end{split}
\end{equation}

By definition of $\varphi$, and combining it with the property of $p$ in \eqref{eq_p_int_bdry}, we have
\begin{equation}\label{temp222}
\begin{split}
\int_{\partial h_{i}}\nabla \vphi\cdot \vec{n}d\sigma&=\int_{\partial h_{i}}\nabla \left(\frac{|x|^{2}}{2}\right)\cdot \vec{n}d\sigma+\int_{\partial h_{i}} \nabla p\cdot \vec{n}d\sigma\\
&=\int_{h_{i}}2dx+\int_{\partial h_{i}} \nabla p\cdot \vec{n}d\sigma =0.
\end{split}
\end{equation}
Plugging this into \eqref{temp111} gives $\mathcal{I}=0$. On the other hand, we also have
\begin{align*}
\mathcal{I}=&\int_{D} x\cdot \nabla fdx+\int_{D} \nabla p\cdot \nabla fdx=:E_{1}+E_{2}.
\end{align*}
 We compute
\begin{align}
E_{1}=\int_{D}x\cdot (1_D\ast \nabla \mathcal{N})dx&=\int_{D}\int_{D} \frac{1}{2\pi}\frac{x\cdot(x-y)}{|x-y|^{2}}dydx =\frac{|D|^{2}}{4\pi},\label{eq9}
\end{align} 
where the last equality is obtained by exchanging $x$ with $y$ and taking the average with the original integral.
For $E_{2}$, the divergence theorem yields that
\begin{align*}
E_{2}&=\int_{\partial D}p \nabla f\cdot \vec{n}d\sigma-\int_{D}p\lap fdx =\int_{\partial D}p \nabla f\cdot \vec{n}d\sigma-\int_{D}pdx.
\end{align*}
Using the property of $p$ in \eqref{eq1} and the fact that $\Delta f = 0$ in $h_i$, the divergence theorem yields
\begin{equation}\label{eq8}
\int_{\partial D}p\nabla f\cdot \vec{n}d\sigma=-\sum_{i=1}^{n}\int_{\partial h_{i}}p\nabla f\cdot \vec{n}d\sigma 
=-\sum_{i=1}^{n}c_{i}\int_{h_{i}}\Delta f dx =0, 
\end{equation}
As a result, we have $E_2 = -\int_D p dx$. If $D$ is neither a disk nor an annulus,  Proposition~\ref{talenti2} gives
\begin{align*}
\mathcal{I}=E_1 + E_2 =\frac{|D|^{2}}{4\pi}-\int_{D}pdx >0,
\end{align*}
contradicting $\mathcal{I}=0$.
\end{proof}

In the next corollary, we generalize the above result to a nonnegative stationary patch with multiple (disjoint) patches. 

\begin{cor}\label{cor_multiple_steady}
Let $\omega(x):=\sum_{i=1}^n \alpha_i1_{D_i}$, where $\alpha_i>0$, each $D_{i}$ is a bounded connected domain with $C^1$ boundary, and $D_{i}\cap D_{j}=\emptyset$ if $i\ne j$. Assume that $\omega$ is a stationary patch solution, that is, the function $f(x) := \omega * \mathcal{N}$ satisfies $\nabla^\perp f \cdot \vec{n} = 0$ on $\partial D_i$ for all $i = 1,\dots,n$. Then $\omega$ is radially symmetric up to a translation. 
\end{cor}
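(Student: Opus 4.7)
The plan is to mimic the proof of Theorem~\ref{theorem1} on each connected component $D_i$ separately, and then combine the resulting identities weighted by $\alpha_i$. For each $i$ let $p_i:\overline{D_i}\to\R$ be the function built in Lemma~\ref{p_def} applied to $D_i$ (with its holes $h_{i,k}$ and inner-boundary constants $c_{i,k}>0$), and set $\varphi_i:=\tfrac{|x|^2}{2}+p_i$. Consider
\[
\mathcal I_i := \int_{D_i}\nabla\varphi_i\cdot\nabla f\,dx,\qquad f:=\omega*\mathcal N.
\]
Exactly as in \eqref{temp111}--\eqref{temp222}, the three facts $\Delta\varphi_i=0$ in $D_i$, $f$ constant on each connected component of $\partial D_i$, and $\int_{\partial h_{i,k}}\nabla\varphi_i\cdot\vec n\,d\sigma=0$ for every $k$ together force $\mathcal I_i=0$.

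The second (direct) evaluation of $\mathcal I_i$ is where the interaction between patches appears. Splitting $\mathcal I_i=\int_{D_i}x\cdot\nabla f\,dx+\int_{D_i}\nabla p_i\cdot\nabla f\,dx$ and using that $\Delta f=\alpha_i$ on $D_i$ while $\Delta f=\sum_{j:\,D_j\subset h_{i,k}}\alpha_j 1_{D_j}$ inside $h_{i,k}$, the divergence theorem gives
\[
\int_{D_i}\nabla p_i\cdot\nabla f\,dx = -\alpha_i\int_{D_i}p_i\,dx - \sum_k c_{i,k}\sum_{j:\,D_j\subset h_{i,k}}\alpha_j|D_j|.
\]
Summing $\alpha_i\mathcal I_i=0$ over $i$ and symmetrizing $\iint\omega(x)\omega(y)\tfrac{x\cdot(x-y)}{|x-y|^2}\,dy\,dx$ as in \eqref{eq9} yields the master identity
\[
X = \sum_i\alpha_i^2\!\left(\frac{|D_i|^2}{4\pi}-\int_{D_i}p_i\,dx\right) + \frac{1}{2\pi}\sum_{i<j}\alpha_i\alpha_j|D_i||D_j|,
\]
where $X:=\sum_i\alpha_i\sum_k c_{i,k}\sum_{j:\,D_j\subset h_{i,k}}\alpha_j|D_j|\ge 0$.

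The decisive step is to bound $X$ from above by $\tfrac{1}{2\pi}\sum_{i<j}\alpha_i\alpha_j|D_i||D_j|$, using (i) $c_{i,k}\le\sup_{\overline{D_i}}p_i\le|D_i|/(2\pi)$ from Proposition~\ref{talenti2}, and (ii) the elementary observation that for disjoint connected sets at most one of ``$D_j$ lies in a hole of $D_i$'' or ``$D_i$ lies in a hole of $D_j$'' can hold, so the ordered pairs that index $X$ inject into the unordered pairs $\{i,j\}$. Together with $\int_{D_i}p_i\,dx\le|D_i|^2/(4\pi)$ (again Proposition~\ref{talenti2}), the master identity then forces all these inequalities to be saturated simultaneously. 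The Talenti equality cases force each $D_i$ to be a disk or an annulus, while saturation of (i)--(ii) forces every pair $\{D_i,D_j\}$ to be nested and $c_{i,k(i,j)}=|D_i|/(2\pi)$; the equality analysis inside Proposition~\ref{talenti2} then further implies that any $D_i$ containing some $D_j$ in its hole must be an annulus with a single hole, on whose inner boundary $p_i$ attains its maximum.

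The nesting relation is now a total order, so pick the outermost $D_{i_0}$; every other $D_j$ sits inside a hole of $D_{i_0}$. Each $D_j$ is a disk or annulus centered at some point $x_j$, and by Newton's theorem in two dimensions $1_{D_j}*\mathcal N(x)=\tfrac{|D_j|}{2\pi}\log|x-x_j|+\mathrm{const}$ outside $D_j$, which includes $\partial D_{i_0,0}$. For $f$ to be constant on the circle $\partial D_{i_0,0}$ (a circle centered at $x_{i_0}$), each such $\log|x-x_j|$ must be constant along that circle, which forces $x_j=x_{i_0}$ for every $j$. After translating the common center to the origin, $\omega$ is radially symmetric, as claimed. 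The principal technical obstacle is the combinatorial bookkeeping for the master identity and its upper bound: it is precisely the mutual exclusivity of the nesting relation that makes the ordered-pair upper bound on $X$ match the unordered-pair lower bound coming from the identity.
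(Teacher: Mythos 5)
Your first two-thirds of the argument are correct and, up to a trivial rearrangement, identical to the paper's proof: you use the same vector field $\vec v=-\nabla(\tfrac{|x|^2}{2}+p_i)$, the same identity $\mathcal I_i=0$ from the divergence theorem, and the same exact accounting of the Newtonian interaction terms. Your ``master identity'' $X=\sum_i\alpha_i^2(\tfrac{|D_i|^2}{4\pi}-\int_{D_i}p_i)+\tfrac{1}{2\pi}\sum_{i<j}\alpha_i\alpha_j|D_i||D_j|$ is precisely the paper's equation \eqref{eq53}, written as an equality rather than a one-sided bound; the saturation analysis (each $D_i$ is a disk or annulus, every pair is nested) is sound.

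The last step, however, has a genuine gap. You claim that because $f$ is constant on the outer circle $\partial D_{i_0,0}$ and $f=\alpha_{i_0}1_{D_{i_0}}*\mathcal N+\sum_{j\neq i_0}\tfrac{\alpha_j|D_j|}{2\pi}\log|x-x_j|$ there, ``each such $\log|x-x_j|$ must be constant along that circle.'' This does not follow: a sum of logarithmic potentials with positive weights can certainly be non-radial about the circle's center even when the total is constrained. (For instance, $\log|x-(\epsilon,0)|+\log|x+(\epsilon,0)|=\tfrac12\log((R^2+\epsilon^2)^2-4\epsilon^2x_1^2)$ is manifestly non-constant on $|x|=R$, so the individual terms being non-constant is not excluded by the sum being constant -- the implication you invoke is simply not a valid inference.) The conclusion $x_j=x_{i_0}$ for all $j$ is true, but it needs a real argument. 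One way: once $f$ is constant on the outer circle, uniqueness of the exterior Dirichlet problem plus matched growth at infinity gives $\sum_j a_j\log|x-x_j|=m\log|x-x_{i_0}|+\mathrm{const}$ outside the circle; by real-analytic continuation this identity propagates to $\mathbb R^2\setminus(\{x_j\}\cup\{x_{i_0}\})$, and comparing singular sets forces every $x_j=x_{i_0}$. The paper instead avoids the multi-mass situation entirely by an induction from the innermost patch outward: once $D_1,\dots,D_k$ are known concentric about $o$, the restriction of $f$ to the circle $\inb D_{k+1}$ is $\tfrac{C}{2\pi}\log|x-o|$ plus a constant (Lemma~\ref{lem_newton}), so at each step there is only a \emph{single} effective point mass and the constancy of one log on a circle does immediately force concentricity. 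I would either adopt that induction or spell out the analytic-continuation argument; as written, the sentence is a non sequitur.
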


\begin{proof}
Following similar notations as the beginning of Section~\ref{subsec_non_simply_connected}, we denote the outer boundary of $D_i$ by $\partial D_{i0}$, and the holes of each $D_i$ (if any) by $h_{ik}$ for $k=1,\dots, N_i$.  Let $p_i:\overline{D_i} \to \mathbb{R}$ be defined as in Lemma~\ref{p_def}, that is, $p_i$ satisfies
\begin{align*}
\begin{cases}
\lap p_i=-2 &\text{ in $ D_i$}\\
p_i=c_{ik} &\text{ on $\partial h_{ik}$}\\
p_i=0 & \text{ on $\partial D_{i0}$},
\end{cases}
\end{align*}
where $c_{ik}$ is chosen such that $\int_{\partial h_{ik}}\nabla p_i\cdot \vec{n}d\sigma=-2|h_{ik}|$.  We then define $\displaystyle \vphi:\cup_{i=1}^n \overline D_i \to \mathbb{R}$, such that in each $\overline D_i$ we have $\vphi =\vphi_i := \frac{|x|^2}{2}+p_i$. 

 Similar to Theorem~\ref{theorem1}, we compute 
 \[
 \mathcal{I}:=\int_{\mathbb{R}^2} \omega \nabla \vphi \cdot \nabla f dx= \sum_{i=1}^{n}\int_{D_i}\alpha_i\nabla \vphi_i\cdot \nabla fdx
 \]
  in two different ways.  On the one hand, since $f=\omega*\mathcal{N}$ is a constant on each connected component of $\partial D_i$, the same computation of Theorem~\ref{theorem1} yields that $\int_{D_i} \nabla \vphi_i\cdot \nabla fdx=0$, therefore $\mathcal{I}=0$.\\
  On the other hand, since $\nabla \vphi = x + \nabla p_i$ in each $D_i$, we break $\mathcal{I}$ into
\[ \mathcal{I}
 =\sum_{i,j=1}^{n}\alpha_i\alpha_j\int_{D_i}x\cdot \nabla(1_{D_j}*\mathcal{N})dx+\sum_{i,j=1}^{n}\alpha_i\alpha_j\int_{D_i}\nabla p_i\cdot \nabla(1_{D_j}*\mathcal{N})dx=:\mathcal{I}_1+\mathcal{I}_2.
\]
For $\mathcal{I}_1$, we compute
\begin{align}\label{I1_multiple}
\mathcal{I}_1&= \sum_{i,j=1}^{n} \frac{\alpha_i\alpha_j}{2}\left(\int_{D_i}x\cdot \nabla(1_{D_j}*\mathcal{N})dx + \int_{D_j}x\cdot \nabla(1_{D_i}*\mathcal{N})dx\right)\nonumber\\
&=\sum_{i,j=1}^{n} \frac{\alpha_i\alpha_j}{2}\left(\int_{D_i}\int_{D_j}\frac{x\cdot (x-y)}{2\pi|x-y|^2}dydx+\int_{D_j}\int_{D_i}\frac{x\cdot (x-y)}{2\pi|x-y|^2}dydx\right)\nonumber\\
&=\sum_{i,j=1}^{n} \frac{\alpha_i\alpha_j}{4\pi}|D_i||D_j|,
\end{align}  
where we exchanged $i$ with $j$ to get the first equality. For $\mathcal{I}_2$, we have
\[
\mathcal{I}_2=\sum_{i=1}^{n}\alpha_i^2\int_{D_i}\nabla p_i\cdot \nabla(1_{D_i}*\mathcal{N})dx+\sum_{i\ne j}\alpha_i\alpha_j\int_{D_i}\nabla p_i\cdot \nabla(1_{D_j}*\mathcal{N})dx=:I_{21}+I_{22}.
\]
By the same computation for $E_2$ in the proof of Theorem~\ref{theorem1}, we have
\begin{align}\label{I21_multiple}
I_{21}=-\sum_{i=1}^{n}\alpha_i^2\int_{D_i}p_i dx.
\end{align}

For $i\neq j$, we denote $j\prec i$ if $D_j$ is contained in a hole of $D_i$. (And if $D_j$ is not contained in any hole of $D_i$, we say $j\not\prec i$.) Using this notation, the divergence theorem directly yields that 
\begin{equation}\label{int_is_0*}
\int_{\partial D_{i}}p_{i}\nabla (1_{D_{j}}*\mathcal{N}) \cdot \vec n d\sigma = -\sum_{k=1}^{N_i} \int_{\partial h_{ik}}p_{i}\nabla (1_{D_{j}}*\mathcal{N}) \cdot \vec n d\sigma = 0 \quad \text{ if } j \not\prec i.
\end{equation}
And if $j\prec i$, then the divergence theorem and \eqref{estimate_p_general1} in Proposition~\ref{talenti2} yield 
\begin{equation}\label{crude_bd*}
\int_{\partial D_{i}}p_{i}\nabla (1_{D_{j}}*\mathcal{N}) \cdot \vec n d\sigma \ge -\sup_{\partial D_{i}}p_{i}|D_{j}| \geq -\frac{1}{2\pi} |D_i| |D_j|  \quad \text{ if } j \prec i.
\end{equation}
Hence it follows that
\begin{align}\label{I22_multiple}
I_{22}\ge -\sum_{i\neq j}\mathbbm{1}_{j\prec i}\frac{\alpha_i\alpha_j}{2\pi}|D_i||D_j| = -\sum_{i\neq j}(\mathbbm{1}_{j\prec i} + \mathbbm{1}_{i\prec j}) \frac{\alpha_i\alpha_j}{4\pi}|D_i||D_j|,
\end{align}
where the last step is obtained by exchanging $i,j$ and taking average with the original sum. Note that we have $\mathbbm{1}_{j\prec i} + \mathbbm{1}_{i\prec j}\leq 1$ for any $i\neq j$.
From \eqref{I1_multiple}, \eqref{I21_multiple} and \eqref{I22_multiple}, we obtain
\begin{equation}\label{eq53}
\mathcal{I} \ge \sum_{i=1}^{n}\alpha_i^2\left(\frac{|D_i|^2}{4\pi}-\int_{D_i}p_idx\right)+
\sum_{\underset{i\neq j}{j\not\prec i\text{ and }i\not\prec j}}\alpha_i\alpha_j\frac{|D_i||D_j|}{4\pi}.
\end{equation} 
Since we already know that $\mathcal{I}=0$ and all the summands in \eqref{eq53} are nonnegative, it follows that
\begin{equation*}
\frac{|D_i|^2}{4\pi}=\int_{D_i}p_idx \text{ for all $i=1,\ldots,n$} \text{ and }\left\{(i,j) : i\neq j, i\not\prec j \text{ and } j\not\prec i\right\}=\emptyset.
\end{equation*}
Therefore every $D_i$ is either a disk or an annulus by Proposition~\ref{talenti2} and they are nested. By relabeling the indices, we can assume that  $i\prec i+1$ for $i=1,\ldots ,n-1$.


Next we prove that all $D_i$'s are concentric by induction. For $k\geq 1$, suppose $D_1,\dots, D_k$ are known to be concentric about some $o\in\mathbb{R}^2$. To show $D_{k+1}$ is also centered at o, we break $f$ into
 \[
 f = \sum_{i=1}^{k}(\alpha_i1_{D_i})*\mathcal{N} + \sum_{i=k+1}^{n}(\alpha_i1_{D_i})*\mathcal{N}.
  \]
In the first sum, each $D_i$ is centered at $o$ for $i\leq k$, thus Lemma~\ref{lem_newton}(a) (which we prove right after this theorem) yields that $\sum_{i=1}^{k}(\alpha_i1_{D_i})*\mathcal{N} = \frac{C}{2\pi}\ln|x-o|$ on $\inb D_{k+1} $, where $C=\sum_{i=1}^k \alpha_i |D_i|>0$. In the second sum, for each $i\geq k+1$, since each $D_i$ is an annulus with $\inb D_{k+1}$ in its hole, Lemma~\ref{lem_newton}(b) gives that $1_{D_i}*\mathcal{N} \equiv \text{const}$ on  $\inb D_{k+1} $ for all $i\geq k+1$. Thus overall we have
$f = \frac{C}{2\pi}\ln|x-o| + C_2$ on $\inb D_{k+1}$ for $C>0$. Combining it with the assumption that $f$ is a constant on $ \inb D_{k+1}$, we know $D_{k+1}$ must also be centered at $o$, finishing the induction step.
 \end{proof}
 
 Now we state and prove the lemma used in the proof of Corollary~\ref{cor_multiple_steady}, which follows from standard properties of the Newtonian potential.  
 \begin{lem}\label{lem_newton}
 Assume $g\in L^\infty(\mathbb{R}^2)$ is radially symmetric about some $o\in\mathbb{R}^2$, and is compactly supported in $B(o,R)$. Then $\eta := g*\mathcal{N}$ satisfies the following:
 
 \vspace*{-0.2cm}
 \begin{enumerate}
 \item[(a)]
$ \displaystyle \eta(x) = \frac{\int_{\mathbb{R}^2} g dx }{2\pi} \ln|x-o|\,$ for all $x\in B(0,R)^c.
$ 
\smallskip
\item[(b)] If in addition we have $g\equiv 0$ in $B(o,r)$ for some $r\in(0,R)$, then 
$ \eta=\text{const}$ in $B(o,r)$.
 
 \end{enumerate}
 \end{lem}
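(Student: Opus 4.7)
The plan is a short exercise in potential theory, using the radial symmetry of $g$ together with the fact that $\mathcal{N}$ is the fundamental solution of the Laplacian. Without loss of generality, I would assume $o=0$, since both statements are translation-invariant.

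For part (a), the idea is to use uniqueness of radial harmonic functions. Because $g$ is radial and the Newtonian potential commutes with rotations, $\eta=g*\mathcal N$ is radial; and since $g$ vanishes outside $B(0,R)$, we have $\Delta\eta=g=0$ on $\{|x|>R\}$. Any radial harmonic function on this exterior annular region is of the form $a\ln|x|+b$. To identify $a$, I would apply the divergence theorem on $B(0,\rho)$ for some $\rho>R$:
\[
2\pi a = \int_{\partial B(0,\rho)} \partial_r \eta\, d\sigma = \int_{B(0,\rho)} \Delta \eta\, dx = \int_{\mathbb R^2} g\, dx,
\]
giving $a = \frac{1}{2\pi}\int g\,dx$. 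To pin down $b=0$, I would use the defining integral $\eta(x)=\frac{1}{2\pi}\int g(y)\ln|x-y|\,dy$ and the expansion $\ln|x-y|=\ln|x|+O(|x|^{-1})$ uniformly for $y\in B(0,R)$ as $|x|\to\infty$, so that $\eta(x)-\frac{\int g}{2\pi}\ln|x|\to 0$ at infinity, forcing $b=0$.

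For part (b), the argument is even shorter. The hypothesis $g\equiv 0$ on $B(0,r)$ gives $\Delta\eta=0$ there, so $\eta$ is harmonic in $B(0,r)$. It remains radial about $0$ by the same symmetry argument. A radial harmonic function on a disk has the form $a\ln|x|+b$, but since $\eta$ is continuous (indeed bounded and locally $C^{1,\alpha}$, as $g\in L^\infty$ is compactly supported) at $x=0$, the logarithmic term must vanish, so $\eta$ is constant on $B(0,r)$.

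The whole proof is really just two observations: radial symmetry of the convolution, and the explicit form of radial harmonic functions in two dimensions. The only place requiring mild care is confirming $b=0$ in part (a), which needs the logarithmic asymptotic of $\mathcal N$ at infinity; everything else is immediate.
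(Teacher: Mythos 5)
Your proof is correct and takes essentially the same approach as the paper: both use radial symmetry of $\eta$, the divergence theorem over circles centered at $o$ to identify the logarithmic coefficient, and the logarithmic asymptotics of $\mathcal{N}$ at infinity to rule out the additive constant. The only cosmetic difference is in part (b), where the paper shows $\nabla\eta\equiv 0$ directly via the same divergence-theorem computation, whereas you invoke the general form $a\ln|x|+b$ of radial harmonic functions and kill the log term by continuity at the origin — a trivial variant of the same idea.
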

 
 \begin{proof}
To show (a), we take any $x\in B(o,R)^c$ and consider the circle $\Gamma \ni x$ centered at $o$. By radial symmetry of $\eta$ about $o$ and the divergence theorem, we have
 \[
\nabla \eta \cdot  \frac{x}{|x|}= \frac{1}{|\Gamma|} \int_{\Gamma} \nabla \eta \cdot \vec{n} d\sigma = \frac{1}{|\Gamma|} \int_{\interior(\Gamma)} \Delta \eta dx =\frac{ \int_{\mathbb{R}^2} g(x) dx}{2\pi|x-o|},
 \]
which implies $ \eta(x) =\frac{\int g dx }{2\pi} \ln|x-o|  + C$. To show that $C=0$, for $|x|$ sufficiently large we have 
\[
|C| = \Big|  \int_{B(o,R)} g(x) (\mathcal{N}(x-y)  - \mathcal{N}(x-o)) dy\Big| \leq \|g\|_{L^\infty(\mathbb{R}^2)} \sup_{y\in B(o,R)} |\mathcal{N}(x-o)-\mathcal{N}(x-y)|,
\] and by sending $|x|\to\infty$ we have $C=0$, which gives (a). To show (b), it suffices to prove that $\nabla \eta \equiv 0$ in $B(o,r)$. Take any $x\in B(o,r)$, and consider the circle $\Gamma_2 \ni x$ centered at $o$. Again, symmetry and the divergence theorem yield that \[
|\nabla \eta(x)| = \frac{1}{|\Gamma_2|}\int_{\Gamma_2} \nabla \eta \cdot \vec{n} d\sigma = \frac{1}{|\Gamma_2|} \int_{\interior(\Gamma_2)} \Delta \eta dx =  \frac{\int_{\interior(\Gamma)} g(x) dx}{|\Gamma|}  = 0,\] 
finishing the proof of (b).
 \end{proof}


\subsection{Radial symmetry of non-simply-connected rotating patches with $\Omega<0$}

In this subsection, we show that a nonnegative uniformly rotating patch solution (with multiple disjoint patches) must be radially symmetric if the angular velocity $\Omega<0$. 

\begin{thm}\label{thm_omega<0}
For $i=1,\dots,n$, let $D_i$ be a connected domain with $C^1$ boundary, and assume $D_i\cap D_j=\emptyset$ for $i\ne j$. If $\omega =\sum_{i=1}^n \alpha_i 1_{D_i}$ is a nonnegative rotating patch solution with $\alpha_i > 0$ and angular velocity $\Omega < 0$, then $\omega$ must be radially symmetric.
\end{thm}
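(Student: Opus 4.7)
The plan is to extend the variational argument of Corollary~\ref{cor_multiple_steady} by incorporating the rotation term, and to exploit a boundary-flux identity that makes this new contribution carry the correct sign whenever $\Omega<0$. For each $i=1,\dots,n$, let $p_i\colon\overline{D_i}\to\mathbb{R}$ be the function produced by Lemma~\ref{p_def}, with $p_i=0$ on the outer boundary $\partial D_{i0}$ and positive constants $c_{ik}$ on the inner boundaries $\partial h_{ik}$ chosen so that $\int_{\partial h_{ik}}\nabla p_i\cdot\vec{n}\,d\sigma=-2|h_{ik}|$. Define $\varphi_i:=\tfrac{1}{2}|x|^2+p_i$, so that $\Delta\varphi_i=0$ in $D_i$ and $\int_{\partial h_{ik}}\nabla\varphi_i\cdot\vec{n}\,d\sigma=0$. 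With $f_\Omega:=\omega*\mathcal{N}-\tfrac{\Omega}{2}|x|^2$, consider
\[
\mathcal{I}:=\sum_{i=1}^n \alpha_i\int_{D_i}\nabla\varphi_i\cdot\nabla f_\Omega\,dx,
\]
and compute it in two ways. Since $f_\Omega$ is constant on every connected component of each $\partial D_i$ by~\eqref{eq_rotating20}, the divergence-theorem computation from Theorem~\ref{theorem1} (using $\Delta\varphi_i=0$ and the vanishing of the inner fluxes) yields $\mathcal{I}=0$.

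Next, split $\mathcal{I}=\mathcal{I}_{\mathrm{coul}}-\Omega\,\mathcal{I}_{\mathrm{rot}}$, where $\mathcal{I}_{\mathrm{coul}}:=\sum_i\alpha_i\int_{D_i}\nabla\varphi_i\cdot\nabla(\omega*\mathcal{N})\,dx$ is exactly the quantity analyzed in Corollary~\ref{cor_multiple_steady}, and $\mathcal{I}_{\mathrm{rot}}:=\sum_i\alpha_i\int_{D_i}\nabla\varphi_i\cdot x\,dx$. The bound~\eqref{eq53} gives $\mathcal{I}_{\mathrm{coul}}\geq 0$, with equality forcing each $D_i$ to be a disk or annulus and all pairs to be nested. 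The new ingredient is the identity
\[
\int_{D_i}\nabla\varphi_i\cdot x\,dx \;=\; \int_{D_i}|\nabla\varphi_i|^2\,dx.
\]
This follows by writing $x=\nabla(|x|^2/2)$ and applying the divergence theorem: using $\Delta\varphi_i=0$, the fact that $|x|^2/2-\varphi_i=-p_i$ vanishes on $\partial D_{i0}$ and equals the constant $-c_{ik}$ on $\partial h_{ik}$, together with $\int_{\partial h_{ik}}\nabla\varphi_i\cdot\vec{n}\,d\sigma=0$, collapses the boundary terms to $\int_{\partial D_i}\varphi_i\nabla\varphi_i\cdot\vec{n}_{D_i}\,d\sigma=\int_{D_i}|\nabla\varphi_i|^2\,dx$. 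Consequently, for $\Omega<0$,
\[
-\Omega\,\mathcal{I}_{\mathrm{rot}} \;=\; -\Omega\sum_{i=1}^n\alpha_i\int_{D_i}|\nabla\varphi_i|^2\,dx \;\geq\; 0.
\]

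Combining both computations gives $0=\mathcal{I}=\mathcal{I}_{\mathrm{coul}}+(-\Omega)\sum_i\alpha_i\int_{D_i}|\nabla\varphi_i|^2$, so both nonnegative terms must vanish. From $\mathcal{I}_{\mathrm{coul}}=0$ and the analysis of Corollary~\ref{cor_multiple_steady}, each $D_i$ is a disk or annulus. From $\int_{D_i}|\nabla\varphi_i|^2=0$ and the harmonicity of $\varphi_i$, each $\varphi_i$ is constant in $D_i$; but when $D_i$ is a disk or annulus of outer radius $R_i$ centered at some $o_i\in\mathbb{R}^2$, an explicit computation via uniqueness of Poisson's equation gives $p_i(x)=\tfrac{1}{2}(R_i^2-|x-o_i|^2)$, so $\nabla\varphi_i\equiv o_i$, forcing $o_i=0$. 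Therefore every $D_i$ is a disk or annulus centered at the origin, and $\omega$ is radially symmetric. The main technical point of the proof—and the reason the argument works—is the identity $\int_{D_i}\nabla\varphi_i\cdot x=\int_{D_i}|\nabla\varphi_i|^2$, whose validity hinges precisely on the choice of constants $c_{ik}$ in Lemma~\ref{p_def} that preserves the area of each hole along the perturbation generated by $-\nabla\varphi_i$.
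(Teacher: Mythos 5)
Your proof is correct and follows essentially the same strategy as the paper's: the same choice of $\varphi_i$, the same computation giving $\mathcal{I}=0$, and the same reduction of $\mathcal{I}_{\mathrm{coul}}$ to the estimate \eqref{eq53} from Corollary~\ref{cor_multiple_steady}. The only genuine variation is in the treatment of the rotation term: the paper bounds $\mathcal{I}_2 = \sum_i \alpha_i\int_{D_i}(x+\nabla p_i)\cdot x\,dx \ge 0$ by Cauchy--Schwarz together with the inequality $\int_{D_i}|\nabla p_i|^2\le\int_{D_i}|x|^2$ of Lemma~\ref{p_bd}, whereas you reformulate it as the single identity $\int_{D_i}\nabla\varphi_i\cdot x\,dx=\int_{D_i}|\nabla\varphi_i|^2\,dx$, whose nonnegativity (and equality case) is immediate. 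These two bounds are equivalent — your identity is a direct consequence of the first equality $-\int_{D_i}\nabla p_i\cdot x = \int_{D_i}|\nabla p_i|^2$ already appearing in Lemma~\ref{p_bd} — but your packaging is arguably cleaner, since it makes the quantity manifestly a square and avoids Cauchy--Schwarz, and it also makes the rigidity step ($\varphi_i$ constant, hence $D_i$ a disk or annulus centered at the origin) transparent without re-invoking the equality case of Lemma~\ref{p_bd}.
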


\begin{proof}
In this proof, let 
\[
f_\Omega(x):=\omega*\mathcal{N}-\frac{\Omega}{2}|x|^2.
\] In each $D_i$, let us define $p_i$  as in Lemma~\ref{p_def}. Let $\vphi_i:=\frac{|x|^2}{2}+p_i$ in each $D_i$. As in Theorem~\ref{thm_omega<0}, we compute $\mathcal{I}:=\sum_{i=1}^{n}\alpha_i\int_{D_i}\nabla \vphi_i\cdot \nabla f_\Omega dx$ in two different ways. Since $f_\Omega$ is a constant on each connected component of $\partial D_i$ and $\nabla \vphi_i$ is divergence free in $D_i$, we still have $\mathcal{I}=0$ as in the proof of Theorem~\ref{theorem1}.

 On the other hand, we have
 \begin{align*}
 \mathcal{I}&=\sum_{i=1}^n\alpha_i\int_{D_i} (x+\nabla p_i)\cdot \nabla \left(\omega*\mathcal{N}\right)dx + \underbrace{(-\Omega)}_{\geq 0} \sum_{i=1}^n\alpha_i\int_{D_i} (x+\nabla p_i) \cdot x dx\\
&=:\mathcal{I}_1+(-\Omega) \mathcal{I}_2.
 \end{align*}
 As in the proof of Corollary~\ref{cor_multiple_steady}, we have
 \begin{equation}\label{ineq_i1_rotate}
 \mathcal{I}_1=\sum_{i=1}^{n}\alpha_i^2\left(\frac{|D_i|^2}{4\pi}-\int_{D_i}p_idx\right)+ \sum_{\underset{i\neq j}{j\not\prec i\text{ and }i\not\prec j}} \alpha_i\alpha_j\frac{|D_i||D_j|}{4\pi} \geq 0.
 \end{equation}
Note that $\mathcal{I}_1 = 0$ as long as all $D_i$'s are nested annuli/disk, even if they are not concentric. 
 For $\mathcal{I}_{2}$, using Cauchy-Schwarz inequality in the second step, and Lemma~\ref{p_bd} in the third step (which we will prove right after this theorem), we have
\begin{equation}\label{ineq_i2_rotate}
\begin{split}
\mathcal{I}_{2}&= \sum_{i=1}^n \alpha_i \left( \int_{D_i} |x|^2 dx +  \int_{D_i} \nabla p_i \cdot x dx \right)\\
&\geq  \sum_{i=1}^n \alpha_i \left(  \int_{D_i} |x|^2 dx  -\Big(\int_{D_i} |\nabla p|^2 dx \Big)^{1/2} \Big(\int_{D_i} |x|^2 dx \Big)^{1/2}\right) \geq 0.
\end{split}
\end{equation}
Combining \eqref{ineq_i1_rotate} and \eqref{ineq_i2_rotate} gives us $\mathcal{I}\geq 0$. If there is any $D_i$ that is not a disk or annulus centered at the origin, Lemma~\ref{p_bd} would give a strict inequality in the last step of \eqref{ineq_i2_rotate},
which leads to $\mathcal{I}>0$ and thus contradicts with $\mathcal{I}=0$. 
\end{proof}

Now we state and prove the lemma that is used in the proof of Theorem~\ref{thm_omega<0}.

\begin{lem}\label{p_bd}Let $D$ be a connected domain with $C^1$ boundary, and let $p$ be as in Lemma~\ref{p_def}. Then we have
\begin{equation}\label{p_vs_x2}
-\int_{D}\nabla p\cdot xdx = \int_{D}|\nabla p|^{2}dx\le \int_{D} |x|^{2}dx.
\end{equation}
Furthermore, in the inequality, ``='' is achieved if and only if $D$ is a disk or annulus centered at the origin.
\end{lem}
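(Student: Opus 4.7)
The plan is to derive both parts from integration by parts combined with the structural properties of $p$ furnished by Lemma~\ref{p_def}. First I would establish the identity $-\int_D \nabla p\cdot x\, dx = \int_D |\nabla p|^2\, dx$ by introducing the divergence-free field $\vec v := x + \nabla p$ already used in the proof of Theorem~\ref{theorem1}; the computation in \eqref{temp222} shows that $\vec v$ has vanishing flux through every inner boundary $\partial h_i$. Writing $\nabla p = \vec v - x$, one has
\[
\int_D |\nabla p|^2\, dx = \int_D \nabla p\cdot \vec v\, dx - \int_D \nabla p\cdot x\, dx,
\]
and integrating the cross term by parts, using $\nabla\cdot\vec v = 0$ in $D$, the fact that $p$ is constant on each component of $\partial D$ ($p=c_i$ on $\partial h_i$, $p=0$ on $\partial D_0$), and the vanishing of the flux of $\vec v$ through every inner boundary, kills $\int_D \nabla p\cdot\vec v\, dx$ and delivers the claimed identity.

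Second, the inequality $\int_D |\nabla p|^2\, dx \le \int_D |x|^2\, dx$ falls out of this identity by Cauchy--Schwarz:
\[
\int_D |\nabla p|^2\, dx = -\int_D \nabla p\cdot x\, dx \le \Bigl(\int_D |\nabla p|^2\, dx\Bigr)^{1/2}\Bigl(\int_D |x|^2\, dx\Bigr)^{1/2},
\]
and dividing by $\bigl(\int_D |\nabla p|^2\, dx\bigr)^{1/2}$ then squaring gives the bound.

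Finally, for the equality characterization, equality in Cauchy--Schwarz forces $\nabla p = -\lambda x$ pointwise in $D$ for some constant $\lambda \ge 0$; taking divergence and invoking $\Delta p = -2$ fixes $\lambda = 1$, so $p(x) = C - \tfrac{|x|^2}{2}$ for some constant $C$. The boundary values then force $|x|^2 = 2C$ on $\partial D_0$ and $|x|^2 = 2(C-c_i)$ on each $\partial h_i$, so every boundary component of $D$ is a circle centered at the origin. Since $D$ is connected and bounded by such concentric circles, it can contain at most one hole, meaning $D$ is either a disk or an annulus centered at the origin. The main technical point requiring care is really just sign bookkeeping: one must track the orientation of $\vec n$ on $\partial h_i$ consistently with the convention adopted in Lemma~\ref{p_def} (where $\int_{\partial h_i}\nabla p\cdot\vec n\, d\sigma = -2|h_i|$) when doing both integrations by parts, but once the orientations are aligned the proof is short and clean.
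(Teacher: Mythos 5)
Your argument is correct and follows essentially the same route as the paper: both proofs establish the identity $\int_D|\nabla p|^2\,dx=-\int_D\nabla p\cdot x\,dx$ by integration by parts, using that $p$ is locally constant on $\partial D$ together with the flux normalization $\int_{\partial h_i}(\nabla p+x)\cdot\vec n\,d\sigma=0$ from Lemma~\ref{p_def}, and then conclude with an elementary inequality. The paper applies Young's inequality pointwise, which delivers $-\nabla p=x$ directly at equality, whereas you use Cauchy--Schwarz and then pin down the proportionality constant $\lambda=1$ via $\Delta p=-2$; this is a cosmetic difference, and both versions are sound.
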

\begin{proof}
We compute
\begin{align*}
\int_{D}|\nabla p|^{2}dx&=\int_{\partial D}p\nabla p\cdot \vec{n}d\sigma+\int_{D}2pdx\\
&=-\int_{\partial D} p x\cdot \vec{n}d\sigma +\int_{D}2pdx,
\end{align*}
where in the last equality we use that $p$ is constant along each $\partial h_i$, as well as the following identity due to \eqref{eq_p_int_bdry} and the divergence theorem (here $\vec n$ is the outer normal of $h_i$):
\[
\int_{\partial h_i} \nabla p\cdot \vec{n}d\sigma=-2|h_i| =-\int_{h_i}\lap \frac{|x|^{2}}{2}dx =-\int_{\partial h_i}x\cdot \vec{n}d\sigma.
\] On the other hand, the divergence theorem yields
\begin{align*}
-\int_{D} \nabla p\cdot x dx=-\int_{\partial D}px\cdot \vec{n}d\sigma+\int_{D}2pdx.
\end{align*} 
Therefore using Young's inequality $-\nabla p\cdot x \leq \frac{1}{2}|\nabla p|^2 +  \frac{1}{2}|x|^2$ (where the equality is achieved if and only if $-\nabla p = x$), we have
\begin{align*}
\int_{D}|\nabla p|^{2}dx&=-\int_{D}\nabla p\cdot xdx\leq \frac{1}{2}\int_{D}|\nabla p|^{2}dx +\frac{1}{2}\int_{D}|x|^{2}dx,
\end{align*}
which proves \eqref{p_vs_x2}. Here the equality is achieved if and only if $-\nabla p = x$ in $D$, which is equivalent with $p + \frac{|x|^2}{2}$ being a constant in $D$, and it can be extended to $\bar D$ due to continuity of $p$. By our construction of $p$ in Lemma~\ref{p_def}, $p$ is already a constant on each connected component of $\partial D$, implying $\frac{|x|^2}{2}$ is constant on each piece of $\partial D$, hence $\partial D$ must be a family of circles centered at the origin. By the assumption that $D$ is connected, it must be either a disk or annulus centered at the origin.\end{proof}

\subsection{Radial symmetry of non-simply-connected rotating patches with $\Omega \geq \frac{1}{2}$}

In this final subsection for patches, we consider a bounded domain $D$ with $C^1$ boundary. $D$ can have multiple connected components, and each connected component can be non-simply-connected. If $1_D$ is a rotating patch solution to the Euler equation with angular velocity $\Omega\ge\frac{1}{2}$, we will show $D$ must be radially symmetric and centered at the origin.

To do this, one might be tempted to proceed as in Theorem~\ref{thm_simply_connected} and replace $p:D\to \mathbb{R}$ by the function defined in Lemma~\ref{p_def}. Here the first way of computing $\mathcal{I} = \int_D (x + \nabla p) \cdot \nabla f_\Omega dx$ still yields $\mathcal{I}=0$, but the second way gives some undesired terms caused by the holes $h_i$:
\[
\mathcal{I} = \frac{1}{4\pi} |D|^2 - \Omega\int_D |x|^2 dx + (2\Omega - 1) \int_D p dx + 2\Omega \sum_{i=1}^n p|_{\partial h_i} |h_i|.
\]
Due to the last term on the right hand side, we are unable to show $\mathcal{I}\leq 0$ when $\Omega \geq \frac{1}{2}$ as we did before in Theorem~\ref{thm_simply_connected}. For this reason, we take a different approach in the next theorem. Instead of defining $p$ as a function in $D$ and $\mathcal{I}$ as an integral in $D$, we want to define them in $D^c$. But since $D^c$ is unbounded, we define $p^R$ and $\mathcal{I}_R$ in a truncated set $B(0,R)\setminus D^c$, and then use two different ways to compute $\mathcal{I}_R$. By sending $R\to\infty$, we will show that the two ways give a contradiction unless $D$ is radially symmetric.

 \begin{thm}\label{clockwise rotating_patch}
  For a bounded domain $D$ with $C^1$ boundary, assume that $1_D$ is a rotating patch solution to the Euler equation with angular velocity $\Omega\ge\frac{1}{2}$. Then $D$ is radially symmetric and centered at the origin. 
 \end{thm}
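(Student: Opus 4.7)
The plan is to adapt the variational strategy of Theorem~\ref{thm_simply_connected} to the truncated exterior region $\Omega_{R}:=B(0,R)\setminus\overline{D}$ with $R$ large enough that $D\subset B(0,R)$, as suggested by the outline preceding the statement. The key object is an exterior analog of Lemma~\ref{p_def}: define $p^{R}:\overline{\Omega_{R}}\to\mathbb{R}$ to be the unique function satisfying $\Delta p^{R}=2$ in $\Omega_{R}$, $p^{R}\equiv 0$ on $\partial B(0,R)$, and $p^{R}\equiv c_{i}^{R}$ on each connected component $\Gamma_{i}$ of $\partial D$, with the constants $c_{i}^{R}$ chosen so that $\varphi^{R}:=-|x|^{2}/2+p^{R}$ (which is automatically harmonic in $\Omega_{R}$) has vanishing flux $\int_{\Gamma_{i}}\nabla\varphi^{R}\cdot \vec{n}_{\Omega_{R},\mathrm{out}}\,d\sigma=0$ across every $\Gamma_{i}$. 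Existence and uniqueness of the $c_{i}^{R}$'s follows from the same Gershgorin invertibility argument used in Lemma~\ref{p_def}. Define
\[
\mathcal{I}_{R}:=\int_{\Omega_{R}}\nabla\varphi^{R}\cdot\nabla f_{\Omega}\,dx
\]
and compute $\mathcal{I}_{R}$ in two different ways.

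For the first way, integration by parts together with the harmonicity of $\varphi^{R}$, the piecewise constancy of $f_{\Omega}$ on $\partial D$, and the vanishing-flux condition on each $\Gamma_{i}$ reduce $\mathcal{I}_{R}$ to a boundary integral over $\partial B(0,R)$. Since $\int_{\partial B(0,R)}\nabla\varphi^{R}\cdot \vec n\,d\sigma=0$ by the divergence theorem, we may subtract the leading-order asymptotic constant $k_{R}:=\tfrac{|D|}{2\pi}\log R-\tfrac{\Omega}{2}R^{2}$ and write
\[
\mathcal{I}_{R}=\int_{\partial B(0,R)}(f_{\Omega}-k_{R})\,\nabla\varphi^{R}\cdot \vec n\,d\sigma.
\]
The multipole expansion of $1_{D}*\mathcal{N}$ gives $|f_{\Omega}-k_{R}|=O(1/R)$ on $\partial B(0,R)$. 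Meanwhile the shifted potential $\tilde\varphi^{R}:=\varphi^{R}+R^{2}/2$ is harmonic in $\Omega_{R}$, vanishes on $\partial B(0,R)$, and is bounded on $\partial D$ uniformly in $R$; a standard logarithmic estimate for harmonic functions in annular regions then yields $|\nabla\tilde\varphi^{R}|=O(1/(R\log R))$ on $\partial B(0,R)$. Combining these, $\mathcal{I}_{R}\to 0$ as $R\to\infty$.

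For the second way, expand $\nabla\varphi^{R}=-x+\nabla p^{R}$ and integrate by parts each piece, using $\Delta f_{\Omega}=-2\Omega$ in $\Omega_{R}$ and the boundary values of $p^{R}$. Exactly as in Theorem~\ref{theorem1}, each hole $h_{i}$ contributes zero to $\mathcal{I}_{R}$ by harmonicity of $\varphi^{R}$ and the vanishing-flux property on $\partial h_{i}$, so only the annular component $A_{R}$ of $\Omega_{R}$ contributes, and a careful bookkeeping yields
\[
\mathcal{I}_{R}=2|W|c_{0}-R\!\!\int_{\partial B(0,R)}\!\!\!f_{\Omega}\,d\sigma+2\!\int_{A_{R}}\!\!f_{\Omega}\,dx+c_{0}^{R}\bigl(2\Omega|W|-|D|\bigr)+2\Omega\!\int_{A_{R}}\!\!p^{R}\,dx,
\]
where $W=\interior(\partial D_{0})$ is the filled-in hull of $D$ and $c_{0}=f_{\Omega}|_{\partial D_{0}}$. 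Decomposing $p^{R}=|x|^{2}/2-R^{2}/2+\tilde h^{R}$ with $\tilde h^{R}$ harmonic in $A_{R}$, vanishing on $\partial B(0,R)$, and converging on compacta to a bounded harmonic $\tilde h^{\infty}$ in the exterior of $W$, and using the multipole expansion of $f_{\Omega}$, one checks that all the growing contributions (of orders $R^{4}$, $R^{2}\log R$, and $R^{2}$) cancel, leaving a finite limit $\mathcal{I}_{\infty}:=\lim_{R\to\infty}\mathcal{I}_{R}$.

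Equating the two computations forces $\mathcal{I}_{\infty}=0$. The heart of the proof, and the main obstacle I expect, is then to establish a rigidity inequality of Talenti type for this exterior problem: namely, that $\mathcal{I}_{\infty}\leq 0$ whenever $\Omega\geq 1/2$, with strict inequality unless $D$ is a disk centered at the origin. I expect this to come from combining a sharp lower bound for $\int_{D}|x|^{2}$ in terms of $|D|$ and $|W|$ with exterior analogs of the Talenti bounds in Proposition~\ref{talenti2} applied to $\tilde h^{\infty}$, so that equality in the combined estimate forces $D$ to be a disk centered at the origin. Once this rigidity is in place, $\mathcal{I}_{\infty}=0$ gives the desired conclusion. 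The fully disconnected case is then handled componentwise, with only cosmetic changes, analogous to the extension from Theorem~\ref{theorem1} to Corollary~\ref{cor_multiple_steady}.
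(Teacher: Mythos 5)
Your setup — truncating to $B(0,R)\setminus\overline D$, building a divergence-free perturbation field from a Poisson solve with component-wise constant boundary values chosen for zero flux, then computing $\mathcal{I}_R$ two ways and letting $R\to\infty$ — matches the paper's architecture, and your first computation ($\mathcal{I}_R\to 0$ via the multipole expansion of $f_\Omega$ and a logarithmic gradient bound on the shifted harmonic function) is essentially the paper's Lemma~\ref{phi_convergence}. Your construction of $p^R$ on the whole exterior region is also equivalent to the paper's component-by-component definition, up to a sign convention. So far you and the paper agree.

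The gap is exactly where you flag it: the second computation. You expand $\nabla\varphi^R$ directly, assert (without verifying) that the $R^4$, $R^2\log R$, $R^2$ terms cancel, and then write that you ``expect'' to need a new ``exterior Talenti-type'' rigidity inequality. That expectation is the hard part, and the paper sidesteps it entirely by a specific algebraic move: since $1_{B_R}*\mathcal{N}-\tfrac{|x|^2}{4}$ is constant on $B_R$, one may rewrite
\[
f_\Omega \;=\; -\,1_{B_R\setminus\overline D}*\mathcal{N}\;-\;\Bigl(\tfrac{\Omega}{2}-\tfrac14\Bigr)|x|^2\;+\;C_R \quad\text{in }B_R,
\]
so that
\[
\mathcal{I}_R\;=\;-\int_{B_R\setminus\overline D}\nabla\bigl(1_{B_R\setminus\overline D}*\mathcal{N}\bigr)\cdot\nabla\varphi^R\,dx
\;-\;\frac{2\Omega-1}{2}\int_{B_R\setminus\overline D}x\cdot\nabla\varphi^R\,dx\;=:\;-\mathcal{J}_R^1-\tfrac{2\Omega-1}{2}\mathcal{J}_R^2.
\]
Both $\mathcal{J}_R^1$ and $\mathcal{J}_R^2$ are now \emph{interior} quantities for the domain $B_R\setminus\overline D$, to which the already-established Proposition~\ref{talenti2} (for $\mathcal{J}_R^1$) and Lemma~\ref{p_bd} (for $\mathcal{J}_R^2$) apply directly, with nonnegativity and rigidity exactly as in the $\Omega\le 0$ case. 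Note the sign: $\Omega\ge\tfrac12$ makes the coefficient $-\tfrac{2\Omega-1}{2}$ nonpositive, which is what makes the same inequalities work in this regime. No new exterior rearrangement inequality is required, which is why your route stalls where it does.

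There is a second, subtler missing piece you do not mention: even after the nonnegativity of $\mathcal{J}_R^1$, $\mathcal{J}_R^2$, the contributions coming from the outer component $D_{0,R}$ are $R$-dependent and a priori could degenerate to $0$ as $R\to\infty$ even if $\partial D$ is not a circle. The paper's Lemma~\ref{lem2} supplies the needed uniform-in-$R$ positive lower bound on $\tfrac{1}{2\pi}|D_{0,R}|-\sup_{D_{0,R}}p_{0,R}$ in terms of the fixed geometry of $V=B_R\setminus D_{0,R}$; this quantitative compactness argument (a subsequence of $\tilde\varphi_{R_i}$ converging to a harmonic limit, combined with the equality case of Talenti) is a genuine analytical ingredient. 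Finally, the paper also needs a separate argument to pin the center to the origin (split into $\Omega>\tfrac12$ and $\Omega=\tfrac12$, the latter using Lemma~\ref{lem_newton}); your sketch does not address this either. As written, the proposal is an honest and correct outline of the variational scaffolding, but the decisive reformulation of $f_\Omega$, the quantitative Lemma~\ref{lem2}, and the centering argument are all missing.
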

 \begin{proof}
 Since $D$ is bounded, let us choose $R_0>0$ such that $B_{R_0}\supset D$. For any $R>R_0$, consider the domain $B_R \setminus \overline{D}$, which may have multiple connected components. We call the component touching $\partial B_R$ as  $D_{0,R}$, and name the other connected components by $U_1, \dots, U_n$. Throughout this proof we assume that $n\geq 1$: if not, then each connected component of $D$ is simply connected, which has been already treated in Theorem~\ref{thm_simply_connected} and Remark~\ref{rmk_disconnected}.  We also define $V := B_R \setminus D_{0,R}$, which is the union of $D$ and all its holes. Note that $V$ may have multiple connected components, but each must be simply-connected. See Figure~\ref{fig_inverse} for an illustration of $D_{0,R}$, $\{U_i\}_{i=1}^n$ and $V$.
\begin{figure}[h!]
\begin{center}
\includegraphics[scale=0.9]{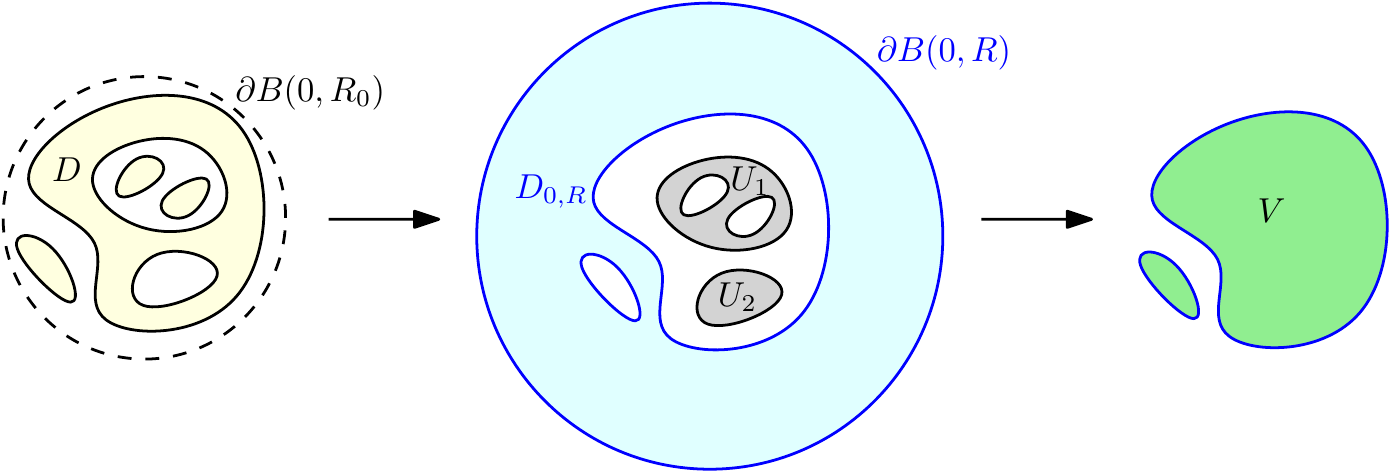}
\caption{For a set $D \subset B(0,R_0)$ (the whole yellow region on the left), the middle figure illustrates the definition of $D_{0,R}$ (the blue region), $\{U_i\}$ (the gray regions), and right right figure illustrate $V = B_R \setminus D_{0,R}$ (the green region). \label{fig_inverse}}
\end{center}
\end{figure}

To prove the theorem, the key idea is to define $p^R$ and $\mathcal{I}_R$ in $B_R \setminus D$, instead of in $D$.  Let $p_{0,R}$ and $p_i$ be defined as in Lemma~\ref{p_def} in $D_{0,R}$ and $U_i$ respectively, then set $\vphi_{0,R}:=p_{0,R}+\frac{|x|^2}{2}$ in $D_{0,R}$, and $\vphi_i:=p_i +\frac{|x|^2}{2}$ in $U_i$ for $i=1,\dots, n$. Finally, define $p^R$ and $\vphi^R: \mathbb{R}^2\to\mathbb{R}$ as
\[
p^R:=p_{0,R}1_{D_{0,R}}+\sum_{i=1}^{n}p_i1_{U_i},\quad\quad \vphi^R:=\vphi_{0,R}1_{D_{0,R}}+\sum_{i=1}^{n}\vphi_{i}1_{U_i}.
\]

Since $1_D$ rotates with angular velocity $\Omega\geq \frac{1}{2}$, we know $f_\Omega:=1_D*\mathcal{N}-\frac{\Omega}{2}|x|^2$ is constant on each connected component of $\partial D$.  Next we will compute 
\begin{align}
\mathcal{I}_{R}:=\int_{B_R\setminus \overline{D}}\nabla f_\Omega \cdot \nabla \vphi^R dx
\end{align}
in two different ways. If some connected component of $\partial D$ is not a circle, we will derive a contradiction by sending $R\to \infty$. We point out that as we increase $R$, the domain $D_{0,R}$ will change, but the domains $\{U_i\}_{i=1}^n$ and $V$ all remain unchanged. 

 On the one hand, we break $\mathcal{I}_R$ into
 \[
\mathcal{I}_R =\int_{D_{0,R}}\nabla f_\Omega \cdot  \nabla \vphi_{0,R}dx +\sum_{i=1}^n\int_{U_i}\nabla f_\Omega \cdot \nabla \vphi_idx =: \mathcal{I}_R^1 + \mathcal{I}_R^2.
 \]
 Since $f_\Omega$ is constant on each connected component of $\partial U_i$, the same computation as \eqref{temp111}--\eqref{temp222} gives $\mathcal{I}_R^2 = 0$.
For $\mathcal{I}_{R}^1$, note that although $f_\Omega $ is a constant along the boundary of each hole of $D_{0,R}$, it is \emph{not} a constant along $\outb D_{0,R} = \partial B_R$. Thus similar computations as \eqref{temp111}--\eqref{temp222} now give
\begin{equation}\label{int_d0r}
\begin{split}
\mathcal{I}_R^1 
&=\int_{\partial B_R}\big(1_D * \mathcal{N} - \frac{\Omega}{2} R^2\big) \nabla \vphi_{0,R}\cdot \vec{n}d\sigma\\
&= \int_{\partial B_R}\left((1_{D}*\mathcal{N})(x) - |D|\mathcal{N}(x)\right)\nabla \vphi_{0,R}\cdot \vec{n}d\sigma(x),
\end{split}
\end{equation}
where in the second equality we used $\int_{\partial B_R}\nabla \vphi_{0,R}\cdot\vec{n}d\sigma=0$ and the fact that $\mathcal{N}(x)$ is constant on $\partial B_R$. For any $x\in \partial B_R$, since $D \subset B_{R_0}$ and $R>R_0$, we can control $(1_{D}*\mathcal{N})(x) - |D|\mathcal{N}(x)$ as
\begin{equation}\label{temp_diff1}
\begin{split}
\big|(1_{D}*\mathcal{N})(x) - |D|\mathcal{N}(x)\Big| \leq \frac{1}{2\pi}  \int_D\Big| \log|x-y| - \log|x| \Big|  dy \leq \frac{|D|}{2\pi} \left|\log\left(1-\frac{R_0}{R}\right)\right| \text{ on }\partial B_R.
\end{split}
\end{equation}
We introduce the following lemma to control $|\nabla\vphi_{0,R}\cdot \vec{n}|$ on $\partial B_R$, whose proof is postponed to the end of this subsection.

\begin{lem}\label{phi_convergence} Let $D \subset B_{R_0}$ be a domain with $C^1$ boundary. For any $R>R_0$, let $D_{0,R}$, $V$, $p_{0,R}$ and $\vphi_{0,R}$ be defined as in the proof of Theorem~\ref{clockwise rotating_patch}. Then we have 
\begin{equation}\label{phi_conv_2}
\displaystyle|\nabla \vphi_{0,R} \cdot \vec{n}| \leq \frac{N R_0^2}{2R \log(R/R_0)}\quad\text{ on }\partial B_R,
\end{equation}
where $N>0$ is the number of connected components of $V$ (and is independent of $R$).
\end{lem}

Once we have this lemma, plugging \eqref{phi_conv_2} and \eqref{temp_diff1} into \eqref{int_d0r} yields
\[
|\mathcal{I}_R^1| \leq    \frac{ N |D| R_0^2}{2 } \left| \log\left(1-\frac{R_0}{R}\right)\right| (\log(R/R_0))^{-1} \leq|D|\frac{ C(D, R_0)}{R \log R} \to 0 \quad \text{ as }R\to \infty.
\]
Combining this with $\mathcal{I}_R^2 = 0$ gives
\begin{equation}\label{limit I_R}
\lim_{R\to\infty}\mathcal{I}_{R}=0.
\end{equation}
 Next we compute $\mathcal{I}_{R}$ in another way. Note that  $1_{B_R}*\mathcal{N}-\frac{|x|^2}{4}$ is a radial harmonic function in $B_R$, thus is equal to some constant $C_R$ in $B_R$. Using this fact, we can rewrite $f_\Omega$ as 
 \[
 f_\Omega = 1_D * \mathcal{N} - \frac{\Omega}{2}|x|^2 = (1_D - 1_{B_R})* \mathcal{N}  - \Big(\frac{\Omega}{2}-\frac{1}{4}\Big) |x|^2 + C_R.
 \]
 As a result, $\mathcal{I}_{R}$ can be rewritten as
 \begin{equation}
 \begin{split}\label{redef I}
 \mathcal{I}_{R}&=-\int_{{B_{R}\setminus \overline{D}}}\nabla\left(1_{B_{R}\setminus \overline{D}}*\mathcal{N}\right)\cdot \nabla \vphi^Rdx-\frac{(2\Omega-1)}{2}\int_{{B_{R}\setminus \overline{D}}} x\cdot\nabla \vphi^{R}dx\\
 &=:-\mathcal{J}_R^1-\frac{(2\Omega-1)}{2}\mathcal{J}_R^2.
 \end{split}
 \end{equation}
 Next we will show $\mathcal{J}_R^1, \mathcal{J}_R^2 \geq 0$, leading to $\mathcal{I}_R\leq 0$. Let us start with $\mathcal{J}_R^2$. Applying Lemma~\ref{p_bd} to each of $D_{0,R}$ and $\{U_i\}_{i=1}^n$ immediately gives 
\begin{equation}\label{I_R2}
 \mathcal{J}_R^2 \geq \int_{D_{0,R}}  |x|^2 + \nabla p_{0,R} \cdot x dx + \sum_{i=1}^n \int_{U_i}  |x|^2 + \nabla p_i \cdot x dx =: T_{0,R} + \sum_{i=1}^n T_i \geq 0.
 \end{equation}
 Note that the $T_i$'s are independent of $R$ for $i=1,\dots,n$, and we know $T_i\geq 0$ with equality achieved if and only if $U_i$ is an annulus or a disk centered at the origin. This will be used later to show all $\{U_i\}_{i=1}^n$ are centered at the origin in the $\Omega>\frac{1}{2}$ case. (When $\Omega=\frac{1}{2}$, the coefficient of $\mathcal{J}_R^2$ becomes 0 in \eqref{redef I}, thus a different argument is needed in this case.)

We now move on to $\mathcal{J}_R^1$. We first break it as
\[
\begin{split} \mathcal{J}_R^1&=\int_{B_R\setminus \overline{D}}\nabla (1_{B_R\setminus \overline{D}}*\mathcal{N})\cdot xdx+\int_{B_R\setminus \overline D}\nabla(1_{B_R\setminus \overline{D}}*\mathcal{N})\cdot \nabla p^Rdx=:J_{11}+J_{12}.
\end{split}
\]
 An identical computation as \eqref{eq9} gives 
$ \displaystyle J_{11}
 = \frac{1}{4\pi}\Big(|D_{0,R}| + \sum_{i=1}^n|U_i|\Big)^2.$
  For $J_{12}$, the same computation as \eqref{I21_multiple}--\eqref{crude_bd*} gives the following (where we used that each $U_i$ lies in a hole of $D_{0,R}$ for $i=1,\dots,n$): 
 \[
 J_{12} \geq -\int_{D_{0,R}} p_{0,R} dx - \sum_{i=1}^n \int_{U_i} p_i dx -\sum_{1\leq i\leq n}|U_i| \sup_{D_{0,R}} p_{0,R}-   \sum_{\substack{1\leq i,j\leq n\\ j\prec i}} \frac{|U_i| |U_j|}{2\pi}.
  \]
 Adding up the estimates for $J_{11}$ and $J_{12}$, we get
 \begin{equation}\label{jr1}
 \begin{split}
 \mathcal{J}_R^1 \,\geq& \, \left(\frac{1}{4\pi} |D_{0,R}|^2 - \int_{D_{0,R}} p_{0,R} dx \right) + \left(\sum_{1\leq i\leq n}|U_i|\right) \left(\frac{1}{2\pi}|D_{0,R}| -\sup_{D_{0,R}} p_{0,R}\right)  \\
 &+ \sum_{i=1}^n \left(\frac{1}{4\pi} |U_i|^2 - \int_{U_i} p_i dx \right) +  \sum_{\substack{j\not\prec i \text{ and } i\not\prec j\\ i\neq j}} \frac{1}{4\pi} |U_i| |U_j|.
 \end{split}
 \end{equation}
 By Proposition~\ref{talenti2}, all terms on the right hand side are nonnegative. But note that only the two terms in the second line are independent of $R$.  Plugging \eqref{jr1} and \eqref{I_R2} into \eqref{redef I} gives the following (where we only keep the terms independent of $R$ on the right hand side):
 \[
 \liminf_{R\to\infty} (-\mathcal{I}_R) \geq \sum_{i=1}^n \left(\frac{1}{4\pi} |U_i|^2 - \int_{U_i} p_i dx \right) +  \sum_{\substack{j\not\prec i \text{ and } i\not\prec j\\ i\neq j}} \frac{1}{4\pi} |U_i| |U_j| + \frac{2\Omega-1}{2} \sum_{i=1}^n T_i  \geq 0.
 \]
Combining this with the previous limit \eqref{limit I_R}, we know $U_i$ must be an annulus or a disk for $i=1,\dots,n$, and  they must be nested in each other. In addition, if $\Omega>\frac{1}{2}$, we have $T_i=0$ for $i=1,\dots,n$, implying that each $U_i$ is centered at the origin.

The radial symmetry of $D_{0,R}$ is more difficult to obtain. Although the first two terms on the right hand side of \eqref{jr1} are both strictly positive if $D_{0,R}$ is not an annulus, we need some  uniform-in-$R$ lower bound to get a contradiction in the $R\to\infty$ limit. Between these two terms, it turns out the second term is easier to control. This is done in the next lemma, whose proof we postpone to the end of this subsection.

\begin{lem}
\label{lem2} Let $D \subset B_{R_0}$ be a domain with $C^1$ boundary. For any $R>R_0$, let $D_{0,R}$,  $V$ and $p_{0,R}$ be given as in the proof of Theorem~\ref{clockwise rotating_patch}. 
If $V$ is not a single disk, there exists some constant $C(V)>0$ only depending on $V$, such that
\[
\liminf_{R\to\infty} \left(\frac{1}{2\pi}|D_{0,R}| -\sup_{D_{0,R}} p_{0,R}\right) \geq C(V) > 0.
\]
\end{lem}

If $V$ is not a disk,  Lemma~\ref{lem2} gives $\liminf_{R\to\infty} \mathcal{J}_R^1>\left(\sum_{1\leq i\leq n}|U_i|\right)  C(V)>0$. (Recall that in the beginning of this proof we assume $\sum_{1\leq i\leq n}|U_i|>0$, and it is independent of $R$.) This  implies $\liminf_{R\to\infty}(- \mathcal{I}_R)\geq C(V)>0$, contradicting \eqref{limit I_R}.

So far we have shown that  $\partial D$ is a union of nested circles, and it remains to show that they are all centered at 0. For the $\Omega > \frac{1}{2}$ case, we already showed all $\{U_i\}_{i=1}^n$ are centered at 0, so it suffices to show the outmost circle $\partial V$ (denote by $B(\tilde o, \tilde r)$) is also centered at 0. By definition of $\{U_i\}_{i=1}^n$, we have $D = B(\tilde o, \tilde r) \setminus \left(\dot{\cup}_{i=1}^n U_i\right)$. Note that $1_{B(\tilde o, \tilde r)}*\mathcal{N} = \frac{|x-\tilde o|^2}{4}+C$ for some constant $C$, and $1_{\dot{\cup}_{i=1}^n U_i} * \mathcal{N}$ is radially increasing. Therefore $f_\Omega$ can be written as
\[
f_\Omega = 1_{B(\tilde o, \tilde r)}*\mathcal{N} - 1_{\dot{\cup}_{i=1}^n U_i} * \mathcal{N} - \frac{\Omega}{2}|x|^2 = \frac{|x-\tilde o|^2}{4} - g(x),
\]
where $g$ is radially symmetric, and strictly increasing in the radial variable. Since both $f_\Omega$ and $\frac{|x-\tilde o|^2}{4}$ are known to take constant values on $\partial B(\tilde o, \tilde r)$, it implies $g$ must be constant on $\partial B(\tilde o, \tilde r)$ too, and the fact that $g$ is a radially increasing function gives that $\tilde o=0$. This finishes the proof for $\Omega >\frac{1}{2}$.

For $\Omega=\frac{1}{2}$, we do not know whether $\{U_i\}_{i=1}^n$ are centered at 0 yet. Denote by $U_1$ be the innermost one. Then we have 
\begin{equation}\label{tempf0}
f_\Omega(x) =  \frac{|x-\tilde o|^2}{4} - 1_{\dot{\cup}_{i=1}^n U_i} * \mathcal{N} - \frac{|x|^2}{4} =  \frac{\tilde o \cdot x}{2}+ \text{const}\quad\text{for }x\in \outb U_1,
\end{equation}
where the second equality follows from Lemma~\ref{lem_newton}(b), where we used that $1\prec j$ for all $2\leq j\leq n$. Combining \eqref{tempf0} with the fact that $f_\Omega= \text{const}$ on  $\outb U_1$ gives $\tilde o=0$, that is, the outmost circle must be centered at 0. This leads to $f_\Omega = - \sum_{i=1}^n 1_{U_i} * \mathcal{N}$. Since $f_\Omega=\text{const}$ on each connected component of $\partial U_i$,  we can apply the last part in the proof of Corollary~\ref{cor_multiple_steady} to show that all $\{U_i\}_{i=1}^n$ are all concentric. Denoting their center by $o_1$, we can show that $o_1 = 0$: Lemma~\ref{lem_newton}(a)  gives $f_\Omega(x) = C \ln |x- o_1|$ for some $C<0$ on $\partial B(\tilde o, \tilde r)$, and since we have $f=\text{const}$ on $\partial B(\tilde o, \tilde r)$ and $\tilde o=0$, it implies that $o_1=0$, finishing the proof.
\end{proof}

\begin{proof}[\textup{\textbf{Proof of Lemma~\ref{phi_convergence}}}]   For notational simplicity, we shorten $p_{0,R}$, $D_{0,R}$ and $\vphi_{0,R}$ into $p_R$, $D_R$ and $\vphi_R$ thoughout this proof. Recall that $\partial D_R = \partial B_R \cup \partial V$. Clearly we have $\vphi_R=\frac{R^2}{2}$ on $\partial B_R$, due to $p_R = 0$ on $\outb D_R = \partial B_R$. We claim that 
 \begin{equation}\label{vphi_bound}
-\frac{N R_0^2}{2} \leq \vphi_R-\frac{R^2}{2}\le  \frac{R_0^2}{2}\quad\text{ on }\partial V,
 \end{equation}
 where $N\geq 1$ is the number of connected components of $V$.
 Once it is proved, we apply the comparison principle to the functions $\varphi_{R}-\frac{R^2}{2}$ and $\pm g$, where 
 \[g(x) := \frac{N R_0^2}{2\log(R/R_0)} \log\frac{R}{|x|}.
 \] Note that $g\equiv 0$ on $\partial B_R$, and  $g \geq  \frac{N R_0^2}{2}$ on $\partial V$ since $\partial  V \subset B_{R_0}$. If $0 \not\in \overline{D_{R}}$, then the functions $\vphi_R-\frac{R^2}{2}$ and $\pm g$ are all harmonic in $D_{R}$, their values on $\partial B_R$ are all 0, and their boundary values on $\partial  V$ are ordered due to \eqref{vphi_bound}. The comparison principle in $D_R$ then yields 
 \begin{equation}\label{phi_conv_1}
-g(x)\leq \vphi_R (x)-\frac{R^2}{2} \leq g(x) \quad \text{ in $D_{R}$}.
\end{equation}
Since $\vphi_R-\frac{R^2}{2} \equiv g \equiv 0$  on $\partial B_R$, \eqref{phi_conv_1} gives $|\nabla \varphi_R \cdot \vec{n}| \leq |\nabla g\cdot \vec{n}| = \frac{N R_0^2}{2R \log(R/R_0)}$ on $\partial B_R$, which is the desired estimate \eqref{phi_conv_2}.
And if $0 \in \overline{D_R}$, then \eqref{phi_conv_1} still holds in $D_R \setminus B_{\epsilon}$ for all sufficiently small $\epsilon>0$ by applying the comparison principle in this set, and \eqref{phi_conv_2} again follows as a consequence.

In the rest of the proof we will show \eqref{vphi_bound}. Its second inequality is straightforward: 
\[
 \vphi_R-\frac{R^2}{2}\leq \frac{R_0^2}{2}+\sup_{\overline{D_R}} p_R-\frac{R^2}{2}\leq \frac{R_0^2}{2} \quad\text{ on }\partial V,
\]
here the first inequality follows from the definition of $\vphi_R$ and the fact that $ V \subset B_{R_0}$, and the second inequality is due to $\sup_{\overline{D_R}} p_R \le \frac{|D_R|}{2\pi}\le \frac{R^2}{2}$ in Proposition~\ref{talenti2}. 
 
It remains to prove the first inequality of \eqref{vphi_bound}. Let us fix any $R>R_0$. Denote the $N$ connected components of $ \partial  V$ by  $\{\Gamma_i\}_{i=1}^N$, and let $\Gamma_0:=\partial B_R$. These notations lead to $\partial D_R = \cup_{i=0}^N \Gamma_i$. For $i=0,\dots,N$, let $L_i\subset \mathbb{R}$ be the range of $\vphi_R-\frac{R^2}{2}$ on $\Gamma_i$. By continuity of $\vphi_R$,  each $L_i$ is a closed bounded interval, which can be a single point. Clearly,  $L_0 = \{0\}$ due to $\varphi_R|_{\partial B_R}=\frac{R^2}{2}$.
Towards a contradiction, suppose
\begin{equation}\label{def_ar} v_{\min} := \min_{1\leq i\leq N} \inf L_i = \inf_{\partial  V}\Big(\vphi_R-\frac{R^2}{2}\Big)=: -\frac{N |R_0|^{2}}{2} -\delta \text{\quad for some }\delta>0.
\end{equation}
As for the maximum value, since $L_0 = \{0\}$ we have 
\begin{equation}\label{max_vi}
v_{\max} := \max_{0\leq i\leq N} \sup L_i \geq 0.
\end{equation}
For $i=1,\dots, N$, using $p_R|_{\Gamma_i}=\text{const}$, $\vphi_R = p_R + \frac{|x|^2}{2}$ and $\Gamma_i \subset B_{R_0}$, the length of each interval $L_i$ satisfies
\begin{equation}\label{tempvi}
\big|L_i\big| = \text{osc}_{\Gamma_i}\frac{|x|^2}{2} \leq \frac{R_0^2}{2} \quad\text{ for }i = 1,\dots, N.
\end{equation}

Comparing \eqref{tempvi} with \eqref{def_ar}--\eqref{max_vi}, we know the union of $\{L_i\}_{i=0}^N$ cannot fully cover the interval $[v_{\min}, v_{\max}]$, thus they can be separated in the following sense: there exists a nonempty proper subset $S \subset \{0,\dots,N\}$, such that the range of $L_i$ for indices in $S$ and $S^c:=\{0,\dots,N\}\setminus S$ are strictly separated by at least $\delta$, i.e.
$
\min_{i\in S} \inf L_i \geq \max_{i\in S^c}\sup L_i + \delta.
$
In terms of $\varphi_R$, we have
\begin{equation}\label{gamma_2}
\min_{i\in S} \inf_{\Gamma_i} \vphi_R \geq \max_{i\in S^c} \max_{\Gamma_i} \vphi_R + \delta.
\end{equation}
Since $\vphi_R$ is harmonic in $D_R$, whose boundary is $\cup_{i=0}^N \Gamma_i$, it is a standard comparison principle exercise to show that \eqref{gamma_2} implies
\begin{equation}\label{gamma_1}
\sum_{i\in S} \int_{\Gamma_i} \nabla\vphi_R\cdot \vec{n} d\sigma > 0,
\end{equation}
where $\vec n$ denotes the outer normal of $D_R$. But on the other hand, we have
\begin{equation}\label{gamma_0}
\int_{\Gamma_i} \nabla\vphi_R\cdot \vec{n} d\sigma=0\quad\text{ for }i=0,\dots,N.
\end{equation}
To see this, the cases $i=1,\dots,N$ can be done by an identical computation as \eqref{temp222}, and the $i=0$ case follows from $\int_{\partial D_R}\vphi_R\cdot \vec{n} d\sigma=\int_{D_R} \Delta \varphi_R dx = 0$ and the fact that $\partial D_R = \cup_{i=0}^N \Gamma_i$. Thus we have obtained a contradiction between \eqref{gamma_1} and \eqref{gamma_0}, completing the proof.
 \end{proof}

\begin{proof}[\textbf{\textup{Proof of Lemma \ref{lem2}}}] Assume $V$ has $N$ connected components $\{V_j\}_{j=1}^N$ for $N\geq 1$. For notational simplicity, we shorten $D_{0,R}$, $p_{0,R}$ and $\vphi_{0,R}$ into $D_R, p_R$ and $\vphi_R$ in this proof. Let $\epsilon_R :=  \frac{1}{2\pi}|D_{R}| - \sup_{D_R}p_R$, which is nonnegative by Proposition~\ref{talenti2}.  
Towards a contradiction, assume there exists a diverging subsequence $\{R_i\}_{i=1}^\infty$ such that $\lim_{i\to 0}\epsilon_{R_i}=0$. 

Define $\tilde \vphi_{R_i} := \vphi_{R_i} - \frac{R_i^2}{2}$. We claim that $\{\tilde \vphi_{R_i}\}_{i=1}^\infty$ has a subsequence that converges locally uniformly to some bounded harmonic function $\vphi_\infty$ in $\mathbb{R}^2 \setminus V$. 

To show this, we will first obtain a uniform bound of $\{\tilde \vphi_{R_i}\}_{i=1}^\infty$. Note that \eqref{vphi_bound} gives that $\sup_{\partial V}|\tilde\vphi_{R_i}| \leq \frac{N R_0^2}{2}$ for all $i\in\mathbb{N}^+$. 
Since $\tilde \vphi_{R_i}\equiv 0$ on $\partial B_{R_i}$ for all $i\in\mathbb{N}^+$, the maximum principle for harmonic function gives $
\sup_{D_{R_i}}|\tilde\vphi_{R_i}| \leq \frac{N R_0^2}{2}$ for all $i\in\mathbb{N}^+$. 


For any $R>2R_0$, we will obtain a uniform gradient estimate for $\{\tilde\vphi_{R_i}\}$  in $D_R$ for all  $R_i>2R$. First note that since $\partial B_R$ is in the interior of $D_{R_i}$ (due to $R_i>2R$), interior estimate for harmonic function (together with the above uniform bound) gives that $\|\tilde\vphi_{R_i}\|_{C^2(\partial B_R)}\leq C(N,R_0)$. On the other boundary $\partial V$, recall that $\tilde \vphi_{R_i}|_{\partial V_j}= \frac{|x|^2}{2}+c_{i,j}$, with $|c_{i,j}|\leq \frac{(N+1) R_0^2}{2}$. Thus $\|\tilde\vphi_{R_i}\|_{C^2(\partial D_R)}\leq C(N,R_0)$ for all $R_i>2R$, and the standard elliptic regularity theory gives the uniform gradient estimate 
$\sup_{D_R}|\nabla \tilde\vphi_{R_i}| \leq C(V)$. 
This allows us to take a further subsequence (which we still denote by $\{\tilde \varphi_{R_i}\}$) that converges uniformly in $\bar{D_R}$ to some harmonic function $\tilde \vphi_\infty \in C(\bar{D_R})$. Since $R>2R_0$ is arbitrary, we can repeat this procedure (for countably many times) to obtain a subsequence that converges locally uniformly to a harmonic function $\tilde \vphi_\infty$ in $\mathbb{R}^2 \setminus V$, where $\tilde \vphi_\infty|_{\partial V_j}= \frac{|x|^2}{2}+c_{j}$ with $|c_{j}|\leq \frac{(N+1) R_0^2}{2}$. This finishes the proof of the claim.

Now define 
\[
\tilde p_{R_i} := p_{R_i} -\frac{R_i^2}{2}= \tilde \vphi_{R_i}-\frac{|x|^2}{2},
\] which is known to converge locally uniformly to $\tilde p_\infty := \tilde \vphi_\infty - \frac{|x|^2}{2}$ in $\mathbb{R}^2\setminus V$. Note that $\tilde p_\infty$ is \emph{not} radially symmetric up to any translation: To see this, recall that $\tilde p_\infty|_{\partial V_j}\equiv c_{j}$. If $\tilde p_\infty$ is radial about some $x_0$,  it must be of the form $-\frac{|x-x_0|^2}{2}+c$ due to $\Delta \tilde p_\infty=-2$. As a result, the level sets of $\tilde p_\infty$ are all nested circles, thus $V$ must be a single disk (where we used that each connected component of $V$ is simply-connected).

Next we will show that $\lim_{i\to 0}\epsilon_{R_i}=0$ implies $\tilde p_\infty$ is radial up to a translation, leading to a contradiction. For $k\in\mathbb{R}$, let $g_i(k) := |\{x\in D_{R_i}: p_{R_i}(x)>k\}|$. In the proof of Proposition~\ref{talenti2}, we have shown that $g_i(0) = |D_{R_i}|$, $g_i$ is decreasing in $k$, with $g_i'(k) \leq -2\pi$ for almost every $k \in (0,\sup_{D_{R_i}} p_{R_i})$. Since $\sup_{D_{R_i}}p_{R_i}=  \frac{1}{2\pi}|D_{R_i}| - \epsilon_{R_i} $, we can control $g_i(k)$ from below and above as follows:
\begin{equation}\label{squeeze}
 (|D_{R_i}|-2\pi k -2\pi\epsilon_{R_i} )_+ \leq g_i(k) \leq  (|D_{R_i}|-2\pi k )_+  \quad\text{ for all }k\geq 0.
\end{equation}
Likewise, define $\tilde g_i(k) := |\{x\in D_{R_i}: \tilde p_{R_i}(x)>k\}|$, and $\tilde g_{\infty}(k) := |\{x\in D_{R_i}: \tilde p_{\infty}(x)>k\}|$. Since $\tilde p_{R_i} = p_{R_i} - \frac{R_i^2}{2}$, we have $\tilde g_i(k) = g_i(k+\frac{R_i^2}{2}) $ for all $k\geq -\frac{R_i^2}{2}$, thus \eqref{squeeze} is equivalent to
\[
 (-|V|-2\pi k - 2\pi \epsilon_{R_i})_+ \leq \tilde g_i(k)\leq (-|V| -2\pi k  )_+ \quad \text{ for all }k\geq -\frac{R_i^2}{2}.
\]
The locally uniform convergence of $p_{R_i}$ gives $\lim_{i\to\infty} \tilde g_i = \tilde g_0$, and since we assume $\lim_{i\to\infty}\epsilon_{R_i}=0$, we take the $i\to\infty$ limit in the above inequality and obtain
\[
\tilde g_{\infty}(k) = (-2\pi k - |V|)_+\quad \text{ for all }k\in \mathbb{R},
\]which implies 
\begin{equation}
\label{g0}
\tilde g'_{\infty}(k)=-2\pi\quad\text{ for all }k\in(-\infty,\sup_{\mathbb{R}^2\setminus V} \tilde p_{\infty}).\end{equation}
Applying the proof of Proposition~\ref{talenti2} to $\tilde p_\infty$ (note that the proof still goes through even though $\tilde p_\infty$ takes negative values, and is defined in an unbounded domain), we have that \eqref{g0} can happen only if $\tilde D_k := \{\tilde p_\infty>k\}\dot{\cup}( \dot{\cup}_{\{j: c_j > k\}} \overline{V_j})$ is a disk for almost every $k\in(-\infty,\sup_{\mathbb{R}^2\setminus V} \tilde p_{\infty})$, and $|\nabla \tilde p_\infty|$ is a constant on almost every $\partial \tilde D_k$. These two conditions imply that all regular sets of $\tilde p_\infty$ are concentric circles, thus $\tilde p_\infty$ is radial up to a translation, and we have obtained a contradiction.\end{proof}

\color{black}

\section{Radial symmetry of nonnegative smooth stationary/rotating solutions\\ for 2D Euler with $\Omega\leq 0$}\label{sec3}

 
 
Let $\omega(x,t)=\omega_0(R_{\Omega t}x)$ be a nonnegative compactly-supported stationary/rotating  solution of  2D Euler with angular velocity $\Omega\leq 0$. Recall that by \eqref{eq_smooth}, $f_\Omega := \omega_0 * \mathcal{N}-\frac{\Omega}{2}|x|^2$ is a constant along each connected component of a regular level set of $\omega_0$. In this section, we prove that $\omega_0$ is radial up to a translation for $\Omega=0$, and radial for $\Omega<0$. As we discussed in the introduction, the $\omega\geq 0$ condition is necessary:  in a forthcoming work \cite{GomezSerrano-Park-Shi-Yao:nonradial-stationary-solutions} we will show that there exists a compactly-supported, sign-changing smooth stationary vorticity $\omega_0$ that is non-radial, and the construction also works for $\Omega<0$ that is close to 0.

Most of this section is devoted to the proof of Theorem~\ref{theorem3} in the $\Omega=0$ case (the $\Omega<0$ case is done in Corollary~\ref{cor_conti_rotating} as a simple extension). In the proof, the two key steps are to show that every connected component of a regular level set of $\omega$ is a circle, and these circles are concentric. These are done by approximating $\omega$ by a step function $\omega_n = \sum_{i=1}^{M_n} \alpha_i 1_{D_i}$ such that the sets $\{D_i\}$ are disjoint, and $\|\omega-\omega_n\|_{L^{\infty}} = O(1/n)$. We then define $\varphi^n=\frac{|x|^2}{2}+\sum_{i=1}^{M_n} 1_{D_i} p_i$ corresponding to this step function $\omega_n$, and compute $\mathcal{I}_n = \int \omega_n \nabla \varphi^n \cdot \nabla f dx$ in two ways. 


Due to the $O(1/n)$ error in the approximation, the qualitative statement in Proposition~\ref{talenti2} that ``the equality is achieved if and only if $D$ is a disk or annulus'' is no longer good enough for us. We need to obtain various quantitative versions of \eqref{estimate_p_general1} for doubly-connected domains, and three such versions are stated below.

In Lemma~\ref{talenti_2}, the quantitative constant $c_0>0$ depends on the Fraenkel asymmetry of the outer boundary defined in Definition~\ref{Fraenkel}. 
 
 \begin{definition}[{c.f.~\cite[Section 1.2]{Fusco-Maggi-Pratelli:stability-faber-krahn}}] \label{Fraenkel}
 For a bounded domain $E\subseteq \R^{2}$, we define the \emph{Fraenkel asymmetry} $\mathcal{A}(E)\in[0,1)$ as
\begin{align*}
\mathcal{A}(E):=\inf_{x_0}\left\{ \frac{|E\lap (x_{0}+rB)|}{|E|}: x_{0}\in \R^{2},\pi r^{2}=|E|\right\},
\end{align*}
where $B$ is a unit disk in $\R^{2}$.
\end{definition}

\begin{lem}\label{talenti_2}
Let $D$ be a doubly connected set. Let us denote the hole of $D$ by an open set $h$, and let $\tilde D := D\cup \bar h$. 
We define $p$ in $D$ as in Lemma~\ref{p_def}. Then if $\mathcal{A}(\tilde D)>0$, there is a constant $c_{0}>0$ that only depends on $\mathcal{A}(\tilde D)$, such that
\begin{align*}
p|_{\partial h}\le \frac{|D|}{2\pi}(1-c_{0}).
\end{align*}
\end{lem}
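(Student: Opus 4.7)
The plan is to strengthen the proof of Proposition~\ref{talenti2} by replacing the standard isoperimetric inequality used in \eqref{isoperimetric} with the quantitative Fusco--Maggi--Pratelli version
\[
 P(E)^{2}\ \ge\ 4\pi|E|\bigl(1+\kappa\,\mathcal{A}(E)^{2}\bigr),
\]
valid for a universal $\kappa>0$. Setting $c_{1}:=p|_{\partial h}>0$ and keeping the notation $g(k):=|\{x\in D:p(x)>k\}|$ and $\tilde D_{k}:=\{p>k\}\cup\bar h$ for $k<c_{1}$ (and $\tilde D_{k}:=\{p>k\}$ for $k\ge c_{1}$) from that proof, the Cauchy--Schwarz step in \eqref{eq28} combined with the quantitative inequality above and the identity $|\tilde D_{k}|=g(k)+|h|\mathbbm{1}_{k<c_{1}}$ yields the sharpened differential inequality
\[
 g'(k)\ \le\ -2\pi\bigl(1+\kappa\,\mathcal{A}(\tilde D_{k})^{2}\bigr)\qquad\text{for a.e.\ regular }k\in(0,\sup_{D}p).
\]

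The next step is to transfer the assumed asymmetry of $\tilde D$ to its super-level sets $\tilde D_{k}$ for small $k$. For this I will use the elementary stability estimate: if $E_{2}\subset E_{1}$ with $|E_{1}\setminus E_{2}|\le\delta$, then
\[
 \mathcal{A}(E_{2})\,|E_{2}|\ \ge\ \mathcal{A}(E_{1})\,|E_{1}|-2\delta,
\]
obtained by comparing $E_{1}$ with the ball of the correct area concentric with the optimal ball for $E_{2}$ (so both the difference $E_{1}\setminus E_{2}$ and the annular correction contribute at most $\delta$). Choosing $\delta:=\mathcal{A}(\tilde D)\,|\tilde D|/4$ and letting $k_{*}>0$ be the largest value for which $|\tilde D\setminus\tilde D_{k}|=|D|-g(k)\le\delta$ on $(0,k_{*})$ (such $k_{*}$ exists because $p>0$ in $D$ by the strong minimum principle, so $g(k)\to|D|$ as $k\searrow 0$), the stability estimate yields $\mathcal{A}(\tilde D_{k})\ge\mathcal{A}(\tilde D)/2$ on $(0,k_{*})$, and therefore $g'(k)\le -(2\pi+\varepsilon)$ there, where $\varepsilon:=\tfrac{\pi\kappa}{2}\mathcal{A}(\tilde D)^{2}$.

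Integrating the strengthened ODE on $(0,k_{*})$ gives $(2\pi+\varepsilon)k_{*}\le|D|-g(k_{*})=\delta$, hence $k_{*}\le\delta/(2\pi+\varepsilon)$. On $(k_{*},\sup_{D}p)$ the fallback bound $g'(k)\le-2\pi$ gives $\sup_{D}p-k_{*}\le g(k_{*})/(2\pi)=(|D|-\delta)/(2\pi)$. Adding the two estimates,
\[
 p\big|_{\partial h}\ \le\ \sup_{\bar D}p\ \le\ \frac{|D|}{2\pi}-\frac{\delta\,\varepsilon}{2\pi(2\pi+\varepsilon)}\ =\ \frac{|D|}{2\pi}\bigl(1-c_{0}\bigr),
\]
and $c_{0}\gtrsim\mathcal{A}(\tilde D)^{3}$ depends only on $\mathcal{A}(\tilde D)$ because $|\tilde D|\ge|D|$. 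The main delicate point is the stability estimate for the Fraenkel asymmetry; once this is in hand, the two-range integration is routine. A minor technical issue is that the quantitative isoperimetric inequality must be applied to $\tilde D_{k}$, which is legitimate because Sard's theorem ensures $\tilde D_{k}$ has smooth boundary (hence finite perimeter) for almost every $k$.
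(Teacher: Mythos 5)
Your proof is correct and follows the same core strategy as the paper's: replace the isoperimetric inequality in the proof of Proposition~\ref{talenti2} by the quantitative Fusco--Maggi--Pratelli version to sharpen the differential inequality to $g'(k)\le -2\pi\bigl(1+\kappa\,\mathcal{A}(\tilde D_k)^2\bigr)$, transfer the asymmetry of $\tilde D$ to its super-level sets near $k=0$, and integrate. Two points of the execution differ from the paper in ways worth noting. First, where the paper cites Lemma~\ref{CKY_lem4.4} (imported from Craig--Kim--Yao) to control $\mathcal{A}(\tilde D_k)$, you prove your own stability estimate $\mathcal{A}(E_2)|E_2|\ge\mathcal{A}(E_1)|E_1|-2\delta$ for $E_2\subset E_1$ via a triangle inequality for symmetric differences centered at the optimal ball for $E_2$; this is correct, self-contained, and in fact gives the slightly sharper lower bound $\mathcal{A}(\tilde D_k)\ge\mathcal{A}(\tilde D)/2$ under the same hypothesis where the cited lemma gives $\mathcal{A}(\tilde D)/4$. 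Second, you set the splitting point $k_*$ by the measure condition $|D|-g(k)\le\delta$, which makes the two-range integration and the final addition clean and direct, whereas the paper fixes a threshold $A=\mathcal{A}(\tilde D)|\tilde D|/(16\pi)$, proves a claim by contradiction for $k\le\min\{p_h,A\}$, and then splits into cases according to whether $p_h\le A$.

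One corner case is left slightly implicit: if $\delta=\mathcal{A}(\tilde D)|\tilde D|/4\ge |D|$ (possible when $|h|$ is large relative to $|D|$), then $k_*$ reaches $\sup_D p$ and the identity $|D|-g(k_*)=\delta$ fails. But then the first integration alone gives $(2\pi+\varepsilon)\sup_D p\le |D|$, i.e.\ $\sup_D p\le \tfrac{|D|}{2\pi}\bigl(1-\tfrac{\varepsilon}{2\pi+\varepsilon}\bigr)$, which is still of the required form with $c_0$ depending only on $\mathcal{A}(\tilde D)$. It would be worth stating this explicitly. Likewise you use $p|_{\partial h}\le\sup_{\bar D}p$; that is fine, though it gives up a little since the paper estimates $p_h$ directly.
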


Lemma~\ref{talenti_2} will be used in the proof of Theorem~\ref{theorem3} to show that all level sets of $\omega$ are circles. To obtain radial symmetry of $\omega$, we also need to show all these level sets are concentric. To do this, we need to obtain some quantitative lemmas for a region between two non-concentric disks.  In Lemma~\ref{talenti_3} we consider a thin tubular region between two non-concentric disks whose radii are  close to each other, and obtain a quantitative version of \eqref{estimate_p_general1} for such domain. 

\begin{lem}\label{talenti_3}
For $\epsilon>0$, consider two open disks $B_{1}:=B(o_{1},1)$ and $B_{2}=B(o_{2}, 1+\epsilon)$ such that $B_{1}\subset B_{2}$.  Suppose $|o_{1}-o_{2}|=a\epsilon$ with $a\in (0,1)$, and let $p$ be defined as in Lemma~\ref{p_def} in $D:=\overline{B_{2}}\setminus B_{1}$. Then if $\epsilon$ and $a$ satisfy that $0<\epsilon < \frac{a^2}{64}$, we have
\begin{align}
p\big|_{\partial B_{1}}\le \frac{|D|}{2\pi}\left(1-\frac{a^2}{16}\right).
\end{align}
\end{lem}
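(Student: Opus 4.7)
The plan is to reduce the problem to a transverse (one-dimensional) analysis in the thin annular region $D$, rigorously controlling the geometric corrections using the smallness hypothesis $\epsilon<a^{2}/64$. First I set coordinates with $o_{2}=0$ and $o_{1}=(a\epsilon,0)$, and parametrize $\partial B_{1}$ by the angle $\phi\in[0,2\pi)$ of the outward unit normal. A direct computation gives the transverse width of $D$ at angle $\phi$ as
\[
\delta(\phi)=\sqrt{(1+\epsilon)^{2}-a^{2}\epsilon^{2}\sin^{2}\phi}-1-a\epsilon\cos\phi=\epsilon(1-a\cos\phi)+O(\epsilon^{2}),
\]
uniformly in $\phi$.

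Next I introduce the auxiliary function $v(x):=\tfrac{1}{2}((1+\epsilon)^{2}-|x|^{2})$, which satisfies $\lap v=-2$ and $v|_{\partial B_{2}}=0$, and one computes $v|_{\partial B_{1}}=\epsilon+\tfrac{\epsilon^{2}(1-a^{2})}{2}-a\epsilon\cos\phi$. Writing $p=v+h$, the correction $h$ is harmonic in $D$, vanishes on $\partial B_{2}$, equals $c_{1}-v|_{\partial B_{1}}$ on $\partial B_{1}$, and (since $v$ has the same total normal flux as $p$ through $\partial B_{1}$) satisfies $\int_{\partial B_{1}}\nabla h\cdot\vec{n}\,d\sigma=0$. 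Splitting $h=Kh_{1}+a\epsilon h_{\cos}$ according to the constant and $\cos\phi$ components of $h|_{\partial B_{1}}$, where $h_{1}$ and $h_{\cos}$ are the harmonic extensions in $D$ with boundary values $1$ and $\cos\phi$ on $\partial B_{1}$ and zero on $\partial B_{2}$, the flux condition becomes a linear equation determining $K$, and hence $c_{1}=\epsilon+\tfrac{\epsilon^{2}(1-a^{2})}{2}+K$.

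The decisive estimates are the boundary fluxes of $h_{1}$ and $h_{\cos}$. In the thin-annulus regime I use the explicit transverse linear barrier $1-s/\delta(\phi)$ for $h_{1}$ (and the analogous linear-plus-dipole ansatz for $h_{\cos}$) together with maximum-principle comparison to obtain
\[
\int_{\partial B_{1}}\nabla h_{1}\cdot\vec{n}\,d\sigma=-\frac{2\pi}{\epsilon\sqrt{1-a^{2}}}\bigl(1+O(\epsilon)\bigr),\qquad \int_{\partial B_{1}}\nabla h_{\cos}\cdot\vec{n}\,d\sigma=O(\epsilon^{-1}),
\]
with error bounds uniform in $a\in(0,1)$ thanks to the hypothesis $\epsilon<a^{2}/64$. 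Solving for $K$ yields $c_{1}\le\epsilon(1+\epsilon)\sqrt{1-a^{2}}(1+O(\epsilon))$. Since $\sqrt{1-a^{2}}\le 1-a^{2}/2$ and $|D|/(2\pi)=\epsilon+\epsilon^{2}/2$, a short algebraic check under $\epsilon<a^{2}/64$ absorbs all $O(\epsilon)$ corrections into the gap between the factors $1-a^{2}/2$ and $1-a^{2}/16$, producing the desired bound $c_{1}\le\tfrac{|D|}{2\pi}(1-a^{2}/16)$.

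The main anticipated obstacle is the rigorous bookkeeping of the one-dimensional approximation: while the leading-order heuristic is clean, the curvature-type corrections to $\lap$ in the transverse coordinates $(s,\phi)$ and the non-concentric geometry must be handled uniformly in $a$ and $\epsilon$, which is ultimately what forces the quantitative smallness condition $\epsilon<a^{2}/64$.
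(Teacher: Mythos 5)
Your approach takes a genuinely different route from the paper's, and as written it has a gap that the paper's method is specifically designed to avoid. The paper decomposes $p = p|_{\partial B_1}\,g + u$ with $g$ harmonic (equal to $1$ on $\partial B_1$, $0$ on $\partial B_2$) and $u$ the Poisson solution with zero data on all of $\partial D$, then computes $\int_{\partial B_1}\nabla g\cdot\vec n\,d\sigma$ \emph{exactly} by conformally mapping the eccentric annulus to a concentric one with an explicit M\"obius map; $u$'s flux is controlled by a simple radial barrier. Everything is algebraic, with no asymptotic remainders to track. You instead write $p = v + h$ with $v$ the radial Poisson solution on $B_2$ and $h$ harmonic, split $h$ into a constant mode $h_1$ and a cosine mode $h_{\cos}$, and propose to estimate their boundary fluxes via thin-domain barriers. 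Your decomposition is valid (one can check $\int_{\partial B_1}\nabla v\cdot\vec n\,d\sigma = -2\pi$ so $h$ indeed has vanishing total flux), but the method must then deliver asymptotic flux estimates with remainders uniform in both $a$ and $\epsilon$.

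That is where the gap lies. To conclude $c_1 \le \epsilon\sqrt{1-a^2}(1 + O(\epsilon))$ you need $K = c_1 - \epsilon - \tfrac{\epsilon^2(1-a^2)}{2}$ to come out negative and of size $\sim -\tfrac{a^2}{2}\epsilon$. From the flux balance $K\int\nabla h_1\cdot\vec n + a\epsilon\int\nabla h_{\cos}\cdot\vec n = 0$, the stated bound $\int\nabla h_{\cos}\cdot\vec n = O(\epsilon^{-1})$ only yields $|K| = O(a\epsilon)$, with no control of sign or leading constant — nowhere near enough. Indeed, for the concentric annulus one has $\int_{\partial B_1}\nabla h_{\cos}\cdot\vec n\,d\sigma = 0$ identically (the normal derivative there is a constant multiple of $\cos\phi$), so the quantity you need is a subleading eccentricity-driven correction, which a back-of-the-envelope calculation shows should be $\approx -\tfrac{\pi a}{\epsilon\sqrt{1-a^2}}$. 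Extracting the correct sign, the factor $a$, and the factor $\sqrt{1-a^2}$ from a linear-plus-dipole barrier pair with curvature corrections controlled uniformly in $a$ and $\epsilon$ is precisely the work you flag as the ``main anticipated obstacle,'' and it is not carried out. Until matching upper and lower barriers are constructed and the comparison inequalities verified with explicit constants, the claimed bound on $c_1$ does not follow. The paper's conformal-map computation is exactly the device that sidesteps this difficulty.
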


In Lemma~\ref{talenti_4} we consider a region between two non-concentric disks (that is not necessarily a thin tubular region),
 and obtain a quantitative version of \eqref{estimate_p_general1} for such domain. 
 

\begin{lem}\label{talenti_4}
Consider two open disks $B_{r}:=B(o_{1},r)$ and $B_{R}=B(o_{2}, R)$ such that $B_{r}\subset B_{R}$ . Let $p$ be defined as in \eqref{p_def} in $D:=B_{R}\backslash \overline{B_{r}}$. Suppose $l:=|o_{1}-o_{2}|>0$ and there exist $\delta_1>0$, and $\delta_2>0$ such that $\delta_1<r<R<\delta_2$. Then there exist a constant $c_{0}$ that only depends on $\delta_1,$ $\delta_2$ and $l$ such that
\begin{align*}
p|_{\partial B_{r}}\le \frac{|D|}{2\pi}(1-c_{0}).
\end{align*}
\end{lem}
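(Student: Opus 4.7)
The plan is to prove Lemma~\ref{talenti_4} by a compactness argument that upgrades the strict-inequality part of Proposition~\ref{talenti2} to a uniform gap. By translation I may set $o_1 = 0$, and (after observing the statement is vacuous when $\delta_1 + l > \delta_2$) work on the parameter space
\[
\overline{\mathcal{X}} := \bigl\{(o_2, r, R) \in \mathbb{R}^2 \times \mathbb{R} \times \mathbb{R}: |o_2| = l, \ \delta_1 \le r \le R \le \delta_2, \ r + l \le R\bigr\},
\]
which is a closed and bounded subset of $\mathbb{R}^4$, hence compact. For each $(o_2, r, R) \in \overline{\mathcal{X}}$, the domain $D = B(o_2, R) \setminus \overline{B(0, r)}$ is a smooth doubly-connected annular region, Lemma~\ref{p_def} associates the unique function $p = p_{o_2, r, R}$, and I would set
\[
F(o_2, r, R) := \frac{|D|}{2\pi} - p_{o_2, r, R}\big|_{\partial B(0, r)}.
\]
The combination $p|_{\partial B(0,r)} \le \sup_{\bar D} p \le |D|/(2\pi)$ from Proposition~\ref{talenti2} gives $F \ge 0$, and the rigidity clause of that proposition (equality forces $D$ to be a disk or concentric annulus) together with $|o_2| = l > 0$ rules out $F = 0$, so $F > 0$ pointwise on $\overline{\mathcal{X}}$.

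The main step, and the one I expect to require the most care, is continuity of $F$ on $\overline{\mathcal{X}}$. I would pull back to a fixed reference annulus $D_0$ by choosing a smooth family of diffeomorphisms $\Phi_{o_2, r, R} : \overline{D_0} \to \overline{D(o_2, r, R)}$ depending smoothly on the parameters (for instance, by a radial interpolation between the inner and outer circles), and considering $\tilde p := p \circ \Phi_{o_2, r, R}$. The function $\tilde p$ solves a uniformly elliptic Poisson-type boundary value problem on the fixed domain $D_0$ whose coefficients and right-hand side depend continuously on $(o_2, r, R)$. The normalization condition $\int_{\partial B(0,r)} \nabla p \cdot \vec n\, d\sigma = -2|B(0,r)|$ from \eqref{eq_p_int_bdry} pulls back to an analogous linear condition fixing the constant boundary value of $\tilde p$ on $\Phi^{-1}(\partial B(0,r))$; the unique solvability established in the proof of Lemma~\ref{p_def}, combined with standard Schauder estimates, then yields $\tilde p \in C^{2,\alpha}(\overline{D_0})$ depending continuously on the parameters. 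In particular, the boundary value $p|_{\partial B(0,r)} = \tilde p|_{\Phi^{-1}(\partial B(0,r))}$ and the area $|D|$ depend continuously on $(o_2, r, R)$, so $F$ is continuous. This continuous-dependence step is where essentially all of the technical work lives; everything else in the proof is soft.

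Once continuity is in hand, compactness of $\overline{\mathcal{X}}$ produces $c_0' := \min_{\overline{\mathcal{X}}} F > 0$, and because $|D| \le \pi \delta_2^2$ we obtain
\[
p\big|_{\partial B(0,r)} \le \frac{|D|}{2\pi} - c_0' = \frac{|D|}{2\pi}\left(1 - \frac{2\pi c_0'}{|D|}\right) \le \frac{|D|}{2\pi}(1 - c_0)
\]
with $c_0 := 2c_0'/\delta_2^2 > 0$ depending only on $\delta_1, \delta_2$, and $l$, as claimed.
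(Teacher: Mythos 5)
Your compactness scheme is a genuinely different route from the paper's. The paper proves the lemma by a short explicit computation: it combines $\int_D p\,dx \ge \pi\beta^2$ (which follows from $g'(k)\le -2\pi$ as in the proof of Proposition~\ref{talenti2}, with $\beta := p|_{\partial B_r}$), the identity $\beta|B_r| + \int_D p\,dx = \tfrac{1}{2}\int_D|\nabla p|^2\,dx \le \tfrac{1}{2}\int_D|x|^2\,dx$ from Lemma~\ref{p_bd}, and a direct evaluation of $\int_D|x|^2\,dx$ for the eccentric annulus that produces the offset term $-\tfrac{1}{2}l^2|B_r|$; solving the resulting quadratic inequality in $\beta$ gives the bound with a constant depending only on $\delta_1,\delta_2,l$. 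That computation is manifestly uniform over the whole parameter range and gives $c_0$ in closed form. Your version trades that explicitness for a soft argument that delegates the quantitative content to compactness plus the rigidity clause of Proposition~\ref{talenti2}.

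There is, however, a genuine gap at the degenerate boundary of your parameter space. The stratum $r + l = R$ of $\overline{\mathcal{X}}$ corresponds to internally tangent circles, where $D = B_R\setminus\overline{B_r}$ pinches to a point: $\partial D$ is not $C^1$, Lemma~\ref{p_def} does not define $p$, and no smooth family of diffeomorphisms $\Phi_{o_2,r,R}:\overline{D_0}\to\overline{D}$ from a fixed reference annulus reaches this configuration, so the Schauder constants you invoke blow up as $r+l\to R$. Thus $F$ is neither defined on that stratum nor shown to extend continuously to it with a positive limit, and the minimum over $\overline{\mathcal{X}}$ you want to take is not yet justified. Because the lemma imposes no lower bound on $R-r-l$, you cannot simply shrink $\overline{\mathcal{X}}$ away from the pinch; you would need a separate barrier or asymptotic estimate controlling $p|_{\partial B_r}$ uniformly in the near-tangent regime before the compactness step closes.
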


The proofs of the above quantitative lemmas will be postponed to Section~\ref{sec21}. Now we are ready to prove the main theorem.

\color{black}

\begin{thm}\label{theorem3}
Let $\omega$ be a compactly supported smooth nonnegative stationary solution to the 2D Euler equation. Then $\omega$ is radially symmetric up to a translation.

\end{thm}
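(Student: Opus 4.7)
The strategy is to reduce to the patch case of Corollary~\ref{cor_multiple_steady} by an $L^\infty$ step-function approximation, using the quantitative Talenti-type stability estimates in Lemmas~\ref{talenti_2}--\ref{talenti_4} to close the approximation gap. Using Sard's theorem I first pick a decreasing sequence of regular values $h_0 = \sup\omega > h_1 > \cdots > h_{M_n} = 0$ with spacing bounded by $1/n$, set $D_{n,k} := \{\omega > h_k\}$, and define $\omega_n := \sum_k (h_{k-1}-h_k)\,1_{D_{n,k}}$, so that $\|\omega-\omega_n\|_{L^\infty} \leq 1/n$. Since $h_k$ is regular, each connected component $E$ of $D_{n,k}$ has smooth boundary; on each such $E$, I solve the hole-corrected Poisson problem of Lemma~\ref{p_def} for $p_E$, and then set $\varphi^n := \tfrac{|x|^2}{2} + \sum_E p_E\,1_E$, with the sum over all components of all $D_{n,k}$'s.

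With $f := \omega * \mathcal{N}$, I compute
$$\mathcal{I}_n := \int \omega_n\,\nabla\varphi^n \cdot \nabla f\, dx$$
in two ways. On the one hand, $\partial E$ is a union of components of regular level sets of $\omega$, on each of which $f$ is constant by the stationarity identity \eqref{eq_smooth}; together with $\Delta\varphi^n = 0$ in $E$ and the flux identity \eqref{eq_p_int_bdry} on the holes, the divergence-theorem argument from Theorem~\ref{theorem1} yields $\mathcal{I}_n = 0$ exactly. On the other hand, writing $f = \omega_n * \mathcal{N} + (\omega-\omega_n)*\mathcal{N}$ and repeating the two-step decomposition of Corollary~\ref{cor_multiple_steady} (observing that components of \emph{distinct} super-level sets $D_{n,k}, D_{n,k'}$ are automatically nested, so the only surviving cross terms are between two disjoint components of the \emph{same} super-level set), I obtain
$$0 \;=\; \mathcal{I}_n \;\geq\; \sum_{k,E}(h_{k-1}-h_k)^2\left(\frac{|E|^2}{4\pi}-\int_E p_E\, dx\right) \;+\; \sum_k (h_{k-1}-h_k)^2\!\!\sum_{\substack{E\neq E'\\ E,E'\subset D_{n,k}}}\!\!\frac{|E||E'|}{4\pi} \;-\; \frac{C(\omega)}{n},$$
where the $C(\omega)/n$ term absorbs the contribution from $\nabla\bigl((\omega-\omega_n)*\mathcal{N}\bigr)$, whose $L^2$ norm is $O(1/n)$ by standard potential-theoretic estimates.

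Since both sums on the right are nonnegative by Proposition~\ref{talenti2}, each must vanish in the limit $n\to\infty$. Vanishing of the cross terms forces each super-level set $\{\omega>h\}$ to be connected for almost every $h$, while vanishing of $\tfrac{|E|^2}{4\pi}-\int_E p_E$, combined with Lemma~\ref{talenti_2} and the elementary observation that an $L^\infty$-deficit in $p_E$ gives a quadratic deficit in $\int p_E$, forces the Fraenkel asymmetry $\mathcal{A}(\widetilde E)\to 0$; hence each connected component of $\{\omega=h\}$ is a single circle or a pair of concentric circles.

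I expect the concentricity step across different levels to be the main obstacle. To show that all these nested circles share a common center $x_0$, I would argue by contradiction: if two nearby level sets $\{\omega=h\}$ and $\{\omega=h'\}$ had centers at distance $l>0$, Lemma~\ref{talenti_3} (thin tubular regime, when $h$ and $h'$ are close) or Lemma~\ref{talenti_4} (general regime) produces an $n$-uniform constant $c_0(l)>0$ such that the annular region between them forces $\tfrac{|E|^2}{4\pi}-\int_E p_E \geq c_0(l)|E|^2/(4\pi)$ for the corresponding $E$'s in the approximation, contradicting the vanishing above. Once all regular level sets are concentric circles about some common $x_0\in\mathbb{R}^2$, $\omega$ is radial about $x_0$, which proves Theorem~\ref{theorem3}. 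For the $\Omega<0$ extension in Corollary~\ref{cor_conti_rotating}, one replaces $f$ by $f_\Omega$ and adds the nonnegative term $-\Omega\sum_k(h_{k-1}-h_k)\sum_E\int_E (x+\nabla p_E)\cdot x\, dx$ as in Theorem~\ref{thm_omega<0}, forcing the common center $x_0$ to coincide with the origin.
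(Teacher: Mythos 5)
Your overall architecture --- step-function approximation, computing $\mathcal{I}_n$ two ways, quantitative Talenti stability, Lemmas~\ref{talenti_2}--\ref{talenti_4} for circularity and concentricity --- matches the paper's, but you chose a genuinely different decomposition and it breaks the key computation. You work with nested super-level sets $D_{n,k}=\{\omega>h_k\}$ and $\omega_n=\sum_k(h_{k-1}-h_k)1_{D_{n,k}}$, whereas the paper takes the \emph{disjoint} pre-image slabs $D_i=\{r_i<\omega<r_{i+1}\}$ (Step 2 of the paper). This is not a cosmetic difference. First, your $\varphi^n=\frac{|x|^2}{2}+\sum_E p_E 1_E$, with $E$ ranging over components of \emph{all} levels, is not harmonic on any one $E$: at a point covered by $m\geq 2$ of the overlapping $E$'s, $\Delta\varphi^n=2-2m\neq 0$, so the step ``$\Delta\varphi^n=0$ in $E$ $\Rightarrow$ $\mathcal{I}_n=0$'' fails as written. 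Second, and more fundamentally, your claim that the only surviving cross terms are between disjoint components of the same super-level set is false. For components $E\subset E'$ from \emph{different} levels ($D_{n,k}\subset D_{n,l}$), the term $\int_E\nabla p_E\cdot\nabla(1_{E'}*\mathcal{N})\,dx$ has the nonvanishing bulk contribution $-\int_{E\cap E'}p_E\,dx=-\int_E p_E\,dx$, which is of size $\sim|E|^2$; it vanishes in the paper's setup precisely because $D_i\cap D_j=\emptyset$. Summed with weights $(h_{k-1}-h_k)(h_{l-1}-h_l)$ over the $O(n^2)$ cross-level pairs, this gives an $O(1)$ contribution entirely absent from your displayed inequality. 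To salvage the layer-cake route one would need something like $\int_E p_E\,dx+\int_E p_{E'}\,dx\leq\frac{|E||E'|}{2\pi}$ for connected $E\subset E'$, together with a quantitative stability version; you neither state nor prove anything of this kind.

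Even granting the cross-level terms, your displayed inequality is provably false as stated: take $\omega$ radial and supported on two concentric annuli (a perfectly valid stationary solution). Then each $E$ is a concentric disk or annulus so every Talenti term vanishes, yet for small $h_k$ the super-level set $D_{n,k}$ has two components, making $\sum_{E\neq E'\subset D_{n,k}}|E||E'|/(4\pi)=\Theta(1)$, contradicting $0\geq\Theta(1)-C/n$. The improvement sum must be restricted to pairs that are \emph{not} nested, as in \eqref{eq53} and \eqref{eq18}; its vanishing then forces the components to be pairwise nested (paper's Step 4), not that the super-level sets are connected. Finally, your concentricity argument via Lemmas~\ref{talenti_3}--\ref{talenti_4} only applies to level sets inside one connected component of $\supp\,\omega$, where an annular region of positive vorticity separates them. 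When $\supp\,\omega$ has (possibly countably many) components, no such annulus exists and a separate argument is required; this is the paper's Step 6, which uses the explicit Newtonian-potential identities in Lemma~\ref{lem_newton} to compare $f$ at the leftmost and rightmost points of the outermost circle, and is missing from your proposal.
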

\begin{proof}
Note that as mentioned in step 1 of Proposition~\ref{talenti2}, we have that for almost every $k\in (0,\|\omega\|_{L^\infty})$, $\omega^{-1}(\left\{ k\right\})$ is a smooth 1-manifold. Furthermore, since $\omega$ is compactly supported, each such level set is a disjoint union of finite number of simple closed curves. For any such closed curve, we call it a ``\emph{level set component}'' in this proof.

\indent We split the proof into several steps. Throughout step 1, 2 and 3, we prove that all level set components of $\omega$ must be circles. In step 4, we will prove that any two level set components are nested, i.e. one is contained in the other. Lastly we present the proof that all level set components are concentric in step 5 and 6. 

\textbf{Step 1.} Towards a contradiction, suppose there is $k>0$ that is a regular value of $\omega$, and $\omega^{-1}(\left\{ k\right\})$ has a connected component $\Gamma$ that differs from a circle.   Recall that $\interior(\Gamma)$ denotes the interior of $\Gamma$, which is open and simply connected. 
Since $\Gamma$ is not a circle, we have $\mathcal{A}(\interior(\Gamma))>0$, with $\mathcal{A}$ as in Definition~\ref{Fraenkel}.

In this step, we investigate level set components near  $\Gamma$. Since $k$ is a regular value, we can find an open neighborhood $U$ of $\Gamma$ and a constant $\eta>0$ such that $|\nabla \omega|>\eta$ in $U$. For any $x\in\Gamma$, consider the flow map $\Phi_t(x)$ originating from $x$, given by
\[
\frac{d}{dt} \Phi_t(x) = \frac{\nabla\omega(\Phi_t(x))}{|\nabla \omega(\Phi_t(x))|^{2}}
\]
with initial condition $\Phi_0(x) = x$. Since $\frac{\nabla \omega}{|\nabla\omega|^2}$ is smooth and bounded in $U$, we can choose $\delta_1>0$ so that $\Phi_{t}(\Gamma):=\{\Phi_t(x): x\in \Gamma\}$ lies in $U$ for any $t\in (-\delta_1,\delta_1)$. Note that $\Phi_{t}$'s are diffeomorphisms, thus $\Phi_{t}(\Gamma)$ is also a smooth simple closed curve for $t\in (-\delta_1,\delta_1)$.
Then we observe that
\begin{align}\label{eq27}
\frac{d}{dt}\omega(\Phi_{t}(x))=\nabla \omega(\Phi_{t}(x))\cdot \frac{\nabla \omega(\Phi_{t}(x))}{|\nabla \omega(\Phi_{t}(x))|^{2}}=1 \quad\text{ for $(t,x)\in (-\delta_1,\delta_1)\times \Gamma$.}
\end{align}
Hence for each $t\in (-\delta_1,\delta_1)$, $\Phi_{t}(\Gamma)$ is a level set component and 
\begin{align}\label{eq14}
\omega(\Phi_{t_{1}}(\Gamma))\ne \omega(\Phi_{t_{2}}(\Gamma)) \text{ if $t_{1}\ne t_{2}$.}
\end{align}
 By continuity of the map $(t,x)\mapsto \Phi_{t}(x)$, we can find $\delta_2\in(0,\delta_1)$ such that 
\begin{align}\label{eq15}
\mathcal{A}(\interior(\Phi_{t}(\Gamma)))>\frac{1}{2}\mathcal{A}(\interior(\Gamma)) \text{ for any $t\in(-\delta_2,\delta_2)$}.
\end{align}
\indent Since two different level sets cannot intersect,  we can assume without loss of generality that $\interior(\Phi_{-\delta_2}(\Gamma))\subset \interior(\Phi_{\delta_2}(\Gamma))$. Then it follows from the intermediate value theorem  and \eqref{eq27} that
\begin{align}
\interior(\Phi_{-\delta_2}(\Gamma))\subset \Phi_{t}(\Gamma)\subset  \interior(\Phi_{\delta_2}(\Gamma)), \text{ for any $t\in (-\delta_2,\delta_2)$}.\label{eq16}
\end{align} 
Lastly we denote $V:=\interior(\Phi_{\delta_2}(\Gamma))\backslash \overline{\interior(\Phi_{-\delta_2}(\Gamma))}$ which is a nonempty open doubly-connected set, therefore $|V|>0$.\\

\indent \textbf{Step 2.} For any integer $n> 1$, we claim that we can approximate $\omega$ by a step function $\omega_n$ of the form $\omega_n(x) = \sum_{i=1}^{M_n} \alpha_{i}1_{D_{i}}(x)$, which satisfies all the following properties. 

\begin{itemize}
\item[(a)] Each $D_{i}$ is a connected open domain with smooth boundary and possibly has a finite number of holes.
\item[(b)] Each connected component of $\partial D_{i}$ is a level set component of $\omega$.
\item[(c)] $D_{i}\cap D_{j}=\emptyset$ if $i\ne j$. 
\item[(d)] $\|\omega_n - \omega\|_{L^\infty(\mathbb{R}^2)} \le \frac{2}{n} \|\omega\|_{L^\infty(\mathbb{R}^n)}.$
\end{itemize}

To construct such $\omega_n$ for a fixed $n>1$, let $r_{0}=0$ and $r_{n+1}=\| \omega\|_{L^{\infty}}$. We pick $r_1,\dots,r_n$ to be regular values of $\omega$, such that $0< r_{1}<\dots<r_{n}<\| \omega\|_{L^{\infty}}$, and $r_{i+1}-r_{i}<\frac{2}{n} \| \omega\|_{L^{\infty}}$ for $i=0,\dots,n$. We denote $D_{i}:=\left\{ x\in \R^{2}: r_{i}<\omega(x)<r_{i+1}\right\}$ for $i = 1,\dots, n-1$, and let $D_n := \left\{ x\in \R^{2}: \omega(x)> r_n\right\}$. Thus for each $i = 1,\cdots,n$, $D_{i}$ is a bounded domain with smooth boundary. We can then write it as $D_{i}=\dot{\cup}_{l=1}^{m_{i}}D_{i}^{l}$ for some $m_{i}\in \N$ where $D_{i}^{l}$'s are connected components of $D_{i}$. Then let $\omega_{n}(x):=\sum_{i=1}^{n}r_{i}\sum_{l=1}^{m_{i}}1_{D_{i}^{l}}$. By relabeling the indices, we rewrite $\omega_n(x)=\sum_{i=1}^{M_n} \alpha_i 1_{D_i}$, where $M_n=\sum_{i=1}^n m_i$, and each $\alpha_i \in \{r_1,\dots, r_n\}$. One can easily check that such $\omega_n$ satisfies properties (a)--(d). 

Of course, there are many ways to choose the values $r_1,\cdots, r_n$, with each choice leading to a different $\omega_n$. From now on, for any $n>1$, we fix $\omega_n(x) := \sum_{i=1}^{M_n} \alpha_{i}1_{D_{i}}(x)$ as any function constructed in the above way. (Note that $\alpha_i$ and $D_i$ all depend on $n$, but we omit their $n$ dependence for notational simplicity.)

\indent Finally, let us point out that for $\omega_n(x)$ constructed above, if $D_{i}\subset V$ for some $i$, then  $D_{i}$ must be doubly connected, since step 1 shows that all level set components in $V$ are nested curves. We will use this  in step 3 and 5. \\

\textbf{Step 3.} For any $n>1$, let $\omega_n(x) = \sum_{i=1}^{M_n} \alpha_{i}1_{D_{i}}(x)$ be constructed in Step 2.  For each $D_i$, let we define $p_{i}^n$ in $D_{i}$ as in Lemma~\ref{p_def}. We set 
\begin{align}\label{eq42}
\begin{cases}
&p^n:=\sum_{i=1}^{M_n}p_{i}^n1_{D_{i}}\\
&\vphi^n_{i}:=p_{i}+\frac{|x|^{2}}{2} \text{ in }D_{i}\\
&\vphi^n:=\sum_{i=1}^{M_n}\vphi_{i}^{n}1_{D_{i}}.
\end{cases}
\end{align}
As in Theorem~\ref{theorem1}, let $f:=\omega * \mathcal{N}$, and we will compute 
\begin{equation}\label{def_i_n}
\mathcal{I}^n := \int_{\mathbb{R}^2} \omega_n(x) \nabla \varphi^n(x)\cdot \nabla f(x) dx
\end{equation}
 in two different ways and derive a contradiction by taking the $n\to\infty$ limit.\\
\indent On the one hand, the same computation as in \eqref{temp111}--\eqref{temp222} yields that
\begin{align}\label{eq20}
\mathcal{I}^n=\sum_{i=1}^{M_n}\alpha_{i}\left(\int_{\partial D_{i}} f(x)\nabla \vphi_{i}^n(x)\cdot \vec{n}d\sigma-\int_{D_{i}}f(x)\lap \vphi_{i}^n(x)dx\right)=0.
\end{align}
On the other hand, 
\begin{align*}
\mathcal{I}^n&=\int_{\R^{2}}\omega_{n}(x)x\cdot \nabla f(x)dx+\int_{\R^{2}}\omega_{n}(x) \nabla p^n(x) \cdot \nabla f(x)dx\\
&=:\mathcal{I}^n_1+\mathcal{I}^n_2.
\end{align*}
Since $\omega_n$ satisfies property (d) in step 2, it follows that
\begin{align*}
\lim_{n\to\infty}\mathcal{I}^n_1=\int_{\R^{2}}\omega(x) x\cdot \nabla f(x)dx.
\end{align*}
A similar computation as in \eqref{eq9} yields that
\begin{align}
 \int_{\R^{2}}\omega(x)x\cdot \nabla f(x)dx&=\frac{1}{2\pi}\int_{\R^{2}}\int_{\R^{2}}\omega(x)\omega(y) \frac{x\cdot(x-y)}{|x-y|^{2}}dxdy\nonumber\\
&=\frac{1}{4\pi}\int_{\R^{2}}\int_{\R^{2}}\omega(x)\omega(y)dxdy\nonumber\\
&=\frac{1}{4\pi}\left(\int_{\R^{2}}\omega(x)dx\right)^{2},\label{eq21}
\end{align}
where we used the symmetry of the integration domain to get the second equality. \\
\indent Now we estimate the limit of $\mathcal{I}_2^n$. By Lemma~\ref{p_bd}, we have $\int_{D_i} |\nabla p^n_i|^2 dx \leq \int_{D_i} |x|^2 dx$, hence $\|\omega_{n}\nabla p\|_{L^{2}(\R^{2})}$ is uniformly bounded. Since $\omega_{n}\to\omega$ in $L^{\infty}$, the bounded convergence theorem yields that
\begin{align*}
\lim_{n\to\infty}\int_{\R^{2}}\omega_{n}\nabla p^n\cdot \nabla \left((\omega_{n}-\omega)*\mathcal{N}\right)(x)dx=0,
\end{align*}
therefore
\begin{align*}
\liminf_{n\to\infty}\mathcal{I}^n_2&=\liminf_{n\to\infty}\int_{\R^{2}}\omega_{n}(x)\nabla p^n(x)\cdot \nabla (\omega_{n}*\mathcal{N})dx.
\end{align*}

From now on, we will omit the $n$ dependence in $p_i^n$ for notational simplicity. Let us break the integral in the right hand side as
\begin{equation}
\label{def_f1f2}
\begin{split}
\int_{\R^{2}}\omega_{n}(x)\nabla p^n(x)\cdot \nabla (\omega_{n}*\mathcal{N})dx
&=\sum_{i,j=1}^{M_n}\alpha_{i}\alpha_{j}\int_{D_{i}}\nabla p_{i}\cdot \nabla(1_{D_{j}}*\mathcal{N})dx\\
&=\sum_{i=1}^{M_n}\alpha_{i}^{2}\int_{D_{i}}\nabla p_{i}\cdot \nabla(1_{D_{i}}*\mathcal{N})dx +\sum_{i\ne j}\alpha_{i}\alpha_{j}\int_{D_{i}}\nabla p_{i}\cdot \nabla(1_{D_{j}}*\mathcal{N})dx\\
&=:F_{1}+F_{2}.
\end{split}
\end{equation}
For $F_{1}$, the divergence theorem yields
\begin{align}\label{eq23}
F_{1}&=\sum_{i=1}^{M_n}\alpha_{i}^{2}\left( \int_{\partial D_{i}}p_{i}\nabla(1_{D_{i}}*\mathcal{N})\cdot \vec{n}d\sigma-\int_{D_{i}}p_{i}dx\right) = -\sum_{i=1}^{M_n}\alpha_{i}^{2}\int_{D_{i}}p_{i}dx,
\end{align}
where the second equality follows from an identical computation as in \eqref{eq8}.
 Then by Proposition~\ref{talenti2}, we have
\begin{align}\label{eq17}
F_{1}&\ge-\frac{1}{4\pi}\sum_{i=1}^{M_n}\alpha_{i}^{2}|D_{i}|^{2}.
\end{align}

For $F_{2}$, the divergence theorem yields
\[
F_{2}=\sum_{i\ne j}\alpha_{i}\alpha_{j}\left(   \int_{\partial D_{i}}p_{i}\nabla (1_{D_{j}}*\mathcal{N})\cdot\vec{n}d\sigma-\int_{D_{i}}p_{i}1_{D_{j}}dx\right)=\sum_{i\ne j}\alpha_{i}\alpha_{j}\int_{\partial D_{i}}p_{i}\nabla(1_{D_{j}}*\mathcal{N})\cdot\vec{n}d\sigma,
\]where we use property (c) in step 2 to get the last equality. 

For $i\neq j$, recall that as in the proof of Corollary~\ref{cor_multiple_steady}, we denote $j\prec i$ if $D_j$ is contained in a hole of $D_i$. Then divergence theorem gives 
\begin{equation}\label{int_crude}
\int_{\partial D_{i}}p_{i}\nabla (1_{D_{j}}*\mathcal{N}) \cdot \vec n d\sigma \begin{cases}= 0 \quad &\text{ if } j \not\prec i,\\
\ge -\sup_{\partial D_{i}}p_{i}|D_{j}|  & \text{ if } j\prec i.
\end{cases}
\end{equation}

Next we will improve this inequality for $j\prec i$ and $i\in L$, where $L:=\left\{ i : D_{i}\subset V\right\}$. (Note that $L$ depends on $\omega_n$, where we omit this dependence for notational simplicity.) From the discussion at the end of step 2, we know that $D_i$ has exactly one hole for all $i \in L$. Using the divergence theorem together with this observation,  \eqref{int_crude} becomes
\begin{equation}\label{eq29}
\int_{\partial D_{i}}p_{i}\nabla (1_{D_{j}}*\mathcal{N}) \cdot \vec n d\sigma \begin{cases}= 0 \quad &\text{ if } j \not\prec i,\\
\ge -\sup_{D_{i}}p_{i}|D_{j}|  & \text{ if } j\prec i \text{ and } i \not\in L,\\
= -p_{i}|_{\partial_{\text{in}}D_{i}}|D_{j}|  & \text{ if } j\prec i \text{ and } i \in L.
\end{cases}
\end{equation}

For the second case on the right hand side, we simply use the crude bound $\sup_{D_i} p_i \leq \frac{|D_i|}{2\pi}$ from Proposition~\ref{talenti2}. For the third case we can have a better bound:  for any $i\in L$, by Lemma~\ref{talenti_2} and \eqref{eq15}, there exists an $\epsilon>0$ that only depends on $\mathcal{A}(\interior(\Gamma))$ (and in particular is independent of $n$), such that $p_{i}|_{\inb D_{i}}\le(\frac{1}{2\pi}-\epsilon)|D_{i}|.$ Thus \eqref{eq29} now becomes
\begin{align}\label{eq29_1}
\int_{\partial D_{i}}p_{i}\nabla (1_{D_{j}}*\mathcal{N}) \cdot \vec n d\sigma \begin{cases}= 0 \quad &\text{ if } j \not\prec i,\\
\ge -\frac{1}{2\pi} |D_i| |D_j|   & \text{ if } j\prec i \text{ and } i \not\in L,\\
\geq -(\frac{1}{2\pi}-\epsilon) |D_i| |D_j| & \text{ if } j\prec i \text{ and } i \in L.
\end{cases}
\end{align}

Now we are ready to estimate $F_2$. Let us break it into
\[
\begin{split}
F_{2}&=\sum_{\underset{(i,j)\not\in L\times L}{j\prec i}}  \alpha_{i}\alpha_{j}\int_{\partial D_{i}}p_{i}\nabla(1_{D_{j}}*\mathcal{N})\cdot\vec{n}d\sigma + \sum_{\underset{(i,j)\in L\times L}{j\prec i}}  \alpha_{i}\alpha_{j}\int_{\partial D_{i}}p_{i}\nabla(1_{D_{j}}*\mathcal{N})\cdot\vec{n}d\sigma\\
&\geq -\sum_{\underset{(i,j)\not\in L\times L}{j\prec i}}  \alpha_i \alpha_j \frac{1 }{2\pi} |D_i| |D_j| - \sum_{\underset{(i,j)\in L\times L}{j\prec i}} \alpha_i \alpha_j \Big( \frac{1 }{2\pi}-\epsilon\Big) |D_i| |D_j| \\
\end{split}
\]where the first equality follows from case 1 of \eqref{eq29_1}, and the second inequality follows from case 2,3 of \eqref{eq29_1}. Finally, by exchanging  $i$ with $j$ and taking average with the original inequality, we have
\begin{equation}\label{eq18}
\begin{split}
F_{2}&\geq -\frac{1}{4\pi}\sum_{\underset{(i,j)\not\in L\times L}{i \neq j}}(\mathbbm{1}_{i\prec j} + \mathbbm{1}_{j\prec i})  \alpha_i \alpha_j |D_i| |D_j| - \frac12 \sum_{\underset{(i,j)\in L\times L}{i \neq j}}(\mathbbm{1}_{i\prec j} + \mathbbm{1}_{j\prec i}) \alpha_i \alpha_j \Big( \frac{1 }{2\pi}-\epsilon\Big) |D_i| |D_j| \\
&\geq -\frac{1}{4\pi}\sum_{\underset{(i,j)\not\in L\times L}{i \neq j}}  \alpha_i \alpha_j |D_i| |D_j| - \frac12 \sum_{\underset{(i,j)\in L\times L}{i \neq j}} \alpha_i \alpha_j \Big( \frac{1 }{2\pi}-\epsilon\Big) |D_i| |D_j| \\
& = - \frac{1 }{4\pi} \sum_{i\neq j} \alpha_i \alpha_j  |D_i| |D_j| + \frac{\epsilon}{2} \sum_{\underset{(i,j)\in L\times L}{i\neq j}}  \alpha_i \alpha_j  |D_i| |D_j|,
\end{split}
\end{equation}
where the second inequality is due to the fact that for any $i\neq j$, at most one of $i \prec j$ and $j\prec i$ can be true, thus we always have $\mathbbm{1}_{i\prec j} + \mathbbm{1}_{j\prec i} \leq 1$.

Therefore, from \eqref{eq17} and \eqref{eq18} it follows that
\begin{align}
F_{1}+F_{2}&\ge -\frac{1}{4\pi}\sum_{i,j=1}^{M_n}\alpha_{i}\alpha_{j}|D_{i}||D_{j}|+\frac{\epsilon}{2}\sum_{(i,j)\in L\times L}\alpha_{i}\alpha_{j}|D_{i}||D_{j}|-\frac{\epsilon}{2}\sum_{i\in L}\alpha_{i}^{2}|D_{i}|^{2}\nonumber\\
&=-\frac{1}{4\pi}\left(\sum_{i=1}^{M_n}\alpha_{i}|D_{i}|\right)^{2}+\frac{\epsilon}{2}\left(\sum_{i\in L}\alpha_{i}|D_{i}|\right)^{2}-\frac{\epsilon}{2}\sum_{i\in L}\alpha_{i}^{2}|D_{i}|^{2}.
\end{align}
Since we will send $n\to\infty$, in the rest of step 3 we will denote $L$ by $L^n$ to emphasize that $L$ depends on $\omega_n$. (In fact $\alpha_i$ and $D_i$ depend on $n$ as well, and we omit the $n$ dependence for them to avoid overcomplicating the notations.)

Note that $\sum_{i\in L^n}\alpha_{i}1_{D_{i}}$ converges to $\omega1_{V}$ in $L^1(\R^{2})$. Also if $i\in L^n$, then the nondegeneracy of $|\nabla \omega|$ on $V$ yields that $\lim_{n\to\infty}\sup_{i\in L^n}|D_{i}|=0$, consequently
\begin{align*}
\lim_{n\to\infty}\sum_{i\in L^n}\alpha_{i}^{2}|D_{i}|^{2}\le \| \omega\|_{L^{\infty}}\lim_{n\to\infty}\sup_{i\in L^n}|D_{i}|\int_{\R^{2}}\omega dx=0.
\end{align*}
Therefore it follows that
\begin{align}
\liminf_{n\to\infty}\mathcal{I}^n_2&=\liminf_{n\to\infty}\left(F_{1}+F_{2}\right)\nonumber\\
&\ge-\lim_{n\to\infty}\frac{1}{4\pi}\left(\sum_{i=1}^{M_n}\alpha_{i}|D_{i}|\right)^{2}+\lim_{n\to\infty}\frac{\epsilon}{2}\left(\sum_{i\in L^n}\alpha_{i}|D_{i}|\right)^{2}\nonumber\\
&= -\frac{1}{4\pi}\left(\int_{\R^{2}}\omega(x)dx\right)^{2}+\frac{\epsilon}{2}\left(\int_{V}\omega(x)dx\right)^{2}.\label{eq19}
\end{align}
Note that $\omega$ is strictly positive in $V$, due to $|\nabla \omega|>0$ in $V$ and $\omega\geq 0$ in $\mathbb{R}^2$. Thus from \eqref{eq20}, \eqref{eq21} and \eqref{eq19}, it follows that
\begin{equation}\label{final_contra}
0=\lim_{n\to\infty}\mathcal{I}^n\ge\lim_{n\to\infty}\mathcal{I}^n_1+\liminf_{n\to\infty}\mathcal{I}^n_2\ge \frac{\epsilon}{2}\left(\int_{V}\omega(x)dx\right)^{2}>0,
\end{equation}
which is a contradiction and we have proved that any level set component is a circle.\\

\textbf{Step 4.} In this step we show that every pair of level set components are nested. Towards a contradiction, assume that there exist two level set components $\Gamma_{1}$ and $\Gamma_{2}$ that are not nested.

From step 3, we know that $\Gamma_{1}$ and $\Gamma_{2}$ are circles.  Then the disks $\interior(\Gamma_1)$ and $\interior(\Gamma_2)$ are disjoint, and they must be separated by a positive distance since $\Gamma_{1}$ and $\Gamma_{2}$ are level sets of regular values of $\omega$. As in step 1, using the flow map $\Phi_t$ originating from the two circles, we can find disjoint open annuli $V_{1}$ and $V_{2}$ such that $\Gamma_{i}\subset V_{i}$ for $i=1,2$, and both $\outb V_i$ and $\inb V_i$ are level set components of $\omega$.
 
 \indent For any $n>1$, let $\omega_n(x) = \sum_{i=1}^{M_n} \alpha_{i}1_{D_{i}}(x)$ be constructed in step 2, and let 
\[
L_{1}^n:=\left\{i:  D_i \subset V_{1}\right\} \quad \text{ and } \quad L_{2}^n:=\left\{i:  D_i \subset V_{2}\right\}.
\]
 Let $p_i$ be defined in \eqref{eq42} of step 3, and $\mathcal{I}^n$ defined in \eqref{def_i_n}. Then on the one hand, the same computations in step 3 give 
\begin{align}\label{eq25}
\lim_{n\to\infty}\mathcal{I}^n = 0 \quad\text{ and  }\quad \lim_{n\to\infty}\mathcal{I}^n_1=\frac{1}{4\pi}\left(\int_{\R^{2}}\omega(x)dx\right)^{2}.
\end{align}
 
Let $F_1$ and $F_2$ be given by \eqref{def_f1f2}. For $F_1$, the estimate \eqref{eq17} still holds. For $F_2$, using \eqref{int_crude} and Proposition~\ref{talenti2}, we have
\[
F_{2}\ge-\frac{1}{4\pi}\sum_{i\prec j \text{ or }j\prec i}\alpha_{i}\alpha_{j}|D_{i}||D_{j}|.
\]
Since $V_1$ and $V_2$ are assumed to be not nested, if $(i,j)\in L_{1}^n\times L_{2}^n$ then neither $i\prec j$ nor $j\prec i$. Therefore it follows that
\begin{align*}
F_{2}\ge -\frac{1}{4\pi}\sum_{i\ne j}\alpha_{i}\alpha_{j}|D_{i}||D_{j}|+\frac{1}{4\pi}\sum_{(i,j)\in L_{1}\times L_{2}}\alpha_{i}\alpha_{j}|D_{i}||D_{j}|+\frac{1}{4\pi}\sum_{(j,i)\in L_{1}\times L_{2}}\alpha_{i}\alpha_{j}|D_{i}||D_{j}|.
\end{align*}
Combining the estimates for $F_1$ and $F_2$ yields
\begin{align*}
F_{1}+F_{2}\ge -\frac{1}{4\pi}\sum_{i,j=1}^{M_n}\alpha_{i}\alpha_{j}|D_{i}||D_{i}|+\frac{1}{2\pi}\left(\sum_{i\in L_{1}^n}\alpha_{i}|D_{i}|\right)\left(\sum_{i\in L_{2}^n}\alpha_{i}|D_{i}|\right).
\end{align*}
As $n\to\infty$, since $\sum_{i\in L_{1}^n}\alpha_{i}1_{D_{i}}$ and $\sum_{i\in L_{2}^n}\alpha_{i}1_{D_{i}}$ converge to $\omega1_{V_{1}}$ and $\omega1_{V_{2}}$ respectively in $L^1(\mathbb{R}^2)$, we have\begin{align}
\liminf_{n\to\infty}\mathcal{I}^n_2&\ge \lim_{n\to\infty}\left(F_{1}+F_{2}\right)=-\frac{1}{4\pi}\left(\int_{\R^{2}}\omega(x)dx\right)^{2}+\frac{1}{2\pi}\left(\int_{V_{1}}\omega(x)dx\right)\left(\int_{V_{2}}\omega(x)dx\right).\label{eq26}
\end{align} 
Combining \eqref{eq25} and \eqref{eq26} gives us a similar contradiction as in \eqref{final_contra}, except that $ \frac{\epsilon}{2}\left(\int_{V}\omega(x)dx\right)^{2}$ is now replaced by $\frac{1}{2\pi}\left(\int_{V_{1}}\omega(x)dx\right)\left(\int_{V_{2}}\omega(x)dx\right)$. Thus we complete the proof that level sets are nested.

\indent \textbf{Step 5.}   In this step, we aim to show that all level set components are concentric within the \emph{same connected component} of $\supp\,\omega$. This immediately implies that each connected component of $\supp\,\omega$ is an annulus or a disk, and $\omega$ is radially symmetric about its center.

Towards a contradiction, suppose that there are two level set components $\Gamma_{\text{in}}$ and $\Gamma_{\text{out}}$ in the same connected component of $\supp\,\omega$, such that they are nested circles, but their centers $O_{\text{in}}$ and $O_{\text{out}}$ do not coincide. We denote their radii by $r_{\text{in}}$ and $r_{\text{out}}$, and define
\[U := \overline{\interior(\Gamma_{\text{out}})}\setminus \interior(\Gamma_{\text{in}}).
\] 
For an illustration of $\Gamma_{\text{in}}$ and $\Gamma_{\text{out}}$ and $U$, see Figure~\ref{fig_dn}(a).

We claim that $\omega$ is uniformly positive in $U$. Recall that all level set components of $\omega$ are nested by step 4. Thus if $\omega$ achieves zero in $U$, the zero level set must be also nested between $\Gamma_{\text{in}}$ and $\Gamma_{\text{out}}$, since it can be taken as a limit of level set components whose value approaches 0; but this contradicts with the assumption that $\Gamma_{\text{in}}$ and $\Gamma_{\text{out}}$ lie in the same connected component of $\supp\,\omega$. As a result, we have $\omega_{\min} := \inf_U \omega>0$.

\begin{figure}[h!]
\begin{center}
\includegraphics[scale=1]{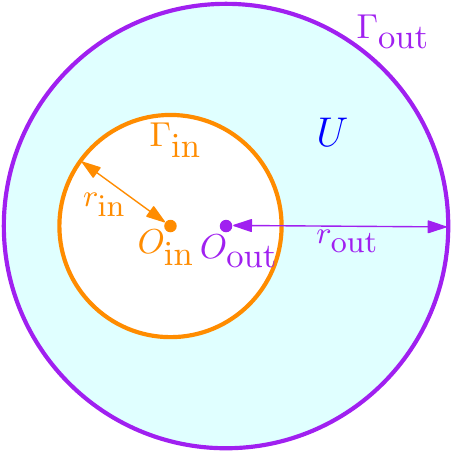}\qquad\qquad\quad\quad \includegraphics[scale=1]{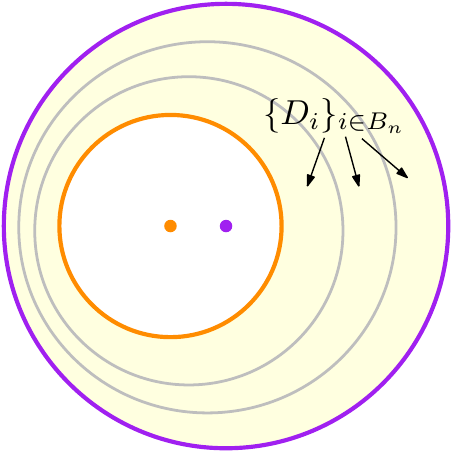}\\
(a)\hspace*{6.7cm}(b)
\end{center}

\vspace*{-0.4cm}
\caption{(a) Illustration of the circles $\Gamma_{\text{in}}$ and $\Gamma_{\text{out}}$, whose centers are $O_{\text{in}}$ and $O_{\text{out}}$. The set $U$ is colored in blue. (b) For a fixed $n$, each open set $\{D_i\}_{i\in B_n}$ is colored in yellow. Note that their union gives exactly the set $U$.\label{fig_dn}} 
\end{figure}

For a sufficiently large $n$, let $\omega_n=\sum_{i=1}^{M_n} \alpha_i 1_{D_i}(x)$ be given in step 2, where we further require both $\Gamma_{\text{in}}$ and $\Gamma_{\text{out}}$ coincide with some boundary of $D_i$. (This is allowed in our construction of $\omega_n$ in step 2, since $\omega$ is regular along both $\Gamma_{\text{in}}$ and $\Gamma_{\text{out}}$.)
Let us denote 
\[
B_n := \{1\leq i\leq M_n: D_i\subset U\},
\]
and note that $
 U := \cup_{i \in B_n} \overline D_i.$  
See Figure~\ref{fig_dn}(b) for an illustration of $\{D_i\}_{i\in B_n}$. 

As before, we denote $i\prec j$ if $D_i$ is nested in $D_j$. 
For the integral $\mathcal{I}^n$ in \eqref{def_i_n}, on the one hand, we have $\mathcal{I}^n=0$ for all $n>1$ by \eqref{eq20}. On the other hand, following the same argument as in step 3 up to \eqref{int_crude} (where we also use that each $D_i$ is already known to be doubly-connected, thus $\int_{\partial D_i}p_i \nabla (1_{D_j} * \mathcal{N})\cdot \vec{n} d\sigma = -p_i|_{\inb D_i} |D_j|$ if $j \prec i$), we have
\[
\begin{split}
\liminf_{n\to\infty} \mathcal{I}^n &= \liminf_{n\to\infty}\left( \frac{1}{4\pi} \Big(\sum_{i=1}^{M_n} \alpha_i |D_i|\Big)^2 - \sum_{i=1}^{M_n} \alpha_i^2 \int_{D_i} p_i dx - \sum_{1\leq i,j \leq M_n, j\prec i} \alpha_i \alpha_j p_i\big|_{\inb D_i} |D_j| \right)\\
&\geq  \liminf_{n\to\infty}\bigg(  \underbrace{\sum_{1\leq i,j\leq M_n,\, j\prec i}  \alpha_i \alpha_j\Big( \frac{1}{2\pi} |D_i| - p_i\big|_{\inb D_i}\Big)|D_j|  }_{=:T_n} \bigg),
\end{split}
\]
where in the last step we used  Proposition~\ref{talenti2}.

Note that Proposition~\ref{talenti2} gives  $T_n> 0$, where we have strict positivity, since $O_{\text{in}} \neq O_{\text{out}}$ implies that some $\{D_i\}_{i\in B_n}$ must be non-radial. But since the area of these $D_i$'s may approach 0 as $n\to\infty$, in order to derive a contradiction after taking $\liminf_{n\to\infty}$, we need to obtain a quantitative estimate for Proposition~\ref{talenti2} for a thin tubular region $D_i$ between two circles, which is done in Lemma~\ref{talenti_3}. 

Next we show that the sets $\{D_i\}_{i\in B_n}$ that are ``non-radial to some extent'' must occupy a certain portion of $ U$. For $i\in B_n$, denote by $o^i_{\text{in}}$ and $r^i_{\text{in}}$ the center and radius of $\inb D_i$, and likewise $o^i_{\text{out}}$ and $r^i_{\text{out}}$ the center and radius of $\outb D_i$. Note that if $D_i$ is the inner-most set in $\{D_i\}_{i\in B_n}$, then we have $o^i_{\text{in}} = O_{\text{in}}$, and the outmost $D_i$ satisfies $o^i_{\text{out}} =  O_{\text{out}}$. In addition, if  $\outb D_i=\inb D_j$ for some $i,j \in B_n$, then $o^i_{\text{out}}$ = $o^j_{\text{in}}$. Thus triangle inequality gives
\begin{equation}\label{eq_sum_o}
  \sum_{i\in B_n} |o^i_{\text{in}} - o^i_{\text{out}}| \geq | O_{\text{in}} - O_{\text{out}}| =: c_0>0 \quad \text{ for all }n\geq n_0.
\end{equation}

In order to apply Lemma~\ref{talenti_3} (which requires the region to have inner radius 1), for each $i \in B_n$, consider the scaling 
\[\tilde p_i(x) := (r_{\text{in}}^i)^{-2} p_i ( r_{\text{in}}^i x).
\] Then $\tilde p_i$ is defined in $\tilde D_i :=  (r_{\text{in}}^i)^{-1}D_i$. 
Due to the scaling, $\tilde D_i$ has inner radius 1 (denote the hole by $\tilde h_i$), and outer radius $1+\epsilon_i$, where $ \epsilon_i := \frac{r_{\text{out}}^i- r_{\text{in}}^i}{r_{\text{in}}^i}>0$. In addition, the distance between the centers of $\inb\tilde D_i$ and $\outb\tilde D_i$ is $ a_i \epsilon_i$,
where \[
 a_i := \frac{|o^i_{\text{in}} - o^i_{\text{out}}|}{|r^i_{\text{in}}-r^i_{\text{out}}|}.
 \] One can also easily check that $\tilde p_i$ satisfies $\Delta \tilde p_i = -2$ in $\tilde D_i$, and $\int_{\partial \tilde h_i} \nabla \tilde p \cdot \vec{n} d\sigma = -2|\tilde h_i| = -2\pi$.
By Lemma~\ref{talenti_3}, if $0<\epsilon_i < \frac{(a_i)^2}{64}$, then $\tilde p_i |_{\partial \tilde h_i} \leq \frac{|\tilde D_i|}{2\pi} (1- \frac{a_i^2}{16})$. Thus in terms of $p_i$, we have
\begin{equation}\label{p_eq000}
 p_i \big|_{\inb D_i} \leq \frac{1}{2\pi} |D_i| - c_1 a_i^2 |D_i| \quad\text{ if $i\in B_n$ satisfies ~} r_{\text{out}}^i- r_{\text{in}}^i \leq c_2 a_i^2,
\end{equation}
where $c_1 := \frac{1}{32\pi}$ and $c_2 := \frac{r_{\text{in}}}{64}$ are independent of $n$ and $i$, due to the fact that $r_{\text{in}}^i \geq r_{\text{in}}>0$ for all $i\in B_n$.
Using the definition of $a_i$,  \eqref{eq_sum_o} can be written as 
\begin{equation}\label{sum_o2}
\sum_{i\in B_n} a_i  |r^i_{\text{in}}-r^i_{\text{out}}| > c_0.
\end{equation}
Note that $\sum_{i\in B_n} |r^i_{\text{in}}-r^i_{\text{out}}|$ satisfies the upper bound
\begin{equation}\label{sum_r_diff}
\sum_{i\in B_n} |r^i_{\text{in}}-r^i_{\text{out}}| \leq \frac{|U|}{2\pi r_{\text{in}}} =: M,
\end{equation}
which follows from 
\[
|U| = \sum_{i\in B_n} |D_i| = \pi \sum_{i\in B_n} |r^i_{\text{in}}-r^i_{\text{out}}| \underbrace{(r^i_{\text{in}}+r^i_{\text{out}})}_{>2r_{\text{in}}}.
\]
Combining \eqref{sum_o2} and \eqref{sum_r_diff} gives
\begin{equation}\label{sum_r2}
\sum_{i\in B_n} \mathbbm{1}_{a_i > \frac{c_0}{2M}} |r^i_{\text{in}}-r^i_{\text{out}}| \geq \sum_{i\in B_n} \Big(a_i-\frac{c_0}{2M}\Big)|r^i_{\text{in}}-r^i_{\text{out}}| \geq \frac{c_0}{2},
\end{equation}
where the first inequality follows from $\mathbbm{1}_{a_i > \frac{c_0}{2M}} \geq a_i-\frac{c_0}{2M}$ (recall that $a_i \in(0, 1)$), and the second inequality follows from subtracting $\frac{c_0}{2M}$ times  \eqref{sum_r_diff} from  \eqref{sum_o2}.

Let 
\[K_n := \Big\{i \in B_n: a_i > \frac{c_0}{2M}\Big\}.
\] Using this definition and the fact that $|D_i| > 2\pi r_{\text{in}} |r^i_{\text{in}}-r^i_{\text{out}}| $, \eqref{sum_r2} can be rewritten as
\begin{equation}\label{sum_temp}
\sum_{i\in K_n} |D_i| > 2\pi r_{\text{in}}  \sum_{i\in K_n} |r^i_{\text{in}}-r^i_{\text{out}}| \geq \pi r_{\text{in}}c_0.
\end{equation}
Now we take a sufficiently large $n$, and discuss two cases (note that different $n$ may lead to different cases):

\textbf{Case 1.} Every $i\in K_n$ satisfies $r^i_{\text{out}}-r^i_{\text{in}} \leq \min\{c_2 (\frac{c_0}{2M})^2 , \frac{ r_{\text{in}} c_0}{4 r_{\text{out}}}\}$. By definition of $K_n$, we have $r^i_{\text{out}}-r^i_{\text{in}} \leq c_2( \frac{c_0}{2M})^2 \leq c_2 a_i^2$ for $i\in K_n$. (The motivation of the second term in the min function will be made clear later.) 
  Then by \eqref{p_eq000}, we have 
\[
\frac{1}{2\pi} |D_i|  - p_i \big|_{\inb D_i} \geq  c_1 a_i^2 |D_i| \geq  \frac{c_1 c_0^2}{4M^2} |D_i| \quad\text{ for all }i\in K_n.
\] 
Since $K_n$ is a subset of $B_n$ (and recall that  $\alpha_i \geq \omega_{\min}>0$ for all $i \in B_n$), we have the following lower bound for $T_n$:
\begin{equation}\label{sum_temp2}
\begin{split}
T_n &\geq \omega_{\min}^2 \sum_{i,j\in K_n,\, j\prec i}  \Big( \frac{1}{2\pi} |D_i| - p_i\big|_{\inb D_i}\Big)|D_j| \\
&\geq \omega_{\min}^2 \sum_{i,j\in K_n,\, j\prec i}   \frac{c_1 c_0^2}{4M^2} |D_i||D_j| 
 =  \omega_{\min}^2 \sum_{i,j\in K_n,\, i\neq j}   \frac{c_1 c_0^2}{8M^2} |D_i||D_j|\\
 &= \omega_{\min}^2  \frac{c_1 c_0^2}{16 M^2} \left(\bigg(\sum_{i\in K_n} |D_i|\bigg)^2 - \sum_{i\in K_n}|D_i|^2 \right) 
\end{split}
\end{equation}
Note that the second term in the min function in the assumption gives
\[
\max_{i\in K_n}|D_i| < 2\pi r_{\text{out}} (r^i_{\text{out}}-r^i_{\text{in}}) \leq \frac{\pi r_{\text{in}} c_0}{2} \leq \frac{1}{2} \sum_{i\in K_n} |D_i|,
\]
where we use \eqref{sum_temp} in the last inequality.
 Applying this to the right hand side of \eqref{sum_temp2} gives 
\[
T_n \geq \omega_{\min}^2\frac{c_1 c_0^2}{16 M^2}\cdot \frac{1}{2} \bigg(\sum_{i\in K_n} |D_i|\bigg)^2 \geq \omega_{\min}^2 \frac{c_1 c_0^2}{32 M^2} (\pi  r_{\text{in}} c_0)^2.\]

\textbf{Case 2.}  If Case 1 is not true, then there must be some $i_0 \in K_n$ satisfying $r^{i_0}_{\text{out}}-r^{i_0}_{\text{in}} > \min\{c_2 (\frac{c_0}{2M})^2 , \frac{ r_{\text{in}} c_0}{4 r_{\text{out}}}\}=: c_3,$ which leads to
\[
|o^{i_0}_{\text{in}} - o^{i_0}_{\text{out}}| = a_{i_0} (r^{i_0}_{\text{out}}-r^{i_0}_{\text{in}} ) >\frac{c_0 c_3}{2M} =: l.
\]
Although this set $D_{i_0}$ is likely not thin enough for us to apply Lemma~\ref{talenti_3}, since $|o^{i_0}_{\text{in}} - o^{i_0}_{\text{out}}| $ is bounded below by a positive constant independent of $n$, we can still use Lemma~\ref{talenti_4} to conclude that $\frac{1}{2\pi} |D_{i_0}|  - p_{i_0} \big|_{\inb D_{i_0}} \geq c_4$ for some $c_4>0$ only depending on $r_{\text{in}}, r_{\text{out}}$ and $l$.
This leads to
\[
T_n \geq \sum_{{i_0}\succ j} \omega_{\min} \alpha_j c_4 |D_j| \geq \omega_{\min} c_4 \hspace*{-0.3cm}\sum_{D_j\subset \interior(\Gamma_{\text{in}})} \hspace*{-0.3cm} \alpha_j  |D_j| \geq \omega_{\min} c_4 \cdot \frac{1}{2} \int_{\interior(\Gamma_{\text{in}})} \omega dx,
\]
where the last inequality follows from the fact that for all sufficiently large $n$, the definition of $\omega_n$ gives $\sum_{D_j\subset \interior(\Gamma_{\text{in}})}\alpha_j |D_j| = \int_{\interior(\Gamma_{\text{in}})} \omega_n dx \geq \frac{1}{2} \int_{\interior(\Gamma_{\text{in}})} \omega dx$. Note that the last integral is positive since $\omega>0$ on $\Gamma_{\text{in}}$, and it is clearly independent of $n$.

From the above discussion, for all sufficiently large $n$, regardless of whether we are in Case 1 or 2 for this $n$, we always have that $T_n$ is bounded below by some uniformly positive constant independent of $n$. Therefore taking the $n\to\infty$ limit gives
\[
\liminf_{n\to\infty} \mathcal{I}^n \geq \liminf_{n\to\infty} T_n > 0.
\]
This contradicts $\mathcal{I}^n=0$, therefore finishing the proof of step 5.

\indent \textbf{Step 6.} It remains to show that all connected components of $\supp\,\omega$ are concentric.  
If $\supp\,\omega$ has finitely many connected components, we could proceed similarly as the end of the proof of  Corollary~\ref{cor_multiple_steady}. But since $\supp\,\omega$ may have countably many connected components, we need to use a different argument. 

Let us denote the connected components of $\supp\,\omega$ by $\{U_i\}_{i\in I}$, where $I$ may have countably many elements. Denote their centers by $\{o_i\}_{i\in I}$, their radii by $\{R_i\}_{i\in I}$, and their outer boundaries by $\{\outb U_i\}_{i\in I}$. Without loss of generality, suppose the $x$-coordinates of their centers $\{o^1_i\}_{i\in I}$ are not all identical. 

Among the (possibly infinitely many) collection of circles $\{\outb U_i\}_{i\in I}$, let $\Gamma_r$ be the ``circle with rightmost center'' among them, in the following sense: 

$\bullet$ If there exists some $i_0 \in I$ such that $o^1_{i_0} = \sup_{i\in I} o^1_i$, we define $\Gamma_r := \outb U_{i_0}$. (If the supremum is achieved at more than one indices, we set $i_0$ to be any of them.) 

$\bullet$ Otherwise, take any subsequence $\{i_k\}_{k\in \mathbb{N}} \subset I$ such that $\lim_{k\to\infty} o^1_{i_k} = \sup_{i\in I} o^1_i$. Since $\omega$ has compact support, we can extract a further subsequence (which we still denote by $\{i_k\}_{k\in\mathbb{N}}$), such that both $o_{i_k}$ and $r_{i_k}$ converge as $k\to\infty$, and denote their limit by $O_r \in \mathbb{R}^2$ and $R_r \in \mathbb{R}$. Finally let $\Gamma_r:= \partial B(O_r, R_r)$. 

With the above definition, we point out that $f := \omega * \mathcal{N}= \text{const}$ on $\Gamma_r$.  Note that in both cases above, we can find a sequence of level set components of $\omega$ that converges to $\Gamma_r$, in the sense that the Hausdorff distance between the two sets goes to 0. Since  $f= \text{const}$ on each level set component of $\omega$, continuity of $f$ gives that $f = \text{const}$ on $\Gamma_r$. 

Let $f_i(x) := (\omega 1_{U_i}) * \mathcal{N}$ for $i\in I$; note that by definition we have $f = \sum_{i\in I} f_i$. Lemma~\ref{lem_newton} gives the following:

(a) For all $x \in (\interior( \outb U_i))^c$, we have $f_i(x) = \frac{1}{2\pi} \big(\int_{U_i} \omega dx\big) \ln |x-o_i|$.

(b) If $U_i$ is doubly-connected, then $f_i =\text{const}$ in $\interior (\inb U_i)$, where the constants are different for different $i$.

Note that for any $i\in I$,  $U_i$ must be either nested inside $\Gamma_r$, or have $\Gamma_r$ nested in its hole. (By a slight abuse of notation, we use $i\prec \Gamma_r$ and $i\succ \Gamma_r$ to denote these two relations.) Let $\Gamma_{r}^R := (O_r^1+R_r, O_r^2)$ and $\Gamma_{r}^L:=(O_r^1-R_r, O_r^2)$ be the rightmost/leftmost point of the circle $\Gamma_r$. Note that  (b) implies $f_i(\Gamma_{r}^R) = f_i(\Gamma_{r}^L)$ for all $i\succ \Gamma_r$, whereas  (a) gives the following for all $i\prec \Gamma_r$:
\[
f_i(\Gamma_{r}^R) = \frac{\int_{U_i} \omega dx}{2\pi} \ln |\Gamma_{r}^R-o_i| \geq  \frac{\int_{U_i} \omega dx}{2\pi} \ln |\Gamma_{r}^L-o_i| =  f_i(\Gamma_{r}^L),
\] 
where the inequality follows from that $|O_r^1+R_r - o^1_i| \geq |O_r^1-R_r - o^1_i|$, which is a consequence of $o^1_i \leq O_r^1$ due to our choice of $O_r$. (Also note that $\Gamma_{r}^R$ and $\Gamma_{r}^L$ have the same $y$-coordinate.)

As a result, summing over all $i\in I$ gives $f(\Gamma_{r}^R) \geq f(\Gamma_{r}^L)$, where the equality is achieved if and only if $o_i^1 = O_r$ for all $i \prec \Gamma_r$. Now we discuss two cases:

\textbf{Case 1.} There is some $i\prec \Gamma_r$ with $o_i^1 < O_r$. In this case the above discussion gives $f(\Gamma_{r}^R) > f(\Gamma_{r}^L)$, which directly leads to a contradiction to $f =\text{const}$ on $\Gamma_r$.

\textbf{Case 2.} If case 1 does not hold, then let us define $\Gamma_l$ as a ``circle with leftmost center'' among $\{\outb U_i\}_{i\in I}$ in the same way as $\Gamma_r$.
 Then we must have $O_l^1 < O_r^1$, and since case 1 does not hold (i.e. all $i\prec \Gamma_r$ satisfy that $o_i^1 = O_r$), we must have $\Gamma_l \succ \Gamma_r$. By definition of $\Gamma_r$, there exists some $U_{i_0}$ whose outer boundary is sufficiently close to $\Gamma_r$ and center sufficiently close to $O_r$. As a result, $i_0 \prec \Gamma_l$ and $o_{i_0}^1 > O_l^1$.
 
Let $\Gamma_l^L$ and $\Gamma_l^R$ be the leftmost/rightmost point of $\Gamma_l$. A parallel argument as above then gives that 
$
f_i(\Gamma_{l}^L) \geq f_i(\Gamma_{l}^R)
$ for all $i\in I$. Since we have found an $i_0 \prec \Gamma_l$ with $o_{i_0}^1 > O_l^1$, we have $f_{i_0}(\Gamma_{l}^L) > f_{i_0}(\Gamma_{l}^R)$, thus summing over all $i\in I$ gives the strict inequality $f(\Gamma_{l}^L) >f(\Gamma_{l}^R)$, contradicting with $f =\text{const}$ on $\Gamma_l$. 

In both cases above we have obtained a contradiction, thus $\{o_i\}_{i\in I}$ must  have the same $x$-coordinate. An identical argument shows that their $y$-coordinate must also be identical, thus $\{U_i\}_{i\in I}$ are concentric. Since $\omega$ is known to be radial within each $U_i$ (about its own center) in step 1--5, the proof is now finished.
\end{proof}

\color{black}

In the next corollary, we show that the above proof for stationary smooth solutions can be extended (with some modifications) to show radial symmetry of nonnegative rotating smooth solutions with $\Omega < 0$.

\begin{cor}\label{cor_conti_rotating}
Let $\omega(x,t)=\omega_0(R_{\Omega t}x)$ be a nonnegative compactly-supported uniformly-rotating  solution of  2D Euler with angular velocity $\Omega< 0$. Then $\omega_0$ is radially symmetric about the origin.
\end{cor}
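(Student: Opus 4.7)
The plan is to adapt the proof of Theorem~\ref{theorem3} with $f_\Omega := \omega_0 * \mathcal{N} - \frac{\Omega}{2}|x|^2$ in place of $f$, exploiting that for $\Omega<0$ the extra $-\Omega x$ piece of $\nabla f_\Omega$ contributes a nonnegative term in the energy identity, in the same spirit as the patch argument of Theorem~\ref{thm_omega<0}. Once this machinery yields that $\omega_0$ is radial about some $x_0\in\mathbb{R}^2$, a short direct argument using \eqref{eq_smooth} pins $x_0$ to the origin.

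Concretely, I would first approximate $\omega_0$ by step functions $\omega_n = \sum_{i=1}^{M_n} \alpha_i 1_{D_i}$ exactly as in Step~2 of the proof of Theorem~\ref{theorem3}, define $p^n$ and $\varphi^n$ via \eqref{eq42}, and set $\mathcal{I}^n := \int_{\mathbb{R}^2} \omega_n \nabla \varphi^n \cdot \nabla f_\Omega\, dx$. Since $f_\Omega$ is constant on every connected component of every $\partial D_i$ (by \eqref{eq_smooth} together with how the $D_i$ are built out of level sets of $\omega_0$), the divergence-theorem calculation in \eqref{eq20} still yields $\mathcal{I}^n = 0$. Decomposing $\nabla f_\Omega = \nabla(\omega_0*\mathcal{N}) - \Omega x$ gives
\begin{align*}
0 \;=\; \mathcal{I}^n \;=\; (\mathcal{I}^n_1 + \mathcal{I}^n_2) + (-\Omega)\,\mathcal{I}^n_3,
\end{align*}
where $\mathcal{I}^n_1,\mathcal{I}^n_2$ are precisely the two integrals estimated in Steps~3--5 of Theorem~\ref{theorem3}, and
\begin{align*}
\mathcal{I}^n_3 \;:=\; \sum_{i=1}^{M_n}\alpha_i \int_{D_i}\bigl(|x|^2 + \nabla p_i\cdot x\bigr)\,dx \;\ge\; 0
\end{align*}
by Lemma~\ref{p_bd}. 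Since $-\Omega>0$, this forces $\mathcal{I}^n_1 + \mathcal{I}^n_2 \le 0$ for every $n$.

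Next I would run Steps~3--6 of Theorem~\ref{theorem3}. If $\omega_0$ were not radial up to a translation, then one of the three obstructions ruled out there would occur: a non-circular level-set component, a pair of non-nested level components, or two non-concentric circular level sets inside one connected component of $\supp \omega_0$. In each situation, the quantitative Talenti-type estimates of Lemma~\ref{talenti_2}, Lemma~\ref{talenti_3}, and Lemma~\ref{talenti_4} yield a strictly positive, $n$-independent lower bound on $\liminf_{n\to\infty}(\mathcal{I}^n_1 + \mathcal{I}^n_2)$, contradicting the inequality $\mathcal{I}^n_1+\mathcal{I}^n_2 \le 0$ obtained above. Step~6 of Theorem~\ref{theorem3} (concentricity across distinct components of $\supp \omega_0$), which only invokes continuity of $f_\Omega$ along Hausdorff limits of level-set circles and Lemma~\ref{lem_newton}, transfers verbatim. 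Thus $\omega_0$ must be radial about some $x_0\in\mathbb{R}^2$.

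To finish, write $\omega_0 * \mathcal{N} = h(|x-x_0|)$ for a suitable $h$. Assuming $\omega_0\not\equiv 0$ (otherwise the claim is trivial), there exists a regular level circle $\{|x-x_0|=r\}$ with $r>0$; on it \eqref{eq_smooth} gives $f_\Omega = h(r) - \frac{\Omega}{2}|x|^2 \equiv \mathrm{const}$, so $|x|^2$ is constant on the circle. Parametrizing $x = x_0 + r\theta$ with $|\theta|=1$ yields $|x|^2 = |x_0|^2 + 2r\,x_0\cdot\theta + r^2$, which is independent of $\theta$ only if $x_0 = 0$. The main technical obstacle is confirming that the quantitative Talenti-type estimates powering Steps~3--5 of Theorem~\ref{theorem3} truly survive the insertion of the extra $-\Omega x$ term; this is essentially automatic since Lemmas~\ref{talenti_2}--\ref{talenti_4} concern only $p$ and the geometry of the $D_i$, with no dependence on $\Omega$.
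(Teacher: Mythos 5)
Your overall strategy is essentially the paper's: decompose $\mathcal{I}^n=0$ into the $\nabla(\omega_0*\mathcal{N})$ part and the $(-\Omega)x$ part, show the latter is $\geq 0$ via Lemma~\ref{p_bd}, and then use the quantitative Talenti-type estimates of Steps~3--5 of Theorem~\ref{theorem3} to rule out non-circular level sets, non-nested components, and non-concentric circles within a single connected component of the support. That part is correct and matches the paper (the paper groups your $\mathcal{I}^n_1+\mathcal{I}^n_2$ into a single $\mathcal{I}^n_1$ and your $\mathcal{I}^n_3$ into its $\mathcal{I}^n_2$, but the content is identical).

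The gap is in your treatment of concentricity across distinct connected components of $\supp\,\omega_0$. You claim ``Step~6 of Theorem~\ref{theorem3} transfers verbatim'' to yield that $\omega_0$ is radial about some $x_0$, and then you pin $x_0=0$ by a separate argument. This does not go through as stated. Step~6 of Theorem~\ref{theorem3} rests on the constancy of $f=\omega*\mathcal{N}$ along $\Gamma_r$, and its Case~1/Case~2 dichotomy derives the contradiction from the strict monotonicity of the $f_i$ contributions. Here only $f_\Omega=f-\frac{\Omega}{2}|x|^2$ is constant, and the extra $-\frac{\Omega}{2}|x|^2$ term changes sign according to the sign of $O_r^1$: if $O_r^1<0$ then $|\Gamma_r^R|<|\Gamma_r^L|$, the $-\frac{\Omega}{2}|x|^2$ term contributes with the \emph{opposite} sign to the $\sum_i f_i$ difference in Case~1, and the contradiction is lost. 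The paper instead replaces Step~6 entirely: it uses that for $\Omega<0$ the term $-\frac{\Omega}{2}|x|^2$ \emph{by itself} produces a strict inequality whenever $O_r^1>0$ (one only needs $f_i(\Gamma_r^R)\geq f_i(\Gamma_r^L)$, no strictness, so no Case~1/Case~2 split is needed), hence $\sup_i o_i^1\leq 0$, and a symmetric argument gives $\inf_i o_i^1\geq 0$, pinning all centers to the origin in one stroke. In other words, for $\Omega<0$ the rotation term does not merely leave Step~6 unchanged; it makes the argument simpler and simultaneously delivers both concentricity and location at the origin, so your attempt to factor the argument into ``concentric about some $x_0$, then $x_0=0$'' works against the structure of the proof. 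Your final Step (showing $x_0=0$ once radiality about $x_0$ is known) is a correct elementary observation, but it becomes superfluous once Step~6 is adapted correctly, and you cannot reach the hypothesis it needs via a verbatim transfer.
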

\begin{proof} The proof is very similar to the proof of Theorem~\ref{theorem3}, and we only highlight the differences. 
Let $\{\omega_n\}$ be the same approximation for $\omega_0$ as in step 2 of Theorem~\ref{theorem3}. We consider the same setting as in \eqref{eq42} and \eqref{def_i_n}, except with $f(x)$ replaced by $f_\Omega(x):=\omega_0*\mathcal{N}-\frac{\Omega}{2}|x|^2.$ From the assumption on $\omega_0$, we have that $f_\Omega$ is a constant on each level set component of $\omega_0$. Thus the same computations in \eqref{eq20} give $\mathcal{I}^{n}=0$ for all $n>1$.

On the other hand, we have
\begin{align}\label{I^n identity}
\mathcal{I}^n=\int_{\R^2}\omega_n \nabla \vphi^n \cdot \nabla \left(\omega_0*\mathcal{N}\right)dx+({-\Omega})\int_{\R^2}\omega_n\nabla \vphi^n\cdot xdx =: \mathcal{I}^n_1 + \underbrace{(-\Omega)}_{\geq 0}\mathcal{I}^n_2.
\end{align}
The same argument as in \eqref{ineq_i2_rotate} of Theorem~\ref{thm_omega<0}  gives that $\mathcal{I}^n_2\geq 0$. As for $\mathcal{I}^n_1$, in step 3 -- step 5 of the proof of Theorem~\ref{theorem3}, we have already shown that $\liminf_{n\to\infty} \mathcal{I}^n_1\geq 0$, and the equality is achieved if and only if each connected component of $\{\omega_0>0\}$ is radially symmetric up to a translation, and they are all nested. 


Let us decompose $\supp\, \omega_0$ into (possibly infinitely many) connected components $\cup_{i\in I} U_i$, with centers $\{o_i\}_{i\in I}$. Our goal is to show $o_i \equiv (0,0)$ for $i\in I$. Note that it suffices to show that their $x$-coordinates satisfy $\sup_{i\in I} o_i^1\leq 0$. Once we prove this, a parallel argument gives $\inf_{i\in I}o_i^1 \geq 0$, which implies $o_i^1\equiv 0$ for $i\in I$, and the same can be done for the $y$-coordinate.

Towards a contradiction, suppose $\sup_{i\in I} o_i^1 > 0$. We can then define $\Gamma_r$ in the same way as step 6 of the proof of Theorem~\ref{theorem3}, i.e. it is the ``circle with rightmost center'' among $\{\outb U_i\}_{i\in I}$, and its center $O_r$ satisfies $O_r^1=\sup_{i\in I} o_i^1>0$. Since $f_\Omega=\text{const}$  along each level set component of $\omega_0$, we again have that $f_\Omega = \text{const}$ on $\Gamma_r$. Let $\Gamma_r^R$ and $\Gamma_r^L$ be the rightmost/leftmost points on $\Gamma_r$. Note that their distances to the origin satisfy $|\Gamma_r^R| > |\Gamma_r^L|$, where the strict inequality is due to the assumption $O_r^1>0$.

Let us define $f_i(x) = (\omega_0 1_{U_i})*\mathcal{N}$ for $i\in I$, and note that $f_\Omega = (\sum_{i\in I} f_i) - \Omega |x|^2$. The properties (a,b) in step 6 of Theorem~\ref{theorem3} still hold for $f_i$, thus we have $f_i(\Gamma_r^R) \geq f_i(\Gamma_r^L)$ for all $i\in I$. This leads to
\[
f_\Omega(\Gamma_r^R) = \Big( \sum_{i\in I} f_i(\Gamma_r^R)\Big) + \underbrace{(-\Omega)}_{>0}\big| \Gamma_r^R\big|^2 > \Big( \sum_{i\in I} f_i(\Gamma_r^L)\Big) + (-\Omega)\big| \Gamma_r^L\big|^2 = f_\Omega(\Gamma_r^L),
\]
contradicting the fact that $f_\Omega \equiv \text{const}$ on $\Gamma_r$.  \color{black}
\end{proof}

\subsection{Proofs of the quantitative lemmas}
\label{sec21}

Before the proof of Lemma~\ref{talenti_2}, let us state two lemmas that we will use in the proof. The first one is a quantitative version of the isoperimetric inequality obtained by Fusco, Maggi and Pratelli \cite{Fusco-Maggi-Pratelli:stability-faber-krahn}. 
\begin{lem}[{c.f.~\cite[Section 1.2]{Fusco-Maggi-Pratelli:stability-faber-krahn}}] \label{Fraenkel2} Let $E\subseteq \R^{2}$ be a bounded domain. Then there is some constant $c\in (0,1),$ such that
\begin{align*}
P(E)\ge 2\sqrt{\pi}\sqrt{|E|}\left(1+c\mathcal{A}(E)^{2}\right),
\end{align*} 
where $P(E)=\mathcal{H}^{1}(\partial E)$ denotes the perimeter of $E$.
\end{lem}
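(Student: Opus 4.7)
The plan is to prove this sharp quantitative isoperimetric inequality in two steps: first establish it for sets whose boundary is a small $C^1$ normal graph over the unit circle (Fuglede-type regime), and then reduce the general case to this regime via a selection principle based on penalized minimization and regularity theory for quasi-minimizers of perimeter.

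By rescaling, we may assume $|E|=\pi$, so the goal reduces to showing $P(E) - 2\pi \geq c\,\mathcal{A}(E)^2$. For the Fuglede step, parametrize $\partial E = \{(1+u(\theta))(\cos\theta,\sin\theta) : \theta\in \mathbb{S}^1\}$ with $\|u\|_{C^1}$ small, and translate $E$ so that the $k=\pm 1$ Fourier modes of $u$ vanish. The area constraint then forces $\int_{\mathbb{S}^1}u\,d\theta = -\tfrac12\int_{\mathbb{S}^1}u^2\,d\theta$. A Taylor expansion of the arc-length integral yields
\begin{equation*}
P(E) - 2\pi \;=\; \tfrac12\int_{\mathbb{S}^1}\bigl((u')^2 - u^2\bigr) d\theta + O(\|u\|_{C^1}^3),
\end{equation*}
and since only modes $|k|\geq 2$ appear, a Parseval computation $\int_{\mathbb{S}^1}((u')^2-u^2)\,d\theta = 2\pi\sum_{|k|\geq 2}(k^2-1)|\hat u_k|^2$ bounds the quadratic part below by $c_0\|u\|_{H^1(\mathbb{S}^1)}^2$. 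Meanwhile $\mathcal{A}(E) \leq |E\triangle B_1|/|E| \leq C\|u\|_{L^1} \leq C\|u\|_{H^1}$, which yields the desired inequality in this regime.

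For the general case, argue by contradiction: suppose there exists a sequence $\{E_n\}$ with $|E_n|=\pi$, $\mathcal{A}(E_n)\geq \delta_0>0$, and $P(E_n) - 2\pi\to 0$. Introduce a penalized functional
\begin{equation*}
\mathcal{F}_n(F) := P(F) + \Lambda\bigl||F|-\pi\bigr| - \tfrac{1}{n}\bigl|\mathcal{A}(F) - \mathcal{A}(E_n)\bigr|
\end{equation*}
over bounded sets of finite perimeter and extract a minimizer $F_n$ by the direct method. Since $\mathcal{F}_n(F_n)\leq \mathcal{F}_n(E_n)$, one obtains $P(F_n)-2\pi\to 0$ and $|F_n|\to\pi$; hence $F_n\to B_1$ in $L^1$ (up to translation) by the compactness of sets of finite perimeter together with the classical isoperimetric inequality. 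The penalty structure makes each $F_n$ an $(\omega,r)$-quasi-minimizer of perimeter, so the De Giorgi-Almgren-Tamanini regularity theory upgrades the $L^1$ convergence to $C^{1,\alpha}$ convergence of $\partial F_n$ to $\partial B_1$. Step~1 then applies to $F_n$ for large $n$, and combined with $\mathcal{A}(F_n)\to \mathcal{A}(E_n)\geq \delta_0$, this contradicts the choice of $E_n$.

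The main obstacle is the regularity upgrade from $L^1$ to $C^{1,\alpha}$ convergence for the penalized minimizers, which relies on the nontrivial regularity theory of $(\omega,r)$-quasi-minimizers of perimeter; choosing $\Lambda$ large enough to force the area constraint while keeping the $\mathcal{A}$-penalty subcritical (so it does not obstruct quasi-minimality) is a delicate balancing act. An alternative, more elementary 2D route would exploit Bonnesen's inequality $P(E)^2 - 4\pi|E|\geq \pi^2(R-r)^2$ for convex sets, but quantitatively transferring the estimate to non-convex $E$ by controlling the perimeter/asymmetry loss under convexification is itself non-trivial.
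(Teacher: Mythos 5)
The paper does not prove this lemma; it cites it directly from Fusco--Maggi--Pratelli, where it is established via a reduction to $n$-symmetric sets by Steiner symmetrization. Your proposed route is instead the Cicalese--Leonardi selection principle (Fuglede's nearly spherical estimate plus penalized minimization and quasi-minimizer regularity), which is a legitimate and well-known alternative proof of the sharp quantitative isoperimetric inequality. The Fuglede step you sketch is correct: after killing the $k=\pm1$ Fourier modes by translation, the second-order Taylor expansion of the perimeter yields a coercive quadratic form on the modes $|k|\ge 2$, which dominates $\|u\|_{H^1}^2$ and hence $\mathcal{A}(E)^2$.

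However, the reduction step has two genuine errors. First, your contradiction hypothesis --- $\mathcal{A}(E_n)\ge\delta_0>0$ while $P(E_n)-2\pi\to 0$ --- is not the negation of the quantitative inequality; it is the (much easier) negation of \emph{qualitative} stability, and is already ruled out directly by compactness in BV together with the classical isoperimetric inequality, without any selection principle. The actual negation gives a sequence with $P(E_n)-2\pi<\tfrac{1}{n}\mathcal{A}(E_n)^2$; the case that requires work is precisely when $\mathcal{A}(E_n)\to 0$, which your setup does not cover. Second, the penalty term $-\tfrac1n|\mathcal{A}(F)-\mathcal{A}(E_n)|$ has the wrong sign: with a minus sign it rewards straying from the target asymmetry, so the minimizer of $\mathcal{F}_n$ is just (close to) the ball with $\mathcal{A}(F_n)=0$, and you lose the lower bound on asymmetry that the contradiction needs. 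In Cicalese--Leonardi the penalty is $+|\mathcal{A}(F)-\epsilon_n|$ with a coefficient of order one, chosen so that the minimizer both converges in $C^1$ to the ball (via $\Lambda$-minimality) \emph{and} inherits asymmetry comparable to $\epsilon_n$; getting the constant regime right is exactly the delicate part of the selection principle. As written, the argument would not close.
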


The second lemma is a simple result relating the Fraenkel asymmetry of a set $E$ with its subsets $U$. 
\begin{lem}[c.f.~{\cite[Lemma 4.4]{Craig-Kim-Yao:aggregation-newtonian}}]\label{CKY_lem4.4} Let $E\subseteq\R^{2}$ be a bounded domain. For all $U\subseteq E$ satisfying $|U|\ge|E|(1-\frac{\mathcal{A}(E)}{4}),$ we have
\begin{align*}
\mathcal{A}(U)\ge\frac{\mathcal{A}(E)}{4}.
\end{align*}
\end{lem}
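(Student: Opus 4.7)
The statement is a quantitative comparison of Fraenkel asymmetries, and the natural approach is a straightforward triangle inequality argument for the symmetric difference, combined with the observation that when $U$ is a large subset of $E$, any ball of mass $|U|$ nearly coincides with a concentric ball of mass $|E|$.

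The plan is the following. Fix an arbitrary center $x_{0}\in\R^{2}$, and let $\rho_{U},\rho_{E}>0$ be the radii determined by $\pi\rho_{U}^{2}=|U|$ and $\pi\rho_{E}^{2}=|E|$. Set
\[
B_{U}:=x_{0}+\rho_{U}B,\qquad B_{E}:=x_{0}+\rho_{E}B.
\]
Since $|U|\le|E|$, we have $B_{U}\subseteq B_{E}$, and moreover $|B_{E}\setminus B_{U}|=|E|-|U|$. Similarly, because $U\subseteq E$ we have $|E\triangle U|=|E|-|U|$. Applying the triangle inequality for symmetric differences twice gives
\[
|E\triangle B_{E}|\le|E\triangle U|+|U\triangle B_{U}|+|B_{U}\triangle B_{E}|=2(|E|-|U|)+|U\triangle B_{U}|.
\]
By the definition of the Fraenkel asymmetry applied to $E$, the left-hand side is at least $\mathcal{A}(E)|E|$, so
\[
|U\triangle B_{U}|\ge\mathcal{A}(E)|E|-2(|E|-|U|).
\]

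Now I would insert the hypothesis $|U|\ge|E|(1-\mathcal{A}(E)/4)$, which gives $|E|-|U|\le\mathcal{A}(E)|E|/4$, hence
\[
|U\triangle B_{U}|\ge\mathcal{A}(E)|E|-\tfrac{1}{2}\mathcal{A}(E)|E|=\tfrac{1}{2}\mathcal{A}(E)|E|.
\]
Since $x_{0}$ was arbitrary, taking the infimum on the left yields $\mathcal{A}(U)|U|\ge\tfrac{1}{2}\mathcal{A}(E)|E|$, and using $|U|\le|E|$ one more time finishes the proof with the slightly stronger bound $\mathcal{A}(U)\ge\mathcal{A}(E)/2\ge\mathcal{A}(E)/4$.

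There is no real obstacle here: the only subtle point is to keep the triangle inequality aligned with a \emph{single common center} $x_{0}$ for both balls $B_{U}$ and $B_{E}$, which is what lets us bound $|B_{U}\triangle B_{E}|$ cleanly by $|E|-|U|$. Taking the infimum on the left only at the end (rather than center-by-center balancing) is what produces a uniform bound.
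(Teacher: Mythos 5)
Your proof is correct, and it actually establishes the stronger bound $\mathcal{A}(U)\ge \mathcal{A}(E)/2$, which of course implies the stated $\mathcal{A}(E)/4$. The paper itself does not reprove this lemma — it only cites \cite[Lemma 4.4]{Craig-Kim-Yao:aggregation-newtonian} — but your argument via the two-step triangle inequality for symmetric differences with a common center $x_0$, followed by inserting the mass hypothesis $|E|-|U|\le \mathcal{A}(E)|E|/4$ and passing to the infimum over $x_0$ only at the very end, is exactly the natural and essentially standard proof of this stability fact, and each step checks out (both $|E\triangle U|$ and $|B_U\triangle B_E|$ equal $|E|-|U|$ because of the inclusions $U\subseteq E$ and $B_U\subseteq B_E$, and the final use of $|U|\le |E|$ converts the mass-weighted bound into the asymmetry bound).
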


\begin{proof}[\textup{\textbf{Proof of Lemma~\ref{talenti_2}}}] The proof of the Lemma~\ref{talenti_2} is similar to \cite[Proposition~4.5]{Craig-Kim-Yao:aggregation-newtonian} obtained by Craig, Kim and the last author. For the sake of completeness, we give a proof below.  Let $g(k)$, $D_{k}$ and $\tilde{D}_{k}$ be defined as in Proposition~\ref{talenti2}, let $\tilde D = D \cup \overline{h}$ and define $p_h := p|_{\partial h}$. We start by following the proof of Proposition~\ref{talenti2}, except that after obtaining \eqref{eq28}, instead of using the isoperimetric inequality, we use the stability version in Lemma~\ref{Fraenkel2} to control $P(\tilde D_k)$. This gives
\begin{align*}
g'(k)\big(g(k)+|h|\mathbbm{1}_{p_h>k}\big)&\le-\frac{1}{2}P(\tilde{D}_{k})^{2}\\
&\le-2\pi |\tilde{D}_{k}|\left(1+c\mathcal{A}(\tilde{D}_{k})^{2}\right)^{2}\\
&\le -2\pi\big(g(k)+|h|\mathbbm{1}_{p_h>k}\big)\left(1+c\mathcal{A}(\tilde{D}_{k})^{2}\right).
\end{align*}



Hence it follows from Lemma~\ref{CKY_lem4.4} that
\begin{align}\label{eq41}
g'(k)\le-2\pi\Big(1+c\frac{\mathcal{A}(\tilde D)^{2}}{16}\Big) \ \text{ for all $k$ such that } |\tilde{D}_{k}|\ge |\tilde D|\Big(1-\frac{\mathcal{A}(\tilde D)}{4}\Big).
\end{align}
We claim that
\begin{align}\label{eq39}
g(k)\le |D|-2\pi\Big(1+c\frac{\mathcal{A}(\tilde D)^{2}}{16}\Big)k \quad\text{ for }k \leq \min\Big\{p_h,\frac{\mathcal{A}(\tilde D)|\tilde D|}{16\pi}\Big\}.
\end{align}
Towards a contradiction, suppose there is $k_{0}\leq  \min\left\{p_h,\frac{\mathcal{A}(\tilde D)|\tilde D|}{16\pi}\right\}$ such that \eqref{eq39} is violated.
Since $1+c\frac{\mathcal{A}(\tilde D)^{2}}{16}\le 2$, we have
\begin{align*}
g(k_{0})>|D|-4\pi k_{0}\ge|D|-\frac{\mathcal{A}(\tilde D)|\tilde D|}{4},
\end{align*}
therefore
\begin{align*}
|\tilde{D}_{k_{0}}|&=g(k_{0})+|h|\\
&>|D|-\frac{\mathcal{A}(\tilde D)|\tilde D|}{4}+|h|\\
&=|\tilde D|-\frac{\mathcal{A}(\tilde D)|\tilde D|}{4}=|\tilde D|\Big(1-\frac{\mathcal{A}(\tilde D)}{4}\Big).
\end{align*}
Hence for all $k\in(0, k_{0}]$, $g'(k)$ satisfies the inequality \eqref{eq41}. Thus we have
\begin{align*}
g(k_{0})&\le\int_{0}^{k_{0}}-2\pi\Big(1+c\frac{\mathcal{A}(\tilde D)^{2}}{16}\Big)dk+|D|\\
&=|D|-2\pi\Big(1+c\frac{\mathcal{A}(\tilde D)^{2}}{16}\Big)k_{0},
\end{align*}
contradicting our assumption on $k_0$.

Finally, to control $p_h$, we discuss two cases below, depending on which one in the minimum function in \eqref{eq39} is smaller. For simplicity, we denote $A:=\frac{\mathcal{A}(\tilde D)|\tilde D|}{16\pi}$ and $B:=c\frac{\mathcal{A}(\tilde D)^{2}}{16}$.\\
\newline
\indent \textbf{Case 1:} $p_h\le A$. In this case \eqref{eq39} holds for all $k\le p_h$, thus
\begin{align*}
&0\le g(p_h)\le |D|-2\pi\left(1+B\right)p_h,
\end{align*}
implying
\[
 p_h\le \frac{|D|}{2\pi\left(1+B \right)}\le\frac{|D|}{2\pi}(1-c_{0}),
\]
for some constant $c_{0}$ which only depends on $\mathcal{A}(\tilde D)$. \\
\newline
\indent \textbf{Case 2:} $p_h>A$. In this case \eqref{eq39} gives $g(A)\le |D|-2\pi(1+B)A$ and we use a crude bound for $k\ge A$ that is $g'(k)\le -2\pi$. Therefore for $k>A$,
\[
\begin{split}
g(k)&=\int_{A}^{k}g'(k)dk+g(A)\le -2\pi(k-A)+|D|-2\pi(1+B)A\\
&=|D|-2\pi k -2\pi AB\\
&\le |D|\Big(1-\frac{\mathcal{A}(\tilde D)}{8}B\Big)-2\pi k,
\end{split}
\]where the last inequality follows from $A>\frac{\mathcal{A}(\tilde D)|D|}{16\pi}$.
Plugging in $k=p_h$ gives
\[0\le g(p_h)\le |D|\Big(1-\frac{\mathcal{A}(\tilde D)}{8}B\Big)-2\pi p_h,
\]
leading to \[
 p_h\le\frac{|D|}{2\pi}(1-c_{0}),
\]
again $c_{0}$ only depends on $\mathcal{A}(\tilde D)$.
\end{proof}

Next we prove Lemma~\ref{talenti_3}.


\begin{proof}[\textup{\textbf{Proof of Lemma~\ref{talenti_3}}}]
Without loss of generality, we can assume that $o_1=(0,0)$ and $o_2=(a\epsilon,0)$. To estimate $ p|_{\partial B_1}$, we decompose $p$ into 
\[
p=p\big|_{\partial B_1} g + u,
\] where $g$ satisfies 
\begin{equation}\label{g_def}
  \begin{cases}
  \lap g=0 & \text{ in $D$}\\
  g=1 & \text{ on $\partial B_1$}\\
  g=0 & \text{ on $\partial B_2$},
  \end{cases}
  \end{equation} 
  and $u$ satisfies
  \begin{equation}\label{u_def}
  \begin{cases}
  \lap u=-2 & \text{ in $D$}\\
  u=0 & \text{ on $\partial D$.}
  \end{cases}
  \end{equation} 
Using this decomposition as well as the definition of $p$, we have
  \[
  -2|B_1|=\int_{\partial B_1}\nabla p\cdot \vec{n}d\sigma= p\big|_{\partial B_1} \int_{\partial B_1}\nabla g\cdot \vec{n}d\sigma+\int_{\partial B_1}\nabla u\cdot \vec{n}d\sigma,
  \] 
 where $\vec{n}$ is the outer-normal of $B_1$ throughout this proof. Thus
 \begin{equation}\label{p_in_temp}
   p\big|_{\partial B_1} =\frac{1}{\int_{\partial B_1}\nabla g\cdot \vec{n}d\sigma}\left(-2\pi-\int_{\partial B_1}\nabla u\cdot \vec{n}d\sigma\right).
\end{equation}
To estimate $p\big|_{\partial B_1} $, it remains to estimate the two integrals in \eqref{p_in_temp}.

 The function $g$ can be explicitly constructed using the conformal mapping from $D$ to a perfect annulus centered at 0.  Consider the M\"obius map $h:\mathbb{C}\to\mathbb{C}$ given by
 \[
 h(z):=\frac{z+b}{1+bz},
 \] 
where $b\in \mathbb{R}$ will be fixed soon. Note that the unit circle and the real line are both invariant under $h$, and $\partial B_2$ is mapped to some circle centered on the real line. In order to make $h(\partial B_2)$ centered at 0, since the left/right endpoints of $\partial B_2$ are $\pm(1+\epsilon)+a\epsilon$, we look for $b\in \mathbb{R}$ that solves
 \begin{equation}\label{b_def}
 h(1+\epsilon+a\epsilon)=-h(-1-\epsilon+a\epsilon).
 \end{equation}
 Plugging the definition of $h$ into the above equation, we know that $b$ is a root of the quadratic polynomial
 \[
 f(b) := b^2 -\frac{2 + (1-a^2)\epsilon}{a} b + 1.
 \]
 Clearly, for $0<a<1$, $f$ has two positive roots whose product is 1, thus one is in $(0,1)$ and the other in $(1,+\infty)$. We define $b$ to be the root in $(0,1)$. One can easily check that $f(a)<0$, and $f(\frac{a}{2})>0$ if $a^2 > 2(1-a^2)\epsilon$, which is true due to our assumption $a^2>64\epsilon$. Thus for all $\epsilon\in (0,\frac{a^2}{64})$ we have
 \[
 0<\frac{a}{2} < b < a < 1.
 \]
 
Note that $h$ is holomorphic in $\mathbb{C}$ except at the two singularity points $-b$ and $-\frac{1}{b}$. We have already shown that $-b \in B_1$, thus it is outside of $D$. Next we will show that $-\frac{1}{b}\in B_2^c$, thus is also outside of $D$. To see this,  note  that 
 \[
 \dfrac{-1-\epsilon+a\epsilon+b}{1+b(-1-\epsilon+a\epsilon)} = h(-1-\epsilon+a\epsilon) = - h(1+\epsilon+a\epsilon)=-\frac{1+\epsilon+a\epsilon+b}{1+b(1+\epsilon+a\epsilon)}<0,
 \] where the inequality follows from the fact that $a,b,\epsilon>0$. Since the numerator of the left hand side is already known to be negative due to $a,b\in(0,1)$, its denominator must be positive, leading to 
$ -\frac{1}{b}<-1-\epsilon+a\epsilon$, i.e. $-\frac{1}{b} \in B_2^c$.
 
 Now we define $g:\mathbb{R}^2 \setminus \{(-b,0) \cup (-1/b,0)\}\to\mathbb{R}$ as
\[  g(x):=-\frac{1}{\log|h(1+\epsilon+a\epsilon)|}\log|h(z)|+1 \quad\text{ for $z=x_1+ix_2$}.
 \]
 Let us check that $g$ indeed satisfies \eqref{g_def}: first note that $g$ satisfies the boundary conditions in \eqref{g_def}, since $h$ maps $D$ to a perfect annulus centered at the origin, whose inner boundary is $\partial B_1$. In addition, $g$ is harmonic in $\mathbb{R}^2 \setminus \{(-b,0) \cup (-1/b,0)\}$, thus harmonic in $D$.

Using the explicit formula of $g$, we have
 \[
 \Delta g(x) = -\frac{2\pi}{\log|h(1+\epsilon+a\epsilon)|} \Big( \delta_{(-b,0)}(x) - \delta_{\left(-\frac{1}{b},0\right)}(x) \Big) 
 \]
 in the distribution sense. We can then apply the divergence theorem to $g$ in $B_1$, and compute the integral containing $g$ in \eqref{p_in_temp} explicitly as
 \begin{equation}\label{g_int}
  \int_{\partial B_1}\nabla g\cdot \vec{n}d\sigma =-\frac{2\pi}{\log|h(1+\epsilon+a\epsilon)|}.
 \end{equation}
As for the integral containing $u$ in \eqref{p_in_temp}, we  compare $u$ with a radial barrier function 
\[
w(x):=-2(|x|-1)(|x|-1-2\epsilon),
\] which satisfies $w=0$ on $\partial B_1$ and $w>0$ on $\partial B_2$. 
Note that
\begin{align*}
\lap w=\left(\partial_{rr}+\frac{1}{r}\partial_{r}\right)w=-8+\frac{4+4\epsilon}{r}\le-2 \quad\text{ in }D,
\end{align*}
where we used that $\epsilon \in (0,\frac12)$ and $r>1$ in $D$ in the last inequality.
Thus $w-u$ is superharmonic in $D$ and nonnegative on $\partial D$, 
which allows us to apply the classical maximum principle to obtain $u\le w$ in $\overline{D}$. Combining this with the fact that $u = w = 0$ on $\partial B_1$, we have
\[\nabla u(x)\cdot \vec{n}(x)\leq \nabla w(x)\cdot \vec{n}(x)=\frac{d}{dr}w(r)\Big|_{r=1}=4\epsilon \quad\text{ for all }x\in \partial B_1,
\]hence
  \begin{equation}\label{int_u}
  \int_{\partial B_1}\nabla u\cdot \vec{n}d\sigma\le 8\pi\epsilon.
  \end{equation} 
  Plugging \eqref{g_int} and \eqref{int_u} into \eqref{p_in_temp}, we obtain
\[  p\big|_{\partial B_1}\le\log(|h(1+\epsilon+a\epsilon)|)(1+4\epsilon).
\]  Since $\log s\le s-1$ for $s>1$, it follows that
\[
\begin{split}  \log|h(1+\epsilon+a\epsilon)|&\le h(1+\epsilon+a\epsilon)-1=\frac{1+\epsilon+a\epsilon+b}{1+b(1+\epsilon+a\epsilon)}-1\\
  &=\epsilon\left(1+\frac{a-2b-ab-b\epsilon-ab\epsilon}{1+b(1+\epsilon+a\epsilon)}\right)\\
  &\le \epsilon\left(1 - \frac{ a b}{4}\right)\leq  \epsilon\left(1-\frac{a^2}{8}\right) ,
\end{split}
\]  where we used $b>\frac{a}{2}$ to obtain the last two inequalities. Finally, using that $\epsilon < \frac{a^2}{64}$, we have
  \[
  p\big|_{\partial B_1}\le \epsilon\left(1-\frac{a^2}{8}\right) \left(1+\frac{a^2}{16}\right) \leq \epsilon\left(1-\frac{1}{16}a^2\right) < \frac{|D|}{2\pi} \left(1-\frac{1}{16}a^2\right),
  \]
  where in the last step we use that $|D| = \pi(1+\epsilon)^2 - \pi > 2\pi \epsilon$. This finishes the proof of the lemma.
\end{proof}


Finally we give the proof of Lemma~\ref{talenti_4}.

\begin{proof}[\textup{\textbf{Proof of Lemma~\ref{talenti_4}}}]
Without loss of generality, we can assume that $o_{2}$ is the origin.  Let $\beta:=p|_{\partial B_r}$. From the proof of Proposition~\ref{talenti2},  we already know that $g'(k)\le -2\pi$, where $g(k):=|\left\{ x\in D : p(x)>k\right\}|$. This implies that $g(k)\ge -2\pi (k-\beta).$ Therefore we have
\begin{align*}
\int_D pdx \geq \int_0^\beta g(k)dk\ge \int_0^\beta -2\pi(k-\beta)dk=\pi \beta^2.
\end{align*}
On the other hand, the same computation in the proof of Lemma~\ref{p_bd} gives
\begin{align*}
\beta |B_r|+\int_D pdx=\frac12 \int_D |\nabla p|^2dx\le \frac12 \int_D |x|^2dx.
\end{align*}
Since 
\begin{align*}
\frac12 \int_D |x|^2dx&=\frac12 \left(\int_{B_R}|x|^2dx-\int_{B_r}|x|^2dx\right)\\
&=\frac{|D|^2}{4\pi}+\frac{|D||B_r|}{2\pi}+\frac{|B_r|^2}{4\pi}-\frac12 \int_{B_r}|x|^2dx\\
&=\frac{|D|^2}{4\pi}+\frac{|D||B_r|}{2\pi}-\frac{l^2|B_r|}{2},
\end{align*}
it follows that
\begin{equation}\label{quadratic_ineq}
\pi \beta^2+\beta |B_r|\le \frac{|D|^2}{4\pi}+\frac{|D||B_r|}{2\pi}-\frac{l^2|B_r|}{2}.
\end{equation}
By solving the quadratic inequality \eqref{quadratic_ineq}, we find that 
\begin{align*}
\beta\le \frac{|D|}{2\pi}(1-c_0),
\end{align*}
for some constant $c_0$ which only depends on $\delta_1$, $\delta_2$ and $l$.
\end{proof}

\section{Radial symmetry for stationary/rotating gSQG solutions with $\Omega\leq 0$}

In this section, we consider the family of gSQG equations with $0 < \alpha < 2$, and study the symmetry property for rotating patch/smooth solutions with angular velocity $\Omega \leq 0$. 

Let us deal with patch solutions first. As we have discussed in the introduction, we cannot expect a non-simply-connected patch $D$ with $\Omega\leq 0$ to be radial, due to the non-radial examples in \cite{delaHoz-Hassainia-Hmidi:doubly-connected-vstates-gsqg, GomezSerrano:stationary-patches} for $\alpha\in(0,2)$. For a simply-connected patch $D$, the constant on the right hand side of \eqref{eq_rotating20} is the same on $\partial D$, which motivates us to consider Question~\ref{question2} in the introduction. The goal of this section is to prove Theorem~\ref{thmC}, which gives an affirmative answer to Question~\ref{question2} for the whole range $\alpha\in[0,2)$.

Our results are not limited to the Riesz potentials $K_{\alpha,d}$  in \eqref{def_k_alpha}; in fact, we only need the potential being radially increasing and not too singular at the origin. Below we state our assumption on the potential $K$, which covers the whole range of $K_{\alpha,d}$ with $\alpha \in [0,2)$.

\medskip
\noindent\textbf{(HK)} 
Let $K \in C^1(\mathbb{R}^d \setminus \{0\})$ be radially symmetric with $K'(r)>0$ for all $r>0$. (Here we denote $K(x) = K(r)$ by a slight abuse of notation.) Also assume there is some $\delta>0$ such that $K'(r)\leq r ^ {-d-1+\delta}$ for all $0< r\leq 1$.
\medskip


Our proof is done by a variational approach, which relies on a continuous Steiner symmetrization argument in a similar spirit as \cite{Carrillo-Hittmeir-Volzone-Yao:nonlinear-aggregation-symmetry-asymptotics}.

\subsection{Definition and properties of continuous Steiner symmetrization}

Below we define the continuous Steiner symmetrization for a bounded open set $D \subset \mathbb{R}^d$ with respect to the direction $e_1 = (1,0,\dots,0)$, which can be easily adapted to any other direction in $\mathbb{R}^d$. The definition is the same as \cite[Section 2.2.1]{Carrillo-Hittmeir-Volzone-Yao:nonlinear-aggregation-symmetry-asymptotics}, which we briefly outline below for completeness.

For a \emph{one-dimensional} open set $U\subset\mathbb{R}$, we define its continuous Steiner symmetrization $M^\tau[U]$ as follows. If $U=(a,b)$ is an open interval, then $M^\tau[U]$ shifts the midpoint of this interval towards the origin with velocity 1, while preserving the length of interval. That is,
\[
M^\tau[U] := \begin{cases}
\big(a - \tau\, \text{sgn}(\frac{a+b}{2}), b-\tau \,\text{sgn}(\frac{a+b}{2})\big) & \text{ for } 0\leq \tau < \frac{|a+b|}{2},\\
(-\frac{b-a}{2}, \frac{b-a}{2}) &  \text{ for } \tau \geq \frac{|a+b|}{2}.
\end{cases}
\]
If $U=\cup_{i=1}^N U_i$ is a finite union of open intervals, then $M^\tau[U]$ is defined by $\cup_{i=1}^N M^\tau[U_i]$, and as soon as two intervals touch each other, we merge them into one interval as in \cite[Definition 2.10(2)]{Carrillo-Hittmeir-Volzone-Yao:nonlinear-aggregation-symmetry-asymptotics}. Finally, if $U=\cup_{i=1}^\infty U_i$ is a countable union of open intervals, we define $M^\tau[U]$ as a limit of $M^\tau[\cup_{i=1}^N U_i]$ as $N\to\infty$  as in \cite[Definition 2.10(3)]{Carrillo-Hittmeir-Volzone-Yao:nonlinear-aggregation-symmetry-asymptotics}. See \cite[Figure 1]{Carrillo-Hittmeir-Volzone-Yao:nonlinear-aggregation-symmetry-asymptotics} for an illustration of $M^\tau[U]$.

Next we move on to higher dimensions. 
We denote a point $x\in\mathbb{R}^d$ by $(x_1, x')$, where $x' = (x_2,\dots,x_d) \in\mathbb{R}^{d-1}$. For a bounded domain $D \subset \mathbb{R}^d$ and any $x'\in\mathbb{R}^{d-1}$, we define the \emph{section} of $D$ with respect to the direction $x_1$ as
\[
D_{x'} := \{x_1 \in \mathbb{R}: (x_1, x')\in D\},
\]
which is an open set in $\mathbb{R}$.  If the section $D_{x'}$ is a single open interval centered at 0 for all $x' \in \mathbb{R}^{d-1}$, then we say the set $D$ is \emph{Steiner symmetric} about the hyperplane $\{x_1=0\}$. Note that this definition is stronger than being symmetric about $\{x_1=0\}$.  For example, an annulus in $\mathbb{R}^2$ is symmetric about $\{x_1=0\}$, but not Steiner symmetric about it.

Finally, for any $\tau>0$, the \emph{continuous Steiner symmetrization} of $D\subset\mathbb{R}^d$ is defined as
\[
S^\tau[D] := \{(x_1, x') \in \mathbb{R}^d: x_1 \in M^\tau[D_{x'}]\},
\]
with $M^\tau$ given above being the continuous Steiner symmetrization for one-dimensional open sets. See Figure~\ref{fig_steiner} for a comparison of the sets $D$ and $S^\tau[D]$  for small $\tau>0$.

\begin{figure}[h!]
\begin{center}
\includegraphics[scale=0.7]{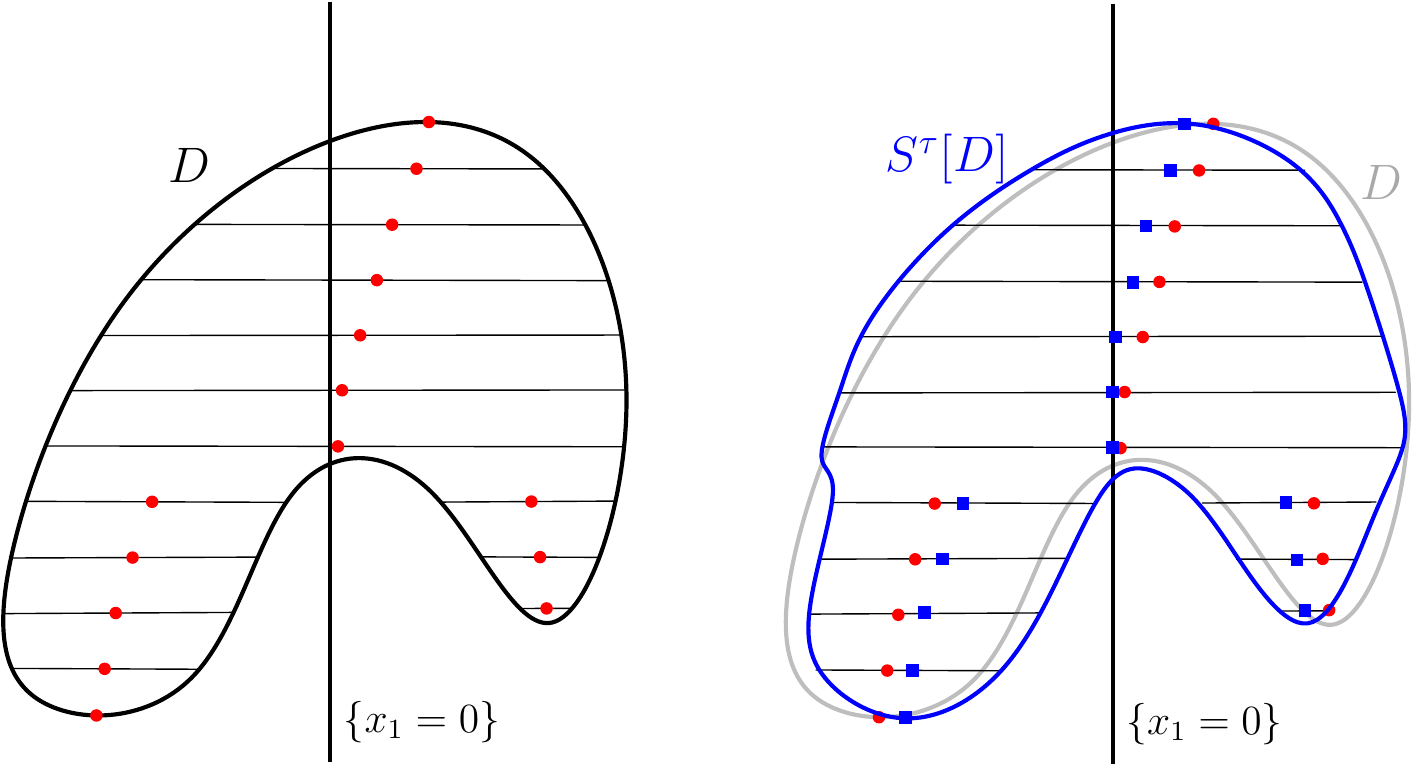}
\caption{Illustration of the continuous Steiner symmetrization $S^\tau[D]$ for a set $D\subset \mathbb{R}^2$. The left figure is the set $D$, with the midpoints of all subintervals of its 1D section highlighted in red circles. The right figure shows the set $S^\tau[D]$ for some small $\tau>0$, with the new midpoints denoted by blue squares. \label{fig_steiner}}
\end{center}
\end{figure}

One can easily check that $S^\tau[D]$ satisfies the following properties.
\begin{lem}\label{lem_steiner}
For any bounded open set $D\subset\mathbb{R}^d$, its continuous Steiner symmetrization $S^\tau[D]$ satisfies the following properties:
\begin{enumerate}
\item[(a)] $|S^\tau[D]| = |D|$ for any $\tau> 0$, where $|\cdot|$ denotes the Lebesgue measure in $\mathbb{R}^d$.
\item[(b)] $(S^\tau[D] ) \triangle D \subset B^\tau[D]$ for any $\tau>0$, where $\triangle$ is the symmetric difference between the two sets, and $B^\tau[D]$ is the $\tau$-neighborhood of $\partial D$, given by
\begin{equation}\label{def_b_tau}
B^\tau[D] := \{x \in \mathbb{R}^d: \textup{dist}(x,\partial D) \leq  \tau\}.
\end{equation}

\end{enumerate}
\end{lem}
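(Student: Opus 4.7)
The plan is to reduce both claims to statements about one-dimensional sections, then invoke Fubini's theorem. For (a), the construction of $M^\tau$ on a single open interval $(a,b)$ is a rigid translation, so the 1D Lebesgue measure of each interval is preserved. When two intervals collide, the definition merges them into a single interval whose length is the sum of the two original lengths, so the total 1D measure is again preserved. In the countable case, this is preserved under the limit $N\to\infty$ used in the definition. Thus for every $x' \in \mathbb{R}^{d-1}$ we have $|M^\tau[D_{x'}]| = |D_{x'}|$, and Fubini gives
\[
|S^\tau[D]| = \int_{\mathbb{R}^{d-1}} |M^\tau[D_{x'}]|\, dx' = \int_{\mathbb{R}^{d-1}} |D_{x'}|\, dx' = |D|.
\]

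For (b), the key observation I would establish is a \emph{one-dimensional speed bound}: for every open set $U \subset \mathbb{R}$ and every $\tau>0$,
\[
M^\tau[U] \triangle U \subset \{s\in\mathbb{R} : \dist(s,\partial U) \leq \tau\}.
\]
This follows by checking the defining cases. For a single interval $(a,b)$, the symmetric difference with its shift by $\pm \tau$ (or with $(-\tfrac{b-a}{2},\tfrac{b-a}{2})$ after the midpoint has crossed $0$) lies within distance $\tau$ of $\{a,b\}$. For finite unions, the same bound holds until a collision occurs, and at the collision time the merged interval still has every boundary point of its symmetric difference with $U$ coming from an endpoint of some component of $U$ at distance at most $\tau$. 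The countable case is then obtained by passing to the limit in the finite approximations, using that $B^\tau$ is closed and that sections of $D$ have only countably many components.

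Given the 1D bound, (b) follows immediately. If $(x_1,x') \in S^\tau[D] \triangle D$, then $x_1 \in M^\tau[D_{x'}] \triangle D_{x'}$, so by the one-dimensional claim there exists $s_0 \in \partial(D_{x'})$ with $|x_1 - s_0| \leq \tau$. But any $s_0 \in \partial(D_{x'})$ gives a point $(s_0, x') \in \partial D$, hence $\dist((x_1,x'),\partial D) \leq |x_1 - s_0| \leq \tau$, i.e. $(x_1,x') \in B^\tau[D]$. The main (modest) obstacle is the bookkeeping around merging events and the countable-union limit, but neither genuinely obstructs the argument since $M^\tau$ is designed so that the area and speed-one properties survive the merging rule.
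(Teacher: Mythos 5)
Your proof is correct and follows essentially the same route as the paper: reduce both claims to the one-dimensional statements (measure preservation and the speed-one bound for $M^\tau$), then lift to $\mathbb{R}^d$ via Fubini for (a) and by observing that $\partial(D_{x'}) \times \{x'\} \subset \partial D$ for (b). The only cosmetic difference is that the paper cites \cite[Lemma 2.11(b)]{Carrillo-Hittmeir-Volzone-Yao:nonlinear-aggregation-symmetry-asymptotics} for the 1D measure preservation rather than re-deriving it.
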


\begin{proof}
(a) is a direct consequence of the fact that $|M^\tau [U]| = |U|$ for any open set $U\subset\mathbb{R}$ and $\tau>0$  \cite[Lemma 2.11(b)]{Carrillo-Hittmeir-Volzone-Yao:nonlinear-aggregation-symmetry-asymptotics}. 
To prove (b), one can start with the one-dimensional version: For any bounded open set $U \subset \mathbb{R}$, we have
$
 M^\tau[U] \triangle U \subset \{x\in \mathbb{R}: \text{dist}(x, \partial U)\leq \tau\},
$
which follows from the fact that the intervals move with velocity at most 1. Thus for any bounded open set $D\subset \mathbb{R}^d$,
\[
\begin{split}
S^\tau[D]\triangle D &= \{(x_1, x')\in\mathbb{R}^d: x_1 \in M^\tau[D_{x'}] \triangle D_{x'}\} \\
& \subset \{(x_1,x'): \text{dist}(x_1,\partial (D_{x'})) \leq \tau\}\\
& \subset B^\tau[D],
\end{split}
\]
finishing the proof. 
\end{proof}

\subsection{Simply-connected patch solutions with $\Omega\leq0$}

We assume that $D\subset \mathbb{R}^d$ satisfies the following condition. 

\medskip
\noindent\textbf{(HD)} $D\subset \mathbb{R}^d$ is a bounded domain, and there exists some $M>0$ depending on $D$, such that $|B^\tau[D]|\leq M\tau$ for all sufficiently small $\tau>0$, where $B^\tau[D]$ is given in \eqref{def_b_tau}.

It can be easily checked that for $d\geq 2$, any bounded domain $D$ with Lipschitz continuous boundary satisfies condition \textbf{(HD)}. In fact, for $d=2$, we will show any domain $D\subset\mathbb{R}^2$ with a rectifiable boundary satisfies \textbf{(HD)}, with a precise bound
\begin{equation}\label{ineq_2d}
|B^\tau[D]| \leq 2|\partial D| \tau \quad\text{ for all }\tau\geq 0,
\end{equation}
where $|\partial D|$ is the total length of $\partial D$. Let us first prove \eqref{ineq_2d} holds for any polygon $P \subset \mathbb{R}^2$. Erect two polygons at distance $\tau$ from $P$ and the transversal sides being bisectors of the inner angles of $P$ (see Figure \ref{fig_poly}). It is clear that $B^\tau[P]$ is contained in the trapezoidal region, which has area no more than $2|\partial P| \tau$. Finally, this can be extended to the general case by approximating any rectifiable curve by polygons.


\begin{figure}[h!]
\begin{center}
\includegraphics[scale=1]{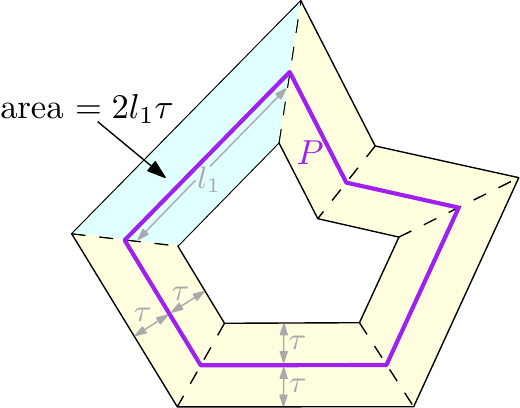}
\caption{Illustration of the polygon $P$ and the underlying trapezoidal region (the whole colored region). Here the blue trapezoid has area $2 l_1 \tau$ ($l_1$ is the corresponding side length in $P$), and summing over all edges gives a total area $2 |\partial P| \tau$. Since the trapezoids may intersect for large $\tau$, the whole trapezoidal region has area no more than $2|\partial P| \tau$. \label{fig_poly}}
\end{center}
\end{figure}


Below we state our main theorem of this section, which is slightly more general than Theorem~\ref{thmC}.

\begin{thm}\label{thm-generic} Let $D\subset\mathbb{R}^d$ and $K\in C^1(\mathbb{R}^d\setminus\{0\})$ satisfy the conditions \textup{\textbf{(HD)} and \textup{\textbf{(HK)}}} respectively.  Let $g\in C^1(\mathbb{R}^d)$ be a radial function with $g'(r)>0$ for all $r> 0$.

If $D$ satisfies that
\begin{equation}\label{stat_eq}
1_D * K- \frac{\Omega}{2} g(x) = \text{const} \quad\text{ on }\partial D
\end{equation}
 for some $\Omega \leq 0$ (where the constant is the same on all connected components of $\partial D$), then $D$ is
a ball. Moreover, the ball is centered at the origin if $\Omega<0$.
\end{thm}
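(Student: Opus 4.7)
The plan is to use a variational approach based on the continuous Steiner symmetrization introduced in Section~4.1. Define the energy functional
\[
\mathcal{E}[D] := \tfrac{1}{2}\iint_{D\times D}K(x-y)\,dx\,dy \;-\; \tfrac{\Omega}{2}\int_D g(x)\,dx,
\]
so that the stationary condition \eqref{stat_eq} is precisely the Euler--Lagrange equation for $\mathcal{E}$ under the area-preserving constraint $|D|=\text{const}$: for any signed perturbation $\eta$ with $\int \eta =0$, the first variation is $\int \eta\,[(1_D*K)-\tfrac{\Omega}{2}g]\,dx$, which vanishes for all such $\eta$ iff the bracketed function is constant on $\partial D$. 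The strategy is to show that the only stationary $D$ with the right symmetry are balls.

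First I would establish that $\mathcal{E}$ is non-increasing along the Steiner flow $\tau\mapsto S^\tau[D]$ about any hyperplane $H_0=\{x_1=0\}$ through the origin. Writing $K=-\tilde K$ with $\tilde K>0$ radial decreasing (as permitted by \textbf{(HK)}), Brock's continuous Riesz rearrangement inequality gives $I[S^\tau[D]]\le I[D]$, where $I[D]:=\iint 1_D\,1_D\,K$; the radial-increasing property of $g$ gives $V[S^\tau[D]]\le V[D]$, where $V[D]:=\int_D g$. Combined with $\Omega\le 0$, this yields $\mathcal{E}[S^\tau[D]]\le\mathcal{E}[D]$ for every $\tau\ge 0$, with strict inequality whenever $D$ is not already Steiner symmetric about $H_0$.

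Second, a finite-difference estimate exploiting stationarity (as opposed to a derivative, because of the low regularity of $K$). Let $\chi:=1_{S^\tau[D]}-1_D$ and $u:=(1_D*K)-\tfrac{\Omega}{2}g-c$, so that $u\equiv 0$ on $\partial D$. Using $\int\chi=0$, one expands
\[
\mathcal{E}[S^\tau[D]] - \mathcal{E}[D] = \int \chi\,u\,dx + \tfrac{1}{2}\iint \chi(x)\chi(y) K(x-y)\,dx\,dy.
\]
The H\"older continuity of $1_D*K$ (which follows from integrating the bound in \textbf{(HK)}) together with $u|_{\partial D}=0$, $\supp\chi\subset B^\tau[D]$, and $|B^\tau[D]|\le M\tau$ from \textbf{(HD)} make the linear term $o(\tau)$; the quadratic term is bounded by $C\tau^{1+\delta/d}$ via $|K(x)|\lesssim |x|^{-d+\delta}$ near the origin (with $\delta$ as in \textbf{(HK)}). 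Thus $\mathcal{E}[S^\tau[D]]-\mathcal{E}[D]=o(\tau)$ as $\tau\to0^+$.

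The hardest step is upgrading the first step to a \emph{quantitative} strict decrease $\mathcal{E}[S^\tau[D]]-\mathcal{E}[D]\le -c\tau$ for some $c>0$ whenever $D$ is not Steiner symmetric about $H_0$, which together with the finite-difference estimate forces Steiner symmetry about $H_0$. I expect this quantitative rate to be the main obstacle, because the classical strict Riesz inequality only guarantees strict inequality after full symmetrization ($\tau\to\infty$), and extracting a first-order rate requires a careful slice-by-slice analysis using the strict positivity $K'>0$ of \textbf{(HK)} to produce a linear-in-$\tau$ contribution from every $x'$ whose section $D_{x'}$ contains an interval with nonzero midpoint. Once the contradiction is in place, $D$ is Steiner symmetric about every hyperplane through the origin; this forces every line in $\mathbb{R}^d$ to intersect $D$ in a single interval centered at the foot of the perpendicular from the origin, which precisely characterizes a ball centered at the origin (in particular ruling out annuli, which fail to be Steiner symmetric about generic hyperplanes through the origin). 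For $\Omega=0$, $g$ drops out and translation invariance allows symmetrization about any hyperplane, giving a ball of unspecified center.
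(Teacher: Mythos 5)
Your overall strategy coincides with the paper's: symmetrize via the continuous Steiner flow, assemble the energy $\mathcal{E}=\mathcal{I}+\mathcal{V}$, use \eqref{stat_eq} together with \textbf{(HD)}, \textbf{(HK)} to show via a finite-difference estimate that $\mathcal{E}[S^\tau[D]]-\mathcal{E}[D]=o(\tau)$, and derive a contradiction against a strict first-order decrease. But you explicitly leave unresolved the step that closes the argument -- the quantitative bound $\mathcal{E}[S^\tau[D]]-\mathcal{E}[D]\le -c\tau$ -- and you frame it as requiring a delicate slice-by-slice analysis of the \emph{interaction} term $\mathcal{I}$ via strict monotonicity of $K$. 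That step is a genuine gap as written.

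The paper avoids the difficulty you anticipate by splitting the two cases. When $\Omega<0$, the strict first-order decrease is obtained entirely from the \emph{confining} term $\mathcal{V}[D^\tau]=(-\Omega)\int_{D^\tau}g$: since $g'>0$ strictly and the Steiner flow moves each interval of $D_{x'}$ toward the axis with speed one, \cite[Lemma 2.22]{Carrillo-Hittmeir-Volzone-Yao:nonlinear-aggregation-symmetry-asymptotics} gives $\frac{d^+}{d\tau}\mathcal{V}[D^\tau]\big|_{\tau=0}<0$ whenever $D$ is not Steiner symmetric about $\{x_1=0\}$; for the interaction term one only needs the non-strict $\frac{d^+}{d\tau}\mathcal{I}[D^\tau]\le 0$ from Brock and Lieb--Loss. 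Only when $\Omega=0$, where the confining term drops out, is the strict first-order decrease of $\mathcal{I}$ actually required, and there the paper normalizes the center of mass to the origin and invokes \cite[Proposition 2.15]{Carrillo-Hittmeir-Volzone-Yao:nonlinear-aggregation-symmetry-asymptotics} (extended to the more singular kernels allowed by \textbf{(HK)} in \cite[Theorem 6]{Carrillo-Hoffmann-Mainini-Volzone:ground-states-diffusion-regime}). This split also explains why the conclusion at $\Omega=0$ is only a ball up to translation while $\Omega<0$ pins the center at the origin. So your proposal aims the hard technology at both cases when only $\Omega=0$ needs it, and for $\Omega=0$ the slice-by-slice argument you sketch would in effect have to reproduce the cited lemma rather than replace it. A minor further point: \textbf{(HK)} says $K$ is radially increasing, so $-K$ is radially decreasing but need not be positive (e.g.\ $K=\tfrac{1}{2\pi}\ln|x|$); the monotonicity of $\mathcal{I}$ under symmetrization still holds, but your intermediate rewriting $K=-\tilde K$ with $\tilde K>0$ is not accurate as stated.
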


\begin{rmk}\label{rmk_origin}

(1) Note that $D$ does not need to be simply-connected in Theorem~\ref{thm-generic}. However, since the constant on the right hand side of \eqref{stat_eq} is assumed to be the same on all connected components of $\partial D$, comparing with \eqref{eq_rotating20}, Theorem~\ref{thm-generic} only implies that all simply-connected patches with $\Omega\leq 0$ must be a disk.

(2) In the case $\Omega = 0$, the problem is translation invariant, so in the proof we assume without loss of generality that the center of mass of $D$ is at the origin.
\end{rmk}

\begin{proof}
We prove it by contradiction. Without loss of generality, we assume $D$ is not Steiner symmetric about the hyperplane $\{x_1=0\}$. Let $D^\tau := S^\tau[D]$ be the continuous Steiner symmetrization of $D$ at time $\tau>0$. By Lemma~\ref{lem_steiner}(b), we have 
\begin{equation}\label{sym_diff}
D^\tau \triangle D \subset B^\tau[D],
\end{equation} where $B^\tau$ is defined in \eqref{def_b_tau}.
Let us consider the functional
\[
\mathcal{E}[D] := \underbrace{\int_{\mathbb{R}^d} \int_{\mathbb{R}^d} 1_D(x) 1_D(y) K(x-y)dxdy}_{=: \mathcal{I}[D]} +\underbrace{(- \Omega)\int_{\mathbb{R}^d} g(x) 1_D(x) dx}_{=: \mathcal{V}[D]}.
\]
We will use two different ways to compute $\frac{d^+}{d\tau} \mathcal{E}[D^\tau] \Big|_{\tau = 0}$, where $\frac{d^+}{d\tau}$ denotes the right derivative. On the one hand, using the equation \eqref{stat_eq} and the regularity assumptions on $D, K$ and $g$, we aim to show that 
\begin{equation}\label{diff1}\frac{d^+}{d\tau} \mathcal{E}[D^\tau] \Big|_{\tau = 0} = 0.
\end{equation} Instead of directly taking the derivative, we consider the finite difference
\[
\mathcal{E}[D^\tau] - \mathcal{E}[D] = \underbrace{\int_{\mathbb{R}^d}2 (1_{D^\tau}-1_{D}) \left(1_D * K - \frac{\Omega}{2} g(x) \right) dx }_{=:I_1}+ \underbrace{\int_{\mathbb{R}^d} ( 1_{D^\tau}-1_D) ( (1_{D^\tau}-1_D) * K) dx}_{=: I_2},
\]
where in the equality we used that $\int 1_D(1_{D^\tau}*K) dx = \int 1_{D^\tau}(1_D*K)dx$ for any radial kernel $K$.

Let us control the term $I_1$ first. First note that \eqref{sym_diff} implies that the integrand is supported in $B_\tau[D]$. 
  Next we claim that \textbf{(HK)} implies $1_D * K - \frac{\Omega}{2}g \in C^{0,\delta'}(\mathbb{R}^d)$ 
  for  $\delta':= \min\{\delta,1\}$, where $C^{0,1}$ stands for Lipschitz continuity. 
   The proof is a simple potential theory estimate, which we provide below for completeness. For any $x,z \in \mathbb{R}^d$, 
\[
\begin{split}
|(1_D * K)(x+z) &- (1_D * K)(x)| = \left|\int_{\mathbb{R}^d} 1_D(x-y) (K(y+z)-K(y)) dy\right|\\
&\leq \int_{|y|<2|z|} |K(y+z)-K(y)| dy + \int_{|y|>2|z|} 1_D(x-y) |K(y+z)-K(y)| dy\\
&=: J_1 + J_2. 
\end{split}
\]
For $J_1$, a crude estimate gives
\[
J_1 \leq  \int_{|y|<2|z|} |K(y+z)|+|K(y)| dy \leq 2  \int_{|y|<3|z|} |K(y)| dy \leq C(d)|z|^\delta,
\]
where in the last step we used that \textbf{(HK)} implies $|K(y)| \leq C |y|^{-d+\delta}$ for $|y|\leq 1$. For $J_2$, note that \textbf{(HK)} and the mean-value theorem gives 
\[
|K(y+z)-K(y)| \leq C|y|^{-d-1+\delta}|z| \quad\text{ for all }|y|>2|z|,\]
and plugging it into the integral gives $J_2 \leq C(d,|D|) |z|^{\delta}$. Putting the estimates for $J_1$ and $J_2$ together gives that
 $1_D * K \in C^{0,\delta'}(\mathbb{R}^d)$ for $\delta' = \min\{\delta,1\}$, and
combining this with the assumption $g\in C^1(\mathbb{R}^d)$ gives $1_D * K- \frac{\Omega}{2}g \in C^{0,\delta'}(\mathbb{R}^d)$.

In addition, by \eqref{stat_eq}, 
We have $1_D * K - \frac{\Omega}{2} g(x) \equiv C_0$ on $\partial D$ for some constant $C_0$. Thus we have
\[
\Big|1_D * K - \frac{\Omega}{2} g(x)- C_0\Big| \leq C(\delta', d,|D|)\tau^{\delta'} \quad\text{ in }B^\tau[D]
\]
for some constant $C>0$, where we used the H\"older continuity of $1_D * K - \frac{\Omega}{2} g$ and the definition of $B^\tau[D]$. This leads to
\[
|I_1| \leq 2|B^\tau[D]|~ \sup_{x\in B^\tau[D]}\Big|1_D * K - \frac{\Omega}{2} g(x)- C_0\Big| \leq C(\delta', d,|D|) M \tau^{1+\delta'},
\]
where in the first inequality we used that $\int_{B_\tau} (1_D - 1_{D^\tau}) C_0 dx = 0$, which follows from Lemma~\ref{lem_steiner}(a); and in the second inequality we used \textbf{(HD)}.

Next we control $I_2$ by the crude bound
\[
\begin{split}
|I_2| \leq \int_{\mathbb{R}^d} 1_{B^\tau[D]} |(1_{B^\tau[D]} * K)|dx \leq |B^\tau[D]|\, \|1_{B^\tau[D]} * K\|_\infty \leq M\tau \|(1_{ B^\tau[D]})^* * K\|_\infty,
\end{split}
\]
where the last step follows from the Hardy--Littlewood inequality, where $(1_{ B^\tau[D]})^*$ is the radial decreasing rearrangement of $1_{B^\tau[D]}$. By \textbf{(HD)}, $(1_{ B^\tau[D]})^*$ is a characteristic function of a ball whose radius is bounded by $C(d) (M\tau)^{1/d}$, thus
\[
\|(1_{ B^\tau[D]})^* * K\|_\infty \leq \int_0^{C(d)  (M\tau)^{1/d}} |K(r)| \, \omega_d r^{d-1}dr \leq  \int_0^{C(d)  (M\tau)^{1/d}} \omega_d r^{-1+\delta}dx \leq C(d) (M\tau)^{\frac{\delta}{d}},
\]
and plugging it into the $I_2$ estimate gives
\[
|I_2| \leq C(d) M^{\frac{d+\delta}{d}} \tau^{1+\frac{\delta}{d}}.
\]

Putting the estimates of $I_1$ and $I_2$ together directly yields
\[
\frac{|\mathcal{E}[D^\tau] - \mathcal{E}[D]|}{\tau} \leq C(\delta',d,M,|D|) \tau^{\min\{\frac{\delta}{d}, \delta'\}},
\]
and since $\delta>0$ we have $\frac{d^+}{d\tau} \mathcal{E}[D^\tau] \Big|_{\tau = 0} = 0$.

Now, we use another way to calculate $\frac{d^+}{d\tau} \mathcal{E}[D^\tau]\big|_{\tau=0} $. Let us deal with the $\Omega<0$ case first. Since $K$ is radial and increasing in $r$, it has been shown in \cite[Corollary 2]{Brock:continuous-steiner-symmetrization} and \cite[Theorem 3.7]{Lieb-Loss:analysis} that  the interaction energy $\mathcal{I}[D^\tau]=\int_{D^\tau}\int_{D^\tau} K(x-y) dxdy$ is non-increasing along the continuous Steiner symmetrization, leading to

\begin{equation}\nonumber
\frac{d^+}{d\tau}\mathcal{I}[D^\tau]\leq  0 \quad\text{ for all }\tau\geq 0.
\end{equation}
For the other term $\mathcal{V}[D^\tau]=(-\Omega)\int_{D^\tau} g(x) dx$, by the assumptions that $g'(r)>0$ for all $r>0$ and  $D$ is not Steiner symmetric about $\{x_1=0\}$, 
we can use \cite[Lemma 2.22]{Carrillo-Hittmeir-Volzone-Yao:nonlinear-aggregation-symmetry-asymptotics} to show, for $\Omega<0$,

\begin{equation}\nonumber
\frac{d^+}{d\tau} \mathcal{V}[D^\tau]\bigg|_{\tau=0} = (-\Omega)\frac{d^+}{d\tau}\int_{D^{\tau}} g(x) dx\bigg|_{\tau=0}<0.
\end{equation}
Adding them together gives
\[
\frac{d^+}{d\tau}\mathcal{E}[D^\tau]\bigg|_{\tau=0} < 0
\]leading to a contradiction with \eqref{diff1}.

In the $\Omega =0$ case, recall that we assume that the center of mass of $D$ is at the origin. Thus if $D$ is not Steiner symmetric about $\{x_1=0\}$, the same proof as \cite[Proposition 2.15]{Carrillo-Hittmeir-Volzone-Yao:nonlinear-aggregation-symmetry-asymptotics} gives that $\mathcal{I}[D]$ must be decreasing to the first order for a short time, leading to

\begin{equation}\nonumber
\frac{d^+}{d\tau}\mathcal{E}[D^\tau]\bigg|_{\tau=0}  = \frac{d^+}{d\tau}\int_{D^{\tau}}\int_{D^{\tau}} K(|x-y|) dx dy\bigg|_{\tau=0}< 0,
\end{equation}
again contradicting \eqref{diff1}.
We point out that although the proposition was stated for continuous densities, the same proof works for the patch setting. In addition, although \cite{Carrillo-Hittmeir-Volzone-Yao:nonlinear-aggregation-symmetry-asymptotics} only dealt with the kernels no more singular than Newtonian potential, the proof indeed holds for all kernels $K$ satisfying \textbf{(HK)}: see \cite[Theorem 6]{Carrillo-Hoffmann-Mainini-Volzone:ground-states-diffusion-regime} for an extension to all Riesz potentials $K_{\alpha,d}$ with $\alpha \in (0,2)$.
\end{proof}

The above theorem immediately leads to the following result concerning simply-connected stationary/rotating patch solution with $\Omega\leq 0$.

\begin{thm}\label{thm-euler}Let $D \subset \mathbb{R}^2$ be a bounded, simply-connected domain with rectifiable boundary. 
If $1_{D}$ is a V-state for \eqref{gSQG} for some $\alpha\in[0,2)$ with angular velocity $\Omega\leq 0$, then $D$ must be a disk.
In addition, the disk must be centered at the origin if $\Omega<0$. 
\end{thm}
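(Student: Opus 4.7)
The plan is to deduce Theorem \ref{thm-euler} directly from Theorem \ref{thm-generic} applied to the kernel $K = K_\alpha$, dimension $d=2$, and the radial weight $g(x) = |x|^2$. The key observation is that because $D$ is simply-connected, $\partial D$ has a single connected component, so the V-state equation \eqref{eq_rotating20} reads
\begin{equation*}
1_D * K_\alpha - \frac{\Omega}{2}|x|^2 \equiv \text{const} \quad \text{on all of } \partial D,
\end{equation*}
which is exactly the hypothesis \eqref{stat_eq} of Theorem \ref{thm-generic}. The weight $g(x)=|x|^2$ is clearly $C^1(\mathbb{R}^2)$, radial, and satisfies $g'(r)=2r>0$ for $r>0$, as required.

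Next I would verify hypotheses \textbf{(HD)} and \textbf{(HK)}. For \textbf{(HD)}: the paper already establishes via polygonal approximation that every rectifiable plane domain satisfies the sharp bound \eqref{ineq_2d}, namely $|B^\tau[D]| \leq 2|\partial D|\tau$ for all $\tau\ge 0$, so \textbf{(HD)} holds with $M=2|\partial D|$. For \textbf{(HK)}: when $\alpha=0$, $K_0(x)=\tfrac{1}{2\pi}\log|x|$ gives $K_0'(r)=(2\pi r)^{-1}>0$, which is dominated (up to an absolute constant, harmlessly absorbed into the statement) by $r^{-d-1+\delta}=r^{-3+\delta}$ for $r\le 1$ and any $\delta\in(0,2]$. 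When $\alpha\in(0,2)$, $K_\alpha(x)=-C_\alpha|x|^{-\alpha}$ gives $K_\alpha'(r)=\alpha C_\alpha r^{-\alpha-1}>0$, which is dominated by $r^{-3+\delta}$ for $r\le 1$ with any $\delta\in(0,2-\alpha]$; since $\alpha<2$, a positive $\delta$ is available.

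With both hypotheses verified, Theorem \ref{thm-generic} immediately gives that $D$ must be a disk, centered at the origin when $\Omega<0$. I do not anticipate a genuine obstacle: the entire content of Theorem \ref{thm-euler} is already absorbed into the variational/continuous-Steiner-symmetrization argument behind Theorem \ref{thm-generic}, and what remains is purely the matching of hypotheses. The only minor bookkeeping concern is the constant in the pointwise bound for $K_\alpha'$, but as noted above, the proof of Theorem \ref{thm-generic} uses \textbf{(HK)} only to derive local integrability of $K_\alpha$ and $C^{0,\delta'}$-regularity of the potential $1_D * K_\alpha$, both of which are invariant under multiplication of $K$ by a constant; hence the absolute constants $C_\alpha$, $\alpha C_\alpha$ are immaterial.
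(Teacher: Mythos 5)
Your proposal is correct and follows exactly the same route as the paper: Theorem~\ref{thm-euler} is proved by direct reduction to Theorem~\ref{thm-generic} with $K=K_\alpha$ and $g(x)=|x|^2$. The paper's own proof is a two-line reduction that leaves all the hypothesis-checking implicit; you have simply written out the verification of \textbf{(HD)}, \textbf{(HK)}, and the single-constant condition on $\partial D$ that the paper takes for granted.
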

\begin{proof}
We have $1_D * K - \frac{\Omega}{2} |x|^2 = C$ for some constant $C$ on $\partial D$.
For the Euler equation, $K=\frac{1}{2\pi}\ln |x|$. For the g-SQG equation. $K=-C_\alpha|x|^{-\alpha}$. In both cases, the proof follows from Theorem \ref{thm-generic}.
\end{proof}

\begin{rmk}
As we discussed in the beginning of this subsection, in the case of gSQG with $\alpha \in (0,2)$, Theorem \ref{thm-euler} is not true if the simply connected assumption is dropped, due to the non-radial patches in \cite{delaHoz-Hassainia-Hmidi:doubly-connected-vstates-gsqg, GomezSerrano:stationary-patches} bifurcating from annuli. 
\end{rmk}

\subsection{Smooth solutions with simply-connected level sets with $\Omega\leq0$}

The rest of this section is devoted to the smooth setting. We will show that any nonnegative smooth rotating solution  of the Euler or gSQG equation with angular velocity $\Omega \leq 0$ must be radial, under the additional assumption that all the super level-sets $U^h$
\begin{equation}\label{def_Uh}
U^h:=\{x\in \mathbb{R}^d: \omega(x) > h\}
\end{equation}
are simply-connected for any $h>0$. We believe that the simply-connected assumption is necessary, since it is likely that the bifurcation arguments from annuli in \cite{delaHoz-Hassainia-Hmidi:doubly-connected-vstates-gsqg, GomezSerrano:stationary-patches} can be extended to the smooth setting as well, using a similar argument as in \cite{Castro-Cordoba-GomezSerrano:uniformly-rotating-smooth-euler} or \cite{Castro-Cordoba-GomezSerrano:global-smooth-solutions-sqg}.

\begin{thm}\label{thm-smooth}
Let $\omega (x) \in C^1(\mathbb{R}^2)$ be nonnegative and compactly supported. In addition, assume the super level-set $U^h$ as in \eqref{def_Uh} is simply connected for all $h\in(0,\sup\omega)$.
Assume $K$ satisfies \textup{\textbf{(HK)}}.
If for some $\Omega\leq 0$, we have
\begin{equation}\label{thm2_eq}
\omega*K-\frac{\Omega}{2} |x|^2=C_0(h) \quad\text{ on } \partial U^h~ \text{ for all }  h \in(0, \sup\omega),
\end{equation}then $\omega$ is radially decreasing up to a translation. Moreover, it is centered at the origin if $\Omega < 0$.
\end{thm}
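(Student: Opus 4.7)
The plan is to extend the continuous Steiner symmetrization argument of Theorem~\ref{thm-generic} to the smooth setting, using the simply-connectedness assumption on each super level set as the crucial structural input. Argue by contradiction: assume $\omega$ is not radially symmetric (for $\Omega=0$, first translate so the center of mass is at the origin). Then there exists a hyperplane---say $\{x_1=0\}$---about which $\omega$ is not Steiner symmetric. Define $\omega^\tau$ by $\{\omega^\tau>h\}:=S^\tau[U^h]$ for every $h>0$. Since the $U^h$ are simply connected and nested, and $S^\tau$ preserves nestedness of open sets, $\omega^\tau$ is a well-defined nonnegative function. Moreover, because each $x_1$-section of $\omega$ is displaced by at most $\tau$ under $S^\tau$, the $C^1$-regularity of $\omega$ yields the pointwise estimate $\|\omega^\tau-\omega\|_\infty \le \tau\|\nabla\omega\|_\infty$.

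Consider the functional $\mathcal{E}[\omega]:=\iint\omega(x)\omega(y)K(x-y)\,dx\,dy+(-\Omega)\int|x|^2\omega(x)\,dx$ and compute $\mathcal{E}[\omega^\tau]-\mathcal{E}[\omega]$ in two ways. Expanding in $\omega^\tau-\omega$,
\begin{equation*}
\mathcal{E}[\omega^\tau]-\mathcal{E}[\omega]=\underbrace{2\int(\omega^\tau-\omega)\,\psi\,dx}_{=:T_1}+\underbrace{\int(\omega^\tau-\omega)\bigl((\omega^\tau-\omega)*K\bigr)dx}_{=:T_2},
\end{equation*}
where $\psi:=\omega*K-\tfrac{\Omega}{2}|x|^2$. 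By \eqref{thm2_eq}, there exists a continuous profile $C_0:[0,\sup\omega]\to\mathbb{R}$ with $\psi(x)=C_0(\omega(x))$ throughout $\supp(\omega)$. Set $\Phi(t):=2\int_0^t C_0(s)\,ds\in C^1$ with $\Phi(0)=0$; the layer-cake formula together with $|\{\omega^\tau>h\}|=|\{\omega>h\}|$ gives $\int\Phi(\omega^\tau)dx=\int\Phi(\omega)dx$. A first-order Taylor expansion of $\Phi$, combined with uniform continuity of $C_0$ on $[0,\sup\omega]$ and the bound $\|\omega^\tau-\omega\|_\infty=O(\tau)$, yields $\int 2C_0(\omega)(\omega^\tau-\omega)dx=o(\tau)$. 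Correcting for the thin tubular layer in $\supp(\omega^\tau)\setminus\supp(\omega)$, where $\psi$ and $C_0(\omega)$ may disagree, contributes only $O(\tau^3)$ by continuity of $\psi$ near $\partial\supp(\omega)$; hence $T_1=o(\tau)$. The bilinear term satisfies $\|(\omega^\tau-\omega)*K\|_\infty=O(\tau)$ since $K\in L^1_{\mathrm{loc}}$ by \textbf{(HK)} and the supports are bounded, giving $T_2=O(\tau^2)$. Altogether $\mathcal{E}[\omega^\tau]-\mathcal{E}[\omega]=o(\tau)$.

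On the other hand, Brock's continuous Steiner symmetrization inequality shows that $\iint\omega^\tau(x)\omega^\tau(y)K(x-y)\,dxdy$ is non-increasing in $\tau$, and $(-\Omega)\int|x|^2\omega^\tau dx$ is non-increasing as well. If $\Omega<0$, the smooth analogue of \cite[Lemma 2.22]{Carrillo-Hittmeir-Volzone-Yao:nonlinear-aggregation-symmetry-asymptotics} yields a strict first-order decrease of the potential at $\tau=0$ whenever $\omega$ is not Steiner symmetric; if $\Omega=0$, the smooth version of \cite[Proposition 2.15]{Carrillo-Hittmeir-Volzone-Yao:nonlinear-aggregation-symmetry-asymptotics}---which uses the center-of-mass normalization---gives the analogous strict decrease of the interaction. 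Either way, $\mathcal{E}[\omega^\tau]-\mathcal{E}[\omega]\le -c\tau$ for small $\tau>0$ and some $c>0$, contradicting $T_1+T_2=o(\tau)$. Hence $\omega$ is Steiner symmetric about every hyperplane through the appropriate center, and so radial; combined with simple-connectedness of the $U^h$, it must be radially decreasing about that center.

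The main obstacle is the rigorous control of $T_1$: verifying that $C_0$ is well-defined and continuous (which requires $\omega$ to have no flat level set of positive measure, a property consistent with the nested simply-connected super level-set assumption for $\omega\in C^1$), establishing $\|\omega^\tau-\omega\|_\infty=O(\tau)$ precisely from the $C^1$ regularity, and quantifying the boundary-layer discrepancy between $\psi$ and $C_0(\omega)$ in $B^\tau[\supp(\omega)]\setminus\supp(\omega)$. Should any of these regularity issues resist a direct treatment, a robust fallback is to mimic the step-function strategy of Theorem~\ref{theorem3}: approximate $\omega$ by $\omega_n=\sum_{i=1}^n\epsilon\,\mathbf{1}_{U^{r_i}}$ at regular values $r_i$, apply the finite-difference Steiner argument of Theorem~\ref{thm-generic} to each $\omega_n$, and pass to the limit $n\to\infty$, trading analytic regularity of $C_0$ for careful combinatorial bookkeeping of the level-set contributions.
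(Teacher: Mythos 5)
Your overall structure (continuous Steiner symmetrization, finite difference $\mathcal{E}[\omega^\tau]-\mathcal{E}[\omega]=T_1+T_2$, Brock's inequality and \cite[Lemma~2.22]{Carrillo-Hittmeir-Volzone-Yao:nonlinear-aggregation-symmetry-asymptotics} for the second way of computing) coincides with the paper's, and your treatment of $T_2$ is essentially identical. But your handling of $T_1$ follows a genuinely different route, and the route as written has a gap that you flag but do not close: you introduce $\Phi(t)=2\int_0^t C_0(s)\,ds$ and exploit $\int\Phi(\omega^\tau)=\int\Phi(\omega)$ via a Taylor expansion of $\Phi$, which requires $C_0$ to be continuous on $[0,\sup\omega]$. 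This is \emph{not} a consequence of the hypotheses. A $C^1$ function with all super level sets simply connected can still have an annular plateau $\{\omega=h^*\}$ of positive measure; then $\lim_{h\nearrow h^*}C_0(h)$ is $\psi$ on the outer rim of the plateau while $C_0(h^*)$ is $\psi$ on the inner rim, and \eqref{thm2_eq} only constrains $\psi$ on $\partial U^h$, never in the interior of the plateau, so these two values need not match. Your parenthetical claim that the simply-connected assumption rules out flat level sets of positive measure is therefore not correct.

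The paper avoids this by never integrating $C_0$ into a potential. It layer-cake decomposes $T_1=2\int_0^{h_0}\int(1_{S^\tau[U^h]}-1_{U^h})\psi\,dx\,dh$ and, for each fixed $h$, subtracts the \emph{constant} $C_0(h)$ using $\int(1_{S^\tau[U^h]}-1_{U^h})\,dx=0$ (equimeasurability at that level). Since the integrand is supported in $S^\tau[U^h]\triangle U^h\subset B^\tau[U^h]$ and $\psi$ is Lipschitz with $\psi=C_0(h)$ on $\partial U^h$, one gets $|\psi-C_0(h)|\le C_1\tau$ on the symmetric difference; the two-dimensional perimeter bound $|B^\tau[U^h]|\le 2|\partial U^h|\tau$ plus the coarea formula then give $|T_1|\le 4C_1\tau^2\int|\nabla\omega|$. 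Crucially, $C_0(h)$ is used only as an anchor value inside each $h$-slice and is never differentiated or integrated in $h$, so mere measurability of $C_0$ suffices; the boundary-layer discrepancy you handle separately is also absorbed into the same uniform bound.

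Your fallback plan (step-function approximation as in Theorem~\ref{theorem3}) would work but is heavier than necessary; the paper does not use it for this theorem. If you want to repair your primary route rather than switch, the simplest fix is to replace the global $\Phi$-Taylor argument by the per-level subtraction above---you already have all the ingredients (Lipschitz $\psi$, equimeasurability, the $\tau$-neighborhood containment for $S^\tau$), and the resulting bound $|T_1|=O(\tau^2)$ is cleaner than the $o(\tau)$ you obtain.
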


\begin{proof}
We prove it by contradiction. For the $\Omega < 0$ case, without loss of generality, we assume $\omega$ is not symmetric decreasing about the line $x_1=0$. For the $\Omega=0$ case, similar to Remark~\ref{rmk_origin}, without loss of generality we assume the center of mass is at the origin, and then we assume $\omega$ is not symmetric decreasing about the line $x_1=0$.

For any $\tau\geq 0$, we define the continuous Steiner symmetrization $\omega^\tau(x)$ in the same way as \cite[Definition 2.12]{Carrillo-Hittmeir-Volzone-Yao:nonlinear-aggregation-symmetry-asymptotics}: 
\[
\omega^\tau(x):=\int_0^{h_0}1_{S^\tau[U^h]}(x)\,dh,
\]where $h_0:=\sup \omega$, and $S^\tau [U^h]$ is the continuous Steiner symmetrization of the super level set $U^h$ at time $\tau\geq 0$.

Consider the energy functional 
\[ 
\mathcal{E}[\omega]:=\underbrace{\int_{\mathbb{R}^2}\int_{\mathbb{R}^2}\omega(x)\omega(y)K(x-y)dxdy}_{=:\mathcal{I}[\omega]}+\underbrace{(-\Omega)\int_{\mathbb{R}^2}\omega(x)|x|^2dx}_{=:\mathcal{V}[\omega]}.
\]
We proceed similarly as in Theorem~\ref{thm-generic} to compute $\frac{d^+}{d\tau} \mathcal{E}[\omega^\tau]$ in two different ways. We first rewrite the finite difference $\mathcal{E}[\omega^\tau] - \mathcal{E}[\omega]$ as
\begin{eqnarray}\nonumber
&&\mathcal{E}[\omega^\tau] - \mathcal{E}[\omega] \\\nonumber
&=& \int_{\mathbb{R}^2}2 (\omega^\tau(x)-{\omega(x)}) \left(\omega * K - \frac{\Omega}{2} |x|^2 \right) dx +\iint_{\mathbb{R}^2\times \mathbb{R}^2} (\omega^\tau(x) - \omega(x)) ( \omega^\tau(y)-\omega(y) ) K(x-y) dxdy\\
&=:& I_1 + I_2.
\end{eqnarray}
Since $\omega \in C_c^1(\mathbb{R}^2)$ and $K$ satisfies \textbf{(HK)} (hence is locally integrable), one can easily check that  
$
\omega*K-\frac{\Omega}{2}|x|^2$ is Lipschitz in 
$\tilde D:= \{x\in\mathbb{R}^2: \text{dist}(x,\text{supp\,} \omega)\leq 1\}
.$
 Note that we have $\text{supp~} \omega^\tau \in \tilde D$ for all $\tau\in[0, 1]$. Combining this fact with the assumption \eqref{thm2_eq},  there exists $C_1>0$ independent of $h$, such that
\begin{equation}\label{temp432}
\Big|(\omega*K)(x)-\frac{\Omega}{2}|x|^2-C_0(h)\Big|\leq C_1 \tau \quad \text{on }S^\tau[U^{h}]\triangle U^h ~~\text{ for all }h\in (0,h_0).
\end{equation}
Let us first rewrite $I_1$ as
\[
I_1=2\int_0^{h_0}\int_{\mathbb{R}^2}\Big(1_{S^\tau[U^h]}(x)-1_{U^h}(x)\Big)\Big((\omega*K)(x)-\frac{\Omega}{2}|x|^2\Big)\,dx\,dh.
\]
By Lemma~\ref{lem_steiner}(a), we have $\int_{\mathbb{R}^2}  (1_{S^\tau[U^h]}(x)-1_{U^h}(x)) dx =0$ for all $h\in(0,h_0)$. Thus we can control $I_1$ as
\begin{equation}
\begin{split}
|I_1|&=\bigg|2\int_0^{h_0}\int_{\mathbb{R}^2}\Big(1_{S^\tau[U^h]}(x)-1_{U^h}(x)\Big)\Big((\omega*K)(x)-\frac{\Omega}{2}|x|^2-C_0(h)\Big)dxdh\bigg|\\
&\leq 2C_1\tau \int_0^{h_0} \Big|(S^\tau[U^h])\triangle U^h\Big| dh\\
&\leq 2C_1\tau \int_0^{h_0} 2|\partial U^h|\tau \,dh\\
&= 4C_1 \tau^2 \int_{\text{supp~}\omega}|\nabla \omega| dx \leq C(\omega) \tau^2.
\end{split}
\end{equation}
Here in the second line we used \eqref{temp432}; in the third line we used  Lemma~\ref{lem_steiner}(b) and the property \eqref{ineq_2d} in two dimensions; and in the fourth line we used the co-area formula and the fact that $\omega\in C_c^1$.



We next move on to $I_2$. Since $|\nabla\omega|$ is bounded, Lemma~\ref{lem_steiner}(b) leads to 
\[
\begin{split}
|\omega^\tau (x)-\omega(x)| &=\left| \int_0^\infty 1_{S^\tau[U^h]}(x)-1_{U^h}(x) dh \right|\\
& \leq \|\nabla \omega\|_{L^\infty}\tau \quad\text{ for all }x\in \mathbb{R}^2,
\end{split}
\]
and $\text{supp}~\omega^\tau \in \tilde D$ for all $\tau\in [0,1]$.
Thus
\[
\begin{split}
|I_2|&\leq \|\omega^\tau - \omega\|_{L^1} \|(\omega^\tau-\omega) * K\|_{L^\infty}\\
&\leq \|\omega^\tau - \omega\|_{L^1} \|\omega^\tau - \omega\|_{L^\infty} \int_{\tilde D} |K(x)| dx\\
&\leq C(\omega)\tau^2.
\end{split}
\]

Combining the estimates for $I_1$ and $I_2$ gives $\mathcal{E}[\omega^\tau]-\mathcal{E}[\omega]\leq C(\omega)\tau^2$ for all $\tau\in[0,1]$, thus
\begin{equation}\label{thm2_ineq2}
\frac{d^+}{d\tau}\mathcal{E}[\omega^\tau]\bigg|_{\tau=0} = \frac{d^+}{d\tau}(I_1+I_2)\bigg|_{\tau=0}=0.
\end{equation}
On the other hand, we compute $\frac{d^+}{d\tau}\mathcal{E}[\omega^\tau]\big|_{\tau=0}$ in a different way as
\begin{eqnarray}\nonumber
&&\frac{d^+}{d\tau}\mathcal{E}[\omega^\tau]\bigg|_{\tau=0}=\frac{d^+}{d\tau}(\mathcal{I}[\omega^\tau] + \mathcal{V}[\omega^\tau])\bigg|_{\tau=0}.
\end{eqnarray}
In the $\Omega<0$ case, similarly as in Theorem~\ref{thm-generic}, we have $\mathcal{I}[\omega^\tau]$ is non-increasing along the continuous Steiner symmetrization by  \cite[Corollary 2]{Brock:continuous-steiner-symmetrization} and \cite[Theorem 3.7]{Lieb-Loss:analysis}, thus
\begin{equation}\nonumber
\frac{d^+}{d\tau}\mathcal{I}[\omega^\tau]\leq  0 \quad\text{ for all }\tau>0.
\end{equation}
For  $\mathcal{V}[\omega^\tau]$, by the assumption that $\omega$ is not symmetric decreasing about $\{x_1=0\}$, 
we again use \cite[Lemma 2.22]{Carrillo-Hittmeir-Volzone-Yao:nonlinear-aggregation-symmetry-asymptotics} to show, for $\Omega<0$,
\begin{equation}\nonumber
\frac{d^+}{d\tau}\mathcal{V}[\omega^\tau] = (-\Omega)\frac{d^+}{d\tau}\int_{\mathbb{R}^2}  \omega(x) |x|^2 dx\bigg|_{\tau=0}<0.
\end{equation}
Adding them together gives $
\frac{d^+}{d\tau}\mathcal{E}[D^\tau]\big|_{\tau=0} < 0$, contradicting \eqref{thm2_ineq2}.

In the $\Omega =0$ case, we assume that the center of mass of $\omega$ is at the origin. Thus if $\omega$ is not symmetric  decreasing about $\{x_1=0\}$, the same proof as \cite[Proposition 2.15]{Carrillo-Hittmeir-Volzone-Yao:nonlinear-aggregation-symmetry-asymptotics} gives that $\mathcal{I}[D]$ must be decreasing to the first order for a short time (again, the proof holds for all kernels $K$ satisfying \textbf{(HK)}; see \cite[Theorem 6]{Carrillo-Hoffmann-Mainini-Volzone:ground-states-diffusion-regime} for extensions to Riesz kernels $K_{\alpha,d}$ with $\alpha\in(0,2)$). This gives $
\frac{d^+}{d\tau}\mathcal{E}[D^\tau]\big|_{\tau=0} < 0,$ again contradicting \eqref{thm2_ineq2}.
\end{proof}

The above theorem immediately gives the following corollary concerning the V-states for the Euler and gSQG equations.

\begin{cor}\label{thm-euler-smooth}
Assume  $\omega (x) \in C^1(\mathbb{R}^2)$ is a nonnegative, compactly supported V-state satisfying the Euler equation or the gSQG equation for some $\alpha\in (0,2)$ with $\Omega\leq 0$. In addition, assume the super level-set $U^h$ as in \eqref{def_Uh} is simply connected for all $h\in(0,\sup\omega)$. Then $\omega$ must be radially decreasing if $\Omega<0$, and radially decreasing up to a translation if $\Omega=0$.
\end{cor}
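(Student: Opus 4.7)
The plan is to deduce this corollary directly from Theorem~\ref{thm-smooth}, by verifying that the stated hypotheses on $\omega$ and the Biot--Savart kernel $K$ match the assumptions of that theorem. There are essentially three things to check: that $K$ satisfies hypothesis \textbf{(HK)}; that the V-state assumption on $\omega$ implies the level-set identity \eqref{thm2_eq}; and that the topological hypothesis on $U^h$ ensures the constant in \eqref{thm2_eq} is unambiguous.

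First I would verify \textbf{(HK)} for both kernels. For 2D Euler, $K(r) = \frac{1}{2\pi}\ln r$ is $C^1$ away from the origin, strictly radially increasing, and satisfies $K'(r) = \frac{1}{2\pi r} \leq r^{-1} = r^{-d-1+\delta}$ with $d=2$ and $\delta=2$, so \textbf{(HK)} holds. For gSQG with $\alpha \in (0,2)$, $K(r) = -C_\alpha r^{-\alpha}$ is $C^1$ away from the origin, strictly radially increasing (since $C_\alpha > 0$), and $|K'(r)| = \alpha C_\alpha r^{-\alpha-1}$, which is bounded by $C r^{-3+(2-\alpha)}$ for $r \leq 1$, giving \textbf{(HK)} with $\delta = 2-\alpha > 0$. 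In both cases the additive constants and signs do not affect the hypothesis, since only $K'$ and local integrability appear in the proof of Theorem~\ref{thm-smooth}.

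Next I would verify \eqref{thm2_eq}. Being a V-state, $\omega$ satisfies \eqref{eq_smooth}, i.e.\ $\omega * K - \frac{\Omega}{2}|x|^2$ is constant on each connected component of every regular level set. By Sard's theorem, almost every $h \in (0,\sup\omega)$ is a regular value, so $\partial U^h$ is a smooth $1$-manifold. Combined with the assumption that $U^h$ is simply-connected and bounded, $\partial U^h$ must consist of a single Jordan curve --- for a bounded simply-connected planar domain with smooth boundary cannot have interior boundary components (such a hole would destroy simple connectivity). Thus the constant appearing in \eqref{eq_smooth} is a single value $C_0(h)$ on all of $\partial U^h$, giving \eqref{thm2_eq} for a.e.\ $h$. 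Inspecting the proof of Theorem~\ref{thm-smooth}, the identity \eqref{thm2_eq} is only used under the integral sign in $h$ (see the display around \eqref{temp432}), so validity for a.e.\ $h$ is enough; alternatively one extends to all $h \in (0,\sup\omega)$ by continuity of $\omega * K - \frac{\Omega}{2}|x|^2$ and density of regular values.

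With these two points verified, Theorem~\ref{thm-smooth} applies and yields the conclusion: $\omega$ is radially decreasing (and symmetric about the origin when $\Omega < 0$, up to a translation when $\Omega = 0$). The main conceptual obstacle is the second step --- making sure $C_0(h)$ is genuinely a function of $h$ alone rather than varying over different components of $\partial U^h$ --- and this is precisely what the simply-connected assumption is there to guarantee. Without that assumption, one would be in the doubly-connected regime where the non-radial examples of \cite{delaHoz-Hassainia-Hmidi:doubly-connected-vstates-gsqg, GomezSerrano:stationary-patches} obstruct radial symmetry; so the hypothesis cannot be dropped and this verification step is where all the geometric content of the corollary lies.
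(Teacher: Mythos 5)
Your proof is correct and takes the same route as the paper's: the published proof simply identifies $K=\frac{1}{2\pi}\ln|x|$ (Euler) or $K=-C_\alpha|x|^{-\alpha}$ (gSQG) and cites Theorem~\ref{thm-smooth}. You have filled in the details the paper elides—verifying \textbf{(HK)} for both kernels, observing that the simply-connected hypothesis forces $\partial U^h$ to be a single Jordan curve at regular values so that the constant in \eqref{thm2_eq} is genuinely a function of $h$ alone, and noting that Sard's theorem gives this for a.e.\ $h$, which suffices since \eqref{thm2_eq} is only used under an integral in $h$.
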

\begin{proof}
For the Euler equation, $K=\frac{1}{2\pi}\ln|x|$. For the gSQG equation. $K=-C_\alpha|x|^{-\alpha}$. In both cases, the proof follows from Theorem \ref{thm-smooth}.
\end{proof}

\section{Radial symmetry of rotating gSQG solutions with $\Omega>\Omega_\alpha$}

In this section, we focus on rotating gSQG patches with area $\pi$ and $\alpha \neq 0$. As we discussed in the introduction, for $\alpha\in[0,2)$, there exist rotating patches bifurcating from the unit disk at angular velocities  $\Omega_{m}^{\al} = 2^{\al-1}\frac{\Gamma(1-\al)}{\Gamma\left(1-\frac{\al}{2}\right)^2}\left(\frac{\Gamma\left(1+\frac{\al}{2}\right)}{\Gamma\left(2-\frac{\al}{2}\right)} - \frac{\Gamma\left(m+\frac{\al}{2}\right)}{\Gamma\left(m+1-\frac{\al}{2}\right)}\right)$, where $\Omega_m^\alpha$ is increasing in $m$ for any fixed $\alpha \in [0,2)$. Let us denote $\Omega_\alpha := \lim_{m\to\infty}\Omega^{\alpha}_{m}$. If $\alpha \in (0,1)$ we have that 
\begin{equation}\label{def_omega_alpha}
\Omega_\alpha = 2^{\al-1}\frac{\Gamma(1-\al)}{\Gamma\left(1-\frac{\al}{2}\right)^2}\frac{\Gamma\left(1+\frac{\al}{2}\right)}{\Gamma\left(2-\frac{\al}{2}\right)}.
\end{equation}
Note that $\Omega_\alpha$ is a continuous function of $\alpha$ for $\alpha \in (0,1)$, with $\Omega_0 = \frac{1}{2}$, and $\Omega_\alpha = +\infty$ for all $\alpha \in [1,2)$.

A natural question is whether there can be rotating patches with area $\pi$ with $\Omega \geq \Omega_\alpha$ for $\alpha \in (0,1)$. Note that the area constraint is necessary for all $\alpha>0$: if $D$ is a rotating gSQG patch for $\alpha\in(0,2)$ with angular velocity $\Omega$, then one can easily check that its scaling $\lambda D = \{\lambda x: x\in D\}$ is a rotating patch with angular velocity $\lambda^{-\alpha}\Omega$.

In Theorem~\ref{clockwise rotating_patch}, we showed that for the 2D Euler case ($\alpha=0$), all rotating patches with $\Omega \geq \Omega_0 = \frac{1}{2}$ must be a disk. In this section, our goal is to show that all \emph{simply-connected} rotating patches with area $\pi$ with $\Omega \geq \Omega_\alpha$ for $\alpha \in (0,1)$ must be a disk. Whether there exist non-simply-connected or disconnected rotating patches with $\Omega \geq \Omega_\alpha$ for $\alpha \in (0,1)$ is still an open question.

Below is the main theorem of this section. Recall that for $\alpha \in (0,2)$, $K_\alpha = -C_\alpha |x|^{-\alpha}$  is the fundamental solution for $-(-\Delta)^{-1+\frac{\alpha}{2}}$, where $C_\alpha = \frac{1}{2\pi} \frac{\Gamma(\frac{\alpha}{2})}{2^{1-\alpha}\Gamma(1-\frac{\alpha}{2})}$.

\medskip
\begin{thm}\label{thm_large_omega}
Let $D\subset \mathbb{R}^2$ be a bounded, simply-connected patch with $C^1$ boundary. Let us denote $R := \max_{x \in D}|x|$. 
Assume that $D$ is a uniformly rotating patch with angular velocity $\Omega$ of the gSQG equation with $\alpha \in (0,1)$, i.e.,
\begin{equation}\label{eq_const}
1_D * K_\alpha - \frac{\Omega}{2} |x|^2 = C \quad\text{ on } \partial D.
\end{equation} 
Let $\Omega_c(R):=R^{-\al}\Omega_\alpha.$
If $\Omega \geq \Omega_c(R)$, then $D$ must coincide with $B(0,R)$.
\end{thm}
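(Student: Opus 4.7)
The plan is to argue by contradiction at a contact point of $\partial D$ with the smallest centered disk containing it. Assume $D\neq B(0,R)$, set $B:=B(0,R)$ and $E:=B\setminus\overline{D}$; since $R$ is attained on $\partial D$ there is some $x_0\in\partial D\cap\partial B$, and since $D$ is a proper simply connected subset of $B$, the set $E$ has positive measure. Because $\partial D$ and $\partial B$ are both $C^1$ and $D\subset B$, they are internally tangent at $x_0$ with common outer normal $\hat{x}_0:=x_0/R$; after a rotation assume $x_0=R\hat{e}_1$. The strategy is to compute the scalar $\nabla(1_E*K_\alpha)(x_0)\cdot\hat{e}_1$ in two different ways that will collide under the hypothesis $\Omega\geq\Omega_c(R)$.

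The first computation is a direct monotonicity one. Since $\alpha\in(0,1)$, the field $\nabla K_\alpha(z)=C_\alpha\alpha\,z/|z|^{\alpha+2}$ is locally integrable, so differentiation under the integral yields
\begin{equation*}
\nabla(1_E*K_\alpha)(x_0)\cdot\hat{e}_1 \;=\; C_\alpha\alpha\int_{E}\frac{R-y\cdot\hat{e}_1}{|x_0-y|^{\alpha+2}}\,dy \;>\;0,
\end{equation*}
because each $y\in E\subset B$ satisfies $y\cdot\hat{e}_1\leq|y|\leq R$, with strict inequality off the tangent line $\{y\cdot\hat{e}_1=R\}$. The second computation uses the linear decomposition $1_E*K_\alpha = 1_B*K_\alpha - 1_D*K_\alpha$. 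For the ball, scaling gives $\nabla(1_B*K_\alpha)(x_0)\cdot\hat{e}_1 = R^{1-\alpha}u_1'(1)$ with $u_1:=1_{B(0,1)}*K_\alpha$; an explicit computation using the identity $\int_0^{2\pi}(r^2-2r\rho\cos\theta+\rho^2)^{-\alpha/2}d\theta = 2\pi r^{-\alpha}\,{}_2F_1(\alpha/2,\alpha/2;1;\rho^2/r^2)$ for $r>\rho$, followed by termwise integration in $\rho\in[0,1]$, differentiation in $r$ at $r=1^+$, and two applications of Gauss's identity ${}_2F_1(a,b;c;1)=\Gamma(c)\Gamma(c-a-b)/(\Gamma(c-a)\Gamma(c-b))$ together with the reductions $\alpha\Gamma(\alpha/2)=2\Gamma(1+\alpha/2)$ and $(2-\alpha)\Gamma(1-\alpha/2)=2\Gamma(2-\alpha/2)$, yields the crisp identity $u_1'(1)=\Omega_\alpha$, so that
\begin{equation*}
\nabla(1_B*K_\alpha)(x_0)\cdot\hat{e}_1 \;=\; R^{1-\alpha}\Omega_\alpha \;=\; R\,\Omega_c(R).
\end{equation*}
For $D$, the V-state condition \eqref{eq_const} forces $\psi_D:=1_D*K_\alpha-\frac{\Omega}{2}|x|^2\equiv C$ on $\partial D$; differentiating tangentially at $x_0$ gives $\partial_{\hat{e}_2}\psi_D(x_0)=0$, and since $x_0\cdot\hat{e}_2=0$ this forces $\nabla(1_D*K_\alpha)(x_0)$ to be parallel to $\hat{e}_1$. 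To close the contradiction we then need the complementary pointwise inequality
\begin{equation*}
\nabla(1_D*K_\alpha)(x_0)\cdot\hat{e}_1 \;\geq\; R\,\Omega_c(R),
\end{equation*}
which combined with the decomposition yields $\nabla(1_E*K_\alpha)(x_0)\cdot\hat{e}_1\leq 0$ and contradicts the strict monotonicity lower bound.

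The main obstacle is this last inequality. I expect it to be where the nonlocal maximum principles and monotonicity formulas alluded to in the introduction must enter: the plan is to apply a Hopf-type lemma to $\psi_D-C$ viewed on $\mathbb{R}^2\setminus\overline{D}$, leveraging that $(-\Delta)^{1-\alpha/2}(-1_D*K_\alpha)=1_D$ (so the auxiliary function is $s$-harmonic outside $\overline{D}$ with $s=1-\alpha/2\in(1/2,1)$) and that $\psi_D\to-\infty$ at infinity, and then comparing $\psi_D$ against the sharp radial competitor $\psi_B$ which already saturates the target inequality at $\Omega=\Omega_c(R)$ -- the unique angular velocity making the outward radial derivative of $\psi_B$ on $\partial B$ vanish. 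The hypothesis $\Omega\geq\Omega_c(R)$ is the precise threshold that makes this nonlocal comparison succeed, so extracting the correct strict sign at $x_0$ from the fractional Hopf argument (together with the ball barrier) is the heart of the proof.
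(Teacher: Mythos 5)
Your setup matches the paper's: the contact point $x_0$, the set $E = B(0,R)\setminus\overline{D}$ (called $U$ in the paper), the strict sign $\nabla(1_E*K_\alpha)(x_0)\cdot\hat{e}_1>0$, the decomposition $1_E = 1_B - 1_D$, and the identity $\nabla(1_B*K_\alpha)\cdot\hat{e}_1\big|_{|x|=R} = R\,\Omega_c(R)$ are all correct. The gap is exactly where you say it is, the complementary bound $\nabla(1_D*K_\alpha)(x_0)\cdot\hat{e}_1\geq R\,\Omega_c(R)$, but the route you sketch does not close it. You propose a fractional Hopf lemma on $\mathbb{R}^2\setminus\overline{D}$ for $\psi_D - C$, invoking that $1_D*K_\alpha$ is $(1-\alpha/2)$-harmonic there; however $\psi_D = 1_D*K_\alpha - \frac{\Omega}{2}|x|^2$ is \emph{not} $s$-harmonic outside $\overline{D}$, since $(-\Delta)^{1-\alpha/2}|x|^2\neq 0$, so the fractional Hopf machinery does not apply to $\psi_D$ as stated. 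Comparing $\psi_D$ with $\psi_B$ on the exterior also runs into trouble: their difference is $1_E*K_\alpha$, and on $D^c$ one has $(-\Delta)^{1-\alpha/2}(1_E*K_\alpha)=-1_E$, which is nonzero precisely on $E\subset D^c$, so the difference is not $s$-harmonic where you want to apply a boundary lemma.

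The paper's proof avoids the complement entirely and works inside $D$, which is where the argument becomes classical and clean. Two facts are established: (i) $1_E*K_\alpha$ is strictly superharmonic in $D$, since $\Delta(1_E*K_\alpha)=(-\Delta)^{\alpha/2}1_E$ and $1_E\equiv 0$ on the open set $D$ makes the singular-integral representation of the fractional Laplacian strictly negative there; and (ii) on $\partial D$ one has $1_E*K_\alpha = 1_B*K_\alpha - \frac{\Omega}{2}|x|^2 - C$, and for $\Omega\geq\Omega_c(R)$ the radial function $1_B*K_\alpha - \frac{\Omega}{2}|x|^2$ is non-increasing in $|x|$ on \emph{all} of $[0,R]$ (Proposition~\ref{prop_radial}), so the boundary minimum of $1_E*K_\alpha$ is attained at the outermost point $x_0$. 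The classical strong maximum principle then places the minimum of $1_E*K_\alpha$ over $\overline{D}$ at $x_0$, giving $\nabla(1_E*K_\alpha)(x_0)\cdot\vec{n}(x_0)\leq 0$ and the contradiction. Note in particular that your hypergeometric computation only produces the derivative of $1_B*K_\alpha$ \emph{at} $|x|=R$; to make the boundary comparison work you need the monotonicity at every radius, which is the additional content of Proposition~\ref{prop_radial} (proved there via the series coefficients of the Sonine--Schafheitlin hypergeometric representation all being of one sign). So the heart of the argument is a classical interior maximum principle plus a global radial monotonicity statement, rather than an exterior fractional Hopf lemma.
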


\medskip
\begin{rmk} (a)
Note that all sets $D\subset\mathbb{R}^2$ with area $\pi$ must have $R\geq 1$. In this case we have $\Omega_c(R)\leq \Omega_\alpha$, thus Theorem~\ref{thm_large_omega} immediately implies that all simply-connected rotating patches with area $\pi$ and $\Omega \geq \Omega_\alpha$ must be a disk. 

(b) Note that the constant $\Omega_\alpha$ is sharp, since there exist patches bifurcating from a disk of radius 1 at velocities $\Omega_m^\alpha$, which can get arbitrarily close to $\Omega_\alpha$ as $m\to\infty$ \cite[Theorem 1.4]{Hassainia-Hmidi:v-states-generalized-sqg}.
\end{rmk}

\begin{proof}Towards a contradiction, assume that $D \neq B(0,R)$.
Let $x_0 \in \partial D$ be the farthest point from 0. Then we have that $D \subset B(0,R)$, and let $U:= B(0,R)\setminus D$. See Figure~\ref{fig_alpha} for an illustration of $U$ and $x_0$. Then \eqref{eq_const} can be rewritten as
\begin{equation}\label{eq_const2}
1_U *K_\alpha =  1_{B(0,R)} * K_\alpha - \frac{\Omega}{2}|x|^2 - C \quad\text{ on } \partial D.
\end{equation}

\begin{figure}[h!]
\begin{center}
\includegraphics[scale=1.1]{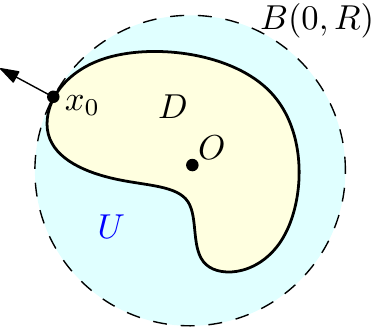}
\caption{Illustration of the set $U$ and the point $x_0$.\label{fig_alpha}}
\end{center}
\end{figure}

The key idea of this proof is to use two different ways to compute $\nabla (1_U * K_\alpha)(x_0)  \cdot x_0$, and obtain a contradiction if $\Omega \geq \Omega_c(R)$. On the one hand, 
\begin{equation}\label{eq_temp1}
\nabla (1_U * K_\alpha)(x_0)  \cdot x_0 = \alpha C_\alpha \int_U \frac{(x_0-y)\cdot x_0}{|x_0-y|^{\alpha+2}}dy > 0,
\end{equation}
where we used the fact that $(x_0-y)\cdot x_0 > 0$ for all $y\in U\subset B(0,R)$ since the two vectors point to the same halfplane.

On the other hand, we claim the following properties hold for $1_U *K_\alpha$:

1. $\Delta (1_U * K_\alpha) < 0$ in $D$.

2. Along $\partial D$, the minimum of $1_U * K_{\alpha}$ is achieved at $x_0$.

To show property 1, using the fact that $K_\alpha = -C_\alpha |x|^{-\alpha}$  is the fundamental solution for $-(-\Delta)^{-1+\frac{\alpha}{2}}$, we have $1_U * K_\alpha = -(-\Delta)^{-1+\frac{\alpha}{2}} 1_U$, thus $\Delta (1_U *K_{\alpha})  = (-\Delta)^{\alpha/2} 1_U$. Thus for any $x\in D$, using the singular integral definition of the fractional Laplacian \cite[Theorem 1.1, Definition (e)]{Kwasnicki:definitions-fractional-laplacian}
and the fact that $1_U \equiv 0$ in $D$, we have
\[
(-\Delta)^{\alpha/2} 1_U(x) = C_1(\alpha) \int_{\mathbb{R}^2}\frac{1_U(x)-1_U(y)}{|x-y|^{2+\alpha}} dy = C_1(\alpha) \int_{\mathbb{R}^2}\frac{0-1_U(y)}{|x-y|^{2+\alpha}} dy < 0 \quad\text{ for }x\in D
\]
for some constant $C_1(\alpha)>0$. Note that despite the denominator being singular, the integral indeed converges for all $x\in D$, due to the fact that $D$ is open and the integrand is identically zero in $D$ which yields
\[
\Delta (1_U * K_\alpha)(x) =  (-\Delta)^{\alpha/2} 1_U(x) < 0 \text{ in } D.
\]

 We now move on to property 2. Due to \eqref{eq_const2} and the fact that $x_0$ is the outmost point on $\partial D$, it suffices to show that the radial function $1_{B(0,R)} *K_\alpha - \frac{\Omega}{2}|x|^2$ is non-increasing in $|x|$ for all $\Omega\geq \Omega_c(R)$. We prove this in Proposition~\ref{prop_radial} right after this theorem.

The above claims allow us to apply the maximum principle to $1_U *K_\alpha$, which yields that the minimum of $1_U * K_\alpha$ in $\overline D$ is also achieved at $x_0$, thus 
\[
\nabla (1_U * K_\alpha)(x_0) \cdot \vec n(x_0) \leq 0,
\] where $\vec n(x_0)$ is the outer normal of $D$ at $x_0$. Since $\vec n(x_0) = x_0/|x_0|$, the above inequality contradicts with \eqref{eq_temp1}. As a result, $D$ must coincide with $B(0,R)$.
\end{proof}

Now we prove the proposition that was used in the proof of the above theorem. 
\begin{prop}\label{prop_radial}
For a fixed $\alpha \in (0,1)$ and $R>0$, let $\Omega_c(R)$ be the smallest number such that 
\[g_R(x) := 1_{B(0,R)}*K_\alpha - \frac{\Omega_c}{2}|x|^2
\] is non-increasing in $|x|$. Then we have $\Omega_c(R) = R^{-\alpha} \Omega_\alpha$, with $\Omega_\alpha$ given in \eqref{def_omega_alpha}.
 \end{prop}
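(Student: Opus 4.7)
The strategy is a three-stage argument: a scaling reduction to the unit ball, derivation of an explicit closed-form expression for $h_1'(r)/r$ via Hankel transform techniques, and a direct monotonicity analysis of the resulting formula.

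For the scaling reduction, homogeneity of $K_\alpha=-C_\alpha|x|^{-\alpha}$ gives, via the substitution $y=Rz$, that $h_R(r):=(1_{B(0,R)}*K_\alpha)(re_1)=R^{2-\alpha}h_1(r/R)$, where $h_1(r):=(1_{B(0,1)}*K_\alpha)(re_1)$. Hence $h_R'(r)/r=R^{-\alpha}h_1'(r/R)/(r/R)$, and taking suprema over $r>0$ reduces the claim to $\sup_{r>0}h_1'(r)/r=\Omega_\alpha$.

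To obtain a closed form for $h_1'(r)/r$, I would use the $2$D Fourier transforms $\widehat{K_\alpha}(\xi)=-|\xi|^{\alpha-2}$ (the identity that fixes the normalization of $C_\alpha$) and $\widehat{1_{B(0,1)}}(\xi)=2\pi J_1(|\xi|)/|\xi|$, followed by radial Hankel inversion and $J_0'=-J_1$, to arrive at
\begin{equation*}
h_1'(r)=\int_0^\infty J_1(s)\,J_1(rs)\,s^{\alpha-1}\,ds.
\end{equation*}
This is a Weber--Schafheitlin integral, and its two-branch closed-form evaluation (depending on whether $r<1$ or $r>1$) expresses $h_1'(r)/r$ in terms of a single Gauss hypergeometric function. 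Setting $M:=2^{\alpha-1}\Gamma(1+\alpha/2)/\Gamma(1-\alpha/2)$, the formulas read
\begin{equation*}
\frac{h_1'(r)}{r}=\begin{cases}
M\,{}_2F_1\!\left(1+\tfrac{\alpha}{2},\tfrac{\alpha}{2};2;r^2\right), & 0<r<1,\\[4pt]
M\,r^{-2-\alpha}\,{}_2F_1\!\left(1+\tfrac{\alpha}{2},\tfrac{\alpha}{2};2;1/r^2\right), & r>1.
\end{cases}
\end{equation*}

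To locate the maximum, observe that every Taylor coefficient of ${}_2F_1(1+\alpha/2,\alpha/2;2;z)$ is strictly positive (since the parameters $a,b,c>0$), so the first branch is strictly increasing on $(0,1)$; the second branch is a product of two positive, strictly decreasing functions of $r$, hence strictly decreasing on $(1,\infty)$. Both branches meet at $r=1$ with common value $M\cdot{}_2F_1(1+\alpha/2,\alpha/2;2;1)$. Because $c-a-b=1-\alpha>0$ in our range, Gauss's summation theorem yields ${}_2F_1(1+\alpha/2,\alpha/2;2;1)=\Gamma(1-\alpha)/[\Gamma(1-\alpha/2)\Gamma(2-\alpha/2)]$; inserting $M$ and using $\Gamma(2-\alpha/2)=(1-\alpha/2)\Gamma(1-\alpha/2)$ collapses the expression to precisely the formula \eqref{def_omega_alpha} for $\Omega_\alpha$. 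Therefore $\sup_{r>0}h_1'(r)/r=\Omega_\alpha$, attained uniquely at $r=1$.

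The main technical obstacle is the Weber--Schafheitlin evaluation itself: the dichotomy between the $r<1$ and $r>1$ cases (which arises from the different regions of convergence of the underlying hypergeometric series) is exactly what makes $r=1$ the unique maximizer, and verifying that the normalizing constants and branch formulas fit together to give $\Omega_\alpha$ relies on several $\Gamma$-function identities. An alternative route that bypasses the Hankel transform is to expand $|re_1-y|^{-\alpha}$ in Gegenbauer polynomials of the angular variable and integrate $y$ in polar coordinates, producing the same ${}_2F_1$ representations directly.
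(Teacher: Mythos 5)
Your proof is correct, and it follows the same core route as the paper: both identify $\Omega_c(R)$ with $\sup_r h_R'(r)/r$, both evaluate the potential of a ball via a Sonine/Weber--Schafheitlin integral to obtain two-branch hypergeometric expressions, and both locate the extremum at $r=R$ by sign/monotonicity of the ${}_2F_1$ series. (Your monotonicity argument -- inner branch has positive coefficients hence is increasing, outer branch is a product of two positive decreasing factors -- is equivalent to the paper's observation that every coefficient of $J(z)$ is negative, since the paper's $J$ differs from $h_R'(r)/r$ by a negative constant and a change of variable $z=r^2$.) Where you genuinely diverge is in the endgame: the paper, after locating the extremum, abandons the hypergeometric representation and recomputes $\nabla(1_{B(0,R)}*K_\alpha)$ at $|x|=R$ from scratch by an elementary but somewhat lengthy real-variable integration (splitting into $I_1-I_2$, substituting $\theta=2\beta$, recognizing beta functions, and applying the duplication formula). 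You instead evaluate ${}_2F_1(1+\tfrac{\alpha}{2},\tfrac{\alpha}{2};2;z)$ at $z=1$ by Gauss's summation theorem, using $c-a-b=1-\alpha>0$, and immediately read off $\Omega_\alpha$. This is cleaner and avoids the separate integral computation entirely; the only price is that one must justify continuity of $h_1'(r)/r$ across $r=1$, which follows either from the $C^{1}$ regularity of the potential for $\alpha<1$ or from the fact that both branch formulas yield the same finite Gauss value at $z=1$. Your upfront scaling reduction to $R=1$ is also a small but worthwhile tidying of the exposition, since it decouples the $R$-dependence from the special-function work.
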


\begin{proof}
Recall that $K_\alpha = -C_\alpha |x|^{-\alpha}$ with  $C_\alpha=\frac{1}{2\pi}\frac{\Gamma(\frac{\alpha}{2})}{2^{1-\alpha}\Gamma(1-\frac{\alpha}{2})}$. Since $|x|^2$ and $1_{B(0,R)} * K_\alpha$ are both radially symmetric and increasing in $|x|$, we have 
\begin{align*}
\Omega_{c}(R)=2C_\alpha \sup_{|x_1|\neq |x_2|}\frac{\displaystyle  \int_{B(0,R)}|x_2-y|^{-\alpha}dy-\int_{B(0,R)}|x_1-y|^{-\alpha}dy}{|x_1|^2-|x_2|^2}.
\end{align*}
Let us denote the fraction above by $F(x_1, x_2)$. 
 We claim that the $\sup_{|x_1|\neq |x_2|} F(x_1, x_2)$ is attained when $|x_1|=R$, and $|x_2| \to R$.

%
%
%
%
To prove the claim, we first compute $I(x) := \int_{B(0,R)}|x-y|^{-\alpha}dy$. Taking the Fourier transform:
\begin{align*}
 I(x) = C R^{2-\al} \int_{0}^{\infty} r^{a-2} J_1(r) J_0\left(\frac{|x| r}{R}\right) dr, 
\end{align*}
where $C$ is some positive constant. By Sonine-Schafheitlin's formula \cite[p. 401]{Watson:treatise-bessel-functions} and by continuity, we obtain
\begin{align*}
 I(x) = \left\{
\begin{array}{cc}
C R^{2-\al} 2^{\alpha-2}\frac{\Gamma\left(\frac{\al}{2}\right)}{\Gamma\left(2-\frac{\al}{2}\right)} \,_2 F_1\left(\frac{\alpha}{2}-1,\frac{\al}{2},1,\frac{|x|^2}{R^2}\right) & \text{ if } |x| \leq R \\
C R^{2-\al}2^{\alpha-2}|x|^{-\alpha}R^{\al}\frac{\Gamma\left(\frac{\al}{2}\right)}{\Gamma\left(1-\frac{\al}{2}\right)} \,_2 F_1\left(\frac{\al}{2},\frac{\al}{2},2,\frac{R^2}{|x|^2}\right) & \text{ if } |x| > R. \\
\end{array}
\right.
\end{align*}

By the mean value theorem, it is enough to check that $\min J(z) = J(R^2)$, where
\begin{align*}
 J(z) & = \left\{
\begin{array}{cc}
 \frac{d}{dz}\left(\,_2 F_1\left(\frac{\alpha}{2}-1,\frac{\al}{2},1,\frac{z}{R^2}\right)\right) & \text{ if } z \leq R^2 \\
\frac{d}{dz}\left(\left(1-\frac{\al}{2}\right)z^{-\frac{\alpha}{2}}R^{\al} \,_2 F_1\left(\frac{\al}{2},\frac{\al}{2},2,\frac{R^2}{z}\right)\right) & \text{ if } z > R^2 \\
\end{array}
\right. \\
& = \left\{
\begin{array}{cc}
 \frac{\al(\al-2)}{4} \frac{1}{R^2}\,_2 F_1\left(\frac{\al}{2}, 1+\frac{\al}{2}, 2, \frac{z}{R^2}\right) & \text{ if } z \leq R^2 \\
 \frac{\al(\al-2)}{4} z^{-1-\frac{\al}{2}} R^{\al} \,_2 F_1\left(\frac{\al}{2}, 1+\frac{\al}{2}, 2, \frac{R^2}{z}\right) & \text{ if } z > R^2 \\
\end{array}
\right. 
\end{align*}

Writing the series expansion (respectively at $z = 0$ and $z = \infty$) of the hypergeometric series:
\begin{align*}
 \frac{\al(\al-2)}{4} \frac{1}{R^2} \,_2 F_1\left(\frac{\al}{2}, 1+\frac{\al}{2}, 2, z\right) & = \frac{1}{R^2} \sum_{n=0}^{\infty} \frac{\Gamma\left(\frac{\al}{2}+n\right)\Gamma\left(n+1+\frac{\al}{2}\right)}{\Gamma\left(\frac{\al}{2}-1\right)\Gamma\left(\frac{\al}{2}\right)\Gamma(1+n)\Gamma(2+n)} \left(\frac{z}{R^2}\right)^n \\
\frac{\al(\al-2)}{4} z^{-1-\frac{\al}{2}} R^{\al} \,_2 F_1\left(\frac{\al}{2}, 1+\frac{\al}{2}, 2, \frac{1}{z}\right) & = \left(\frac{1}{z}\right)^{\frac{\al}{2}} R^{\al-2}\sum_{n=1}^{\infty} 
\frac{\Gamma\left(\frac{\al}{2}+n\right)\Gamma\left(n-1+\frac{\al}{2}\right)}{ \Gamma\left(\frac{\al}{2}-1\right) \Gamma\left(\frac{\al}{2}\right)\Gamma(n)\Gamma(n+1)}\left(\frac{R^2}{z}\right)^n,
\end{align*}
which are both minimized at $z = R^2$ since every coefficient is negative. This proves the claim.

The claim immediately implies
\begin{equation}\label{omegac}
\Omega_c(R)=-\frac{C_\alpha}{R}\frac{d}{d|x|} \int_{B(0,R)}|x-y|^{-\alpha}dy\bigg|_{|x|=R},
\end{equation}
Where $\frac{d}{d|x|}$ denotes the derivative in the radial variable (recall that $\int_{B(0,R)}|x-y|^{-\alpha}dy$ is radially symmetric). To compute the derivative at $|x|=R$, we can simply compute the partial derivative in the $x_1$ direction at the point $(R,0)$:
\begin{equation}\label{I1I2}
\begin{split}
& \frac{\partial}{\partial x_1}  \int_{B(0,R)}|x-y|^{-\alpha}dy\bigg|_{x = (R,0)} = -\alpha\int_{B(0,R)}\left((R-y_1)^2+y_2^2\right)^{-\frac{\alpha}{2}-1}(R-y_1)dy_1dy_2\\
&=-2\int_{0}^{R}\big((R-y_1)^2+{y_2}^2\big)^{-\frac{\alpha}{2}}\Big|_{y_1=-\sqrt{R^2-{y_2}^2}}^{y_1=\sqrt{R^2-{y_2}^2}}dy_2\\
&=-2^{1-\frac{\alpha}{2}} R^{1-\al}\left(\int_{0}^{1}\left(1-\sqrt{1-u^2}\right)^{-\frac{\alpha}{2}}d u -\int_{0}^{1}\left(1+\sqrt{1-u^2}\right)^{-\frac{\alpha}{2}}d u \right)\\
&=-2^{1-\frac{\alpha}{2}}R^{1-\al}\left(\int_0^\frac{\pi}{2}(1-\cos\theta)^{-\frac{\alpha}{2}}\cos\theta\, d\theta-\int_0^{\frac{\pi}{2}}(1+\cos\theta)^{-\frac{\alpha}{2}}\cos\theta \, d\theta\right)\\
&=:- 2^{1-\frac{\alpha}{2}} R^{1-\al}(I_{1}-I_{2}),
\end{split}
\end{equation}
where in the third line we used the identity $(R\pm \sqrt{R^2-y_2^2})^2 + y_2^2 = 2R^2(1 \pm \sqrt{1-(R^{-1}y_2)^2})$, as well as the substitution $u=R^{-1}y_2$.

Using a substitution $\theta = 2\beta$, we rewrite $I_1$ as
\begin{align*}
I_1& = 2\int_0^{\frac{\pi}{4}} (1-\cos(2\beta))^{-\frac{\alpha}{2}}\cos(2\beta)\, d\beta = 2^{1-\frac{\alpha}{2}}\int_{0}^\frac{\pi}{4}(\sin\beta)^{-\alpha}(1-2\sin^2\beta)\,d\beta.
\end{align*}

Likewise, the substitution $\theta = \pi-2\beta$ allows us to rewrite $-I_2$ as
\begin{align*}
-I_2&=2\int_{\frac{\pi}{4}}^{\frac{\pi}{2}}(1-\cos(2\beta))^{-\frac{\alpha}{2}}\cos(2\beta)\, d\beta =2^{1-\frac{\alpha}{2}} \int_{\frac{\pi}{4}}^{\frac{\pi}{2}}(\sin\beta)^{-\alpha}(1-2\sin^2\beta)\,d\beta.
\end{align*}
Adding the above two identities for $I_1$ and $-I_2$ together gives
\begin{align*}
I_1-I_2&=2^{1-\frac{\alpha}{2}}\int_0^\frac{\pi}{2}
(\sin\beta)^{-\alpha}(1-2\sin^2\beta)\, d\beta\\
&=2^{1-\frac{\alpha}{2}}\left( \frac{1}{2}B\Big(\frac{1-\alpha}{2},\frac{1}{2}\Big)-B\Big(\frac{3-\alpha}{2},\frac{1}{2}\Big)\right )\\
&=2^{-\frac{\alpha}{2}}\frac{\Gamma(\frac{1-\alpha}{2})\Gamma(\frac{1}{2})}{\Gamma(1-\frac{\alpha}{2})}-2^{1-\frac{\alpha}{2}}\frac{\Gamma(\frac{3-\alpha}{2})\Gamma(\frac{1}{2})}{\Gamma(2-\frac{\alpha}{2})},
\end{align*}
where $B$ stands for the beta function. Here the second identity follows from the property that $B(x,y) = 2\int_0^{\pi/2} (\sin\theta)^{2x-1} (\cos\theta)^{2y-1}d\theta$, and the third line follows from the property that $B(x,y) = \frac{\Gamma(x)\Gamma(y)}{\Gamma(x+y)}$.  According to the properties of the gamma function $\Gamma(z+1)=z\Gamma(z)$ and $\Gamma(z)\Gamma(z+\frac{1}{2})=2^{1-2z} \sqrt{\pi}\Gamma(2z)$, we have
\begin{equation}
I_1-I_2=2^{-1+\frac{\alpha}{2}} \frac{\alpha}{2-\alpha} \cdot \frac{2\pi\Gamma(1-\alpha)}{\Gamma(1-\frac{\alpha}{2})^2}.
\end{equation}
Finally, plugging this into \eqref{I1I2} and \eqref{omegac} gives
\begin{align*}
\Omega_c(R)&= R^{-\alpha} C_\alpha 2^{1-\frac{\alpha}{2}}  (I_1 - I_2)\\
&=R^{-\al}\frac{1}{2\pi}\frac{\Gamma(\frac{\alpha}{2})}{2^{1-\alpha}\Gamma(1-\frac{\alpha}{2})} \frac{\alpha}{2-\alpha}\bigg(\frac{2\pi\Gamma(1-\alpha)}{\Gamma(1-\frac{\alpha}{2})^2}\bigg)\\
&=R^{-\al}\frac{2^{\alpha-1}\Gamma(1-\alpha)\Gamma(\frac{\alpha}{2}+1)}{\Gamma(1-\frac{\alpha}{2})^2\Gamma(2-\frac{\alpha}{2})} = R^{-\al} \Omega_\alpha,
\end{align*}
finishing the proof.
\end{proof}


At the end of this section, we point out that Theorem~\ref{thm_large_omega} directly gives the following quantitative estimate: if a simply-connected patch $D$ rotates with angular velocity $\Omega \in (0,\Omega_\alpha)$ that is very close to $\Omega_\alpha$, then $D$ must be very close to a disk in terms of symmetric difference.

\begin{cor}Assume $0<\alpha<1$. 
Let $D$ be a rotating patch with area $\pi$ and angular velocity $\Omega \in (0, \Omega_\alpha)$, and let $B$ be the unit disk. Then we have
\[
|D \triangle B| \leq 2\pi \left(\Big(\frac{\Omega_\alpha}{\Omega}\Big)^{2/\alpha}-1\right).
\] 
Note that for a fixed $\alpha\in(0,1)$, the right hand side goes to 0 as $\Omega\nearrow\Omega_\alpha$. 

\end{cor}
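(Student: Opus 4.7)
The plan is to apply Theorem~\ref{thm_large_omega} as a contrapositive to bound the outer radius of $D$ in terms of $\Omega$, and then translate that bound into an estimate on $|D \triangle B|$ via the area constraint $|D| = \pi = |B|$. Define $R := \max_{x\in \overline{D}} |x|$, so that $D \subset \overline{B(0,R)}$. Since $|D| = \pi$ and $D$ is contained in $\overline{B(0,R)}$, we immediately get $\pi \leq \pi R^2$, i.e.\ $R \geq 1$.

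Next, I would dichotomize based on the relationship between $\Omega$ and $\Omega_c(R) = R^{-\alpha}\Omega_\alpha$. If $\Omega \geq \Omega_c(R)$, Theorem~\ref{thm_large_omega} forces $D = B(0,R)$, and then the area constraint gives $R = 1$, so $D = B$ and the symmetric difference is zero, making the claimed inequality trivial. Otherwise $\Omega < R^{-\alpha}\Omega_\alpha$, which rearranges to
\[
R < \Big(\tfrac{\Omega_\alpha}{\Omega}\Big)^{1/\alpha}.
\]

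Finally, I would use the area constraint to convert a containment bound into an $L^1$ bound on the symmetric difference. Since $|D| = |B|$, a standard identity gives $|D \triangle B| = 2|D \setminus B|$. Because $D \setminus B \subset B(0,R) \setminus B$, we have
\[
|D\setminus B| \leq |B(0,R)| - |B| = \pi(R^2 - 1),
\]
so $|D \triangle B| \leq 2\pi(R^2 - 1)$. Combining this with the bound on $R$ from the previous step yields
\[
|D \triangle B| \leq 2\pi\left(\Big(\tfrac{\Omega_\alpha}{\Omega}\Big)^{2/\alpha} - 1\right),
\]
which is the desired estimate. There is essentially no obstacle here beyond invoking Theorem~\ref{thm_large_omega} correctly; the entire content of the corollary is the bound on $R$, and the rest is a direct geometric consequence of $D \subset \overline{B(0,R)}$ together with equal areas.
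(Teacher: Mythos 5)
Your proof is correct and follows essentially the same route as the paper: invoke Theorem~\ref{thm_large_omega} in contrapositive form to bound $R = \max_{x\in D}|x|$ by $(\Omega_\alpha/\Omega)^{1/\alpha}$, then use $|D|=|B|=\pi$ and $D\subset B(0,R)$ to convert this containment into the stated bound on $|D\triangle B|$. The only cosmetic difference is that you spell out explicitly the degenerate case where $\Omega\geq\Omega_c(R)$ forces $D=B$, which the paper subsumes by phrasing its argument under the hypothesis that $D$ is not a disk.
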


\begin{proof}
Denote $R := \max_{x \in D}|x|$. If $D$ is a rotating patch with angular velocity $\Omega$ and is not a disk, Theorem~\ref{thm_large_omega} gives that $\Omega \leq R^{-\alpha}\Omega_\alpha$, which gives that $ R \leq (\frac{\Omega_\alpha}{\Omega})^{1/\alpha}$. Thus $D \subset B(0, (\frac{\Omega_\alpha}{\Omega})^{1/\alpha})$, which implies that the symmetric difference $D\triangle B$ satisfies
\[
|D\triangle B| = 2|D\setminus B| \leq 2 \left|B\Big(0, \big(\frac{\Omega_\alpha}{\Omega}\big)^{1/\alpha}\Big) \setminus B\right| = 2\pi \left(\Big(\frac{\Omega_\alpha}{\Omega}\Big)^{2/\alpha}-1\right).
\]
\end{proof}


\section*{Acknowledgements}

JGS was partially supported by NSF through Grant NSF DMS-1763356. JP was partially supported by NSF through Grants NSF DMS-1715418, and NSF CAREER Grant DMS-1846745. JS was partially supported by NSF through Grant NSF DMS-1700180. YY was partially supported by NSF through Grants NSF DMS-1715418, and NSF CAREER Grant DMS-1846745.

\bibliographystyle{abbrv}
\bibliography{references}

\begin{tabular}{ll}
\textbf{Javier G\'omez-Serrano} & \textbf{Jia Shi}\\
{\small Department of Mathematics} & {\small Department of Mathematics}\\
{\small Princeton University} & {\small Princeton University} \\
{\small 610 Fine Hall, Washington Rd,} & {\small A09 Fine Hall, Washington Rd,}\\
{\small Princeton, NJ 08544, USA} & {\small Princeton, NJ 08544, USA}\\
 {\small e-mail: jg27@math.princeton.edu} & {\small e-mail: jiashi@math.princeton.edu}\\[0.3cm]
\textbf{Jaemin Park} & \textbf{Yao Yao} \\
{\small School of Mathematics, Georgia Tech} & {\small School of Mathematics, Georgia Tech}\\
{\small 686 Cherry Street, Atlanta, GA 30332} & {\small 686 Cherry Street, Atlanta, GA 30332}\\
{\small Email: jpark776@gatech.edu} & {\small Email: yaoyao@math.gatech.edu}\\
   & \\
\end{tabular}

%
%
%
%
%
%
%
%

\end{document}